\setlist{leftmargin=0.5\leftmargin}
\newtheorem{proposition}{Proposition}
\newtheorem{theorem}[proposition]{Theorem}
\newtheorem{lemma}[proposition]{Lemma}
\newtheorem{corollary}[proposition]{Corollary}
\theoremstyle{remark}
\newtheorem{remark}[proposition]{Remark}
\theoremstyle{definition}
\newtheorem{definition}[proposition]{Definition}
\numberwithin{equation}{section}
\numberwithin{proposition}{section}
\numberwithin{figure}{section}
\numberwithin{table}{section}
\newcommand{\N}{\mathbb{N}}
\newcommand{\Q}{\mathbb{Q}}
\newcommand{\R}{\mathbb{R}}
\newcommand{\E}{\mathbb{E}}
\renewcommand{\P}{\mathbb{P}}
\newcommand{\ep}{\varepsilon}
\newcommand{\eps}{\varepsilon}
\renewcommand{\le}{\leqslant}
\renewcommand{\ge}{\geqslant}
\renewcommand{\leq}{\leqslant}
\renewcommand{\geq}{\geqslant}
\renewcommand{\subset}{\subseteq}
\renewcommand{\bar}{\overline}
\renewcommand{\tilde}{\widetilde}
\renewcommand{\hat}{\widehat}
\newcommand{\Ll}{\left}
\newcommand{\Rr}{\right}
\renewcommand{\d}{\mathrm{d}}
\newcommand{\dr}{\partial}
\newcommand{\1}{\mathbf{1}}
\newcommand{\mcl}{\mathcal}
\newcommand{\msf}{\mathsf}
\newcommand{\mfk}{\mathfrak}
\newcommand{\bs}{\boldsymbol}
\newcommand{\al}{\alpha}
\newcommand{\de}{\delta}
\newcommand{\si}{\sigma}
\DeclareMathOperator{\tr}{tr}
\DeclareMathOperator{\supp}{supp}
\newcommand{\la}{\left\langle}
\newcommand{\ra}{\right\rangle}
\renewcommand{\S}{S}
\newcommand{\bff}{\mathbf{f}}
\newcommand{\cH}{{L^2}}
\newcommand{\C}{\mathcal{Q}}
\newcommand{\D}{{D}}
\newcommand{\id}{\mathrm{Id}}
\newcommand{\sP}{\mathscr{P}}
\newcommand{\bal}{\Lambda}
\newcommand{\bh}{\boldsymbol{h}}
\newcommand{\bg}{\boldsymbol{g}}
\newcommand{\fR}{\mathfrak{R}}
\newcommand{\bomega}{\varpi }
\newcommand{\bom}{\varpi }
\newcommand{\rhs}{\mathrm{RHS}}
\newcommand{\cF}{\mathcal{F}}
\newcommand{\lambdamin}{\lambda_{\mathrm{min}}}
\newcommand{\ellipt}{\mathsf{Ellipt}}
\newcommand{\cav}{{\circ}} \newcommand{\orig}{{}}
\begin{document}

\author{Hong-Bin Chen}
\address[Hong-Bin Chen]{Institut des Hautes \'Etudes Scientifiques, France}
\email{hbchen@ihes.fr}

\author{Jean-Christophe Mourrat}
\address[Jean-Christophe Mourrat]{Department of Mathematics, ENS Lyon and CNRS, Lyon, France}
\email{jean-christophe.mourrat@ens-lyon.fr}

\keywords{}
\subjclass[2010]{}
\date{\today}

\title{On the free energy of vector spin glasses with non-convex interactions}

\begin{abstract}
The limit free energy of spin-glass models with convex interactions can be represented as a variational problem involving an explicit functional. Models with non-convex interactions are much less well-understood, and simple variational formulas involving the same functional are known to be invalid in general. We show here that a slightly weaker property of the limit free energy does extend to non-convex models. Indeed, under the assumption that the limit free energy exists, we show that this limit can always be represented as a critical value of the said functional. Up to a small perturbation of the parameters defining the model, we also show that any subsequential limit of the law of the overlap matrix is a critical point of this functional. We believe that these results capture the fundamental conclusions of the non-rigorous replica method.
\end{abstract}

\maketitle

\section{Introduction}

\subsection{Informal summary of the main results}

We fix an integer $D \ge 1$, and let $(H_N(\sigma))_{\sigma \in (\R^N)^\D}$ be a centered Gaussian field such that, for every $\sigma = (\sigma_1,\ldots, \sigma_D)$ and $\tau = (\tau_1, \ldots, \tau_D) \in (\R^N)^\D$, we have
\begin{equation}
\label{e.def.xi}
\E \Ll[ H_N(\sigma) H_N(\tau) \Rr] = N \xi \Ll( \frac{\si \tau^\intercal}{N} \Rr) ,
\end{equation}
where $\xi \in C^\infty(\R^{D\times D}; \R)$ is a smooth function admitting an absolutely convergent power-series expansion with $\xi(0) = 0$, and where $\sigma \tau^\intercal$ denotes the matrix of scalar products 
\begin{equation}
\label{e.def.sigma.tau}
\sigma \tau^\intercal = (\sigma_d \cdot \tau_{d'})_{1 \le d,d' \le D}.
\end{equation}
The notation in \eqref{e.def.sigma.tau} is natural if we think of $\si$ and $\tau$ as $D$-by-$N$ matrices, and we often identify $(\R^N)^D$ with $\R^{D \times N}$. The class of functions $\xi$ such that there exists a Gaussian random field satisfying \eqref{e.def.xi}, together with an explicit construction of the Gaussian process $H_N$, are given in Subsection~\ref{ss.explicit.H} below. We are mostly interested in models for which the function $\xi$ is non-convex. A representative example in this class is the bipartite model, which is obtained by choosing  $D = 2$ and $\xi((A_{d,d'})_{1 \le d,d' \le 2}) = A_{1,1} A_{2,2}$. This model can be realized explicitly by setting $H_N(\sigma)$ to be 
\begin{equation*}  \frac 1 {\sqrt{N}} \sum_{i,j = 1}^N J_{i,j} \sigma_{1,i} \sigma_{2,j},
\end{equation*}
where $(J_{i,j})_{1 \le i,j \le N}$ are independent standard Gaussian random variables. 

We also fix a probability measure $P_1$ on $\R^D$ with compact support, and for each integer $N \ge 1$, we denote by $P_N = P_1^{\otimes N}$ the $N$-fold tensor product of $P_1$. We think of $P_N$ as a probability measure on $\R^{D \times N} \simeq (\R^N)^D$. 
We are interested in understanding, for each fixed $\beta \ge 0$, the large-$N$ limit of the free energy
\begin{equation*}
\frac 1 N \E \log \int \exp \Ll(\beta H_N(\si) \Rr) \, \d P_N(\si).
\end{equation*}
For $D = 1$, $\xi(r) = r^2$, and $P_1 = \tfrac 1 2 \de_{-1} + \tfrac 1 2  \de_{1}$, this corresponds to the Sherrington--Kirkpatrick model \cite{sherrington1975solvable}. In this context, a candidate for the limit free energy was derived in \cite{MPV, parisi1979infinite, parisi1980order, parisi1980sequence, parisi1983order} using sophisticated non-rigorous arguments within the framework of the replica method. In this approach, a number of formal manipulations are performed in order to be in a position to appeal to the Laplace principle for integrals of exponential functions, say of the form 
\begin{equation}  
\label{e.laplace}
\frac 1 N \log \int \exp(N \mathscr J(x)) \, \d x.
\end{equation}
The actual expression that shows up within the replica method also involves the number of replicas as a parameter. As this number, a positive integer, is sent to zero, unexpected things happen. In particular, the idea that the limit of~\eqref{e.laplace} ought to be the supremum of~$\mathscr J$ is no longer valid; for the SK model and all other scalar models with Ising spins, the limit becomes the infimum of $\mathscr J$ instead. For more general models, even this ``inverted variational principle'' is no longer necessarily valid. To the best of our understanding, in general, the main takeaway of the replica method is that the limit free energy can be represented as a critical value of this function $\mathscr J$; by this, we mean that the limit can be represented in the form of $\mathscr J(x)$ for some $x$ such that $\nabla \mathscr J(x) = 0$ (see for instance \cite{fyo1, fyo2, hartnett2018replica, korenblit1985spin} in the bipartite case). Crucially, one also expects that the critical point itself encodes the asymptotic law of the overlap between two independent replicas sampled according to the corresponding Gibbs measure.

The main results of this paper are rigorous versions of statements of this form, for general models and for a suitable choice of $\mathscr J$. Precisely, under the assumption that the free energy converges to some limit, we show that this limit must be a critical value of some explicit functional. Whether or not the free energy converges, we also show that, up to a small perturbation of the energy function $H_N$, any subsequential limit of the law of the overlap matrix is described by a critical point of the same functional.

A related route towards obtaining a fully unambiguous characterization of the limit free energy was explored in \cite{mourrat2021nonconvex, mourrat2023free}; see also \cite{HJbook}. The idea there is to consider an enriched version of the free energy that depends on more parameters, and aim to show that this enriched free energy is the solution to an explicit Hamilton--Jacobi equation. 
We will also work with this enriched version of the free energy throughout this paper. 

One possible approach to solving Hamilton--Jacobi equations is through the method of characteristics. In our context, the characteristics are straight lines, and formal calculations suggest that the gradient of the free energy ought to be constant along those lines. Using the equation then yields a prescription for the value of the solution itself along each of these lines. As long as the characteristic lines do not cross each other, one can show that defining a function according to this prescription yields a valid solution to the equation (see for instance \cite[Section~3.5]{HJbook} for more precision). In this perspective, our main result can be rephrased as saying that in fact, the limit free energy can \emph{always} be represented as the value prescribed by one of these characteristic lines. The only remaining source of ambiguity is that for large times, or in other words at low temperature, there may be multiple characteristic lines reaching a point, and we only assert that one of these characteristic lines prescribes the correct value. 

\subsection{Precise statement of the main results}
Let $\S^\D$ be the space of $D\times D$ real symmetric matrices, and $\S^\D_+$ be the subset of positive semi-definite matrices. 
Let 
\begin{equation}
\label{e.def.mclQ}
\mcl Q := \Ll\{ q : [0,1) \to S^\D_+ \ : \ q \text{ is right-continuous with left limits, and is increasing} \Rr\} ,
\end{equation}
where ``$q$ is increasing'' means that, for every $u,v \in \Ll[0,1\Rr)$,
\begin{equation*}  u \le v \quad \implies \quad q(u) \le q(v),
\end{equation*}
and the latter inequality is interpreted as $q(v) - q(u) \in S^D_+$. For every $r \in [1,\infty]$, we set $\mcl Q_r := \mcl Q \cap L^r([0,1]; S^D)$. The enriched free energy will be a function defined over $\R_+ \times \mcl Q_1$.
Here and throughout, we set $\R_+=[0,\infty)$.
When $q \in \mcl Q$ is constant, i.e. $q = h$ with $h \in S^\D_+$, this enriched free energy is defined as
\begin{multline}  
\label{e.def.simple.F}
\bar F_N(t,h) 
\\
:= -\frac 1 N \E \log \int \exp \Ll( \sqrt{2t} H_N(\sigma) - t N \xi\Ll(\frac{\sigma\sigma^\intercal}{N}\Rr) + \sqrt{2h} z \cdot \sigma - h \cdot \sigma \sigma^\intercal \Rr) \, \d P_N(\sigma),
\end{multline}
where $z = (z_1,\ldots, z_D)$ is a random element of $(\R^{N})^D \simeq \R^{D\times N}$ with independent standard Gaussian entries, independent of $(H_N(\sigma))_{\sigma \in (\R^N)^\D}$, and $\E$ denotes the expectation with respect to all randomness. In~\eqref{e.def.simple.F} and throughout the paper, whenever $a$ and $b$ are two matrices of the same size, we denote by $a \cdot b = \tr(a b^\intercal)$ the entrywise scalar product, and by $|a| = (a \cdot a)^{1/2}$ the associated norm. The compensating term $t N \xi\Ll(\frac{\sigma\sigma^\intercal}{N}\Rr)$ is equal to half the variance of the Gaussian random variable that immediately precedes it in the expression~\eqref{e.def.simple.F}; and as a consequence, an application of Jensen's inequality yields that~$\bar F_N$ is non-negative. This compensating term is helpful for the analysis, and we expect that it can be removed a posteriori using similar but simpler arguments as those presented here (in the sense that the final results are in terms of the critical points of a functional that has been enriched with a finite number of additional parameters). Here we will content ourselves with the observation that this term is constant for the bipartite model with $\pm 1$ spins, or more generally for any model with $\pm 1$ spins for which the function~$\xi$ only depends on the diagonal of its argument, so its presence causes no harm in these cases. (We also explain how this compensating term can be removed in the case when~$\xi$ is convex in Section~\ref{s.convex}.) The definition of $\bar F_N(t,q)$ for non-constant $q \in \mcl Q_1$ is more involved, as one needs to replace the ``replica-symmetric'' random magnetic field $\sqrt{2h} z$ appearing in~\eqref{e.def.simple.F} by a more complicated one that is coupled with a Poisson--Dirichlet cascade; we refer to~\eqref{e.def.FN.piece.const} and Proposition~\ref{p.barF_N_Lip} for a precise definition. Even with general $q \in \mcl Q_1$, the structure of this term remains that of a ``one-body'' potential, in the sense that it does not feature any interaction between spins indexed by different indices in $\{1,\ldots, N\}$. In particular, for every $N \ge 1$, we have that 
\begin{equation}  
\label{e.def.psi}
\psi(q) := \bar F_1(0,q) = \bar F_N(0,q).
\end{equation}
For every $q,q' \in \mcl Q_2$, $p \in L^2([0,1];S^\D)$, and $t \ge 0$, we define
\begin{equation}
\label{e.def.Parisi.functional}
\mcl J_{t,q}(q',p) := \psi(q') +  \la p , q-q'\ra_\cH + t \int_0^1 \xi(p).
\end{equation}
In this expression, we use the shorthand notation $\int_0^1 \xi(p) = \int_0^1 \xi(p(u)) \, \d u$, and the inner product between $p$ and $q-q'$ is that of the space $L^2([0,1], S^\D)$, that is, 
\begin{equation*}  \la p, q-q'\ra_\cH = \int_0^1 p(u) \cdot (q(u)-q'(u)) \, \d u.
\end{equation*}
The mapping in~\eqref{e.def.Parisi.functional} has a close relationship with the Hamilton--Jacobi equation
\begin{equation}  
\label{e.hj}
\Ll\{
\begin{array}{ll}
\displaystyle{\dr_t f - \int_0^1 \xi(\partial_q f)  = 0} & \quad \text{ on } \R_+ \times \mcl Q_2, \\
\displaystyle{f(0,\cdot) = \psi} & \quad \text{ on } \mcl Q_2.
\end{array}
\Rr.
\end{equation}
A first indication of this fact is that, for each fixed $q'$ and $p$, the mapping $(t,q) \mapsto \mcl J_{t,q}(q',p)$ is an affine solution to the equation~\eqref{e.hj} (with a different initial condition). More importantly, under convexity conditions on $\xi$ and $\psi$ respectively, the Hopf--Lax and Hopf formulas allow us to write the solution to~\eqref{e.hj} evaluated at $(t,q)$ as a saddle-point problem for the functional $\mcl J_{t,q}$, as was shown in \cite{chen2022hamilton} (see also \cite[Section~3.4]{HJbook} for the case of equations in finite dimension). For these reasons, we call the mapping $\mcl J_{t,q}$ the \emph{Hamilton--Jacobi functional}. While neither $\xi$ nor $\psi$ need to be convex nor concave in general, the conjecture of \cite{mourrat2021nonconvex, mourrat2023free} (see also \cite[Question~6.11]{HJbook}) is that $\bar F_N$ converges to the solution to~\eqref{e.hj}. The function $\mcl J_{t,q}$ will play the role of the function $\mathscr J$ discussed informally in the previous subsection. We say that the pair $(q',p) \in \mcl Q_2 \times L^2([0,1]; S^D)$ is a \emph{critical point} of the mapping $\mcl J_{t,q}$ if
\begin{equation}
\label{e.crit.p}
q = q' - t \nabla \xi(p)
\end{equation}
and
\begin{equation}
\label{e.crit.q'}
p = \partial_q \psi(q').
\end{equation}
In more explicit notation, these identities state that, for almost every $u \in [0,1)$, we have
\begin{equation*}  q(u) = q'(u) - t \nabla \xi(p(u)) \qquad \text { and } \qquad p(u) = \partial_q \psi(q',u),
\end{equation*}
and $\partial_q \psi(q,\cdot)$ denotes the Gateaux derivative of $\psi$ at $q$, see \eqref{d.gateaux} for a precise definition.
The condition \eqref{e.crit.p} is the first-variation relation for the variable $p$ in the mapping $\mcl J_{t,q}$, while the condition \eqref{e.crit.q'} is the first-variation relation for the variable $q'$ (ignoring the constraint $q' \in \mcl Q_2$). Notice that inserting \eqref{e.crit.q'} into \eqref{e.crit.p} yields that
\begin{equation}
\label{e.charact.line}
q = q' - t \nabla \xi(\dr_q \psi(q')).
\end{equation}
At least formally, the characteristic line associated with the equation \eqref{e.hj} and emanating from the point $q' \in \mcl Q_2$ is the trajectory 
\begin{equation*}  t' \mapsto \big(t', q'-t'\nabla \xi(\dr_q \psi(q'))\big)
\end{equation*}
(cf.\ \cite[(3.94)]{HJbook}).
The relation \eqref{e.charact.line} can therefore be rephrased as saying that the characteristic line emanating from $q'$ passes through the point $(t,q)$.

In order to clarify the relationship between the Hamilton--Jacobi functional and the Parisi formula, we first present a characterization of the limit free energy that is valid under the assumption that $\xi$ is convex over $S^D_+$. The limit free energy was first identified for a class of scalar models including the Sherrington--Kirkpatrick model in \cite{gue03, Tpaper}. The argument was then extended to general scalar models in \cite{pan.aom, pan, pan14}, using an alternative approach based on the ultrametricity property of the Gibbs measure. This approach was then extended to models as in \eqref{e.def.xi} in \cite{barcon, pan.multi, pan.potts, pan.vec}, under the assumption that the function $\xi$ is convex over $\R^{D\times D}$. Here we show that the more general assumption that $\xi$ is convex over $S^D_+$ is sufficient for this result to be valid. To generate examples of functions that satisfy the weaker condition but not the stronger one, we note that if $\xi_0 : \R^{D\times D} \to \R$ is convex over~$\R^{D\times D}$, then any function of the form 
\begin{equation*}  \xi : A = (A_{d,d'})_{d,d' \le 1} \mapsto  \xi_0(A) + \sum_{d = 1}^D \sum_{k = 1}^{+\infty} \beta_{d,k}^2 A_{d,d}^k,
\end{equation*}
with $(\beta_{d,k})_{d \le D, k \ge 1}$ decaying sufficiently rapidly to ensure the absolute summability of the series, is convex over $S^D_+$, but it is not necessarily convex over $\R^{D\times D}$.
\begin{theorem}
\label{t.parisi.convex}
If $\xi$ is a convex function over $S^\D_+$, then for every $t\ge 0$ and $q \in \mcl Q_2$, we have 
\begin{equation}  
\label{e.parisi.convex}
\lim_{N \to \infty} \bar F_N(t,q)= \sup_{q' \in q + \mcl Q_\infty} \inf_{p \in \mcl Q_\infty} \mcl J_{t,q}(q',p) = f(t,q) ,
\end{equation}
where $f$ is the solution to~\eqref{e.hj}. 
\end{theorem}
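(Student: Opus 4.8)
The plan is to prove the two equalities in \eqref{e.parisi.convex} separately. The second one, $\sup_{q' \in q + \mcl Q_\infty}\inf_{p \in \mcl Q_\infty}\mcl J_{t,q}(q',p) = f(t,q)$, is a purely deterministic statement about the equation \eqref{e.hj}: since $\mcl Q_2$ consists of paths valued in $\S^\D_+$, the gradient $\partial_q f$ of any solution is $\S^\D_+$-valued, so only the restriction of $\xi$ to $\S^\D_+$ is seen by \eqref{e.hj}, and the convexity hypothesis is exactly what is needed; together with the convexity and monotonicity of $\psi$ (standard properties of such one-body free energies; cf.\ Proposition~\ref{p.barF_N_Lip}), the Hopf--Lax/Hopf representation established in \cite{chen2022hamilton} yields the identity. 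I would carry this out first for piecewise constant $q$, where the cited results apply directly, and then extend to general $q \in \mcl Q_2$ using the Lipschitz continuity in $q$ of $f$, $\mcl J$ and $\bar F_N$ (again Proposition~\ref{p.barF_N_Lip}).

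The convergence $\lim_N \bar F_N(t,q) = \sup_{q'}\inf_p\mcl J_{t,q}(q',p)$ is the probabilistic heart of the matter, and I would obtain it from two one-sided bounds. For $\liminf_N \bar F_N(t,q) \ge \sup_{q'}\inf_p\mcl J_{t,q}(q',p)$, run a Guerra--Toninelli interpolation: fixing $q' \in q + \mcl Q_\infty$ (the order parameter of a Ruelle probability cascade) and $p = \partial_q \psi(q')$, interpolate between $\bar F_N(t,q)$ and $\psi(q')$ by transferring the $H_N$-disorder onto the cascade; Gaussian integration by parts produces a remainder, and the compensator $-tN\xi(\sigma\sigma^\intercal/N)$ in \eqref{e.def.simple.F} is designed so that the contributions pairing $\xi$ with the variance of $H_N$ along a single replica cancel, leaving an average of Bregman increments of $\xi$ based at the $\S^\D_+$-valued path $q'$. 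For the reverse inequality $\limsup_N \bar F_N(t,q) \le \sup_{q'}\inf_p\mcl J_{t,q}(q',p)$, use the Aizenman--Sims--Starr cavity computation together with the Ghirlanda--Guerra identities (enforced by a small perturbation of $H_N$), ultrametricity, and Panchenko's synchronization of the overlap array for vector spins; the cavity formula for the limiting Gibbs measure then evaluates to $\mcl J_{t,q}(q', \partial_q\psi(q'))$ for the corresponding $q' \in q + \mcl Q_\infty$. Both bounds follow the established proofs of the Parisi formula for vector and multi-species models \cite{pan, pan.vec, barcon}, adapted to the enriched free energy and with $q$ again first taken piecewise constant.

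The main obstacle, and the one place where the weakened hypothesis ``$\xi$ convex on $\S^\D_+$'' rather than ``$\xi$ convex on $\R^{D\times D}$'' is genuinely exploited, is to ensure that every evaluation of $\xi$ in the two bounds occurs at a positive semidefinite matrix. The self-overlaps $\sigma\sigma^\intercal/N$ and the Gibbs-averaged overlaps $\langle\sigma\rangle\langle\sigma\rangle^\intercal/N$ lie in $\S^\D_+$ automatically, and in the limit the synchronized overlap array is $\S^\D_+$-valued, so on the cavity side the identification of the cavity formula with $\mcl J_{t,q}(q',\partial_q\psi(q'))$ only ever sees $\xi$ on $\S^\D_+$. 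On the interpolation side, however, the genuine two-replica overlaps $\sigma^1(\sigma^2)^\intercal/N$ at finite $N$ are not positive semidefinite, so the remainder cannot be signed pointwise by convexity on $\S^\D_+$ alone; reconciling this---signing the remainder up to $o(1)$ by combining the compensator-induced cancellations with the positivity of the Gibbs-averaged overlaps and, after the perturbation, synchronization of the limiting overlap array---is where the real work lies. The remaining steps are a matter of transcribing the machinery of \cite{pan, pan.vec, barcon} into the enriched setting of \eqref{e.def.simple.F}.
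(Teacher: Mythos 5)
Your upper bound half (Aizenman--Sims--Starr plus Ghirlanda--Guerra, ultrametricity, and Panchenko's synchronization) is essentially the route the paper takes, via its cavity corollary. The problem is the lower bound. You propose a Guerra--Toninelli interpolation and you yourself name the obstruction: the interpolation remainder involves $\xi$ evaluated at the off-diagonal overlaps $\sigma^1(\sigma^2)^\intercal/N$, which at finite $N$ do not lie in $S^\D_+$, so convexity of $\xi$ on $S^\D_+$ alone cannot sign it. You then say that resolving this ``is where the real work lies'' --- but you do not resolve it, and the devices you gesture at (compensator cancellations, positivity of Gibbs-averaged overlaps, synchronization ``after the perturbation'') do not combine into an argument: Guerra's bound has to be signed at each finite $N$ and each interpolation time, \emph{before} any limit is taken, whereas synchronization and positivity of the overlap array are purely asymptotic statements. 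For $D=1$ this is exactly what Talagrand's positivity principle repairs; no analogue is available for general vector models, which is why all prior results required convexity on the whole of $\R^{D\times D}$. The paper sidesteps the interpolation entirely: the lower bound $\liminf_N \bar F_N(t,q)\ge f(t,q)$ is quoted from \cite{mourrat2023free}, where it is proved for \emph{arbitrary} (possibly non-convex) $\xi$ by a different mechanism, and \cite{chen2022hamilton} then identifies $f(t,q)$ with the Hopf--Lax saddle when $\xi$ is convex on $S^\D_+$. That substitution is the missing idea in your proposal.

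Two further points. You invoke ``the convexity and monotonicity of $\psi$'' as a standard property; the paper explicitly notes, in its discussion of the Hopf formula \eqref{e.hopf.formula}, that $\psi$ is \emph{not} convex in general --- this is why the Hopf formula fails as a candidate limit. The Hopf--Lax representation used here needs only convexity of $\xi$ on $S^\D_+$ together with Lipschitz continuity and monotonicity of $\psi$, so the deterministic half of your argument survives, but the claim should be removed. Finally, matching the cavity output $\mathscr P_{t,q}(p)=\psi(q+t\nabla\xi(p))-t\int_0^1\theta(p)$ with $\sup_{q'}\inf_{p}\mcl J_{t,q}(q',p)$ is not automatic: it rests on the identity $\theta(b)=\xi^*(\nabla\xi(b))$ for $b\in S^\D_+$ and the resulting evaluation of the inner infimum, which is precisely where convexity of $\xi$ on $S^\D_+$ enters in the paper's proof; your proposal skips this step.
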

The fact that the Parisi formula ($q = 0$) can be recast in terms of the Hamilton--Jacobi functional in the case $D = 1$ is from \cite{mourrat2022parisi}, and was extended to arbitrary $q$ in \cite{mourrat2020extending}. The fact that the solution to  the Hamilton--Jacobi equation~\eqref{e.hj} with convex $\xi_{|S^D_+}$ admits a variational representation is from \cite{chen2022hamilton}, and 
the inequality 
\begin{equation*}  \liminf_{N \to \infty} \bar F_N(t,q) \ge f(t,q)
\end{equation*}
is shown in \cite{mourrat2021nonconvex, mourrat2023free} for arbitrary $\xi$. The converse bound will follow from the results obtained below, which are more precise than necessary in this case.

To clarify further the connection between Theorem~\ref{t.parisi.convex} and the classical expression of the Parisi formula, when $\xi$ is convex over $S^\D_+$, we define its convex dual $\xi^*$ by setting, for every $a \in \R^{D \times D}$, 
\begin{equation}
\label{e.def.xistar}
\xi^*(a) := \sup_{b \in S^\D_+} \Ll\{ a \cdot b - \xi(b)  \Rr\} .
\end{equation}
As will be shown here, one can rewrite the saddle-point problem in~\eqref{e.parisi.convex} as 
\begin{equation}  
\label{e.parisi.convex.var}
\sup_{q' \in \mcl Q_\infty} \Ll\{ \psi(q + q') - t \int_0^1 \xi^* \Ll( \frac{q'}{t} \Rr)  \Rr\} ,
\end{equation}
which perhaps most closely resembles the classical finite-dimensional Hopf--Lax formula adapted to the equation in~\eqref{e.hj}. 
For scalar models and $q = 0$, it was shown in~\cite{mourrat2022parisi} that one can recover the classical expression of the Parisi formula from the formula in~\eqref{e.parisi.convex} via a change of variables. 
This change of variables essentially consists in replacing the variable $q'$ with the variable $p$ defined so that $q' = t \nabla \xi(p)$, so that~\eqref{e.crit.p} holds; see also \cite[Section~6.5]{HJbook} for more precision. 

For possibly non-convex $\xi$, the first identity in~\eqref{e.parisi.convex} is false in general \cite{mourrat2021nonconvex}. Another plausible candidate saddle-point formula is
\begin{equation}  
\label{e.hopf.formula}
\sup_{p \in \mcl Q_\infty} \inf_{q' \in \mcl Q_\infty} \mcl J_{t,q}(q',p) ,
\end{equation}
which is the Hopf formula for the solution to~\eqref{e.hj} when $\psi$ is convex \cite{chen2022hamilton}. A formula of this form arises in the context of certain problems of statistical inference in high dimensions, see \cite{barbier2016, barbier2019adaptive, barbier2017layered, chen2022statistical, HB1, HBJ, kadmon2018statistical,   lelarge2019fundamental, lesieur2017statistical, luneau2020high, mayya2019mutualIEEE, miolane2017fundamental, mourrat2020hamilton, mourrat2021hamilton, reeves2020information, reeves2019geometryIEEE}, as well as \cite[Chapter~4]{HJbook} for a presentation following the Hamilton--Jacobi approach. However, it is shown in \cite[Section~6]{mourrat2021nonconvex} that the variational formula in~\eqref{e.hopf.formula} cannot be the limit free energy in general, because the function $\psi$ is not convex in general. A similar phenomenon seems to occur for certain problems of statistical inference on sparse graphs, see \cite{dominguez2022infinite, dominguez2022mutual, kireeva2023breakdown}. More broadly, there is no arrangement of inf's and sup's in front of the Hamilton--Jacobi functional that could be equal to the limit free energy in general \cite{mourrat2021nonconvex}. In analogy with the change of variables described below \eqref{e.parisi.convex.var}, one might alternatively want to use~\eqref{e.crit.p} and ask whether the limit free energy could be written in the form of the supremum or the infimum over $p \in \mcl Q_\infty$ of the functional
\begin{equation}  
\label{e.parisi-type.functional}
p \mapsto \psi(q + t \nabla \xi(p)) - t \int_0^1 (p\cdot \nabla \xi(p) - \xi(p)),
\end{equation}
but these are also invalid candidates in general \cite{mourrat2021nonconvex}.

Our first main result states that, under the assumption that the limit free energy exists, it must be a critical value of the Hamilton--Jacobi functional. We understand that the assumption of convergence of the free energy $\bar F_N$ to some limit $f$ is pointwise, that is, for every $(t,q) \in \R_+ \times \mcl Q_1$, we assume that $\lim_{N \to \infty} \bar F_N(t,q) = f(t,q)$.
\begin{theorem}[Critical point representation] 
\label{t.main1}
Assume that $\bar F_N$ converges to some $f$. Then for every $t \ge 0$ and $q \in \mcl Q_2$, there exists $(q',p) \in \mcl Q_\infty^2$ that is a critical point of~$\mcl J_{t,q}$ and is such that 
    \begin{equation}
    \label{e.main1.sg}
        \lim_{N \to \infty} \bar F_N(t,q) = \mcl J_{t,q}(q',p).
    \end{equation}
\end{theorem}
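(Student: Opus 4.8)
The plan is to exploit the fact that, for each fixed $q' \in \mcl Q_2$ and $p \in L^2([0,1];S^\D)$, the affine map $(t,q) \mapsto \mcl J_{t,q}(q',p)$ solves the Hamilton--Jacobi equation \eqref{e.hj} (with a modified initial condition), and that it agrees with the true solution precisely when we start the characteristic from the right point. Concretely, I would first fix $(t,q) \in \R_+ \times \mcl Q_2$ and look for a $q' \in \mcl Q_2$ from which the characteristic line reaches $(t,q)$, that is, a solution of the fixed-point relation \eqref{e.charact.line}, namely $q = q' - t \nabla\xi(\dr_q \psi(q'))$; setting $p := \dr_q\psi(q')$ then automatically produces a critical point $(q',p)$ of $\mcl J_{t,q}$. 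So the first technical step is an \emph{existence statement for characteristics}: using regularity and growth bounds on $\psi$ (which are ``one-body'', hence amenable to explicit control, cf.\ \eqref{e.def.psi}) and on $\xi$, show that the map $q' \mapsto q' - t\nabla\xi(\dr_q\psi(q'))$ is a continuous self-map of a suitable compact convex subset of $\mcl Q_\infty$ containing $q$, and apply a Schauder-type fixed-point theorem. One must check that the construction keeps $q'$ inside $\mcl Q_\infty$ (monotonicity and the $S^\D_+$-increasing constraint), and that $p = \dr_q\psi(q')$ lands in $\mcl Q_\infty$ as well — the latter should follow from known structural properties of the minimizer/derivative of the one-body functional $\psi$.

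The second step is to identify $\mcl J_{t,q}(q',p)$ with $\lim_N \bar F_N(t,q) = f(t,q)$ for this particular critical point. Here I would use the two matching one-sided bounds. The lower bound $f(t,q) \geq \mcl J_{t,q}(q',p)$ — or more precisely the statement that along a characteristic the affine solution touches $f$ from below — should come from the general viscosity-solution (or monotonicity/comparison) machinery: the true limit, assumed to exist, is known from \cite{mourrat2021nonconvex, mourrat2023free} to dominate the variational lower bound, and one checks that $\mcl J_{t,q}(q',p)$ computed at the characteristic foot $q'$ equals the value prescribed by the method of characteristics, which is a lower bound for any supersolution. For the reverse inequality $f(t,q) \leq \mcl J_{t,q}(q',p)$, I would argue that the affine function $g(s,r) := \mcl J_{s,r}(q',p)$ is a (smooth, exact) solution of \eqref{e.hj} that dominates the initial condition $\psi$ at $s=0$ (by criticality \eqref{e.crit.q'}, $g(0,\cdot) = \psi(q') + \la \dr_q\psi(q'), \cdot - q'\ra \geq \psi(\cdot)$ if $\psi$ is convex, but in general one needs the weaker fact that $g(0,q')=\psi(q')$ and that the assumed-existing limit $f$ is sandwiched between sub- and supersolutions); then invoke whatever comparison/uniqueness input is available, or, more robustly, directly estimate $\bar F_N(t,q)$ from above by a Gaussian-interpolation / Guerra-type bound along the straight path $s \mapsto (s, q - (t-s)\nabla\xi(p))$, which yields $\bar F_N(t,q) \leq \bar F_N(0,q') + (\text{interpolation remainder}) \to \mcl J_{t,q}(q',p)$ as $N\to\infty$, the remainder being controlled because $p$ is constant in $N$.

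The main obstacle, I expect, is the \textbf{existence of the characteristic foot $q'$ within $\mcl Q_\infty$} together with the requisite compactness: $\mcl Q_2$ is infinite-dimensional, $\xi$ is not convex so $\nabla\xi$ is only Lipschitz on bounded sets, and one must simultaneously (i) produce a fixed point of a nonlinear map, (ii) keep the iterate monotone and $S^\D_+$-valued, and (iii) obtain a uniform $L^\infty$ bound so that the critical point lies in $\mcl Q_\infty^2$ rather than merely $\mcl Q_2^2$. I anticipate handling this by first proving the statement for finite-dimensional (piecewise-constant) approximations of $q$ — where $\mcl Q$ restricted to a fixed partition is a finite-dimensional convex cone and Brouwer applies — establishing uniform bounds on the approximate critical points from the a priori Lipschitz bound on $\bar F_N$ (Proposition~\ref{p.barF_N_Lip}) and on $\psi$, and then passing to the limit along the partition, using the Lipschitz continuity of $\bar F_N(t,\cdot)$ and weak-$*$ compactness in $\mcl Q_\infty$ to extract $(q',p)$ and to preserve both criticality relations \eqref{e.crit.p}--\eqref{e.crit.q'} and the energy identity \eqref{e.main1.sg} in the limit. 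A secondary difficulty is that without convexity of $\psi$ the equality $f(0,q')=\psi(q')$ does not obviously propagate to $f(t,q)$ by elementary means, so the upper bound on $\bar F_N(t,q)$ is probably cleanest to obtain directly by Gaussian interpolation rather than by appealing to PDE uniqueness.
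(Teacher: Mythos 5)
Your overall architecture — produce a critical point $(q',p)$ first, then verify \eqref{e.main1.sg} by two one-sided bounds — is essentially the reverse of what the paper does, and both halves of your plan run into obstructions that are specific to the non-convex setting.

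First, constructing \emph{some} solution of $q = q' - t\nabla\xi(\dr_q\psi(q'))$ by a Schauder/Brouwer argument cannot suffice on its own: the paper stresses that $\mcl J_{t,q}$ may have several critical points, and only some of them have the correct critical value. A fixed-point theorem gives you no handle on \emph{which} critical point you land on, so you would still need an independent mechanism to select the one satisfying \eqref{e.main1.sg}. The paper's selection mechanism is the content of the whole argument: at a point $q$ where $f(t,\cdot)$ is Gateaux differentiable, the Aizenman--Sims--Starr cavity computation (with the perturbations enforcing the Ghirlanda--Guerra identities, ultrametricity and synchronization) produces overlap distributions $p_\pm$ with $\mathscr P_{t,q}(p_-) \le f(t,q) \le \mathscr P_{t,q}(p_+)$, and the semi-concavity of $\bar F_N$ forces $p_\pm = \dr_q f(t,q)$, so the two bounds collapse (Proposition~\ref{p.1st_fpe}); a second cavity computation for $\bar F_{N+1}-\bar F_N$ then yields the criticality relation $p = \dr_q\psi(q+t\nabla\xi(p))$ as an \emph{output} (Proposition~\ref{p.2nd_fpe}), not as an equation to be solved. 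The general $q\in\mcl Q_2$ is then reached by approximating with differentiability points, which are dense outside a Gaussian null set, and passing to the limit using the compactness of $\mcl Q\cap L^\infty_{\le1}$ (Proposition~\ref{p.1st_stronger_id}).

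Second, and more seriously, both of your one-sided bounds fail without convexity of $\xi$. The Guerra-type interpolation along $s\mapsto(s, q-(t-s)\nabla\xi(p))$ produces a remainder of the form $\E\la \xi(R^{1,2}) - \xi(p) - \nabla\xi(p)\cdot(R^{1,2}-p)\ra$; its sign is governed by the convexity of $\xi$ on the relevant set, not by whether $p$ depends on $N$. If that interpolation bound held for non-convex $\xi$, it would hold for every $p$ and would establish the variational formula \eqref{e.parisi.convex}, which \cite{mourrat2021nonconvex} shows is false in general. Likewise, the lower bound of \cite{mourrat2021nonconvex, mourrat2023free} compares $\liminf \bar F_N$ with the solution of \eqref{e.hj}, not with the value $\mcl J_{t,q}(q',p)$ at an arbitrary critical point; for non-convex Hamiltonians there is no comparison principle guaranteeing that the characteristic value bounds the solution from either side, and the limit $f$ is only shown to satisfy the equation at points of differentiability, not to be a viscosity solution. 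So neither inequality in your second step can be closed by the tools you invoke.
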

As will be explained in Propositions~\ref{p.barF_N_Lip} and~\ref{p.precompact}, the function $\bar F_N$ is Lipschitz in its arguments, with a Lipschitz constant that is uniformly bounded in $N$, so the fact that the sequence $(\bar F_N)_{N \ge 1}$ has convergent subsequences is a consequence of the Arzelà-Ascoli theorem. The assumption of convergence could be weakened to a local statement around the point $(t,q)$ for which we want \eqref{e.main1.sg} to hold, but we prefer to refrain from doing so for clarity. As shown in Proposition~\ref{p.1st_stronger_id}, one can also represent the limit free energy at $(t,q)$ as a critical value of the function in~\eqref{e.parisi-type.functional}. 

In fact, for most choices of $(t,q)$, we can identify an explicit choice for the critical point~$(q',p)$ appearing in Theorem~\ref{t.main1}, which is expressed in terms of the limit free energy~$f$. Readers who are familiar with the method of characteristics (or see \cite[Section~3.5]{HJbook}) may guess our choice of $(q',p)$: since the derivative of the solution ought to be constant along the characteristic line, we aim to choose $q'$ so that $\dr_q f(t,q) = \dr_q \psi(q')$, and thus $p = \dr_q \psi(q') = \dr_q f(t,q)$. 

To make this statement precise, we define $\mcl Q_{\uparrow}$ to be the set of all $q \in \mcl Q_2$ such that $q(0) = 0$ and there exists a constant $c > 0$ satisfying, for every $u \le v \in [0,1)$,
\begin{equation}
\label{e.def.Qup}
q(v) - q(u) \ge c (v-u)\, \id \quad \text{ and } \quad \ellipt(q(v) - q(u)) \le c^{-1},
\end{equation}
where $\id$ denotes the identity matrix, and $\ellipt(a)$ denotes the ratio between the largest and the smallest eigenvalues of the matrix $a \in S^D_+$.
Roughly speaking, the space $\mcl Q_\uparrow$ is the set of all $q \in \mcl Q_2$ that are ``uniformly increasing'' with a bounded ellipticity ratio. Although the space $\mcl Q_2$ has empty interior as a subspace of $L^2([0,1];S^\D)$, the space $\mcl Q_\uparrow$ will functionally play the role of the interior of $\mcl Q_2$. This space is for instance convenient when exploring the notion of Gateaux differentiability of a function defined on $\mcl Q_2$; we refer to Definition~\ref{d.gateaux} for the precise notion of Gateaux differentiability we use. We will show below that if $\bar F_N$ converges to some function $f$, then the limit $f$ must be Gateaux differentiable almost everywhere in $\mcl Q_2$. Our usage of the phrase ``almost everywhere'' requires some explanation here, since the space $\mcl Q_2 \subset L^2([0,1];S^\D)$ is infinite-dimensional. What we mean is that the set of points at which $f$ is not Gateaux differentiable is a Gaussian null set (see Definition~\ref{d.gaussian.null}). In particular, if $\bar F_N$ converges to some function $f$, then $f$ must be differentiable on a subset of $\R_+ \times (\mcl Q_\uparrow \cap L^\infty)$ that is dense in $\R_+ \times \mcl Q_2$. 
\begin{theorem}[Critical point identification]
\label{t.main2}
Assume that $\bar F_N$ converges to some limit~$f$. For every $t \ge 0$ and $q \in \mcl Q_\uparrow \cap L^\infty([0,1];S^D)$, if $f(t,\cdot)$ is Gateaux differentiable at $q$, then letting $p = \dr_q f(t,q)$ and $q' = q + t \nabla \xi(p)$, we have that $(q',p) \in \mcl Q_\infty^2$ is a critical point of~$\mcl J_{t,q}$ and is such that~\eqref{e.main1.sg} holds.
\end{theorem}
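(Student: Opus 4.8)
The plan is to reduce Theorem~\ref{t.main2} to Theorem~\ref{t.main1} by showing that the particular choice $p = \dr_q f(t,q)$, $q' = q + t\nabla\xi(p)$ is in fact a legitimate critical point in the sense of \eqref{e.crit.p}--\eqref{e.crit.q'}, and that $\mcl J_{t,q}(q',p)$ equals $f(t,q)$. The relation \eqref{e.crit.p} holds by construction, so the real content is to verify \eqref{e.crit.q'}, i.e. that $\dr_q\psi(q') = \dr_q f(t,q)$, together with the value-matching identity \eqref{e.main1.sg} and the membership $(q',p)\in\mcl Q_\infty^2$. The heuristic is exactly the method of characteristics for \eqref{e.hj}: the gradient of the solution is constant along the characteristic line $t'\mapsto (t', q' - t'\nabla\xi(\dr_q\psi(q')))$ emanating from $q'$, so if this line passes through $(t,q)$ then $\dr_q f(t,q) = \dr_q\psi(q') = \dr_q f(0,q') = p$, which is \eqref{e.charact.line} together with \eqref{e.crit.q'}; and integrating $\dr_t f = \int_0^1\xi(\dr_q f)$ along the line recovers $f(t,q) = \psi(q') + \la p, q-q'\ra_\cH + t\int_0^1\xi(p) = \mcl J_{t,q}(q',p)$.

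Concretely I would proceed as follows. First, establish the a.e.\ (in the Gaussian-null sense) Gateaux differentiability of any subsequential limit $f$, and of $\psi$, using the uniform Lipschitz bounds from Propositions~\ref{p.barF_N_Lip} and~\ref{p.precompact} together with the convexity/concavity or monotonicity structure already built into $\bar F_N$ — this differentiability is asserted in the text preceding the theorem, so I would invoke it. Second, I would show that on $\R_+\times(\mcl Q_\uparrow\cap L^\infty)$ the derivative $\dr_q f(t,q)$ lands in a bounded subset of $\mcl Q_\infty$ (monotonicity in $q$ forces $p = \dr_q f \in \mcl Q$, and the $L^\infty$ bound on $q$ plus the Lipschitz bound forces boundedness), hence $q' = q + t\nabla\xi(p)$ is again in $\mcl Q_\infty$, with the key point that adding $t\nabla\xi(p)$ preserves monotonicity — this may require the uniform-increase property \eqref{e.def.Qup} of $q\in\mcl Q_\uparrow$ to absorb any non-monotonicity of $\nabla\xi(p)$, which is why the theorem is stated on $\mcl Q_\uparrow$ rather than all of $\mcl Q_2$. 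Third, I would run the characteristics computation rigorously: since $f$ solves \eqref{e.hj} and $(t',q)\mapsto\mcl J_{t',q}$ is an affine solution, comparing $f$ with the affine solution $\mcl J$ anchored so that the two agree to first order at $(t,q)$, and using that both are solutions of the same equation, pins down $f(t,q) = \mcl J_{t,q}(q',p)$ and $\dr_q\psi(q') = p$. An alternative, perhaps cleaner, route is to appeal directly to Theorem~\ref{t.main1}: it already furnishes \emph{some} critical point $(\tilde q',\tilde p)$ with $f(t,q) = \mcl J_{t,q}(\tilde q',\tilde p)$; a first-order/envelope argument at the differentiability point $q$ then forces $\tilde p = \dr_q f(t,q) = p$, and \eqref{e.crit.p} forces $\tilde q' = q + t\nabla\xi(p) = q'$, identifying the abstract critical point with the explicit one.

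The main obstacle I expect is the second step: verifying that $q' = q + t\nabla\xi(p)$ genuinely belongs to $\mcl Q_\infty$, i.e.\ that it is still increasing as a path in $S^D_+$. For non-convex $\xi$ the map $a\mapsto\nabla\xi(a)$ need not be monotone, so $\nabla\xi(p(\cdot))$ can fail to be increasing even though $p$ is; the restriction to $\mcl Q_\uparrow$ with its quantitative lower bound $q(v)-q(u)\ge c(v-u)\,\id$ is precisely what should save the argument, since on a bounded range of $p$ the function $\nabla\xi$ has bounded Lipschitz constant and a small enough $t$, or the strong monotonicity margin of $q$, dominates it — but making this quantitative and handling all $t\ge 0$ (not just small $t$) will need care, presumably by exploiting that $p$ itself inherits a margin of increase from the Gateaux derivative of $f$ along a uniformly increasing direction. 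A secondary subtlety is justifying the passage from pointwise (in $(t,q)$) convergence and a.e.\ Gateaux differentiability to the genuine identity $\dr_q f(t,q) = \dr_q\psi(q')$ without a priori knowing $f$ is a classical solution along the whole characteristic line; here I would lean on the fact, already recorded in the excerpt, that $f$ is the (variational, Hopf--Lax-type) solution to \eqref{e.hj} and that characteristics do not cross until the first shock, so the prescription is consistent at least up to that time, and Theorem~\ref{t.main1} covers the identification at later times.
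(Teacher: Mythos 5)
There is a genuine gap: both routes you propose presuppose inputs that are unavailable at this point, and neither reaches the actual content of the theorem. The ``characteristics'' route assumes that $f$ solves \eqref{e.hj} globally and that one can integrate along a characteristic line; but for non-convex $\xi$ this is precisely what is \emph{not} known --- the paper only obtains the equation \eqref{e.satisfy.eqn.intro} pointwise at differentiability points as a \emph{consequence} of the identities you are trying to prove, and your appeal to ``the Hopf--Lax-type solution'' and to ``characteristics not crossing before the first shock'' is only justified in the convex case. The ``envelope'' route via Theorem~\ref{t.main1} is circular in the paper's architecture (Theorem~\ref{t.main1} is deduced \emph{from} Theorem~\ref{t.main2} by a density and continuity argument, Proposition~\ref{p.1st_stronger_id}); and even granting some critical point $(\tilde q',\tilde p)$ with $f(t,q)=\mcl J_{t,q}(\tilde q',\tilde p)$, a first-order/envelope argument requires a one-sided comparison between $f(t,\cdot)$ and the affine map $q\mapsto\mcl J_{t,q}(\tilde q',\tilde p)$ near $q$; no such ordering holds in general (the paper emphasizes that no inf/sup arrangement of $\mcl J$ represents the limit for non-convex models), and two functions that merely agree at a point need not have equal derivatives there. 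What actually carries the proof is the cavity machinery of Section~\ref{s.cavity}: the Aizenman--Sims--Starr computation, the perturbation enforcing the Ghirlanda--Guerra identities, ultrametricity and Panchenko's synchronization identify the limit overlap array as $p(\al\wedge\al')$ for some $p\in\mcl Q\cap L^\infty_{\le 1}$; local semi-concavity and convergence of derivatives (Proposition~\ref{p.conv.der}) pin down $p=\dr_q f(t,q)$, giving $f(t,q)=\mathscr P_{t,q}(p)$ (Proposition~\ref{p.1st_fpe}); and the cavity identity for the spun-off spin, i.e.\ the computation of $\dr_q\bar F_{N+1}$ restricted to the cavity coordinate, yields the fixed-point relation $p=\dr_q\psi(q+t\nabla\xi(p))$ (Proposition~\ref{p.2nd_fpe}), which is exactly \eqref{e.crit.q'}. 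None of this can be replaced by a soft PDE argument.

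A secondary remark: the obstacle you single out as the main difficulty --- that $q'=q+t\nabla\xi(p)$ might fail to be increasing --- is not an issue and is not where $\mcl Q_\uparrow$ is used. Since $p=\dr_q f(t,q)$ is an $L^r$-limit of $\dr_q\bar F_N(t,q)\in\mcl Q\cap L^\infty_{\le1}$ (by \eqref{e.bounds.der.FN}), and since $\nabla\xi$ is increasing on $S^D_+$ as a consequence of the representation \eqref{e.explicit.xi} with $\mathsf C^{(p)}\in S^{D^p}_+$, the path $\nabla\xi(p(\cdot))$ is automatically in $\mcl Q_\infty$, hence so is $q'$, for every $t\ge 0$. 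The hypothesis $q\in\mcl Q_\uparrow\cap L^\infty$ is instead needed so that $q+s\kappa$ remains in $\mcl Q$ for small $s$ and smooth $\kappa$, which is what makes the semi-concavity argument for the convergence of $\dr_q\bar F_N(t,q)$ to $\dr_q f(t,q)$ go through.
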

Up to a small perturbation of the free energy, we can also identify the limit of the law of the overlap matrix in terms of this critical point. We denote by $\langle \cdot \rangle$ the Gibbs measure associated with the enriched free energy with parameters $(t,q)$, see \eqref{e.def.disc.gibbs} for a precise definition. The canonical random variable under this measure is denoted by $\sigma$, and we denote by $\sigma'$ an independent copy of $\sigma$ under $\la \cdot \ra$, also called a replica. The overlap matrix is the random variable $N^{-1} \sigma \sigma'^\intercal$, taking values in $\R^{D\times D}$. 

We now introduce the perturbation to the Hamiltonian that we need to add. We let $(\hat H_N(\sigma))_{\sigma \in (\R^\D)^N}$ be the centered Gaussian field such that, for every $\sigma, \tau \in (\R^\D)^N$,
\begin{equation}\label{e.hatH_N}  \E \Ll[ \hat H_N(\sigma) \hat H_N(\tau) \Rr] = N \Ll| \frac{\sigma \tau^\intercal}{N} \Rr| ^2.
\end{equation}
This Gaussian field can be constructed explicitly by setting
\begin{equation*}  \hat H_N(\sigma) := \frac{1}{\sqrt{N}} \sum_{d = 1}^\D \sum_{i,j = 1}^N W_{i,j} \sigma_{d,i} \sigma_{d,j},
\end{equation*}
where $(W_{i,j})$ are independent standard Gaussians. We impose this new Gaussian field to be independent of all other sources of randomness. For every $\hat t \ge 0$, we define $\hat F_N(t,\hat t,q)$ to be the enriched free energy as discussed earlier and to which we add the following quantity in the exponential:
\begin{equation}  
\label{e.def.tdH}
\sqrt{2\hat t} \, \hat H_N(\si) - \hat t N \Ll| \frac{\sigma \sigma^\intercal}{N} \Rr| ^2;
\end{equation}
see also \eqref{e.def.FN.t.hat} for more precision. The corresponding functional $\mcl J$ then becomes
\begin{equation}  \label{e.hatJ=}
\hat {\mcl J}_{t,\hat t,q}(q',p) := \psi(q') +  \la p , q-q'\ra_\cH + t \int_0^1 \xi(p) + \hat t \int_0^1 |p|^2.
\end{equation}
\begin{theorem}[Identification of the law of the overlap]
\label{t.main3}
Assume that $\hat F_N$ converges to some limit $\hat f$ along a subsequence. Suppose also that, for some $t \ge 0$, $\hat t > 0$, and $q \in \mcl Q_\uparrow \cap L^\infty([0,1];S^D)$, we have that $\hat f(t,\hat t, \cdot)$ is Gateaux differentiable at $q$, and that $\hat f(t,\cdot,q)$ is differentiable at $\hat t$. Then letting $p = \dr_q \hat f(t,\hat t, q)$ and $q' = q + t \nabla \xi(p) + 2 \hat t p$, we have that $(q',p)\in \mcl Q_\infty^2$ is a critical point of $\hat {\mcl J}_{t,\hat t, q}$. Moreover, as $N$ tends to infinity along the said subsequence, the overlap matrix converges in law under $\E \la \cdot \ra$ to the random variable $p(U)$, where $U$ is a uniform random variable over $[0,1]$. 
\end{theorem}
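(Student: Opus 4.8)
The plan is to prove Theorem~\ref{t.main3} by adapting the characteristics-based argument behind Theorem~\ref{t.main2} to the enriched free energy $\hat F_N$, and then upgrading from the critical-point statement to the identification of the overlap law by a differentiation/concentration argument in the new parameter $\hat t$. First I would set up the analogue of the Hamilton--Jacobi structure for $\hat F_N$: the extra term in~\eqref{e.def.tdH} again has its variance exactly compensated, so $\hat F_N$ is non-negative, and the same Lipschitz and precompactness estimates (the $\hat t$-direction being handled just like the $t$-direction, since $|A|^2$ plays the role of $\xi$) guarantee that a subsequential limit $\hat f$ exists and is Lipschitz. The function $\hat f$ should satisfy the two-time Hamilton--Jacobi equation $\dr_t \hat f - \int_0^1 \xi(\dr_q \hat f) = 0$ and $\dr_{\hat t} \hat f - \int_0^1 |\dr_q \hat f|^2 = 0$ with initial condition $\psi$; more precisely I only need the consequences of the lower bound from \cite{mourrat2021nonconvex, mourrat2023free} together with the finite-$N$ approximate equation, exactly as in the proof of Theorem~\ref{t.main2}. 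The critical-point identity for $\hat{\mcl J}_{t,\hat t,q}$ then follows: Gateaux differentiability of $\hat f(t,\hat t,\cdot)$ at $q$ gives $p=\dr_q\hat f(t,\hat t,q)$, the finite-$N$ Gibbs computation identifies $p$ with $\dr_q\psi(q')$ for $q'=q+t\nabla\xi(p)+2\hat t p$ (this is where the differentiability of $\hat f(t,\cdot,q)$ in $\hat t$ enters, pinning down the $2\hat t p$ term via the characteristic), and \eqref{e.crit.p}--\eqref{e.crit.q'} for $\hat{\mcl J}$ hold by construction; membership of $(q',p)$ in $\mcl Q_\infty^2$ comes from the regularity inputs $q\in\mcl Q_\uparrow\cap L^\infty$ and boundedness of $\nabla\xi$ on the relevant bounded set, together with monotonicity of $\dr_q\psi$.

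The new content is the overlap statement. The idea is that the derivative of $\hat F_N$ in $\hat t$ is, up to Gaussian integration by parts, an explicit average of a function of the overlap: differentiating $-\tfrac1N\E\log\int\exp(\cdots)$ and applying the Gaussian integration-by-parts formula to $\hat H_N$ yields
\begin{equation*}
\dr_{\hat t}\hat F_N(t,\hat t,q)=\E\la |N^{-1}\sigma\sigma'^\intercal|^2 - |N^{-1}\sigma\sigma^\intercal|^2\ra + o(1),
\end{equation*}
and more generally the same manipulation applied with a small extra parameter coupling to $|N^{-1}\sigma\sigma'^\intercal|^2$ (i.e. a further enrichment, or a directional perturbation) expresses derivatives of $\hat F_N$ in terms of $\E\la |R|^2\ra$ where $R=N^{-1}\sigma\sigma'^\intercal$ is the overlap. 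The plan is then: (i) use the $\hat t$-enrichment together with the Ghirlanda--Guerra-type self-averaging that the perturbation is designed to produce — note that the perturbation in~\eqref{e.hatH_N} is of the standard form used to force the overlap to concentrate and the cascade structure to emerge — to show that $\E\la|R-R'|^2\ra\to 0$ and that all moments of $|R|$ concentrate; (ii) identify the limiting law of the scalar $|R|$ from the derivative $\dr_{\hat t}\hat f(t,\hat t,q)=\int_0^1|p|^2$, which by the concentration forces $|R|$ to converge in law to $|p(U)|$; (iii) bootstrap from the scalar statement to the full matrix statement by using additional perturbations that couple linearly to each entry of $R$ (so that each fixed linear functional $a\cdot R$ concentrates with limit $a\cdot p(U)$), which determines the law of $R$ on $\R^{D\times D}$ as that of $p(U)$. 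Step (iii) needs the matrix refinement of the Ghirlanda--Guerra identities in the vector-spin setting, which is available from \cite{pan.vec} and the related literature cited in the excerpt; here one leverages that along the characteristic, $p=\dr_q\psi(q')$ so the law of $p(U)$ is exactly the law of the overlap in the one-body model $\psi$, matching the cascade prescribed by $q'$.

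The main obstacle I expect is establishing the overlap concentration in the matrix-valued, non-convex setting and tying it cleanly to $p$. Two difficulties compound: first, because $\xi$ need not be convex, one cannot invoke the usual Guerra-interpolation/convexity machinery to control $\dr_q\hat F_N$ uniformly, so the only leverage on the overlap is through the $\hat t$ (and auxiliary) perturbations plus the a priori existence of the limit $\hat f$ and its differentiability; one must show that differentiability of $\hat f$ in $\hat t$ at a single point already implies $\var_{\E\la\cdot\ra}(|R|^2)\to0$, which is the standard trick of controlling a convex function's derivative by finite differences, and then propagate this to all coordinates. Second, one must verify that the limiting object prescribed by the characteristic, $p(U)$ with $U$ uniform, is consistent with the full matrix law and not merely its trace/norm: this requires knowing that $\dr_q\psi(q',u)$ recovers the entire overlap matrix distribution of the Poisson--Dirichlet cascade model with parameter $q'$, which is precisely the content of the vector-spin Parisi/ultrametricity theory but must be invoked with the correct normalization so that the perturbation-induced Ghirlanda--Guerra identities pin down the joint law. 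I would handle this by first proving the statement in law for every fixed linear functional of $R$ (using directional derivatives of a doubly-enriched free energy), and then concluding by a moment/Stone--Weierstrass argument on the compact set in which $R$ lives, using that $P_1$ has compact support.
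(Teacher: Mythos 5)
The critical-point half of your plan (adapting the proof of Theorem~\ref{t.main2} to the pair $(\hat t, |\cdot|^2)$ playing the role of $(t,\xi)$, so that $p=\dr_q\hat f(t,\hat t,q)$ and $p=\dr_q\psi(q')$ with $q'=q+t\nabla\xi(p)+2\hat t p$) is correct and is what the paper does. The gap is in the overlap identification, where your concentration claims are false as stated and would not close the argument. You assert at various points that $\E\la|R-R'|^2\ra\to0$, that "all moments of $|R|$ concentrate", that the variance of $|R|^2$ under $\E\la\cdot\ra$ tends to zero, and that each linear functional $a\cdot R$ "concentrates with limit $a\cdot p(U)$". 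Since the limit law is $p(U)$ with $U$ uniform and $p$ is in general a non-constant path (replica symmetry breaking), none of the quantities $|R|^2$, $a\cdot R$, or $R-R'$ concentrate to constants; a random variable cannot concentrate onto a non-degenerate limit. Consequently your step (ii) is also incomplete: matching the single moment $\E\la|R|^2\ra\to\int_0^1|p|^2$ does not determine the law of $|R|$, let alone of the matrix $R$, unless you already know around what $R$ concentrates conditionally.

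What actually closes the argument (Proposition~\ref{p.cvg_overlap} in the paper) is concentration of $R$ around its \emph{conditional expectation given the cascade overlap}, obtained from two exact finite-$N$ identities and a Pythagorean cancellation: the $q$-derivative gives $\E\la R\,|\,\alpha\wedge\alpha'\ra_N = p_N(\alpha\wedge\alpha')$ with $p_N=\dr_q\hat F_N(t,\hat t,q)$ (this is \eqref{e.cond_overlap=p_N()}, using the invariance of the cascade), the $\hat t$-derivative gives $\partial_{\hat t}\hat F_N=\E\la|R|^2\ra_N$ (your formula has a spurious self-overlap term; it cancels exactly against the compensator), and hence
\begin{equation*}
\E \la |R - p_N(\alpha\wedge\alpha')|^2 \ra_N \;=\; \partial_{\hat t}\hat F_N(t,\hat t,q) - \Ll|\dr_q\hat F_N(t,\hat t,q)\Rr|_{\cH}^2 ,
\end{equation*}
which tends to zero because the limit satisfies $\partial_{\hat t}\hat f=|\dr_q\hat f|^2_{\cH}$ (Proposition~\ref{p.2nd_fpe} applied in the $\hat t$ variable) and the derivatives converge at the point of differentiability. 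Since $p_N\to p$ in $L^2$ and $p_N(\alpha\wedge\alpha')$ has law $p_N(U)$, this yields the matrix statement directly, with no need for your step (iii). Your alternative route through Ghirlanda--Guerra perturbations and synchronization conflates the role of $\hat H_N$ (which only makes $\E\la|R|^2\ra$ accessible as a derivative) with that of the separate perturbation family $H^x_N$, and the latter only yields conclusions along specially chosen perturbed subsequences $(N_k,x_k)$ with $\Delta_{N_k}(x_k)\to0$ --- not for the Gibbs measure $\la\cdot\ra$ appearing in the statement of the theorem.
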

Notice that in Theorem~\ref{t.main3}, we do not require that the free energy converges as $N$ tends to infinity, as convergence along a subsequence suffices; and we recall that the Lipschitz continuity properties of $\hat F_N$ guarantee that we can always extract convergent subsequences. If we do make the assumption that $\hat F_N$ converges along the full sequence, then an application of Theorem~\ref{t.main2} yields a representation of the free energy itself as
\begin{equation*}  \lim_{N \to \infty} \hat F_N(t,\hat t,q) = \hat{\mcl J}_{t,\hat t,q}(q',p),
\end{equation*}
for the same critical point $(q',p)$ of $\hat{\mcl J}_{t,\hat t,q}$. To be clear, we mention that the statement that $(q',p)$ is a critical point of $\hat{\mcl J}_{t,\hat t,q}$ means that
\begin{equation*}  q = q'-t \nabla \xi(p) - 2\hat t  p \quad \text{ and } \quad p = \dr_q \psi(q').
\end{equation*}
As for Theorem~\ref{t.main2}, for each $t \ge 0$, almost every choice of $(\hat t, q) \in \R_+ \times (\mcl Q_\uparrow \cap L^\infty)$ is a point that satisfies the assumptions of Theorem~\ref{t.main3}. In particular, the set of such points is dense in $\R_+ \times \mcl Q_2$. 

In fact, we will prove a stronger version of Theorem~\ref{t.main3} in which we identify the limit law of the array of overlaps involving arbitrarily many independent copies of $\sigma$ under~$\la \cdot \ra$.

A natural question is whether results such as Theorem~\ref{t.main1} characterize the limit free energy $f$ uniquely. The next proposition answers this question positively at high temperature.
\begin{proposition}[Uniqueness of critical point at high temperature]
\label{p.unique.crit.high.temp}
There exists $t_c > 0$ such that for every $t \in [0,t_c)$ and $q \in \mcl Q_2$, the function $\mcl J_{t,q}$ has a unique critical point in~$\mcl Q_2^2$. 
\end{proposition}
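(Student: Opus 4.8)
The plan is to view the critical point equations as a fixed-point problem and show it is a contraction for small $t$. Combining~\eqref{e.crit.p} and~\eqref{e.crit.q'}, a critical point $(q',p) \in \mcl Q_2^2$ of $\mcl J_{t,q}$ is equivalent to a solution of the coupled system
\begin{equation*}
q' = q + t \nabla \xi(p), \qquad p = \dr_q \psi(q'),
\end{equation*}
which we compress into the single equation $p = \dr_q \psi\bigl(q + t \nabla \xi(p)\bigr) =: \Phi_{t,q}(p)$, with the understanding that $q' \in \mcl Q_2$ is recovered from $p$ by the first relation (one must check that $q + t\nabla\xi(p)$ indeed lands in $\mcl Q_2$, i.e.\ is increasing and $S^D_+$-valued; this uses that $p$ ranges in a bounded set and that $t$ is small, together with monotonicity properties of $\nabla\xi$ on the relevant region). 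First I would establish an a priori bound: any critical point $p$ satisfies $p = \dr_q\psi(q')$, and since $\psi$ is the $N=1$ enriched free energy, $\dr_q\psi$ takes values in a fixed compact set $K \subset S^D$ depending only on the support of $P_1$ (the Gibbs average of $\sigma\sigma'^\intercal$-type quantities is bounded by the diameter of $\supp P_1$). So it suffices to prove that $\Phi_{t,q}$ is a contraction on the ball of $L^2([0,1];S^D)$-functions valued in $K$.

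The core estimate is a Lipschitz bound for $p \mapsto \dr_q\psi(q + t\nabla\xi(p))$ in $L^2$. I would factor this as the composition of $p \mapsto q + t\nabla\xi(p)$, which is Lipschitz with constant $t\,\|\nabla^2\xi\|_{L^\infty(K')}$ for a slightly enlarged compact $K'$ (here $\xi \in C^\infty$ with absolutely convergent power series, so all derivatives are bounded on compacts), and the map $q' \mapsto \dr_q\psi(q')$, which I claim is Lipschitz on the relevant set. The latter is the key technical input: $\psi$ is a one-body functional — essentially $\psi(q') = \bar F_1(0,q')$ — built from a Poisson--Dirichlet cascade over a single spin in $\R^D$, and $\dr_q\psi(q',u)$ is (a version of) the conditional expectation of an overlap-type matrix under the associated cascade Gibbs measure at cascade-level $u$. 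Lipschitz continuity of $\dr_q\psi$ should follow from a second-derivative (covariance) bound for $\psi$: formally $\dr_q^2\psi$ is controlled by the variance of bounded spin observables, hence uniformly bounded on the set of $q'$ with values in a fixed compact set. I would either cite/adapt the smoothness estimates for the cascade free energy from \cite{HJbook} (and the works on $\bar F_N$-Lipschitzness referenced around Proposition~\ref{p.barF_N_Lip}) or prove the needed bound directly via Gaussian integration by parts. Granting a bound $\|\dr_q\psi(q_1') - \dr_q\psi(q_2')\|_{L^2} \le C_\psi \|q_1' - q_2'\|_{L^2}$ on functions valued in the compact set $q([0,1]) + tK''$, the composition has Lipschitz constant $C_\psi\, t\, \|\nabla^2\xi\|_{L^\infty(K')}$, which is $<1$ once $t < t_c := \bigl(C_\psi \|\nabla^2\xi\|_{L^\infty(K')}\bigr)^{-1}$; note this $t_c$ is uniform in $q \in \mcl Q_2$ because $K$, and hence $K'$, $K''$, depend only on $\supp P_1$ and $\xi$, not on $q$. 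Banach's fixed-point theorem on the (complete, convex, closed) set of $L^2$-functions valued in $K$ then yields existence and uniqueness of the critical point.

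The main obstacle I anticipate is making rigorous the Lipschitz continuity of the Gateaux derivative $\dr_q\psi$ on $\mcl Q_2$ — both the precise sense in which $\dr_q\psi$ is defined as an element of $L^2$ (the paper's Definition~\ref{d.gateaux} and the subtlety that differentiability holds only off a Gaussian null set in general, though here the one-body structure should give genuine smoothness) and the quantitative covariance estimate uniform over $q'$ with bounded range, including the behaviour near the endpoint $u \to 1^-$ where the cascade has infinitely many levels. A secondary, more bookkeeping-type obstacle is verifying the invariance condition that $q + t\nabla\xi(p) \in \mcl Q_2$ for $p$ in the iteration domain: one needs $u \mapsto q(u) + t\nabla\xi(p(u))$ to remain increasing in the $S^D_+$ order, which is not automatic for arbitrary $L^2$ functions $p$, so the fixed-point argument should really be run on the smaller invariant set $\{p \in \mcl Q_\infty : p \text{ valued in } K\}$ (or one restricts to monotone $p$ and checks $\Phi_{t,q}$ preserves monotonicity, using that $\dr_q\psi(q')$ is itself monotone when $q'$ is — a property of cascade free energies), after which one separately argues any $\mcl Q_2$ critical point must in fact lie in this set via the a priori bound.
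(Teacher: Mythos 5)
Your uniqueness argument is essentially the paper's: combining \eqref{e.crit.p} and \eqref{e.crit.q'} into $p = \dr_q\psi(q+t\nabla\xi(p))$, using the a priori bound $|p|_{L^\infty}\le 1$ from \eqref{e.bound.der.psi}, the $L^2$-Lipschitz continuity of $\dr_q\psi$ from \eqref{e.continuity.der.psi} (constant $16$), and the local Lipschitzness of $\nabla\xi$ on the unit ball to get $|p_1-p_2|_{L^2}\le 16Ct\,|p_1-p_2|_{L^2}$, forcing $p_1=p_2$ for $t<(16C)^{-1}$. The "main obstacle" you anticipate — quantitative Lipschitz control of $\dr_q\psi$ — is already settled in Corollary~\ref{c.psi_smooth} by exactly the Gaussian-integration-by-parts computation you sketch, so there is no gap there. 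Where you diverge is on existence: you propose to get it from Banach's fixed-point theorem, which forces you to exhibit a closed set invariant under $\Phi_{t,q}$ and in particular to verify that $q+t\nabla\xi(p)$ remains in $\mcl Q_2$ (monotone and $S^D_+$-valued) for $p$ in the iteration domain — a point you correctly flag but do not fully resolve (it does hold, via the representation \eqref{e.explicit.xi} of $\xi$ with positive semi-definite coefficient tensors, which makes $\nabla\xi$ order-preserving on $S^D_+$-valued increasing paths). The paper sidesteps this entirely by deducing existence from Theorem~\ref{t.main1}. Your route is self-contained and does not rely on the convergence hypothesis behind Theorem~\ref{t.main1}, at the price of the extra invariance verification; note only that for uniqueness alone the contraction estimate applied to two given critical points suffices, with no fixed-point machinery needed.
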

For $t> 0$ sufficiently small and $q = 0$, Theorem~\ref{t.main1} is shown in \cite{dey2021fluctuation}. Showing Theorem~\ref{t.main1} at high temperature for non-constant paths $q$ is not a straightforward adaptation of the argument in \cite{dey2021fluctuation} because the system is no longer replica-symmetric in this case, no matter how small $t$ is. 

The Hamilton--Jacobi functional may have multiple critical points in general. Interestingly, this fact does not completely rule out the possibility that Theorem~\ref{t.main1} might fully characterize the limit free energy. Indeed, one can exclude some spurious critical points by exploiting the continuity properties of the free energy, for instance by using the following simple observation.

\begin{proposition}[Relevant critical points must be stable]
\label{p.stability}
For each integer $n \ge 1$, let $(t_n,q_n) \in \R_+ \times \mcl Q_2$, and let $(q'_n,p_n) \in \mcl Q_2^2$ be a critical point of $\mcl J_{t_n,q_n}$ such that 
\begin{equation*}
\lim_{N \to \infty} \bar F_N(t_n,q_n) = \mcl J_{t_n,q_n}(q'_n,p_n).
\end{equation*}
Suppose that $(t_n,q_n)$ converges towards $(t,q) \in \R_+ \times \mcl Q_2$. Then the sequence $(q'_n,p_n)$ is precompact in $\mcl Q_2^2$. Moreover, any subsequential limit $(q',p) \in \mcl Q_2^2$ is a critical point of~$\mcl J_{t,q}$, and is such that
\begin{equation}
\label{e.stability.conclusion}
\lim_{N \to \infty} \bar F_N(t,q) = \mcl J_{t,q}(q',p).
\end{equation}
\end{proposition}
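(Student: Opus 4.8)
The plan is to extract a convergent subsequence of $(q'_n,p_n)$ by a Helly-type compactness argument for monotone bounded paths, and then to pass to the limit both in the two critical-point relations and in the free-energy identity, the latter relying on the $N$-uniform Lipschitz continuity of $\bar F_N$.

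\emph{Step 1: precompactness.} By \eqref{e.crit.q'} we have $p_n = \dr_q \psi(q'_n)$. Since $P_1$ is compactly supported, $\dr_q \psi$ takes values in a fixed bounded subset of $S^\D$ — quantitatively this is the same bound that underlies the $N$-uniform Lipschitz estimate of Proposition~\ref{p.barF_N_Lip} — so $\sup_n \|p_n\|_{L^\infty} < \infty$. As each $p_n \in \mcl Q_2$ is increasing and $S^\D_+$-valued, Helly's selection theorem yields a subsequence along which $p_n$ converges pointwise on $[0,1)$ to an increasing, $S^\D_+$-valued, bounded limit; passing to its right-continuous version gives $p \in \mcl Q_\infty$ with $p_n \to p$ pointwise almost everywhere, and by dominated convergence $p_n \to p$ in $L^2$. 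Along the same subsequence, $\nabla \xi(p_n) \to \nabla \xi(p)$ in $L^2$, because $\xi$ is smooth and the $p_n$ stay in a fixed compact set, so by \eqref{e.crit.p},
\[
q'_n = q_n + t_n \nabla \xi(p_n) \longrightarrow q + t\, \nabla \xi(p) =: q' \quad \text{in } L^2,
\]
using $q_n \to q$ in $L^2$ and $t_n \to t$. Since $\mcl Q_2$ is closed in $L^2$, $q' \in \mcl Q_2$; in fact $q' \in \mcl Q_\infty$. This proves precompactness of $(q'_n,p_n)$ in $\mcl Q_2^2$, and shows that every subsequential limit arises in this way.

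\emph{Step 2: the free-energy identity.} By Propositions~\ref{p.barF_N_Lip} and~\ref{p.precompact}, there is $L < \infty$ independent of $N$ with $|\bar F_N(t,q) - \bar F_N(t_n,q_n)| \le L\, \eps_n$, where $\eps_n := |t - t_n| + \|q - q_n\|_{L^1} \to 0$. Letting $N \to \infty$ and using the hypothesis $\lim_N \bar F_N(t_n,q_n) = \mcl J_{t_n,q_n}(q'_n,p_n)$,
\[
\mcl J_{t_n,q_n}(q'_n,p_n) - L \eps_n \,\le\, \liminf_{N\to\infty} \bar F_N(t,q) \,\le\, \limsup_{N\to\infty} \bar F_N(t,q) \,\le\, \mcl J_{t_n,q_n}(q'_n,p_n) + L \eps_n .
\]
Now send $n \to \infty$ along the subsequence of Step~1. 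The functional $\mcl J$ is jointly continuous along this convergence: $\psi$ is Lipschitz, $\la \cdot, \cdot \ra_\cH$ is continuous on $L^2 \times L^2$, and $p \mapsto \int_0^1 \xi(p)$ is continuous by dominated convergence since the $p_n$ are uniformly bounded. Hence $\mcl J_{t_n,q_n}(q'_n,p_n) \to \mcl J_{t,q}(q',p)$, and therefore $\lim_{N\to\infty} \bar F_N(t,q) = \mcl J_{t,q}(q',p)$, which is \eqref{e.stability.conclusion}. In particular the value $\mcl J_{t,q}(q',p)$ is the same for every subsequential limit, and the limit free energy at $(t,q)$ exists without any additional hypothesis.

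\emph{Step 3: criticality.} The relation \eqref{e.crit.p} for $(q',p)$ holds by the very definition $q' = q + t\nabla\xi(p)$. It remains to verify \eqref{e.crit.q'}, namely $p = \dr_q \psi(q')$, by passing to the limit in $p_n = \dr_q\psi(q'_n)$. This is the step I expect to be the main obstacle: it amounts exactly to the sequential closedness, in $L^2 \times L^2$, of the graph of the Gateaux-derivative operator $q'' \mapsto \dr_q \psi(q'')$ on bounded subsets of $\mcl Q_\infty$, which does \emph{not} follow formally from the Lipschitz continuity of $\psi$. It rests instead on the explicit ``one-body'' structure of $\psi = \bar F_1(0,\cdot)$ recorded in \eqref{e.def.psi}: through the associated Parisi-type recursion, $\dr_q \psi(q'')$ can be represented as the terminal value of a flow whose output depends continuously — indeed in a locally Lipschitz way — on the path $q''$ (cf.\ the regularity of the initial condition in \cite{HJbook}). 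Granting this continuity statement, $q'_n \to q'$ together with $\dr_q\psi(q'_n) = p_n \to p$ forces $\psi$ to be Gateaux differentiable at $q'$ with $\dr_q\psi(q') = p$; hence $(q',p) \in \mcl Q_\infty^2 \subset \mcl Q_2^2$ is a critical point of $\mcl J_{t,q}$, and \eqref{e.stability.conclusion} was already established in Step~2.
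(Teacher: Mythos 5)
Your proposal is correct and follows essentially the same route as the paper: precompactness of $(p_n)$ from the bound $\dr_q\psi(q'_n)\in\mcl Q\cap L^\infty_{\le 1}$ (\eqref{e.bound.der.psi}) together with monotonicity (your Helly argument is exactly what Lemma~\ref{l.compact.embed} packages), then continuity of each term of $\mcl J$ along the strong $L^2$ convergence, and the $N$-uniform Lipschitz bound of Proposition~\ref{p.barF_N_Lip} to transfer the free-energy identity from $(t_n,q_n)$ to $(t,q)$.

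The one substantive comment concerns Step~3. What you identify as ``the main obstacle'' — the sequential closedness of the graph of $q''\mapsto\dr_q\psi(q'')$ — is not an open issue requiring a Parisi-recursion or flow argument: Corollary~\ref{c.psi_smooth}, estimate \eqref{e.continuity.der.psi}, states that $q\mapsto\dr_q\psi(q)$ is $16$-Lipschitz from $L^2$ to $L^2$ (indeed from $L^r$ to $L^r$ for every $r$), so $q'_n\to q'$ in $L^2$ immediately gives $p_n=\dr_q\psi(q'_n)\to\dr_q\psi(q')$ in $L^2$, hence $p=\dr_q\psi(q')$. You are right that this does not follow from the Lipschitz continuity of $\psi$ itself, but the paper proves the stronger derivative estimate separately (via a second-derivative computation in the proof of Proposition~\ref{p.F_N_smooth}), and citing it closes the argument without any further input. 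Two harmless slips: $q'=q+t\nabla\xi(p)$ need not lie in $\mcl Q_\infty$ since $q$ is only assumed in $\mcl Q_2$ (this is immaterial, as $\mcl Q_2$ is all the statement requires and $\dr_q\psi$ is defined on all of $\mcl Q_2$); and the claim in Step~2 that the limit free energy at $(t,q)$ ``exists without additional hypothesis'' is fine but is exactly what the sandwich already delivers.
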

Proposition~\ref{p.stability} implies that among all the candidate critical points of $\mcl J_{t,q}$, we can exclude those that cannot be represented as limits of critical points at nearby locations. While we will not prove this here, we expect that some spurious critical points can indeed be excluded using this property. 

In order to avoid possible confusion, we mention that the notion of stability of critical points implied by Proposition~\ref{p.stability} is different from another possible notion of stability sometimes evoked by physicists that has its clearest meaning for convex models. Indeed, since Theorem~\ref{t.parisi.convex} gives a variational formula for the limit free energy in this case, we can deduce constraints on the Hessian of the Hamilton--Jacobi functional at the relevant critical point, at least at a formal level. From the arguments of \cite[Section~6]{mourrat2021nonconvex}, we do not expect that one can strengthen Theorem~\ref{t.main1} and prescribe the ``orientation'' of the Hessian of the Hamilton--Jacobi functional at a critical point $(q',p)$ in general. Our understanding is that physicists sometimes proceed by inspecting the orientation of the Hessian of the unique critical point of the high-temperature phase, and by then postulating that the same orientation must be valid at low temperature. While this might work in some cases, we would be surprised if this procedure was valid in general.

We now comment on our assumption of convergence of the free energy in Theorems~\ref{t.main1} and \ref{t.main2}. Even for certain scalar models (with $\xi$ that is not convex on $\R$), the only technique we are aware of for showing the convergence of the free energy to some limit consists in obtaining a full characterization of the limit; in other words, it consists in showing that Theorem~\ref{t.parisi.convex} holds. In view of this, we find it more promising to direct further effort towards a full identification of the limit for general models, rather than towards the discovery of an argument that would only guarantee convergence.

\subsection{Related works}
We have already mentioned a number of works that are related to the present paper. In particular, we recall that the limit free energy of the Sherrington--Kirkpatrick model was determined in \cite{gue03, Tpaper, Tbook1, Tbook2}. The proof for general scalar models ($D = 1$) was obtained in \cite{pan.aom, pan} by showing that the Ghirlanda--Guerra identities \cite{aizenman1998stability, ghirlanda1998general, guerra1996overlap} suffice to ensure the ultrametricity of the Gibbs measure. The version of Theorem~\ref{t.parisi.convex} with $\xi$ assumed to be convex over $\R^{D\times D}$ was obtained in \cite{barcon, pan.multi, pan.potts, pan.vec}. The main ingredient of the proof there is that by enforcing ultrametricity of the overlaps associated with a large family of linear combinations of the spins of different types, one can deduce that the overlaps corresponding to these different types synchronize with one another. These properties of ultrametricity and synchronization will also be used crucially in the present work. Related results for spherical models were obtained in \cite{chen2013aizenman, talagrand2006free} for scalar models, and in \cite{bates2022crisanti, bates2022free, ko2020free, panchenko2007overlap} for models with multiple types of spins.  

Connections between mean-field models of statistical mechanics and Hamilton--Jacobi equations have already been noted in \cite{bra83, new86}; we also refer to \cite{bauerschmidt2023stochastic} for a recent survey on related topics. In the context of spin glasses, these connections were first explored in \cite{abarra, barra2,barra1, guerra2001sum}, under a replica-symmetric or one-step replica symmetry breaking assumption. The fact that the Parisi formula can be seen as the Hopf--Lax formula of an infinite-dimensional Hamilton--Jacobi equation is from \cite{mourrat2022parisi, mourrat2020extending}; see also \cite{HJbook} for a pedagogical presentation. The rigorous justification of the fact that this Hopf--Lax formula is indeed the solution to the Hamilton--Jacobi equation is from \cite{chen2022hamilton}. An inequality between the limit free energy and the solution to the Hamilton--Jacobi equation is obtained in \cite{mourrat2021nonconvex, mourrat2023free} for general models. 

Other inequalities for the limit free energy of some non-convex models were obtained in \cite{alberici2020annealing, alberici2021deep}. We do not expect the bounds in \cite{alberici2020annealing, alberici2021deep} to be sharp, except possibly under particular symmetry conditions. For more on possible simplifications under symmetry conditions, we also refer to \cite{bates2023parisi, chen2023parisi, issa2024existence}. Some works also focus on the high-temperature regime, including \cite{barramulti, dey2021fluctuation, genovese2023minimax}.

For certain spherical models with multiple types and non-convex $\xi$ called ``pure models'', the limit free energy was identified explicitly in \cite{subag2021multi2, subag2021free, subag2021multi1, subag2021multi3}, assuming that this limit free energy exists; see also \cite{baik2020free} for the case of the bipartite spherical Sherrington--Kirkpatrick model. A more geometric analysis of the energy landscape is in \cite{benarous2022exponential, huang2023strong, kivimae2023ground, mckenna2021complexity}; see also  \cite{auffinger2013complexity, auffinger2013random, fyodorov2004complexity, subag2017complexity} for scalar models. 

In the low-temperature regime, the free energy is a good approximation for the maximum of the energy function $\sigma \mapsto H_N(\sigma)$ over the support of $P_N$ (see for instance \cite[Exercise~6.3]{HJbook}). We think that it would be interesting to work out what consequences our results entail for the asymptotic behavior of this maximum, in the limit of large system size. For some convex models, a Parisi-like variational formula capturing this was obtained in \cite{auffinger2017parisi, WeiKuo_lp_GG, TD_lp_GG}.  There has also been much recent progress on the problem of determining whether there exists a polynomial-time algorithm that identifies a configuration $\sigma$ in the support of $P_N$ such that $H_N(\sigma)$ is close to its maximal value. The answer turns out to depend on the specifics of the model, and relates to the overlap gap property~\cite{gamarnik2021overlap-paper}; see also \cite{gamarnik2021overlap-survey, gamarnik2022disordered} for surveys. In general, one can define a specific value~$\mathsf{ALG}$ such that there exists a polynomial-time algorithm that can identify a configuration $\sigma$ with $H_N(\sigma) \simeq N \mathsf{ALG}$ with high probability, but such that any algorithm within a broad class will demonstrably fail to improve upon this limit value $\mathsf{ALG}$, as shown first for scalar models in \cite{ams, huang2021tight, jekel2024potential, montanari2021optimization,  sellke2021optimizing, sub18}. Remarkably, these results have been successfully extended to general, possibly non-convex spherical models with multiple types in \cite{huang2023algorithmic, huang2023optimization}, even though the asymptotic behavior of the maximum of $H_N$ is currently not understood at this level of generality.

\subsection{Outline of the paper}
In Section~\ref{s.diff}, we explore the differentiability properties of Lipschitz functions on $\mcl Q_2$. In particular, we show that any such function is Gateaux-differentiable outside of a Gaussian null set. In Section~\ref{s.def.disc}, we define the free energy $\bar F_N(t,q)$ for piecewise-constant paths $q$ using discrete Poisson--Dirichlet cascades, and then extend it to any path $q \in \mcl Q_1$ by continuity. We also show that the sequence $(\bar F_N)_{N \in \N}$ is precompact, and that $\bar F_N$ is locally semi-concave uniformly over $N$. It is convenient to also give a direct definition of the free energy and its associated Gibbs measure for arbitrary paths $q \in \mcl Q_\infty$. For this purpose, we derive a number of preliminary results involving continuous Poisson-Dirichlet cascades in Section~\ref{s.cascades}, and then give this direct definition of the free energy in Section~\ref{s.def.cont}. In that section, we also study the differentiability properties of $\bar F_N$ and of any subsequential limit with respect to the path $q$. We turn to cavity calculations in Section~\ref{s.cavity}. Using the ultrametricity and synchronization results from \cite{pan.aom} and  \cite{pan.multi, pan.potts, pan.vec}, we extend the Aizenman--Simms--Starr scheme \cite{aizenman2003extended} and obtain precise representations of $\bar F_{N+1} - \bar F_N$ and of $\dr_q \bar F_N$. Information about $\dr_q \bar F_N$ gives us access to the asymptotic law of the overlap matrix. In Section~\ref{s.concl}, we combine these results to show Theorem~\ref{t.main2} and then Theorem~\ref{t.main1}, as well as some refinements involving overlap arrays. We also show that any subsequential limit $f$ of the free energy $\bar F_N$ must be such that, at every point $(t,q)$ of Gateaux differentiability of $f$,  we have
\begin{equation}  
\label{e.satisfy.eqn.intro}
\dr_t f(t,q) = \int_0^1 \xi(\dr_q f(t,q,u)) \, \d u.
\end{equation}
In particular, this shows that the function $f$ must satisfy the Hamilton--Jacobi equation~\eqref{e.hj} everywhere outside of a Gaussian null set. Finally, Section~\ref{s.convex} derives further consequences of our results in the special case when $\xi$ is convex over $S^D_+$. In particular, we prove Theorem~\ref{t.parisi.convex} there.

\subsection{Explicit representation of the Gaussian field \texorpdfstring{$H_N$}{H\_N}}
\label{ss.explicit.H}
Before closing this introduction, we clarify the class of functions $\xi$ such that \eqref{e.def.xi} can hold for some Gaussian process~$H_N$, and give an explicit construction of~$H_N$ in this case. By \cite[Proposition~6.6]{mourrat2023free}, the function~$\xi$ must take the form
\begin{equation}  
\label{e.explicit.xi}
\xi(a) = \sum_{p=1}^{+\infty}\mathsf{C}^{(p)}\cdot a^{\otimes p},
\end{equation}
where $\mathsf{C}^{(p)}\in \S^{\D^p}_+$ for each $p \ge 1$. In \eqref{e.explicit.xi}, the notation $a^{\otimes p}$ stands for the $p$-fold tensor product of the matrix $a \in \R^{D\times D}$, which we interpret as a $D^p$-by-$D^p$ matrix. We recall that we assume that the series in \eqref{e.explicit.xi} is absolutely convergent for every $a \in \R^{D\times D}$. The summation in \eqref{e.explicit.xi} starts at $p = 1$ since we assume that $\xi(0) = 0$. There is no loss of generality in this assumption, since the general case can be obtained by simply adding a fixed Gaussian random variable to $H_N$, and this changes neither the averaged free energy nor the Gibbs measure. 

To give an explicit construction of the Hamiltonian $H_N$, it suffices to consider the case when only one of the terms in the series in \eqref{e.explicit.xi} is non-zero, since the general case can then be obtained by adding independent Gaussian fields corresponding to each term in the series. We therefore fix $p \ge 1$, and let
$(J^{(\mathbf d)})_{\mathbf d \in \{1,\ldots D\}^p}$ be a centered Gaussian vector with covariance matrix $\msf C^{(p)}$. We let $((J^{(\mathbf d)}_{\mathbf i})_{\mathbf d \in \{1,\ldots, D\}^p})_{\mathbf{i} \in \{1,\ldots, N\}^p}$ denote $N^p$ independent copies of the $D^p$-dimensional random vector $(J^{(\mathbf d)})_{\mathbf d \in \{1,\ldots D\}^p}$. In particular, we have for every $\mathbf{d}, \mathbf{d}' \in \{1,\ldots, D\}^p$ and $\mathbf{i}, \mathbf{i}'\in \{1,\ldots, N\}^p$ that
\begin{equation*}  \E \Ll[ J_{\mathbf{i}}^{(\mathbf d)}  J_{\mathbf{i}'}^{(\mathbf d')} \Rr] = \1_{\{\mathbf{i} = \mathbf{i}'\}} \, \msf C^{(p)}_{\mathbf{d}, \mathbf{d}'} .
\end{equation*}
We then set, for every $\sigma = (\sigma_{d,i})_{1 \le d \le D, 1 \le i \le N} \in \R^{D\times N}$,
\begin{equation*}  H_N(\si) := N^{-\frac{p-1}{2}} \sum_{d_1, \ldots, d_p = 1}^D \ \sum_{i_1,\ldots, i_p = 1}^N J_{i_1, \ldots, i_p}^{(d_1, \ldots, d_p)} \si_{d_1,i_1} \cdots \, \si_{d_p,i_p}.
\end{equation*}
This Gaussian process satisfies \eqref{e.def.xi}  for the function $\xi$ given by
\begin{equation*}  
\xi(a) = \msf C^{(p)} \cdot a^{\otimes p}. 
\end{equation*}

\subsection{Assumption on the support of \texorpdfstring{$P_1$}{P\_1}}
All the results stated above are valid under the assumption that the reference measure $P_N = P_1^{\otimes N}$ is such that $P_1$ has compact support in $\R^D$. For convenience, we assume from now on that 
\begin{equation}
\label{e.bound.support}
\mbox{the support of $P_1$ is contained in the closed unit ball of $\R^D$.}
\end{equation}
Up to a rescaling of the variables, this entails no loss in generality.

\section{Differentiability of Lipschitz functions}\label{s.diff}

The main goal of this section is to show that a Lipschitz function on the space of paths $\C$ is differentiable ``almost everywhere''. We do not know whether a statement in this spirit is valid if the notion of differentiability is meant in the sense of Fréchet. We will show that it is indeed valid if the notion of differentiability is somewhat weakened, and with ``almost everywhere'' being understood in the sense that the set of points of non-differentiability is a Gaussian null set. 

Throughout this section, we let $E$ be a separable Banach space, and let $G$ be a subset of $E$. In our application, we will choose $E$ to be $L^2([0,1]; S^D)$ and $G = \C_2$. We denote by $|\cdot|_E$ the norm on the space $E$, by $E^*$ the (continuous) dual of $E$, and by $\la \cdot, \cdot \ra_E$ the canonical pairing of $(E^*,E)$. We start by giving precise definitions of the notions of differentiability we use. 
\begin{definition}[Fr\'echet differentiability]A function $g:G\to\R$ is \emph{Fr\'echet differentiable} at ${q} \in G$ if there is a unique $y^* \in E^*$ such that
\begin{align}
\label{e.def.frechet}
    \lim_{r\to 0}\sup_{\substack{{q'}\in G\setminus\{{q}\}\\ |{q'}-{q}|_E\leq r}}\frac{\Ll|g({q'}) - g({q}) - \la y^* , {q'}-{q}\ra_E\Rr|}{|{q'}-{q}|_E} =0.
\end{align}
In this case, we call $y^*$ the \emph{Fréchet derivative} of $g$ at $q$.
\end{definition}
The condition in \eqref{e.def.frechet} can be rewritten as
\begin{align*}
    g({q'}) - g({q}) = \la y^*, {q'}-{q} \ra_E + o\Ll(\Ll|{q'}-{q}\Rr|_E\Rr)
\end{align*}
for ${q'} \in G$ tending to ${q}$. The notion of Fréchet differentiability is often defined only when~$G$ is an open set, in which case if there exists $y^* \in E^*$ satisfying \eqref{e.def.frechet}, then it is necessarily unique. But we do not require that $G$ be open here.

For every $q\in G$, we define
\begin{align*}
\mathrm{Adm}(G,q) :=\Ll\{e\in E\ \big|\  \exists r>0 \, : \, \forall t \in [0,r],\ q+te\in G\Rr\}
\end{align*}
to be the set of admissible directions at $q$ along which a small line segment is contained in $G$.
\begin{definition}[Gateaux differentiability]
\label{d.gateaux}
    A function $g:G\to \R$ is \emph{Gateaux differentiable} at $q\in G$ if the following two conditions hold:
\begin{itemize}
    \item  for every $e\in \mathrm{Adm}(G,q)$, the following limit exists:
    \begin{equation}
    \label{e.def.gateaux.lim}
    g'(q,e) := \lim_{r\searrow 0} \frac{g(q+re)-g(q)}{r};
    \end{equation}
    
    \item there is a unique $y^*\in E^*$ such that  for every $e\in \mathrm{Adm}(G,q)$, we have $g'(q,e) = \la y^*,e\ra_E$.
\end{itemize}
In this case, we call $y^*$ the \emph{Gateaux derivative} of $g$ at $q$.
\end{definition}
We stress that our definition of Gateaux differentiability may differ from the  most classical one. For one thing, the notion is most commonly used only when $G$ is an open set. Most importantly, the notion of Gateaux differentiability is often understood in the sense that only the first of the two properties we listed is valid, while here we require in particular that the mapping $e \mapsto g'(q,e)$ be linear and continuous. We refer to \cite{nashed1966remarks} for a comparison of the different notions.  When $G$ is an open set, our definition coincides with those of \cite[Definition~4]{nashed1966remarks}, \cite[(d) page~528]{phelps1978gaussian}, and \cite[Definition~4.1]{benyamini2000geometric} (we impose $r > 0$ in~\eqref{e.def.gateaux.lim} while the limit is two-sided in \cite{benyamini2000geometric, phelps1978gaussian}, but these are equivalent since we also have that $g'(q,e) = -g'(q,-e)$). An example of a Lipschitz function that is nowhere Fréchet differentiable is the $\ell^1$ norm on the space of absolutely summable sequences; this function is Gateaux differentiable at every nowhere-vanishing sequence.

In the problem we are ultimately interested in, the function defined on $G$ admits a Lipschitz extension on the full space $E$. Under the assumption that the norm $|\cdot |_E$ is Fréchet differentiable away from the origin, it was shown in \cite{preiss1990differentiability} that a Lipschitz function on $E$ must be Fr\'echet differentiable on a dense set. However, what we really want is to identify a dense set of points of differentiability \emph{in $G$}. Since in our application, the set $G$ has empty interior, the existence of a dense set of points of differentiability in $E$ does not guarantee the existence of any such point in $G$. 

We will therefore rely instead on the fact, proved in \cite{aronszajn1976differentiability, phelps1978gaussian}, that a Lipschitz function on $E$ must be Gateaux differentiable outside of a Gaussian null set. As will be explained next, a Gaussian null set is sufficiently ``thin'' that in our application, we can deduce in particular the existence \emph{in $G$} of a dense set of points of Gateaux differentiability.

We now give a precise definition of the notion of a Gaussian null set, and present the relevant results from \cite{phelps1978gaussian}. We equip $E$ with the Borel $\sigma$-algebra generated by its norm topology.

\begin{definition}
    A probability measure $\mu$ on $E$ is said to be a \emph{non-degenerate Gaussian measure} with mean $a \in E$ if for every non-zero $e^*\in E^*$, the measure $\mu\circ (e^*)^{-1}$ is a Gaussian measure with mean $e^*(a)$ and non-zero variance.
\end{definition}
Although we will not make use of this fact, we mention in passing that when $E$ is a separable Banach space as we assume here, the mean of a Gaussian measure can always be represented as an element of $E$, by \cite[Theorem~3.2.3]{bogachev1998gaussian}. This is easier to see when $E$ a separable Hilbert space (e.g. \cite[Theorem~2.3.1]{bogachev1998gaussian}), or more generally when $E$ is reflexive. In our application the space $E$ will be a separable Hilbert space.
\begin{definition} 
\label{d.gaussian.null}
    A Borel subset $B$ of $E$ is said to be a \emph{Gaussian null set} if for every non-degenerate Gaussian measure $\mu$ on $E$, we have $\mu(B)=0$.
\end{definition}
The following lemma is a simple modification of \cite[Lemma~3]{phelps1978gaussian}; we provide a complete proof for the reader's convenience.

\begin{lemma}\label{l.not_Gaussian_null}
Let $(w_n)_{n\in\N}$ be a sequence in $E$ that has dense linear span and satisfies $\lim_{n\to\infty}|w_n|_E=0$, and let $K$ be the closed convex hull of $\{0\}\cup\{w_n:n\in\N\}$. For every $x\in E$, the set $x+K$ is not a Gaussian null set.
\end{lemma}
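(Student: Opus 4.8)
The plan is to argue by contradiction: suppose $x + K$ is a Gaussian null set, and then construct an explicit non-degenerate Gaussian measure $\mu$ on $E$ that charges $x + K$, which is the desired contradiction. Since translating a non-degenerate Gaussian measure by $-x$ again yields a non-degenerate Gaussian measure, it suffices to construct a non-degenerate Gaussian measure supported on (or at least giving positive mass to) $K$ itself; we will in fact produce one whose topological support is contained in $K$, so that $\mu(K) = 1 > 0$.

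The construction: let $(\varepsilon_n)_{n \in \N}$ be an i.i.d.\ sequence of standard Gaussian random variables on some probability space, and choose a sequence of positive reals $(c_n)_{n\in\N}$ with $\sum_n c_n = 1$ and decaying fast enough that $\sum_n c_n |\varepsilon_n| \, |w_n|_E < \infty$ almost surely (for instance $c_n \le 2^{-n}$ together with $|w_n|_E \to 0$ and the fact that $|\varepsilon_n|$ grows at most like $\sqrt{\log n}$ makes the series converge; a Borel--Cantelli argument gives this rigorously). Then define the random element $X := \sum_{n\in\N} c_n |\varepsilon_n| w_n \in E$, where the series converges absolutely in $E$ almost surely. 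Each partial sum $\sum_{n \le m} c_n |\varepsilon_n| w_n$ is a convex combination of $0$ and the $w_n$ (since the coefficients $c_n|\varepsilon_n| \ge 0$ and $\sum_n c_n |\varepsilon_n|$ can be made $\le 1$ after a further normalization, or one simply rescales the $c_n$), hence lies in $K$; since $K$ is closed and convex, the limit $X$ lies in $K$ as well. Let $\mu$ be the law of $X + x$ on $E$. The hard point is to show $\mu$ is a \emph{non-degenerate} Gaussian measure, i.e.\ that for every nonzero $e^* \in E^*$ the pushforward $\mu \circ (e^*)^{-1}$ is a genuine (non-Dirac) Gaussian on $\R$.

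Here lies the main obstacle, and it requires care: the individual summands involve $|\varepsilon_n|$, which is \emph{not} Gaussian, so $e^*(X)$ is a priori only a sum of folded-Gaussian contributions, not obviously Gaussian. The fix (which is the trick in Phelps's Lemma~3) is to symmetrize: replace the fixed construction by randomizing the signs. Concretely, reorganize the argument so that the relevant randomness is $(\eta_n)_{n}$ i.i.d.\ standard Gaussian and one works with $X = \sum_n c_n \eta_n w_n$ directly — but then $X$ need not lie in $K$. To reconcile the two requirements (membership in $K$ forces nonnegative coefficients; Gaussianity of $e^*(X)$ wants signed coefficients), one instead uses the standard device: pick $(\eta_n)$ i.i.d.\ standard Gaussian and set $X = \sum_n c_n \eta_n (w_n - \bar w)$ where... actually the clean route, and the one I would write up, is to use that $K$ contains $0$ and all $w_n$, hence contains the closed convex hull, and to note that a \emph{Gaussian} measure $\nu$ on $E$ with mean in $K$ whose covariance is "small enough" will satisfy $\nu(K) > 0$ by a second-moment/Markov estimate — we do not need $\nu$ supported in $K$, only $\nu(x+K) > 0$. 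So: let $w := \sum_n c_n w_n \in E$ (absolutely convergent since $\sum c_n |w_n|_E < \infty$), which lies in $K$; let $\mu$ be the Gaussian measure on $E$ that is the law of $x + w + \sum_n d_n \eta_n w_n$ for i.i.d.\ standard Gaussians $\eta_n$ and a sequence $d_n > 0$ with $\sum_n d_n |w_n|_E$ as small as we like. This $\mu$ is Gaussian because it is an $E$-valued limit of Gaussians; it is non-degenerate because for nonzero $e^* \in E^*$, density of the linear span of $(w_n)$ forces $e^*(w_m) \ne 0$ for some $m$, so $\mathrm{Var}(e^*(X)) \ge d_m^2 (e^*(w_m))^2 > 0$; and $\mu(x+K) \ge \P\big[\, \|\sum_n d_n \eta_n w_n\|_E \le \mathrm{dist}(x+w, (x+K)^c)\,\big]$, wait — one must instead observe $x + w$ is an interior-like point only if $K$ has nonempty interior, which it may not. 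So the honest final step keeps the nonnegative-coefficient construction $X = x + \sum_n c_n |\eta_n| w_n \in x+K$ almost surely and proves non-degeneracy by a direct computation: $e^*(X - x) = \sum_n c_n |\eta_n| e^*(w_n)$ has strictly positive variance whenever some $e^*(w_m) \ne 0$ (the terms are independent with positive variance $c_m^2 \mathrm{Var}(|\eta_m|)(e^*(w_m))^2$), and it is Gaussian — no, it is not Gaussian. At this point the correct and standard resolution is exactly Phelps's: use i.i.d.\ standard Gaussians $\eta_n$ and the symmetric construction $X = x + \sum_n c_n \eta_n w_n + \sum_n c_n |w_n|_E \, v$ for a fixed unit vector $v$... — rather than belabor this, I will simply follow \cite[Lemma~3]{phelps1978gaussian} verbatim in spirit: take $\mu$ to be the law of $x + \sum_{n} a_n \eta_n w_n$ with $(\eta_n)$ i.i.d.\ $N(0,1)$ and $a_n > 0$ chosen with $\sum_n a_n |w_n|_E < \infty$ and additionally $\sum_n a_n \le 1$; verify via Borel–Cantelli that the series converges in $E$ a.s.\ so $\mu$ is a well-defined Borel (Gaussian) measure; verify non-degeneracy exactly as above using density of $\mathrm{span}(w_n)$; and handle the containment in $x+K$ by replacing $\eta_n$ with $|\eta_n|$ and noting that the convergent nonnegative combination $\sum_n a_n|\eta_n| w_n$, after conditioning on the (a.s.\ finite) value of $S := \sum_n a_n |\eta_n|$, is a point of $S \cdot K \subseteq K$ when $S \le 1$, and the event $\{S \le 1\}$ has positive probability — giving $\mu(x+K) \ge \P[S \le 1] > 0$, while non-degeneracy of the conditioned Gaussian-type measure is preserved. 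The genuinely delicate bookkeeping — reconciling "nonnegative coefficients summing to $\le 1$" (needed for $K$-membership) with "enough independent Gaussian directions with positive variance" (needed for non-degeneracy) — is the crux, and I would import it directly from \cite{phelps1978gaussian}, filling in only the a.s.\ convergence estimate and the density argument, both of which are short.
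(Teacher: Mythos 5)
There is a genuine gap, and you in fact flag it yourself without closing it. Every construction you propose either fails to be Gaussian or fails to charge $x+K$: the law of $x+\sum_n c_n|\eta_n|w_n$ is not a Gaussian measure on $E$ (its one-dimensional marginals $e^*(X)$ are sums of independent folded Gaussians, not Gaussians), and conditioning on the event $\{S\le 1\}$ does not restore Gaussianity; the symmetric version $x+\sum_n c_n\eta_n w_n$ is Gaussian but does not land in $x+K$; and the ``small covariance around an interior point'' idea fails because $K$ may have empty interior. Your closing sentence defers the reconciliation of these two requirements to \cite[Lemma~3]{phelps1978gaussian}, but that reconciliation \emph{is} the content of the lemma, so the proof as written is incomplete.

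The missing device is an \emph{affine} change of variables that turns signed Gaussian coordinates into nonnegative convex-combination coefficients. Concretely, the paper's proof takes a non-degenerate Gaussian measure $\mu$ on $\ell^2$ and pushes it forward through the affine continuous map $T(y) = x + \sum_{n} 2^{-n}\tfrac{y_n+1}{2}\,w_n$. For $y$ in the unit ball $U$ of $\ell^2$ one has $\tfrac{y_n+1}{2}\in[0,1]$ and $\sum_n 2^{-n}\le 1$, so $T(U)\subset x+K$; since $T$ is affine and continuous, $\nu:=\mu\circ T^{-1}$ is a Gaussian measure on $E$, and it is non-degenerate because the dense linear span of $(w_n)$ forces the adjoint of the linear part of $T$ to be injective on $E^*$. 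The second ingredient you are missing is that $\mu(U)>0$: this is not automatic and is proved in the paper by showing that any non-degenerate Gaussian measure on $\ell^2$ has full topological support (via quasi-invariance under translations by the dense range of its covariance operator). With both ingredients, $\nu(x+K)\ge\mu(U)>0$, which is the desired contradiction. Neither the affine reparametrization nor the full-support step appears in your proposal, and both are essential.
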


\begin{proof}
We denote by $\ell^2$ the Hilbert space of square-summable sequences indexed by $\N$, and decompose the proof into two steps.

\medskip

\noindent \emph{Step 1.} Let $\mu$ be a non-degenerate Gaussian measure on $\ell^2$ with mean $a \in \ell^2$. In this step, we show that $\mu$ attributes a positive mass to every open set; or equivalently, that the support of $\mu$ is the full space $\ell^2$. By \cite[Theorem~2.3.1]{bogachev1998gaussian}, there exists a continuous self-adjoint operator $A : \ell^2 \to \ell^2$ such that for every $u \in \ell^2$, the measure $\mu \circ u^{-1}$ is Gaussian with mean $u \cdot a$ and variance $u \cdot A u$. In the notation $\mu \circ u^{-1}$, we interpret $u$ as the element of the dual of $\ell^2$ canonically associated with $u$, namely the mapping $x \mapsto u \cdot x$. Since $\mu$ is non-degenerate, the kernel of $A$ must be trivial. Since $A$ is self-adjoint, this implies that the image of $A$ is dense in $\ell^2$. We also observe that for every $h \in \ell^2$, the translation of $\mu$ by $Ah$ is equivalent to $\mu$, since its Radon--Nikodym derivative with respect to $\mu$ is the mapping
\begin{equation*}  x \mapsto \exp \Ll( (x-a) \cdot h - \frac 1 2 h \cdot A h  \Rr) .
\end{equation*}
Now suppose that there exists an open set $U \subset \ell^2$ such that $\mu(U) = 0$. We deduce that the translation of $U$ by any $Ah$, $h \in \ell^2$ also has zero measure. But since the image of $A$ is dense in $\ell^2$, we can find a sequence $(h_n)_{n \in \N}$ such that 
\begin{equation*}  \bigcup_{n \in \N} (U + A h_n) = \ell^2. 
\end{equation*}
We have thus reached the absurd conclusion that $\mu(\ell^2) = 0$; so there exists no open set~$U \subset \ell^2$ such that $\mu(U) = 0$. 

\medskip

\noindent \emph{Step 2.} 
We define the linear map $L:\ell^2\to E$ such that, for every $y = (y_n)_{n\in\N}\in \ell^2$,
\begin{align*}
    Ly := \sum_{n\in\N}2^{-n}y_n w_n.
\end{align*}
Let $U$ be the unit ball of $\ell^2$. Since the set $\{0\}\cup\{w_n:n\in\N\}\cup\{-w_n:n\in\N\}$ is compact in $E$, its closed convex hull $G$ is also compact (see for instance \cite[Theorem~5.35]{aliprantis2006infinite}).
Since $|y_n|\leq 1$ for every $y\in U$ and $n \in \N$, we have $LU \subset G$. Compactness of $G$ implies that $G$ is bounded and thus $L$ is continuous. We define $T:\ell^2\to E$ such that, for every $y \in \ell^2$,
\begin{equation*}  T(y) := x+\frac{1}{2}Ly + \sum_{n\in\N} 2^{-n-1}w_n =x+ \sum_{n\in\N} 2^{-n}\Ll(\frac{y_n+1}{2}\Rr)w_n.
\end{equation*}
Since $\frac{y_n+1}{2} \in [0,1]$ for every $y\in U$ and $n \in \N$, we have $T(U)\subset x+K$. We let $\mu$ be a non-degenerate Gaussian measure on $\ell^2$. By the result of the previous step, we have that $\mu$ attributes a positive mass to $U$. We denote $\nu := \mu \circ T^{-1}$. Since $T$ is affine and continuous, the measure $\nu$ is a Gaussian measure on $E$ (with a mean in $E$); and since $T(U) \subset x + K$, it attributes a positive mass to $x + K$. To see that it is non-degenerate, let $e^* \in E^*$ be non-zero. For every $y \in \ell^2$, we have
\begin{equation*}  \la e^*,T(y)\ra_E = \la z, y\ra_{\ell^2} + c ,
\end{equation*}
where
\begin{equation*}  z= \Ll(2^{-n-1}\la e^*,w_n\ra_E\Rr)_{n\in\N} \quad \text{ and } \quad c = \la e^*,x+\sum_{n\in\N}2^{-n-1}w_n\ra_E.
\end{equation*}
Recall that the linear span of $(w_n)_{n \in \N}$ is dense in $E$. If $z$ was zero, then by linearity and continuity of $e^*$, we would deduce that $e^*$ is zero as well, which is excluded. Since $\nu \circ (e^{*})^{-1}$ is a translation of the measure $\mu \circ z^{-1}$, and $\mu$ is non-degenerate, we conclude that $\nu$ is non-degenerate as well. To sum up, we have found a non-degenerate Gaussian measure $\nu$ such that $\nu(x+K) > 0$, so the set $x + K$ is not a Gaussian null set.
\end{proof}

The next theorem is a special case of \cite[Theorem~6]{phelps1978gaussian}. This result is itself based on \cite[Theorem~II.2.1]{aronszajn1976differentiability}, which shows a similar result using another notion of thin sets now called ``Aronszajn null sets'', and the main contribution of \cite{phelps1978gaussian} is to show that these sets are Gaussian null sets.  We also refer to \cite[Theorem~6.42 and Proposition~6.27]{benyamini2000geometric} for a recent monograph presentation of this result.

\begin{theorem}[Differentiability of Lipschitz functions \cite{aronszajn1976differentiability,phelps1978gaussian}]\label{t.gateaux_diff}

    If a function $g: E\to\R$ is  Lipschitz continuous, then it is Gateaux differentiable outside of a Gaussian null subset of $E$.
\end{theorem}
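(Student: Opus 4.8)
The plan is to reduce the statement to the two tools already assembled in the excerpt: Lemma~\ref{l.not_Gaussian_null}, which produces sets that are \emph{not} Gaussian null, and the notion of Aronszajn/Gaussian null sets from \cite{aronszajn1976differentiability, phelps1978gaussian}. Since the theorem is explicitly stated to be a special case of \cite[Theorem~6]{phelps1978gaussian}, the honest proof is essentially a citation together with a translation of conventions; the only genuine content to check is that our Definition~\ref{d.gateaux} of Gateaux differentiability (which demands, beyond existence of all directional derivatives, that the map $e \mapsto g'(q,e)$ be linear and continuous on $E$) agrees on the open set $E$ with the notion used in \cite{phelps1978gaussian}. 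This was already flagged in the discussion after Definition~\ref{d.gateaux}, so I would simply invoke that: on $E$ itself, $\mathrm{Adm}(E,q) = E$, and the cited theorem provides, outside a Gaussian null set, a \emph{bounded linear} functional $y^*$ with $g'(q,e) = \la y^*, e \ra_E$ for all $e$, which is exactly our notion.

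Concretely, I would structure the proof as follows. First, recall from \cite[Theorem~II.2.1]{aronszajn1976differentiability} that a Lipschitz $g : E \to \R$ on a separable Banach space is Gateaux differentiable (in the sense that all directional derivatives exist and assemble into a bounded linear functional) outside an Aronszajn null set. Second, invoke \cite[Theorem~6]{phelps1978gaussian} (or \cite[Theorem~6.42]{benyamini2000geometric}), which identifies Aronszajn null sets with Gaussian null sets in the sense of our Definition~\ref{d.gaussian.null}. Combining the two gives that the exceptional set is Gaussian null. Third, observe that the exceptional set is Borel: the set of points where a fixed directional derivative fails to exist, or where the collection of directional derivatives fails to be linear, is a countable union/intersection of closed or $F_\sigma$ conditions on the separable space $E$, so the notion of "Gaussian null" applies to it as required by Definition~\ref{d.gaussian.null}. (If one prefers to avoid any measurability subtlety, one notes that the cited theorems already deliver a Borel — indeed $\sigma$-porous-type — exceptional set, so no extra argument is needed.)

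The main obstacle, such as it is, is not a deep one: it is the bookkeeping of matching our slightly non-standard Definition~\ref{d.gateaux} to the definition in \cite{phelps1978gaussian}, and making sure the equivalence "Aronszajn null $\iff$ Gaussian null" is quoted in the direction we need (we need: Aronszajn null $\implies$ Gaussian null, which is precisely the contribution of \cite{phelps1978gaussian} emphasized in the excerpt). Everything else — separability of $E$, the fact that on an open set our two-sided/one-sided convention is harmless since $g'(q,e) = -g'(q,-e)$, and Borel measurability of the exceptional set — is routine and can be dispatched in a sentence each. I would therefore keep the proof to a short paragraph: state the chain of citations, note the convention match, and conclude. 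The heavier lifting in this section has already been done in Lemma~\ref{l.not_Gaussian_null}, whose role is downstream (showing that Gaussian null sets are thin enough to miss the set $G = \C_2$), not in the proof of this theorem itself.
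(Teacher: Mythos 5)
Your proposal is correct and takes essentially the same route as the paper, which gives no proof of this theorem at all but simply presents it as a special case of \cite[Theorem~6]{phelps1978gaussian}, itself built on \cite[Theorem~II.2.1]{aronszajn1976differentiability}, with the matching of the Gateaux-differentiability conventions already handled in the discussion following Definition~\ref{d.gateaux}. Your extra remarks on Borel measurability and on the direction of the ``Aronszajn null $\implies$ Gaussian null'' implication are accurate but not needed beyond the citation chain.
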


We now specialize this result to the sets $E$ and $G$ of interest to us, namely $E= L^2([0,1]; S^D)$ and $G = \mcl Q_2$. We recall that the $\mcl Q_2$ is the intersection of the set $\C$ defined in \eqref{e.def.mclQ} with $L^2([0,1];S^D)$. For every $p \in [1,\infty]$, we use the shorthand notation $L^p := L^p([0,1];S^D)$. We also recall that the set $\C_\uparrow$ is defined in the sentence containing~\eqref{e.def.Qup}. We observe that the set $\C_\uparrow\cap L^\infty$ is dense in $\C_2$. Indeed, denoting by $S^D_{++}$ the set of positive definite matrices, we have that 
\begin{equation*}  \Ll\{ u \mapsto cu\, \id + \sum_{k = 0}^K q_k \1_{ \Ll[ \frac{k}{K+1}, \frac{k+1}{K+1} \Rr) } \mid c > 0, \ q_0, \ldots, q_K \in S^D_{++} \Rr\} ,
\end{equation*}
is dense in $\mcl Q_2$ and is a subset of $\mcl Q_\uparrow \cap L^\infty$. 
\begin{proposition}\label{p.Gateaux_dff_dense}
If $g:\C_2\to\R$ is Lipschitz continuous, then there exists a Gaussian null set $\mathcal{N}$ of $\cH$ such that $g$ is Gateaux differentiable on $\C_2\setminus\mathcal{N}$. Moreover, $(\C_\uparrow\cap L^\infty)\setminus\mathcal{N}$ is dense in $\C_2$.

Similarly, if $g:\R_+\times\C_2\to\R$ is Lipschitz continuous (in its two variables), then there exists a Gaussian null set $\mathcal{N}$ of $\R\times\cH$ such that $g$ is Gateaux differentiable jointly in its two variables on $(\R_+\times\C_2)\setminus\mathcal{N}$. Moreover, $(\R_+\times(\C_\uparrow\cap L^\infty))\setminus\mathcal{N}$ is dense in $\R_+\times\C_2$.
\end{proposition}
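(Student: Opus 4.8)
The plan is to deduce Proposition~\ref{p.Gateaux_dff_dense} from Theorem~\ref{t.gateaux_diff} and Lemma~\ref{l.not_Gaussian_null} in two moves: first, extend the given Lipschitz function off $\C_2$ to the whole space, apply the abstract theorem to get a Gaussian null set $\mcl N$ of non-differentiability; second, use Lemma~\ref{l.not_Gaussian_null} to show that $\mcl N$ cannot swallow the dense set $\C_\uparrow \cap L^\infty$, by exhibiting around each point of a dense subset a small set of the form $x + K$ that is not Gaussian null and lies inside $\C_\uparrow \cap L^\infty$.

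For the first move, I would invoke the fact that a real-valued Lipschitz function on a subset $G$ of a metric space extends to a Lipschitz function on the whole space with the same constant (the McShane--Whitney extension: $\tilde g(x) = \inf_{y \in G}(g(y) + \mathrm{Lip}(g)\,|x-y|_E)$ works). Applying Theorem~\ref{t.gateaux_diff} to $\tilde g$ on $E = L^2$ yields a Gaussian null set $\mcl N \subset E$ outside of which $\tilde g$ is Gateaux differentiable. One must check that Gateaux differentiability of $\tilde g$ at a point $q \in \C_2$ (in the full-space sense) implies Gateaux differentiability of $g = \tilde g|_{\C_2}$ at $q$ in the sense of Definition~\ref{d.gateaux}: the directional limits along admissible directions $e \in \mathrm{Adm}(\C_2,q)$ agree because for small $r>0$ the segment $q + re$ stays in $\C_2$, so $g(q+re) = \tilde g(q+re)$; and the candidate derivative $y^* = \nabla \tilde g(q) \in E^* = L^2$ restricted to $\mathrm{Adm}(\C_2,q)$ is the unique linear continuous functional matching the directional derivatives, provided $\mathrm{Adm}(\C_2,q)$ spans a dense subspace of $L^2$ — which holds, e.g., because $\mcl Q_2 - \mcl Q_2$ contains all increasing step functions and these are dense. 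The joint-variable case is identical with $E = \R \times L^2$, noting that $\mathrm{Adm}(\R_+ \times \C_2,(t,q))$ is still spanning when $t > 0$, and handled by a limiting/reflection argument when $t = 0$ (or simply observe the $t$-direction derivative is one-sided but determined by $\partial_t$).

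For the second move — which is the real content — fix a point $\bar q$ in the explicit dense subset of $\C_\uparrow \cap L^\infty$ displayed just before the proposition, say $\bar q(u) = cu\,\id + \sum_{k=0}^K q_k \1_{[k/(K+1),(k+1)/(K+1))}$ with $c>0$ and $q_k \in S^D_{++}$. I want to produce a sequence $(w_n) \subset L^2$ with dense linear span and $|w_n|_E \to 0$ such that the closed convex hull $K_0$ of $\{0\} \cup \{w_n\}$ satisfies $\bar q + K_0 \subset \C_\uparrow \cap L^\infty$. The key point is that $\bar q$ is \emph{uniformly increasing with bounded ellipticity} with some margin $c$, so adding any sufficiently small (in sup norm) perturbation $w$ that is itself \emph{increasing and symmetric-matrix-valued with $w(0)=0$} keeps the sum in $\C_\uparrow \cap L^\infty$ with constant $c/2$, say. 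So I would take the $w_n$ to be a countable family of small increasing step functions valued in $S^D$ (scaled down so $|w_n|_{L^2} \to 0$ and the sup norms are summably small); their rational linear combinations are dense in $L^2([0,1];S^D)$, but note the convex hull only uses \emph{nonnegative} combinations — so to recover density I should include both "forward" increasing steps and arrange that the convex hull, suitably translated by $\bar q$, still covers a set not Gaussian null. The cleanest route is to mimic the map $T$ in the proof of Lemma~\ref{l.not_Gaussian_null}: choose increasing $S^D$-valued step functions $w_n$ with dense span and $\sum 2^{-n}\|w_n\|_\infty$ small, so that $T(y) = \bar q + \sum_n 2^{-n}\frac{y_n+1}{2} w_n$ maps the unit ball of $\ell^2$ into $\C_\uparrow \cap L^\infty$ (because each summand is increasing with $w_n(0)=0$, and the total perturbation is sup-norm small); then by Lemma~\ref{l.not_Gaussian_null} the set $\bar q + K_0 \supset T(U)$ is not Gaussian null, hence meets $\C_2 \setminus \mcl N$, and in fact $(\C_\uparrow \cap L^\infty) \setminus \mcl N \neq \emptyset$ in every neighborhood of $\bar q$. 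Since $\bar q$ ranges over a dense subset of $\C_2$ and the perturbation can be made arbitrarily small, $(\C_\uparrow \cap L^\infty) \setminus \mcl N$ is dense in $\C_2$.

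The main obstacle I anticipate is the bookkeeping in the second move: one must simultaneously (i) keep the perturbed path inside the \emph{increasing}, \emph{positive-definite-increment}, \emph{bounded-ellipticity} cone $\C_\uparrow$ — which forces the $w_n$ to be increasing step functions with $w_n(0)=0$ and forces their accumulated sup norm to be small relative to the margin $c$ of $\bar q$ — while (ii) still guaranteeing dense linear span in $L^2([0,1];S^D)$ from such one-sided building blocks, and (iii) matching the hypotheses of Lemma~\ref{l.not_Gaussian_null} exactly (the $w_n$ there are not required to be increasing, so including only increasing steps is fine for "dense span" as long as differences of increasing steps are dense — which they are, since indicator functions $\1_{[a,b)}\,M$ for $M \in S^D$ and $0 \le a < b \le 1$ are linear combinations of increasing steps and span a dense subspace). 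The joint $(t,q)$ statement carries no extra difficulty beyond also perturbing $t$ within $\R_+$, which is handled by the same construction with one additional coordinate direction (and, at $t=0$, only a one-sided direction is available, but the density statement is unaffected since we may approach any $(0,q)$ by points with $t>0$).
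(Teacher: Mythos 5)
Your overall architecture is the same as the paper's: extend $g$ to a Lipschitz function on all of $\cH$ (the paper uses the $1$-Lipschitz metric projection onto the closed convex set $\C_2$ rather than McShane--Whitney, but either works), apply Theorem~\ref{t.gateaux_diff}, note that differentiability of the extension at $q\in\C_2$ yields differentiability of $g$ in the sense of Definition~\ref{d.gateaux} because $\mathrm{Adm}(\C_2,q)\supset\C_2$ spans a dense subspace of $\cH$, and then use Lemma~\ref{l.not_Gaussian_null} with a small translate $x+\eps K$ contained in $\C_\uparrow\cap L^\infty$.

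There is, however, a genuine gap in the density step, precisely at the point you flag as the main obstacle. You claim that if $\bar q\in\C_\uparrow$ has margin $c$ and $w$ is increasing, $S^D$-valued, with $w(0)=0$ and sufficiently small sup norm, then $\bar q+w\in\C_\uparrow$, and accordingly you take the generators $w_n$ to be increasing \emph{step} functions controlled only in norm. This is false: the ellipticity constraint in \eqref{e.def.Qup} must hold on \emph{every} interval, including arbitrarily short ones straddling a jump of $w$. Concretely, take $\bar q(u)=u\,\id$ (so $c=1$) and $w=\delta\,\1_{[1/2,1)}\,J$ with $J\in S^D_+\setminus\{0\}$ singular (possible as soon as $D\ge2$) and $\delta>0$ arbitrarily small. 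For $u<1/2\le v$ with $h=v-u\to0$ one has $(\bar q+w)(v)-(\bar q+w)(u)=h\,\id+\delta J$, whose smallest eigenvalue is $h$ and whose largest is $h+\delta\lambda_{\max}(J)$, so the ellipticity ratio blows up and $\bar q+w\notin\C_\uparrow$ for every $\delta>0$. Hence your set $\bar q+K_0$ need not be contained in $\C_\uparrow\cap L^\infty$, which is exactly what the argument needs. The fix is to control the \emph{increments} of the generators relative to $v-u$ rather than their size: the paper takes the $w_n$ from $W=\{q\in\C:\ q(0)=0\ \text{and}\ q(v)-q(u)\le(v-u)\id\ \text{for all}\ u\le v\}$, i.e.\ increasing $1$-Lipschitz paths (which still have dense linear span), so that every $\kappa$ in the closed convex hull satisfies $0\le\kappa(v)-\kappa(u)\le(v-u)\id$, and then for $x\in\C_\uparrow$ with constant $c$ one gets
\begin{equation*}
\ellipt\bigl(x(v)-x(u)+\eps(\kappa(v)-\kappa(u))\bigr)\le\frac{\lambda_{\max}+\eps(v-u)}{\lambda_{\min}}\le c^{-1}+\frac{\eps}{c},
\end{equation*}
using $\lambda_{\min}(x(v)-x(u))\ge c(v-u)$. (One could alternatively keep step-function generators provided all their jumps are positive definite with uniformly bounded ellipticity, invoking $\ellipt(A+B)\le\max(\ellipt(A),\ellipt(B))$ for $A,B\in S^D_{++}$; but sup-norm smallness alone cannot work.) The remainder of your second move, and the first move, are sound.
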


\begin{proof}
We only prove the first part of the proposition, since the second part is obtained similarly. We decompose the proof into two steps. 

\medskip

\noindent \emph{Step 1}. We construct a Lipschitz extension $\bar g : L^2 \to \R$ to the Lipschitz function $g : \mcl Q_2 \to \R$. 

We denote by $P : L^2 \to \mcl Q_2$ the orthogonal projection to $\mcl Q_2$. By definition, for every $q \in L^2$, the point $P(q)$ is the unique minimizer in $\mcl Q_2$ of the mapping 
\begin{equation*}  \Ll\{
\begin{array}{rcl}  \mcl Q_2 & \to & \R \\
q' & \mapsto & |q-q'|_{L^2}^2.
\end{array}
\Rr.
\end{equation*}
We refer to \cite[Exercise~2.16 and solution]{HJbook} for a justification that the mapping $P$ is well-defined, based on the fact that $\mcl Q_2$ is a closed and convex subset of $L^2$. It is also shown there using the convexity of $\mcl Q_2$ that, for every $q \in L^2$ and $p \in \mcl Q_2$, we have
\begin{equation}  
\label{e.first.variation.P}
\la q - P(q) , p - P(q) \ra_\cH \le 0.
\end{equation}
We show that, for every $q,q' \in L^2$,
\begin{equation}  
\label{e.lipschitz.projection}
|P(q') - P(q)|_{L^2} \le|q'-q|_{L^2}.
\end{equation}
For every $q,q' \in L^2$, we use the identity \eqref{e.first.variation.P} with $p$ substituted with $P(q')$, and similarly with $q$ and $q'$ interchanged, to get that 
\begin{equation*}  \la q- P(q), P(q') - P(q) \ra_\cH \le 0 \le \la q'-P(q'), P(q') - P(q) \ra_\cH.
\end{equation*}
Rearranging and using the Cauchy--Schwarz inequality, this yields that 
\begin{equation*}  |P(q') - P(q)|_{L^2}^2 \le \la q'-q, P(q') - P(q) \ra_\cH \le |q'-q|_{L^2} \, |P(q') - P(q)|_{L^2}.
\end{equation*}
This shows \eqref{e.lipschitz.projection}. To construct the extension of $g$ to $L^2$, we set, for every $q \in \mcl Q_2$,
\begin{equation*}  \bar g(q) := g(P(q)). 
\end{equation*}
Since $P$ is the identity on $\mcl Q_2$, the function $\bar g$ is indeed an extension of the function $g$. In view of \eqref{e.lipschitz.projection} and of the fact that $g$ is Lipschitz, the function $\bar g$ is also Lipschitz, as desired. 

\medskip

\emph{Step 2.}
We let $\bar g : L^2 \to \R$ be a Lipschitz extension of $g$ as given by the previous step. By Theorem~\ref{t.gateaux_diff}, there exists a Gaussian null set $\mathcal{N}\subset \cH$ such that $\bar g$ is Gateaux differentiable on $\cH\setminus \mathcal{N}$. 
Letting $x \in \C_2 \setminus \mcl N$, we verify that $g$ is Gateaux differentiable at $x$. Let $y^* \in (L^2)^* \simeq L^2$ denote the Gateaux derivative of $\bar g$ at $x$. The limit $g'(x,e)$ in~\eqref{e.def.gateaux.lim} exists for every $e \in \mathrm{Adm}(\C_2,x)$, and it is equal to $\la y^*, e \ra_{L^2}$. One can check that the orthogonal of $\C_2$ in $L^2$ is trivial, so the closed linear span of $\C_2$ is $L^2$. Moreover, we clearly have $\C_2\subset \mathrm{Adm}(\C_2,x)$, so there can be only one $y^*$ such that $g'(x,e) = \la y^*, e \ra_{L^2}$ for every $e \in \mathrm{Adm}(\C_2,x)$. This shows that $g$ is Gateaux differentiable at $x \in \C_2\setminus\mathcal{N}$.

We next show that $(\C_\uparrow\cap L^\infty)\setminus\mathcal{N}$ must be dense in $\C_2$.
Notice that the set 
\begin{equation*}  W:=\{q\in \C:\: q(0)=0\ \text{and}\ \forall u\leq v\in[0,1),\ q(v)-q(u)\leq (v-u)\id \}
\end{equation*}
has dense linear span in $\cH$.
Let $(w_n)_{n\in\N}$ be a sequence in $W$ whose linear span is dense in $\cH$. We can rescale~$w_n$ to ensure that $|w_n|_\cH\leq 1$ and $\lim_{n\to\infty} |w_n|_\cH=0$. Let $K$ be as given in Lemma~\ref{l.not_Gaussian_null}. We fix any $x\in \C_\uparrow\cap L^\infty$ and any $\eps>0$. By Lemma~\ref{l.not_Gaussian_null}, we have that $x+\eps K$ is not a Gaussian null set, which implies that $(x+\eps K)\not\subset\mathcal{N}$. Hence, there is $y \in (x+\eps K)\setminus\mathcal{N}$ such that $g$ is Gateaux differentiable at $y$.

To complete the argument, we show that $x+\eps K \subset \C_\uparrow\cap L^\infty$. Let $\kappa\in K$ and let $c$ be the constant in~\eqref{e.def.Qup} for $x$. We write $x':=x+\eps \kappa$ and we clearly have $x'(0)=0$. Fix any $0\leq u\leq v\leq 1$. We have $x'(v)-x'(u)\geq x(v)-x(u) \geq c(v-u)\id$. Let $\lambda_{\max}$ and $\lambda_{\min}$ be the largest and the smallest eigenvalue of $x(v)-x(u)$, respectively. Note that $\lambda_{\min}\geq c(v-u)$ and that the eigenvalues of $\kappa(v)-\kappa(u)$ lie in $[0,v-u]$. So, we get
\begin{align*}
    \ellipt(x'(v)-x'(u)) \leq \frac{\lambda_{\max}+\eps(v-u)}{\lambda_{\min}+0} = \frac{\lambda_{\max}}{\lambda_{\min}} + \frac{\eps(v-u)}{\lambda_{\min}} \leq c^{-1}+\frac{\eps}{c}.
\end{align*}
So, we conclude that $x'\in \C_\uparrow$, and we clearly also have that $x'\in L^\infty$. Therefore, we have verified $x+\eps K \subset \C_\uparrow\cap L^\infty$.

Hence, $y\in (\C_\uparrow\cap L^\infty)\setminus\mathcal{N}$.
Since $|y-x|_\cH\leq \eps$ and $\ep > 0$ was arbitrary, we conclude that $(\C_\uparrow\cap L^\infty)\setminus\mathcal{N}$ is dense in $\C_\uparrow\cap L^\infty$ and thus also in $\C_2$. 
\end{proof}

\section{Definition and regularity properties of the free energy}
\label{s.def.disc}

In this section, we define the free energy $\bar F_N(t,q)$ for arbitrary choices of the path $q \in \mcl Q_1$. When the path $q$ is constant equal to $h \in S^D_+$, we have already displayed the definition of the free energy in \eqref{e.def.simple.F}. Here we will first extend this definition to piecewise-constant $q \in \mcl Q$. One can then verify that the free energy is $L^1$-Lipschitz continuous in~$q$, and this therefore defines the free energy $\bar F_N(t,q)$ for arbitrary choices of $q \in \mcl Q_1$ by density. 
Another consequence of this Lipschitz continuity property is that the sequence of functions $(\bar F_N)_{N \in \N}$ is precompact. We also show that the free energy is locally semi-concave. This property will be used later to assert the convergence of the derivatives of $\bar F_N$ at any point of differentiability of the limit.

We start by defining the free energy $\bar F_N(t,q)$ for piecewise-constant $q \in \mcl Q$. When $q$ is constant equal to $h \in S^D_+$, we had introduced in~\eqref{e.def.simple.F} the external random magnetic field $\sqrt{2h} z$ acting on the spin configuration $\sigma$, where $z$ takes values in $\R^{D\times N}$ and has independent standard Gaussian entries. We will here replace this random magnetic field by a richer object that, in the language of physics, has a finite number of replica-symmetry breaks. Precisely, for each choice of integer $K \in \N$ (the number of replica-symmetry breaks) and parameters $(q_k)_{0 \le k \le K} \subset S^D_+$ and $(\zeta_k)_{1 \le k \le K} \subset [0,1]$ satisfying the constraints
\begin{equation}
\label{e.def.q}
0= q_{-1} \le q_0 < q_1 < \cdots < q_K
\end{equation}
and
\begin{equation}
\label{e.def.zeta}
0 = \zeta_0 < \zeta_1 < \cdots < \zeta_K < \zeta_{K+1} = 1,
\end{equation}
we aim to make sense of $\bar F_N(t,q)$ where $q$ is given by
\begin{equation}  
\label{e.def.piece.const.q}
q = \sum_{k = 0}^K q_k \1_{ \Ll[ \zeta_{k}, \zeta_{k+1} \Rr) }.
\end{equation}
In \eqref{e.def.q} and throughout the paper, whenever $a,b \in S^D$, we write $a < b$ to mean that $b-a \in S^D_+$ and $a \neq b$. The construction involves the notion of Poisson--Dirichlet cascades, which we now introduce briefly. We let $\mcl A$ denote the infinite-ary tree of depth $K$, which we encode as
\begin{align}\label{e.tree}
\mcl {A} := \N^0 \cup \N^1\cup\cdots \cup \N^K,
\end{align}
with the understanding that $\N^0 =\{\emptyset\}$. We understand that $\emptyset$ is the root of the tree $\mcl A$, and that for each $k \in \{0,\ldots,K-1\}$ and $\alpha = (n_1,\ldots, n_k) \in \N^k$, the children of $\alpha$ are all the nodes
\begin{equation*}  \alpha n := (n_1, \ldots, n_k,n) \in \N^{k+1},
\end{equation*}
where $n$ ranges in $\N$. For every $\alpha = (n_1,\ldots,n_k) \in \mcl A$, we denote its depth by $|\alpha| := k$, and for every $\ell \le k$, we write
\begin{equation*}  
\alpha_{|\ell} := (n_1,\ldots,n_\ell) 
\end{equation*}
to denote the ancestor of $\alpha$ at depth $\ell$. For any two leaves $\alpha, \beta \in \N^K$, we write
\begin{equation}  
\label{e.def.wedge}
\alpha \wedge \beta := \max \Ll\{ k \le K \mid \alpha_{|k} = \beta_{|k} \Rr\} 
\end{equation}
to denote the depth of the most recent common ancestor to $\alpha$ and $\beta$. The \emph{Poisson--Dirichlet cascade} (also called Ruelle probability cascade after \cite{ruelle1987mathematical}) with parameters $(\zeta_k)_{1 \le k \le K}$  is a random probability measure on the leaves $\N^K$, whose weights we denote by $(v_\alpha)_{\alpha \in \N^K}$; in particular, $v_\alpha \ge 0$ and $\sum_{\al \in \N^K} v_\al = 1$. Briefly, this probability measure is constructed as follows (more details can be found in \cite[Section~2.3]{pan} or \cite[Section~5.6]{HJbook}). With each non-leaf vertex $\alpha$ at depth $k \le K-1$, we attach an independent Poisson point process on $\R_+$ with intensity measure $x^{-1-\zeta_{k+1}} \, \d x$. This Poisson point process can be ordered decreasingly, say $u_{\alpha 1} \ge u_{\alpha 2} \ge \cdots$, and we attribute the weight $u_{\alpha n}$ to the child vertex $\alpha n$. This attributes a weight to each vertex except the root. Finally, for each leaf vertex $\alpha \in \N^K$, we compute the product of all the weights attached to the nodes from the root to the leaf vertex $\alpha$, excluding the root, and we denote the result by $w_\alpha$, that is, $w_\alpha := \prod_{k = 1}^{K} u_{\alpha_{|k}}$. The family $(v_\alpha)_{\alpha \in \N^K}$ is then obtained by normalizing the family $(w_\alpha)_{\alpha \in \N^K}$ into a probability measure, that is, $v_\alpha := w_\alpha / \sum_\beta w_\beta$. We construct this object independently of $(H_N(\sigma))_{\sigma \in \R^{D\times N}}$.

In order to define the random magnetic field, we also give ourselves $(z_\beta)_{\beta\in \mathcal{A}}$ a collection of independent standard $\R^{D \times N}$-valued Gaussian vectors, independent of the Gaussian field $(H_N(\sigma))$ and the cascade~$(v_{\alpha})$. 
For every $\alpha \in \N^K$, we set
\begin{align}\label{e.Z(h_alpha)=}
    \msf w^q(\al) := \sum_{k = 0}^K(q_k - q_{k-1})^{1/2} z_{\alpha_{|k}}.
\end{align}
The random variable $\msf w^q(\alpha)$ takes values in $\R^{D\times N}$. 
For every $\al, \al' \in \N^K$, we have
\begin{align*}\E \msf w^q(\al)\msf w^q(\al')^\intercal = N q_{\al\wedge\al'}.
\end{align*}
Recalling the definition of the piecewise-constant path $q$ in \eqref{e.def.piece.const.q}, we define
\begin{equation}
\label{e.def.FN.piece.const}
\bar F_N(t,q) := -\frac 1 N \E \log \int \sum_{\alpha \in \N^K} \exp \Ll( H_N^{\mathrm{disc}}(t,q,\sigma,\alpha) \Rr) \, v_\alpha \, \d P_N(\sigma),
\end{equation}
where
\begin{equation*}
H_N^{\mathrm{disc}}(t,q,\sigma,\alpha) := \sqrt{2t} H_N(\sigma) -  t N \xi\Ll(\frac{\sigma\sigma^\intercal}{N}\Rr) 
+ \sqrt{2} \msf w^q \cdot \sigma - \sigma \cdot q_K  \sigma.
\end{equation*}
We denote by $\langle \cdot \rangle$ the associated Gibbs measure, and by $(\sigma,\alpha)$ the canonical random variable. That is, for every bounded measurable $f : \R^N \times \N^K \to \R$,
\begin{equation}
\label{e.def.disc.gibbs}
\la f(\sigma,\alpha) \ra := \frac{ \int \sum_{\alpha \in \N^K} f(\sigma, \alpha) \exp \big(H_N^{\mathrm{disc}}(t,q,\sigma,\alpha)\big) v_\alpha \, \d P_N(\sigma) } { \int \sum_{\alpha \in \N^K} \exp \big(H_N^{\mathrm{disc}}(t,q,\sigma,\alpha)\big) v_\alpha \, \d P_N(\sigma) }.
\end{equation}
We denote by $((\sigma^\ell, \alpha^\ell))_{\ell \in \N}$ independent copies of $(\sigma, \alpha)$ under $\la \cdot \ra$, also called replicas. We may also write $(\sigma',\alpha')$, $(\sigma'',\alpha'')$ for independent copies of $(\sigma,\alpha)$ in calculations in which only two or three such copies suffice. As is classical and will be explained in more details below (or see \cite[(6.60)]{HJbook}), one can show that for every $k \in \{0,\ldots, K\}$, 
\begin{equation}  
\label{e.first.cascade.identity}
\E \la \1_{\{\alpha \wedge \alpha'= k\}} \ra = \zeta_{k+1} - \zeta_k. 
\end{equation}
Using this, one can show the following Lipschitz continuity property of $\bar F_N$ as a function of the path $q$. 
\begin{proposition}[Lipschitz continuity of $\bar F_N$]
\label{p.barF_N_Lip}
For every $t,t'\ge 0 $ and every piecewise-constant ${q},{q'} \in \C$, we have
\begin{align}
\label{e.barF_N_Lip}
    \Ll|\bar F_N(t,{q}) - \bar F_N(t',{q'})\Rr|\leq \Ll|{q}-{q'}\Rr|_{L^1} + |t-t'| \, \sup_{|a| \le 1} |\xi(a)|.
\end{align}
As a consequence, the free energy $\bar F_N$ admits a unique extension to $\R_+ \times \C_1$ satisfying~\eqref{e.barF_N_Lip} for every $t,t' \ge 0$ and $q,q' \in \C_1$. 
\end{proposition}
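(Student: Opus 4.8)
The plan is to prove the Lipschitz bound \eqref{e.barF_N_Lip} by a standard Gaussian interpolation argument, treating the $t$-dependence and the $q$-dependence separately. For the $t$-dependence, I would fix $q$ and differentiate $\bar F_N(t,q)$ in $t$. Writing $H_N^{\mathrm{disc}}(t,q,\sigma,\alpha) = \sqrt{2t}\,H_N(\sigma) - tN\xi(\sigma\sigma^\intercal/N) + (\text{terms not involving }t)$, the derivative $\dr_t \bar F_N(t,q)$ produces the Gibbs average of $-\tfrac{1}{\sqrt{2t}}H_N(\sigma) + N\xi(\sigma\sigma^\intercal/N)$; after integrating by parts in the Gaussian field $H_N$ (using $\E[H_N(\sigma)H_N(\tau)] = N\xi(\sigma\tau^\intercal/N)$), the ``diagonal'' terms cancel against the compensator $tN\xi$ and what remains is $-N\,\E\la \xi(\sigma(\sigma')^\intercal/N)\ra$ (a two-replica term). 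Since the support of $P_1$ lies in the closed unit ball of $\R^D$ by \eqref{e.bound.support}, we have $|\sigma_d|\le \sqrt N$ and hence $|\sigma(\sigma')^\intercal/N| \le 1$ entrywise in operator/Frobenius sense, so $|\dr_t \bar F_N(t,q)| \le \sup_{|a|\le 1}|\xi(a)|$. Integrating in $t$ gives the $|t-t'|\sup_{|a|\le1}|\xi(a)|$ contribution.

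For the $q$-dependence, I would fix $t$ and interpolate between two piecewise-constant paths $q$ and $q'$. The cleanest route is to refine the partition so that $q$ and $q'$ are both written with respect to the same sequence $(\zeta_k)$ (and hence the same cascade weights $(v_\alpha)$), then perform a linear interpolation $q_s = (1-s)q + sq'$ and differentiate $\bar F_N(t,q_s)$ in $s$. The magnetic field $\sqrt{2}\,\msf w^{q_s}\cdot\sigma - \sigma\cdot q_{s,K}\sigma$ is a Gaussian field with covariance structure governed by $q_{s,\alpha\wedge\alpha'}$ plus the $-\sigma\cdot q_{s,K}\sigma$ compensator. Differentiating in $s$ and integrating by parts in the Gaussians $(z_\beta)$ — again the diagonal contribution is cancelled by the compensator — yields a two-replica expression of the form $-\E\la (q'_{\alpha\wedge\alpha'} - q_{\alpha\wedge\alpha'})\cdot \tfrac{\sigma(\sigma')^\intercal}{N}\ra$, which by the cascade identity \eqref{e.first.cascade.identity} (telescoping over the levels $k$ where $\alpha\wedge\alpha' = k$) and $|\sigma(\sigma')^\intercal/N|\le 1$ is bounded in absolute value by $\sum_k |q'_k - q_k|(\zeta_{k+1}-\zeta_k) = |q-q'|_{L^1}$. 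Integrating in $s$ over $[0,1]$ gives the $|q-q'|_{L^1}$ term, and combining with the $t$-estimate and the triangle inequality yields \eqref{e.barF_N_Lip}.

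Finally, the extension statement follows from abstract nonsense: piecewise-constant paths are dense in $\mcl Q_1$ for the $L^1$ norm, and a uniformly Lipschitz function on a dense subset of a metric space extends uniquely to a Lipschitz function on the closure with the same modulus; since $\R_+\times\mcl Q_1$ is itself a metric space with $\R_+\times\{\text{piecewise-constant paths}\}$ dense in it, the extension of $\bar F_N$ to $\R_+\times\mcl Q_1$ satisfying \eqref{e.barF_N_Lip} exists and is unique.

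The main obstacle I anticipate is the bookkeeping in the $q$-interpolation: one must first arrange $q$ and $q'$ on a common partition (which requires subdividing intervals and checking that the value $\bar F_N(t,q)$ is unchanged under such trivial refinements of the $(\zeta_k)$), and then carefully carry out the Gaussian integration by parts for the cascade-coupled field, making sure the ``self-overlap'' (diagonal) terms produced by the integration by parts exactly match the compensating term $\sigma\cdot q_{s,K}\sigma$ so that only the genuinely two-replica contribution survives. The cancellation is what makes the bound clean; getting the signs and the cascade identity \eqref{e.first.cascade.identity} to line up is the delicate part, though it is by now a well-established computation in this literature.
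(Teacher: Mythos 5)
Your proposal is correct and follows essentially the same route as the paper: the $t$-Lipschitz bound via Gaussian integration by parts is exactly the paper's computation \eqref{e.der.t}, and for the $q$-bound the paper likewise reduces the two paths to a common partition (noting, as you do, that repeated values $q_k=q_{k+1}$ leave $\bar F_N$ unchanged --- the paper calls this the main step) and then interpolates, using the cascade identity \eqref{e.first.cascade.identity} to evaluate $\E\la |q_{\al\wedge\al'}-q'_{\al\wedge\al'}|\ra$ as $|q-q'|_{L^1}$. One minor caveat: the bound $|\sigma\sigma'^\intercal/N|\le 1$ in Frobenius norm does not follow from the entrywise estimate $|\sigma_d|\le\sqrt N$ alone (that would only give a bound of order $D$); it requires the Cauchy--Schwarz computation $|\sigma\sigma'^\intercal|^2=\sigma^\intercal\sigma\cdot\sigma'^\intercal\sigma'\le N^2$ as in \eqref{e.bound.on.sisi'}.
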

In \eqref{e.barF_N_Lip} and throughout, for every $p \in [1,\infty]$, we use the notation
\begin{equation*}
|q-q'|_{L^p} := \Ll( \int_0^1 |q-q'|^p \Rr) ^{1/p},
\end{equation*}
with the usual understanding as an essential supremum when $p = \infty$. We also often use the shorthand $L^p := L^p([0,1]; S^D)$, and we recall that $\C_p := \C \cap L^p$. 
\begin{proof}[Proof of Proposition~\ref{p.barF_N_Lip}]
The Lipschitz continuity in $q$ is in \cite[Proposition~3.1]{mourrat2023free}; see also \cite[Proposition~6.3]{HJbook} for a proof when $D = 1$ using similar notation as here. Besides the verification of \eqref{e.first.cascade.identity}, the main step of the proof is to show that if we allow for repetitions in the sequence $(q_k)_{0 \le k \le K}$, replacing the strict inequalities ``$<$'' in \eqref{e.def.q} by ``$\le$'', and follow along with the exact same construction of $\bar F_N$, then we end up with the same quantity as if we had removed the repetitions in the sequence $(q_k)_{0 \le k \le K}$ (and thereby reduced the value of $K$ and removed some ``unused'' $\zeta_k$'s in the representation \eqref{e.def.piece.const.q}) to begin with. 

For the Lipschitz continuity in $t$, we use the Gaussian integration by parts in \cite[Theorem~4.6]{HJbook}, to write, for every $t \ge 0$ and piecewise-constant $q \in \mcl Q$,
\begin{equation}
\label{e.der.t}
\partial_t \bar F_N(t,q) = -\frac 1 N \E \la  \frac 1 {\sqrt{2t}} H_N(\sigma) - N \xi \Ll( \frac{\sigma \sigma^\intercal}{N} \Rr) \ra = \E \la \xi \Ll( \frac{\sigma \sigma'^\intercal}{N} \Rr) \ra . 
\end{equation}
By the assumption \eqref{e.bound.support} on the support of $P_1$ and the Cauchy--Schwarz inequality, we have for every~$\sigma$ in the support of $P_N$ that
\begin{equation}  
\label{e.bound.on.sigma}
|\sigma \sigma^\intercal|^2 = \sigma \sigma^\intercal \cdot \sigma \sigma^\intercal =  |\sigma^\intercal \sigma|^2 = \sum_{i,j = 1}^N \Ll( \sum_{d= 1}^D \sigma_{d,i} \sigma_{d,j} \Rr) ^2 \le N^2. 
\end{equation}
In particular, we have that
\begin{equation*}  \Ll| \partial_t \bar F_N(t,q) \Rr| \le \sup_{|a| \le 1} |\xi(a)|,
\end{equation*}
so the proof is complete. 
\end{proof}

We also recall the following observation justifying the identity in \eqref{e.def.psi}.
\begin{proposition}[Initial condition]
\label{p.init.cond}
For every $N \in \N$ and $q \in \mcl Q_1$, we have
\begin{equation}  
\label{e.init.cond}
\bar F_N(0,q) = \bar F_1(0,q).
\end{equation}
\end{proposition}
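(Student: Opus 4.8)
The plan is to prove the identity first for piecewise-constant paths $q \in \mcl Q$ and then invoke the Lipschitz continuity from Proposition~\ref{p.barF_N_Lip} to extend it to all of $\mcl Q_1$. So fix $q$ of the form~\eqref{e.def.piece.const.q} with parameters $(q_k)_{0\le k\le K}$ and $(\zeta_k)_{1\le k\le K}$ satisfying~\eqref{e.def.q}--\eqref{e.def.zeta}. At $t = 0$ the Gaussian field $H_N$ drops out of the definition~\eqref{e.def.FN.piece.const}, and the remaining Hamiltonian $\sqrt{2}\,\msf w^q\cdot\sigma - \sigma\cdot q_K\sigma$ is a ``one-body'' object: recalling from~\eqref{e.Z(h_alpha)=} that $\msf w^q(\alpha) = \sum_{k=0}^K (q_k - q_{k-1})^{1/2} z_{\alpha_{|k}}$ with $z_\beta \in \R^{D\times N}$, and that $z_\beta$ has i.i.d.\ standard Gaussian entries, everything factorizes over the $N$ spatial coordinates $i \in \{1,\dots,N\}$. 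Precisely, writing $\sigma = (\sigma^{(1)},\dots,\sigma^{(N)})$ with $\sigma^{(i)} \in \R^D$, and correspondingly decomposing $z_\beta = (z_\beta^{(1)},\dots,z_\beta^{(N)})$ and $P_N = P_1^{\otimes N}$, the integrand
$\sum_{\alpha} \exp(\sqrt{2}\,\msf w^q(\alpha)\cdot\sigma - \sigma\cdot q_K\sigma)\, v_\alpha$
can be rewritten, for each fixed realization of the cascade weights $(v_\alpha)$ and the Gaussians, as $\sum_\alpha v_\alpha \prod_{i=1}^N \exp(\sqrt 2\, \msf w^q(\alpha)^{(i)}\cdot \sigma^{(i)} - \sigma^{(i)}\cdot q_K \sigma^{(i)})$.

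The key structural input is the standard fact about Poisson--Dirichlet cascades that makes the free energy with cascade weights behave ``tensorially'' in $N$: if $(g_\alpha^{(1)})_\alpha, \dots, (g_\alpha^{(N)})_\alpha$ are i.i.d.\ copies (over $i$) of a family of random variables indexed by the leaves $\N^K$, each family having the same distributional structure relative to the tree, and each independent of the cascade, then
$\E \log \sum_{\alpha\in\N^K} v_\alpha \prod_{i=1}^N e^{g_\alpha^{(i)}} = N\, \E\log \sum_{\alpha\in\N^K} v_\alpha\, e^{g_\alpha^{(1)}}$.
This is precisely the invariance/recursion property of Ruelle cascades (see \cite[Section~2.3]{pan} or \cite[Section~5.6]{HJbook}); the increments $z_{\alpha_{|k}}^{(i)}$ for distinct $i$ are independent across $i$ and identically distributed, so each coordinate contributes an independent identically distributed ``layer'' and the cascade's functional equation lets one peel off the product coordinate by coordinate, yielding the factor $N$. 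Applying this with $g_\alpha^{(i)} = \sqrt 2\, \msf w^q(\alpha)^{(i)}\cdot\sigma^{(i)} - \sigma^{(i)}\cdot q_K\sigma^{(i)}$ integrated against $P_1(\d\sigma^{(i)})$ — one should first integrate out $\sigma^{(i)}$ inside, or equivalently treat $\int_{\R^D} e^{g_\alpha^{(i)}}\,\d P_1(\sigma^{(i)})$ as the relevant leaf-indexed family, which is legitimate since the $z_\beta^{(i)}$ and $\sigma^{(i)}$ are independent across $i$ — gives
$\E\log \int \sum_\alpha v_\alpha\, e^{\sqrt 2\,\msf w^q\cdot\sigma - \sigma\cdot q_K\sigma}\,\d P_N(\sigma) = N\, \E\log \int_{\R^D}\sum_\alpha v_\alpha\, e^{\sqrt 2\,\msf w^q(\alpha)^{(1)}\cdot\sigma^{(1)} - \sigma^{(1)}\cdot q_K\sigma^{(1)}}\,\d P_1(\sigma^{(1)})$.
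Dividing by $-N$, the right-hand side is exactly $\bar F_1(0,q)$ by its definition~\eqref{e.def.FN.piece.const} with $N=1$, proving~\eqref{e.init.cond} for piecewise-constant $q$.

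Finally, for general $q\in\mcl Q_1$, both sides of~\eqref{e.init.cond} are $L^1$-Lipschitz in $q$ by Proposition~\ref{p.barF_N_Lip} (for $\bar F_N$ with the stated constant, and trivially for $\bar F_1$), and piecewise-constant paths are dense in $\mcl Q_1$, so the identity passes to the limit. I expect the main obstacle to be the clean bookkeeping of the cascade tensorization step — making sure the independence across the spatial index $i$ is used correctly and that the cascade functional equation is applied in the right form — but this is entirely standard for Ruelle probability cascades and requires no new ideas; indeed the identity~\eqref{e.def.psi} is asserted in the introduction precisely because the added random-field term is a one-body potential with no interaction between distinct sites.
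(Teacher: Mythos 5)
Your proposal is correct and follows exactly the route the paper takes: the paper's proof reduces to piecewise-constant $q$ via the Lipschitz bound of Proposition~\ref{p.barF_N_Lip} and then invokes the classical tensorization of the Poisson--Dirichlet cascade free energy over the product measure $P_N = P_1^{\otimes N}$ (citing the first part of \cite[Lemma~6.4]{HJbook}), which is precisely the recursion argument you spell out. The only difference is that you write out the cascade recursion explicitly where the paper cites it, and your bookkeeping of the factorization over the spatial index $i$ is accurate.
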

\begin{proof}
The proof of this result is based on the fact that we chose $P_N$ to be a product measure, $P_N = P_1^{\otimes N}$. 
By the Lipschitz property in Proposition~\ref{p.barF_N_Lip}, it suffices to show the identity \eqref{e.init.cond} for piecewise-constant $q$. The proof is then classical, for $D = 1$ it can be found in the first part of \cite[Lemma~6.4]{HJbook} using similar notation as here, and the argument applies verbatim to arbitrary $D$. 
\end{proof}
Given two metric spaces $\mathcal{X}$ and $\mathcal{Y}$, we say that a sequence $(g_n)_{n\in\N}$ of functions $g_n:\mathcal{X}\to\mathcal{Y}$ converges \emph{locally uniformly} to some $g:\mathcal{X}\to\mathcal{Y}$, if $(g_n)_{n\in\N}$ converges to $g$ uniformly on every metric ball of $\mathcal{X}$.
By combining Propositions~\ref{p.barF_N_Lip} and \ref{p.init.cond}, we can verify that the sequence $\bar F_N$ is precompact. 
\begin{proposition}
[Precompactness of $\Ll(\bar F_N\Rr)_{N\in\N}$]
\label{p.precompact}
For every $p \in (1,+\infty]$, the sequence $(\bar F_N)_{N \in\N}$ is precompact for the topology of local uniform convergence in $\R_+ \times \C_p$. 
\end{proposition}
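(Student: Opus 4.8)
The plan is to deduce precompactness from the Arzelà–Ascoli theorem, applied on each ball of $\R_+ \times \C_p$. The two ingredients I would establish are: (i) a uniform (in $N$) equicontinuity estimate for the family $(\bar F_N)_{N \in \N}$ on bounded sets, and (ii) a uniform pointwise bound $0 \le \bar F_N(t,q) \le C(t,q)$, so that on each ball the family is uniformly bounded. Ingredient (ii) is immediate: non-negativity of $\bar F_N$ was noted right after \eqref{e.def.simple.F} via Jensen's inequality (and extends to piecewise-constant and then general $q$ by the extension in Proposition~\ref{p.barF_N_Lip}), and an upper bound follows by combining Proposition~\ref{p.barF_N_Lip} with Proposition~\ref{p.init.cond}: indeed $\bar F_N(t,q) \le \bar F_N(0,0) + |q|_{L^1} + t \sup_{|a|\le 1}|\xi(a)| = \psi(0) + |q|_{L^1} + t\sup_{|a|\le1}|\xi(a)|$, which is bounded on bounded subsets of $\R_+ \times \C_1$, hence on bounded subsets of $\R_+ \times \C_p$ for any $p \ge 1$. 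Ingredient (i) is exactly the content of the Lipschitz estimate \eqref{e.barF_N_Lip}, which holds with an $N$-independent constant.

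Next I would address the one genuinely non-trivial point, namely that $\R_+ \times \C_p$ with $p \in (1,\infty]$ is not itself a nice enough space to run Arzelà–Ascoli directly on balls — we need precompactness of the balls (or at least that the closure of a ball is compact) in the relevant topology, or else a diagonal-extraction argument over a countable dense set. The cleanest route: fix $R > 0$ and let $B_R := \{(t,q) \in \R_+ \times \C_1 : t \le R,\ |q|_{L^1} \le R\}$; on $B_R$ the functions $\bar F_N$ are uniformly bounded and uniformly $1$-Lipschitz for the $L^1$-type metric $d((t,q),(t',q')) = |t-t'| + |q-q'|_{L^1}$ (with Lipschitz constant $\max(1, \sup_{|a|\le1}|\xi(a)|)$). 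The space $\C_1$ is separable, so $B_R$ contains a countable dense subset; by a diagonal argument one extracts a subsequence along which $\bar F_N$ converges pointwise on this dense set, and uniform equicontinuity then upgrades this to uniform convergence on $B_R$, with the limit a Lipschitz function. Taking $R = 1, 2, 3, \ldots$ and a further diagonal extraction yields a subsequence converging locally uniformly on $\R_+ \times \C_1$, hence on $\R_+ \times \C_p$ since for $p > 1$ the $L^p$-metric dominates the $L^1$-metric and balls of $\C_p$ sit inside balls of $\C_1$ — so local uniform convergence on $\R_+\times\C_1$ restricts to local uniform convergence on $\R_+\times \C_p$.

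A subtle point worth checking is the role of $p > 1$ versus $p = 1$: the statement excludes $p = 1$, and the reason is presumably that one wants each ball of $\R_+\times\C_p$ to be totally bounded for the $L^1$-metric (or contained in a $d$-totally-bounded set), which fails for $p=1$ but is fine for $p>1$ because an $L^p$-bounded, monotone, $S^D_+$-valued family is, by a Helly-type / weak-compactness argument, precompact in $L^1$. Concretely: given an $L^p$-ball in $\C_p$, each coordinate of $q(\cdot)$ is a bounded monotone function on $[0,1]$, so Helly's selection theorem gives pointwise-a.e. convergent subsequences, and the $L^p$-bound with $p>1$ provides the uniform integrability needed to upgrade to $L^1$-convergence; thus balls of $\C_p$ are $d$-precompact, and one may even invoke Arzelà–Ascoli directly. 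I expect this compactness-of-$\C_p$-balls step (and correctly pinning down why $p>1$ is needed) to be the main obstacle; everything else is a routine combination of the already-established Lipschitz bound, non-negativity, and the initial-condition identity.
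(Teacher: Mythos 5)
Your approach is the same as the paper's: the proof there reduces, via the uniform Lipschitz bound of Proposition~\ref{p.barF_N_Lip} and the initial condition of Proposition~\ref{p.init.cond}, to showing that the intersection of any $L^p$ ball with $\mcl Q$ is compact in $L^1$ (this is Lemma~\ref{l.compact.embed}), and then applies Arzelà--Ascoli; your Helly-plus-uniform-integrability argument is exactly that lemma, and you correctly identify why $p>1$ is needed. Two small corrections to the step you flag as the crux. First, for $p<\infty$ the entries of $q$ are \emph{not} bounded on $[0,1)$; one only has $|q(u)|\le (1-u)^{-1}|q|_{L^1}$ from monotonicity, so the Helly/diagonal extraction must be run on compact subintervals of $[0,1)$, with the tail near $1$ controlled separately by the $L^p$ bound via Jensen (this is precisely how Lemma~\ref{l.compact.embed} proceeds). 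Second, the off-diagonal entries of $q$ are not monotone in $u$ --- only the diagonal entries are, since a PSD increment has nonnegative diagonal --- though they are of bounded variation dominated by the increase of the diagonal, so a Helly-type selection still applies; the paper avoids this issue by extracting pointwise convergence at rationals directly. Finally, note that uniform equicontinuity plus convergence on a dense subset of the $L^1$-ball $B_R$ yields only pointwise (not uniform) convergence on $B_R$, since $B_R$ is not totally bounded; this is harmless because the statement only concerns balls of $\C_p$ with $p>1$, which are totally bounded in the $L^1$ metric by the compact embedding.
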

The proof of this proposition relies on the following lemma, which states that for every $r  < p \in [1,+\infty]$, the embedding $\mcl Q_r \subset \mcl Q_p$ is compact.
\begin{lemma}[Compact $\mcl Q_p$ embeddings]
\label{l.compact.embed}
Let $r \in (1,+\infty]$, and let $\Ll(q^{(n)}\Rr)_{n \in \N}$ be a sequence of paths in $\mcl Q$ such that 
\begin{equation}  
\label{e.compact.embed}
\sup_{n \in \N} \Ll|q^{(n)}\Rr|_{L^r} < +\infty.
\end{equation}
There exist a subsequence $\Ll(q^{(n_k)}\Rr)_{k \in \N}$ from $\Ll(q^{(n)}\Rr)_{n \in \N}$ and a path $q \in \mcl Q_r$ such that for every $p \in [1,+\infty)$ with $p < r$, the subsequence $\Ll(q^{(n_k)}\Rr)_{k \in \N}$ converges almost everywhere and in $L^p$ to $q$. 
\end{lemma}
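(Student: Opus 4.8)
The plan is to exploit the monotonicity built into the definition of $\mcl Q$: each path $q^{(n)}: [0,1) \to S^D_+$ is increasing in the sense of the positive-semidefinite order, so that for any fixed unit vector $e \in \R^D$ the scalar function $u \mapsto e^\intercal q^{(n)}(u) e$ is nondecreasing and nonnegative. Since a monotone function has a well-defined set of jump points and is, in particular, of bounded variation on compact subintervals of $[0,1)$, the natural tool is a Helly-type selection theorem. First I would reduce to scalar monotone functions: picking a countable dense set of directions $e_m \in \R^D$ (or equivalently working with the matrix entries $q^{(n)}_{d,d'}$, using that each diagonal entry is monotone and each off-diagonal entry is a difference of monotone functions via $q_{d,d'} = \tfrac12\big((e_d+e_{d'})^\intercal q (e_d+e_{d'}) - e_d^\intercal q\, e_d - e_{d'}^\intercal q\, e_{d'}\big)$), so that coordinatewise control of the matrix entries is equivalent to control of finitely many monotone scalar functions. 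The bound \eqref{e.compact.embed} with $r > 1$ gives, via Hölder on $[0,1]$, a uniform $L^1$ bound $\sup_n |q^{(n)}|_{L^1} < \infty$, and combined with monotonicity this yields a uniform pointwise bound on $q^{(n)}(u)$ for every $u$ bounded away from $1$: indeed $e^\intercal q^{(n)}(u) e \le (1-u)^{-1}\int_u^1 e^\intercal q^{(n)}(v) e\, \d v \le (1-u)^{-1}|q^{(n)}|_{L^1}$.

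Next I would apply Helly's selection theorem (diagonalizing over an exhausting sequence of intervals $[0,1-1/j]$) to extract a subsequence $\big(q^{(n_k)}\big)$ converging pointwise on a dense subset $S \subset [0,1)$ — say the rationals together with the countably many jump points — to some limit that is monotone on $S$; then define $q(u)$ for all $u \in [0,1)$ as the right-continuous version, $q(u) := \lim_{v \searrow u,\, v \in S} \lim_k q^{(n_k)}(v)$. Monotonicity of each $q^{(n_k)}$ in the psd order passes to the limit, so $q \in \mcl Q$; right-continuity with left limits holds by construction; and the uniform pointwise bound above shows $\sup_{u \le 1-\eps}|q(u)| < \infty$. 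A standard monotone-convergence argument (Helly's second theorem / Dini-type reasoning for monotone functions) then upgrades pointwise convergence on a dense set to convergence at every continuity point of $q$, hence almost everywhere since a monotone function has at most countably many discontinuities. Finally, to get $q \in \mcl Q_r$ and $L^p$ convergence for $p < r$: Fatou's lemma applied to $|q^{(n_k)}|^r \to |q|^r$ a.e.\ gives $|q|_{L^r} \le \liminf_k |q^{(n_k)}|_{L^r} < \infty$, so $q \in \mcl Q_r$; and then a.e.\ convergence together with the uniform $L^r$ bound gives $L^p$ convergence for every $p < r$ by the Vitali convergence theorem, the uniform $L^r$ bound supplying uniform integrability of $\big(|q^{(n_k)}|^p\big)_k$.

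The main obstacle is bookkeeping near the endpoint $u = 1$: the paths need not be bounded uniformly all the way up to $1$ (only $L^r$-integrable), so all pointwise estimates and the Helly extraction must be carried out on $[0, 1-\eps]$ and then patched together by diagonalization, with the final $L^p$ convergence on all of $[0,1]$ recovered from a.e.\ convergence plus uniform integrability rather than from any uniform pointwise bound. A secondary technical point is ensuring the limit $q$ genuinely lands in $S^D_+$ and is increasing in the matrix sense — this follows because $S^D_+$ and the cone of psd-increasing pairs are closed under pointwise limits, but one should state it cleanly, ideally by testing against the countable dense family of vectors $e_m$ and invoking continuity of $x \mapsto e_m^\intercal x e_m$. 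The case $r = \infty$ is a degenerate special case: then each $q^{(n)}$ is bounded by a fixed constant on all of $[0,1)$ and the argument simplifies, with the final $L^p$ convergence ($p < \infty$) following from a.e.\ convergence and dominated convergence.
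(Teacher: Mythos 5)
Your proposal is correct and follows essentially the same route as the paper: a uniform pointwise bound $|q^{(n)}(u)|\le M/(1-u)$ from monotonicity plus the $L^r$ bound, a Helly-type diagonal extraction on a countable dense set followed by right-continuous regularization, Fatou to get $q\in\mcl Q_r$, and control of the contribution near $u=1$ to upgrade to $L^p$ convergence for $p<r$. The only cosmetic difference is that the paper handles the tail by an explicit Jensen estimate $\int_u^1|q^{(n)}|^p\le M^p(1-u)^{1-p/r}$ rather than invoking Vitali's theorem, but this is the same uniform-integrability input.
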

\begin{proof}
Let $M < +\infty$ denote the supremum in \eqref{e.compact.embed}. Since the path $q^{(n)}$ is increasing, we use Jensen's inequality to write, for every $u < 1$,
\begin{equation*}  \Ll|q^{(n)}(u)\Rr| \le \frac{1}{1-u} \int_u^1 \Ll|q^{(n)}(t)\Rr| \, \d t \le \frac 1 {1-u} \Ll|q^{(n)}\Rr|_{L^1} \le  \frac 1 {1-u} \Ll|q^{(n)}\Rr|_{L^r} \le \frac{M}{1-u}. 
\end{equation*}
By a diagonal extraction argument, we can therefore extract a subsequence from $(q^{(n)})_{n \in \N}$ that converges pointwise at every rational number in $[0,1)$;  for every $u \in [0,1) \cap \Q$, we denote by $q^{(\infty)}(u)$ this limit. For every $u \in [0,1)$, we set
\begin{equation*}  q(u) := \lim_{\substack{t \in \Q \\ t \searrow u}} q^{(\infty)}(u). 
\end{equation*}
Since $q^{(\infty)}$ is increasing, this limit is well-defined. By construction, the path $q$ belongs to $\mcl Q$, and whenever $u < 1$ is a point of continuity of $q$, we must have that $q^{(n)}(u)$ converges to $q(u)$ along the subsequence. In particular, Fatou's lemma ensures that~${q \in \mcl Q_r}$. Moreover, by the dominated convergence theorem, the convergence of $q^{(n)}$ to $q$ along the subsequence is valid in $L^p([0,u]; S^D)$ for every $u < 1$ and $p < +\infty$. So it suffices to verify that, for each $p < r$,
\begin{equation}
\label{e.neglect.last.piece}
\lim_{u \nearrow 1} \sup_{n \in \N} \int_u^1 \Ll|q^{(n)}(t)\Rr|^p \, \d t = 0.
\end{equation}
By Jensen's inequality, we have
\begin{equation*}  \Ll(\frac{1}{1-u} \int_u^1 \Ll|q^{(n)}(t)\Rr|^p \, \d t\Rr)^{1/p} \le \Ll(\frac{1}{1-u} \int_u^1 \Ll|q^{(n)}(t)\Rr|^r \, \d t\Rr)^{1/r} \le \frac{M}{(1-u)^{1/r}}.
\end{equation*}
The right-hand side is understood as $M$ when $r = +\infty$. The inequality above implies~\eqref{e.neglect.last.piece} and therefore completes the proof.  
\end{proof}

\begin{proof}[Proof of Proposition~\ref{p.precompact}]
We fix $p \in (1,+\infty]$. In view of Propositions~\ref{p.barF_N_Lip} and \ref{p.init.cond}, it suffices to show that the intersection of any $L^p$ ball with the set $\mcl Q$ is compact in $L^1$, by the Arzel\`a-Ascoli theorem.  This follows from Lemma~\ref{l.compact.embed}. 
\end{proof}
Proposition~\ref{p.precompact} implies that if there exists a subsequence along which $\bar F_N$ converges pointwise to some limit, then the convergence in fact holds locally uniformly in $\R_+ \times \mcl Q_p$ for every $p > 1$. 

Our next result records a monotonicity property of the mapping $q \mapsto \bar F_N(t,q)$. One way to think of this result is to say that the derivative $\partial_q \bar F_N(t,q,\cdot)$ always belongs to $\C_2$. Since we would like to use common language also concerning possible limits of $f$, which are not necessarily differentiable everywhere, we prefer to phrase it as a monotonicity property per se, using the notion of dual cones. We denote by $\C_2^*$ the cone dual to $\C_2$, which is defined by
\begin{align*}\C_2^* := \Ll\{\kappa \in\cH \ \mid \ \forall q \in \C_2, \ \la \kappa, q \ra_\cH\geq 0\Rr\}.
\end{align*}
The following result can be found in \cite[Lemma~3.4~(2)]{chen2022hamilton}. We also give a proof here for the reader's convenience.
\begin{lemma}
\label{l.charact.C2star}
We have 
\begin{equation}  
\label{e.charact.Cstar}
\C_2^* = \left\{\kappa\in \cH \ \mid \ \forall t \in [0,1), \ \int_t^1\kappa(s)\d s\in\S^\D_+\right\}.
\end{equation}
\end{lemma}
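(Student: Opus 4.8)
The plan is to prove the two inclusions defining the equality \eqref{e.charact.Cstar} separately, working with the concrete description of $\mcl Q_2$ from \eqref{e.def.mclQ}.

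For the inclusion ``$\supseteq$'', suppose $\kappa \in \cH$ satisfies $\int_t^1 \kappa(s)\,\d s \in S^D_+$ for every $t \in [0,1)$, and let $q \in \mcl Q_2$. The key observation is that an increasing right-continuous $S^D_+$-valued path admits a Stieltjes-type representation $q(u) = q(0) + \int_{[0,u]} \d q(s)$ where $\d q$ is an $S^D_+$-valued measure on $[0,1)$ (the ``derivative'' of $q$ in the sense of Lebesgue--Stieltjes, which exists because each scalar entry $u \mapsto e \cdot q(u) e$ is nondecreasing for every $e \in \R^D$). Then by Fubini's theorem,
\begin{equation*}
\la \kappa, q\ra_\cH = \int_0^1 \kappa(u) \cdot q(u)\,\d u = \Ll(\int_0^1 \kappa(u)\,\d u\Rr) \cdot q(0) + \int_{[0,1)} \Ll( \int_s^1 \kappa(u)\,\d u \Rr) \cdot \d q(s).
\end{equation*}
Both terms on the right are nonnegative: the first because $q(0) \in S^D_+$ and $\int_0^1 \kappa \in S^D_+$ (take $t \searrow 0$ in the hypothesis and use that $S^D_+$ is closed), and the second because it is an integral of the pairing $\big(\int_s^1 \kappa\big)\cdot \d q(s)$ of two $S^D_+$-valued objects, which is nonnegative since $a \cdot b = \tr(ab) \ge 0$ for $a, b \in S^D_+$. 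Hence $\kappa \in \mcl Q_2^*$.

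For the reverse inclusion ``$\subseteq$'', suppose $\kappa \in \mcl Q_2^*$ and fix $t \in [0,1)$ and a vector $e \in \R^D$; I want to show $e \cdot \big(\int_t^1 \kappa(s)\,\d s\big) e \ge 0$. The natural test path is $q_{t,e} := (ee^\intercal)\,\1_{[t,1)}$, which lies in $\mcl Q_2$ (it is right-continuous, and jumps up by the rank-one positive semi-definite matrix $ee^\intercal$ at time $t$). Applying the defining inequality of $\mcl Q_2^*$ to $q_{t,e}$ gives exactly $\int_t^1 \kappa(s)\,\d s \cdot ee^\intercal = e \cdot \big(\int_t^1 \kappa(s)\,\d s\big) e \ge 0$. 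Since $e$ is arbitrary, $\int_t^1 \kappa(s)\,\d s \in S^D_+$, and since $t$ is arbitrary this completes the proof.

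The only mildly delicate point — and the one I would treat most carefully — is the Stieltjes representation and the Fubini interchange in the first inclusion: one must know that an increasing $\mcl Q$-path induces a genuine $S^D_+$-valued Borel measure on $[0,1)$ and that the iterated integral manipulation is justified (finiteness coming from $q \in L^2$, hence $q$ bounded on $[0,1-\delta]$, together with $\kappa \in L^2$). An alternative that sidesteps the measure-theoretic bookkeeping is to first prove the identity for piecewise-constant $q \in \mcl Q$, for which $\la \kappa, q\ra_\cH$ is literally a finite sum $\sum_k (q_k - q_{k-1}) \cdot \int_{\zeta_k}^{1}\kappa$ (after an Abel summation), making nonnegativity transparent from the hypothesis; and then pass to general $q \in \mcl Q_2$ by approximating in $L^2$ by piecewise-constant paths in $\mcl Q$ and using continuity of $\la \kappa, \cdot\ra_\cH$. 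I would likely present this approximation route, as it keeps the argument elementary and self-contained.
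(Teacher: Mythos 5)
Your proof is correct and follows essentially the same route as the paper's: both rest on the integration-by-parts/Abel-summation identity expressing $\la \kappa, q \ra_\cH$ as $q(0)\cdot\int_0^1\kappa$ plus a pairing of the (nonnegative) increments of $q$ with the tail integrals $\int_s^1\kappa$, combined with the self-duality of $S^D_+$, and both pass to general $q\in\C_2$ by density. The one genuine simplification is in the converse inclusion, where your rank-one indicator test paths $ee^\intercal\,\1_{[t,1)}$ yield $\int_t^1\kappa\in S^D_+$ directly for each fixed $t$, whereas the paper tests against smooth paths with $S^D_+$-valued derivative and then needs the continuity of $t\mapsto\int_t^1\kappa$ to conclude.
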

\begin{proof}
We start by recalling, e.g.\ from \cite[Lemma~2.2]{mourrat2020hamilton}, that for every $a \in S^D$, we have the equivalence
\begin{equation}
\label{e.charact.SD+}
a \in S^D_+ \quad \iff \quad \forall b \in S^D_+, \ a \cdot b \ge 0. 
\end{equation}
(In other words, the cone $S^D_+$ is self-dual.)
We denote by $K$ the set on the right side of~\eqref{e.charact.Cstar}. Let $\kappa \in K$ and $q \in \C_2$ be a continuously differentiable path, whose derivative we denote by $\dot q$. Integrating by parts, we see that
\begin{align}  
\label{e.calc.charact.C2star}
\la \kappa,q \ra_\cH 
& = \int_0^1 \kappa(u) \cdot q(u) \,\d u 
\\
\notag
& = q(0) \cdot \int_0^1 \kappa(u) \, \d u + \int_0^1 \kappa(u) \cdot \int_0^u \dot q(t) \, \d t \, \d u
\\
\notag
& = q(0) \cdot \int_0^1 \kappa(u) \, \d u + \int_0^1 \dot q(t) \cdot \int_t^1 \kappa(u) \, \d u \, \d t.
\end{align}
Using that $q$ is increasing and \eqref{e.charact.SD+}, we see that $\dot q$ must take values in $S^D_+$. Since we also have $q(0) \in S^D_+$, we conclude that $\la \kappa,q \ra_\cH \ge 0$. Arguing by approximation, this result can be extended to every $q \in \C_2$. We have thus shown that $K \subset \C_2^*$. 

Conversely, let $\kappa \in \C_2^*$. For every continuous path $\dot q \in C([0,1];S^D_+)$, the path defined for every $u \in [0,1]$ by
\begin{equation*}  q(u) := \int_0^u \dot q(t) \, \d t
\end{equation*}
belongs to $\C_2$, and the notation $\dot q$ is consistent. We conclude from the calculation in~\eqref{e.calc.charact.C2star} that
\begin{equation*}  \int_0^1 \dot q(t) \cdot \int_t^1 \kappa(u) \, \d u \, \d t \ge 0.
\end{equation*}
Using that this property holds for every choice of $\dot q \in C([0,1];S^D_+)$, that the mapping $t \mapsto \int_t^1 \kappa(u) \, \d u$ is continuous, and the characterization in \eqref{e.charact.SD+}, we conclude that $\kappa \in K$, thereby finishing the proof.
\end{proof}
For every subset $G$ of $\cH$ and function $g:G\to\R$, we say that $g$ is  \emph{$\C_2^*$-increasing} if for every $q, q' \in  G$, we have
\begin{align}\label{e.C_2^*-nondec}
 {q}-{q'}\in \C^*_2\quad \implies \quad g({q}) \geq g({q'}).
\end{align}
The following result is a rephrasing of \cite[Proposition~3.8]{mourrat2023free}. 
\begin{proposition}
\label{p.monotone}
For every $t\geq 0$ and $N\in\N$, the mapping $\bar F_N(t,\cdot)$ is $\C_2^*$-increasing. 
\end{proposition}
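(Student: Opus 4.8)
**Proof plan for Proposition~\ref{p.monotone} ($\C_2^*$-monotonicity of $\bar F_N(t,\cdot)$).**

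The plan is to reduce, via the Lipschitz continuity in Proposition~\ref{p.barF_N_Lip} and a density argument, to the case where $q$ and $q'$ are piecewise-constant paths of the form \eqref{e.def.piece.const.q} sharing a common refinement of their jump points, so that one can work with a single cascade of depth $K$ with parameters $(\zeta_k)$ and two families $(q_k)$ and $(q'_k)$; then the condition $q-q' \in \C_2^*$ translates, via Lemma~\ref{l.charact.C2star}, into the statement that the ``tail sums'' of the increments are positive semi-definite, which one should reformulate as: for every $k$, $\sum_{j \ge k}(q_j - q_{j-1})(\zeta_{j+1}-\zeta_j) \ge \sum_{j \ge k}(q'_j - q'_{j-1})(\zeta_{j+1}-\zeta_j)$ in $S^D_+$ (equivalently, after an Abel summation, the analogous statement about the $q_k$ themselves weighted against the cascade structure). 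The core analytic step is then an interpolation: for $s \in [0,1]$ set $q^s_k := (1-s) q'_k + s q_k$ and $q^s$ the corresponding piecewise-constant path, and differentiate $s \mapsto \bar F_N(t, q^s)$, showing the derivative has a sign.

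First I would compute $\partial_s \bar F_N(t,q^s)$ by Gaussian integration by parts, exactly as in \eqref{e.der.t} but now differentiating in the magnetic-field parameters rather than in $t$. Differentiating the random field $\sqrt 2\, \msf w^{q^s}\cdot\sigma - \sigma\cdot q^s_K\sigma$ in $s$, the term $\sqrt 2\,\msf w^{q^s}$ contributes $\sum_k (q_k - q'_k - q_{k-1}+q'_{k-1})^{1/2}\!\cdot(\dots)$-type increments through $\partial_s \msf w^{q^s}(\alpha)$, and after Gaussian IBP one gets a quadratic form in overlaps: the derivative should come out to a sum over $k$ of $\big((q_k-q_{k-1}) - (q'_k - q'_{k-1})\big)\cdot$ (a matrix built from $\E\la \cdot\ra$-averages of overlaps $R_{\ell,\ell'} := N^{-1}\sigma^\ell (\sigma^{\ell'})^\intercal$ restricted to the events $\{\alpha^\ell \wedge \alpha^{\ell'} = k\}$ or $\{\alpha^\ell\wedge\alpha^{\ell'}\ge k\}$). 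The key algebraic fact one needs is that, because of the cascade structure (the Ghirlanda--Guerra / cascade identities, in particular \eqref{e.first.cascade.identity} and its consequences on the law of $\alpha\wedge\alpha'$), these overlap averages assemble into matrices lying in $S^D_+$ and, after an Abel summation matching the ``tail-sum'' reformulation of $q - q' \in \C_2^*$ above, pair non-negatively with the increments of $q - q'$; since $S^D_+$ is self-dual \eqref{e.charact.SD+}, the sign of $\partial_s \bar F_N(t,q^s)$ is determined. Integrating in $s$ from $0$ to $1$ then gives $\bar F_N(t,q) \ge \bar F_N(t,q')$.

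I expect the main obstacle to be the bookkeeping in the Gaussian integration-by-parts step and, above all, verifying that the resulting overlap matrices are positive semi-definite and pair the right way against $q-q'$: this is where the cascade structure must be used essentially (it is false for a generic magnetic field), and where one must be careful about the distinction between the events $\{\alpha\wedge\alpha' = k\}$ and $\{\alpha\wedge\alpha'\ge k\}$ so that the Abel summation lines up exactly with the tail-sum characterization of $\C_2^*$ from Lemma~\ref{l.charact.C2star}. Since this is precisely the computation carried out in \cite[Proposition~3.8]{mourrat2023free}, I would organize the proof so as to isolate that sign computation as the one substantive lemma, and otherwise lean on Propositions~\ref{p.barF_N_Lip}–\ref{p.init.cond} for the reduction to piecewise-constant paths and on Lemma~\ref{l.charact.C2star} for the translation of the hypothesis $q - q' \in \C_2^*$ into a usable form.
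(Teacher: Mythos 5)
The paper does not actually prove this proposition: it states that it is ``a rephrasing of \cite[Proposition~3.8]{mourrat2023free}'' and delegates the entire argument there, so there is no in-paper proof to match. Your scaffolding (reduction to piecewise-constant paths via Proposition~\ref{p.barF_N_Lip}, interpolation in the cascade parameters, Gaussian integration by parts, Abel summation, self-duality of $S^D_+$, essential use of the cascade structure) is indeed the skeleton of that external proof, and deferring the sign computation to the citation is exactly what the authors do.

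However, the sign computation as you describe it is set up backwards, and this hides the one substantive lemma. After the integration by parts, the derivative along your interpolation is $\sum_k \kappa_k \cdot A_k$ with $\kappa_k = q_k - q'_k$ and $A_k = \E\la \1_{\{\alpha\wedge\alpha'=k\}} \tfrac{\sigma\sigma'^\intercal}{N}\ra$. You propose to Abel-sum this into increments of $\kappa$ paired against the tail averages $B_k=\E\la\1_{\{\alpha\wedge\alpha'\ge k\}}\tfrac{\sigma\sigma'^\intercal}{N}\ra$, asserting that the overlap matrices are in $S^D_+$ and that they ``pair non-negatively with the increments of $q-q'$.'' But the hypothesis $q-q'\in\C_2^*$ does \emph{not} make the increments $\kappa_k-\kappa_{k-1}$ positive semi-definite (take $\kappa_0=h\ge 0$, $\kappa_1=0$: the tail integrals are PSD but $\kappa_1-\kappa_0=-h$), and your explicit ``reformulation'' $\sum_{j\ge k}(q_j-q_{j-1})(\zeta_{j+1}-\zeta_j)\ge\cdots$ is not the discrete form of Lemma~\ref{l.charact.C2star} either; the correct form is $\sum_{j\ge k}(q_j-q'_j)(\zeta_{j+1}-\zeta_j)\in S^D_+$. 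The Abel summation must therefore go the other way: write $\kappa_k(\zeta_{k+1}-\zeta_k)=T_k-T_{k+1}$ with $T_k$ the (PSD) tail sums, and pair the $T_k$ against the \emph{increments} $C_k-C_{k-1}$ of the conditional overlaps $C_k=\E\la\tfrac{\sigma\sigma'^\intercal}{N}\mid\alpha\wedge\alpha'=k\ra$. The fact you then need is not that the overlap averages are PSD (that part is an easy Gram-matrix computation) but that $k\mapsto C_k$ is \emph{increasing} in the $S^D_+$ order; this monotonicity of the conditional overlap along the tree depth is the actual content of \cite[Proposition~3.8]{mourrat2023free} and it is exactly the statement your sketch never names. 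As written, your plan would only prove monotonicity of $\bar F_N(t,\cdot)$ along directions $\kappa$ that are themselves increasing, a strictly smaller cone than $\C_2^*$.
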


It will be very useful for our purposes to notice that the free energy $\bar F_N$ is locally semi-concave as a function of $q$. We first obtain a local semi-concavity property for regularly-spaced piecewise-constant paths, and then also derive a version that concerns sufficiently regular continuous paths. We recall that, for every $a \in S^D_{+}$, we denote by $\ellipt(a)$ the ratio between the largest and the smallest eigenvalues of $a$. 

\begin{proposition}[Semi-concavity of $\bar F_N$ for piecewise-constant paths]
\label{p.semiconc.disc}
There exists a constant $C < +\infty$ depending only on $D$ and $\xi$ such that the following holds. Let $(\zeta_k)_{1 \le k \le K}$ be as in~\eqref{e.def.zeta}, let $t,t' \ge 0$, $0 = q_{-1} \le q_0 < \cdots < q_K\in S^D_+$, and $0 = q'_{-1} \le q'_0 < \cdots < q'_K\in S^D_+$, and set
\begin{equation}  
\label{e.def.disc.q.q'}
q := \sum_{k = 0}^K q_k \1_{\Ll[\zeta_k, \zeta_{k+1} \Rr)} \quad \text{ and } \quad q' := \sum_{k = 0}^K q_k' \1_{\Ll[\zeta_k, \zeta_{k+1} \Rr)}.
\end{equation}
Letting $c \in (0,1]$ be such that $t,t' \ge c$ and 
\begin{equation}  
\label{e.semiconc.cond.q}
\forall k \in \{0, \ldots, K\}, \qquad \frac{c}{K+1} \id \le q_{k} - q_{k-1}  \quad \text{ and } \quad \ellipt(q_k - q_{k-1}) \le c^{-1},
\end{equation}
and similarly with $q$ replaced by $q'$ in \eqref{e.semiconc.cond.q}, we have, for every  $\lambda \in [0,1]$,
\begin{multline}  
\label{e.semiconc.disc}
(1-\lambda) \bar F_N(t,q) + \lambda \bar F_N(t',q') - \bar F_N \big( (1-\lambda)(t,q) + \lambda(t',q') \big) 
\\
\le C\lambda (1-\lambda) c^{-2} \Ll(  (t-t')^2 
+  (K+1)\sum_{k=0}^K\Ll|\Ll(q_k-q_{k-1}\Rr)-\Ll(q'_k-q'_{k-1}\Rr)\Rr|^2\Rr).
\end{multline}
\end{proposition}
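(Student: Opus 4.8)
The plan is to establish the semi-concavity inequality by computing the Hessian of $\bar F_N$ with respect to the parameters $(t, (q_k - q_{k-1})_{0 \le k \le K})$ and bounding it from above. The key observation is that $\bar F_N(t,q)$, written via the Poisson--Dirichlet cascade representation in~\eqref{e.def.FN.piece.const}, depends on the increments through the Gaussian magnetic field $\msf w^q(\alpha) = \sum_{k=0}^K (q_k - q_{k-1})^{1/2} z_{\alpha_{|k}}$. It will be convenient to reparametrize: write $\Delta_k := q_k - q_{k-1} \in S^D_+$, and note that each $\Delta_k^{1/2}$ is a smooth function of $\Delta_k$ on the open cone of positive definite matrices, with derivatives controlled by the inverse of the smallest eigenvalue of $\Delta_k$ — this is precisely where the hypothesis $\frac{c}{K+1}\id \le \Delta_k$ and $\ellipt(\Delta_k) \le c^{-1}$ enters, giving a lower bound $\lambda_{\min}(\Delta_k) \ge \frac{c}{K+1}$ and hence control of order $c^{-1}(K+1)$ on the first derivative of $\Delta_k \mapsto \Delta_k^{1/2}$ and order $(c^{-1}(K+1))^{?}$ on the second. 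First I would set up the segment $s \mapsto (t_s, (\Delta_{k,s})_k) := (1-s)(t,(\Delta_k)_k) + s(t',(\Delta'_k)_k)$ for $s \in [0,1]$, observe it stays in the region where~\eqref{e.semiconc.cond.q} holds (by convexity of the constraints), and reduce~\eqref{e.semiconc.disc} to a bound of the form $\frac{\d^2}{\d s^2} \bar F_N(t_s, q_s) \ge -C c^{-2}\big((t-t')^2 + (K+1)\sum_k |\Delta_k - \Delta'_k|^2\big)$ uniformly in $s$, via the elementary fact that $(1-\lambda)g(0) + \lambda g(1) - g(\lambda) \le \lambda(1-\lambda) \sup_s (-g''(s))$ — wait, more precisely $\le \frac{1}{2}\lambda(1-\lambda)\sup|g''|$ when bounding deviation from the chord, but here only the lower bound on $g''$ matters and the constant $C$ absorbs the factor.

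The heart of the matter is the second-derivative estimate. Here I would use the standard interpolation/Gaussian calculus: for a free energy of the form $-\frac1N \E \log \int e^{\mathcal H}$, derivatives in parameters that enter $\mathcal H$ linearly in a Gaussian field produce, after Gaussian integration by parts (as in~\cite[Theorem~4.6]{HJbook} and the computation~\eqref{e.der.t}), expressions involving Gibbs-averaged overlaps, while second derivatives produce variance-type (hence sign-definite up to the explicit ``annealed'' terms) quantities. Concretely, $\partial_t^2 \bar F_N$, $\partial_t \partial_{\Delta_k} \bar F_N$, and $\partial_{\Delta_k}\partial_{\Delta_j}\bar F_N$ can each be written as a constant times an expected Gibbs covariance of bounded observables (the $\xi(\sigma\sigma'^\intercal/N)$ terms, the $\Delta_k^{1/2}$-derivative contractions of $z_{\alpha_{|k}}\cdot\sigma$, and the $\sigma\cdot q_K\sigma$ compensator), plus ``diagonal'' remainder terms coming from the second derivatives of $\Delta_k \mapsto \Delta_k^{1/2}$ acting on $\E\la z_{\alpha_{|k}}\cdot \sigma\ra$. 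The covariance parts are automatically of the correct sign (they contribute a \emph{concave} direction, i.e. $g'' \le 0$ from them — actually one must be careful: it is $-\bar F_N$ that has the log-partition structure, so $\bar F_N$ inherits a convexity from the covariance which gives the \emph{wrong} sign, but this convexity is quantitatively bounded since the observables are bounded by~\eqref{e.bound.on.sigma} and the support assumption~\eqref{e.bound.support}); the point is that \emph{all} contributions to $|\frac{\d^2}{\d s^2}\bar F_N|$ are bounded, so in particular the lower bound on $g''$ holds. The remainder terms, where the $c^{-2}(K+1)$ factor genuinely appears, are bounded using $|\sigma| \le \sqrt N$ (so $|z_{\alpha_{|k}}\cdot\sigma| $ has Gibbs average bounded after another integration by parts by $O(\sqrt{N})\cdot O(\sqrt N) = O(N)$, giving an $O(1)$ contribution to $\frac1N(\cdots)$) together with the chain rule bounds $|D\Delta^{1/2}| \lesssim \lambda_{\min}(\Delta)^{-1/2} \le (c/(K+1))^{-1/2}$ and $|D^2 \Delta^{1/2}| \lesssim \lambda_{\min}(\Delta)^{-3/2}$; tracking powers of $K+1$ and $c$ through these bounds and through the quadratic form $\sum_{k,j}(\Delta_k - \Delta'_k)\cdot(\text{Hessian})\cdot(\Delta_j - \Delta'_j)$, using Cauchy--Schwarz to pass from the double sum to $(K+1)\sum_k|\Delta_k-\Delta'_k|^2$, yields the claimed bound.

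I expect the main obstacle to be the careful bookkeeping of the matrix-square-root derivatives and the combinatorial factors of $(K+1)$: one must verify that the worst term scales exactly as $c^{-2}(K+1)\sum_k |\Delta_k - \Delta'_k|^2$ and not worse, which requires noting that although there are $(K+1)^2$ entries in the Hessian with respect to the increments, the off-diagonal Gibbs-covariance entries involving distinct $k \ne j$ are ``small'' in the sense that $\E\la \1_{\{\alpha\wedge\alpha' \ge \max(k,j)\}}\ra$-type weights (coming from~\eqref{e.first.cascade.identity}) make the relevant sums telescope, and that the diagonal second-derivative-of-square-root terms carry the factor $(K+1)$ from $\lambda_{\min}^{-1} \asymp (K+1)/c$ exactly once. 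A secondary technical point is justifying differentiation under the integral/expectation and the validity of Gaussian integration by parts for the cascade-enriched field; this is routine given the boundedness of the support of $P_1$ and is done essentially identically in the references, so I would invoke~\cite[Chapter~4]{HJbook} and~\cite{mourrat2023free} for it. Once the pointwise-in-$s$ second-derivative bound is in hand, integrating twice and using the elementary chord inequality gives~\eqref{e.semiconc.disc} directly, with $C$ depending only on $D$ and $\xi$ (through $\sup_{|a|\le 1}|\xi(a)|$ and the size of $\nabla\xi, \nabla^2\xi$ on the unit ball).
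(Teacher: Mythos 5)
Your overall strategy — reparametrizing by the square roots of the increments $\Delta_k = q_k - q_{k-1}$ (and of $t$), controlling the first and second derivatives of the matrix square root by $\lambda_{\min}(\Delta_k)^{-1/2}$ and $\lambda_{\min}(\Delta_k)^{-3/2}$ via the hypothesis $\lambda_{\min}(\Delta_k) \ge c/(K+1)$, and then transferring a Hessian bound in the square-root variables back to the increments — is exactly the paper's. But the central analytic step is wrong as written, in a way that would make the proof fail if executed literally. First, the reduction has the wrong sign: to get $(1-\lambda)g(0)+\lambda g(1)-g(\lambda) \le C\lambda(1-\lambda)$ you need an \emph{upper} bound $g'' \le 2C$ along the segment (test on $g(s)=s^2$), not the lower bound $g'' \ge -C$ that you state, and your "elementary fact" $(1-\lambda)g(0)+\lambda g(1)-g(\lambda)\le \lambda(1-\lambda)\sup_s(-g'')$ is false. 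Second, and more seriously, your claim that "all contributions to $|\frac{\d^2}{\d s^2}\bar F_N|$ are bounded" because the observables are bounded is not true uniformly in $N$: differentiating the Gibbs measure produces the Gibbs variance of $\dot{\mathcal H}$, an extensive quantity, and after the $1/N$ normalization this term is of order $N$, not $O(1)$ — this is precisely why the paper's own second-derivative computation in the path variable, \eqref{e.|d^2F_N|<} and \eqref{e.FN.frechet}, carries an explicit factor of $N$. If you integrate by parts and bound every resulting overlap term in absolute value, as you propose, you get a semi-concavity constant growing like $N$ and the proposition is lost.

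The proof works only because that order-$N$ variance term enters $g'' = \frac{\d^2}{\d s^2}\bar F_N$ with a \emph{favorable} sign for the upper bound: since $\bar F_N = -\frac1N\E\log Z$ and the Gaussian part of the Hamiltonian is linear in the square-root variables, the log-partition-function convexity becomes concavity of $\bar F_N$ in those directions, so the variance term may simply be discarded, and the only unfavorable contribution is the second derivative of the deterministic compensator $-\sum_k y_k^2\cdot\sigma\sigma^\intercal$, which is bounded by $2|y-y'|^2$ using $|\sigma\sigma^\intercal|\le N$. (Your parenthetical gets this backwards: $\bar F_N$ inherits concavity, not convexity, from the covariance term, and that is exactly the sign you want.) The paper packages this as a one-line Hölder inequality, $(1-\lambda)G_N(y)+\lambda G_N(y')\le G_N((1-\lambda)y+\lambda y')+\lambda(1-\lambda)|y-y'|^2$, valid for all $N$. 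Once this is in place, your bookkeeping of the square-root derivatives and of the $(K+1)$ and $c^{-2}$ factors is correct and matches Steps 0 and 2 of the paper; the telescoping of off-diagonal cascade weights that you anticipate as an obstacle is not needed.
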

\begin{proof}
We start with a preliminary step. 

\medskip

\noindent \emph{Step~0}.
Recall that we denote by $S^D_{++}$ the set of $D$-by-$D$ positive definite matrices. For every $h \in S^D_{++}$ and  $a\in\S^\D$, we set
\begin{align*}
    \mathsf{D}_{\sqrt{h}}(a) := \lim_{\eps\to 0}\eps^{-1}\Ll(\sqrt{h+\eps a}-\sqrt{h}\Rr),
\end{align*}
as well as
\begin{align*}
    \mathsf{D}^2_{\sqrt{h}}(a) = \lim_{\eps\to  0} \ep^{-1} \Ll(\mathsf{D}_{\sqrt{h+\eps a}}(a) - \mathsf{D}_{\sqrt{h}}(a)\Rr).
\end{align*}
We denote by $\lambdamin(h)$ the smallest eigenvalue of $h$.
In this step, we show that 
\begin{align}
\label{e.|D|<...}
    \Ll|\mathsf{D}_{\sqrt{h}}(a)\Rr|\leq \frac 1 2 |a|\lambdamin(h)^{-\frac 1 2}
\end{align}
and 
\begin{align}\label{e.D^2<...}
    \Ll|\mathsf{D}^2_{\sqrt{h}}(a)\Rr|\leq \frac 1 4|a|^2 \lambdamin(h)^{-\frac 3 2}.
\end{align}
For two matrices $b,b'\in \R^{D\times D}$, we write $b\circledast b' = \frac{1}{2}( bb' + b'b)$ for the symmetrized matrix multiplication.
Writing $h+\eps a = \Ll(\sqrt{h} +\eps \mathsf{D}_{\sqrt{h}}(a) +o(\eps)\Rr)^2$ and expanding the square, we see that
\begin{align}\label{e.sqrt(h)D+...}
    2\sqrt{h}\circledast \mathsf{D}_{\sqrt{h}}(a)=a
\end{align}

Let $\lambda = \mathrm{diag}(\lambda_1,\dots,\lambda_D)$ and $o$ be an orthogonal matrix such that $h = o \lambda o^\intercal$. We have $\sqrt{h} = o\sqrt{\lambda} o^\intercal$, where $\sqrt{\lambda}=\mathrm{diag}(\sqrt{\lambda_1},\dots,\sqrt{\lambda_D})$. 
Writing $D' := o^\intercal \mathsf{D}_{\sqrt{h}}(a)  o $ and $a' := o^\intercal a o$, we get from \eqref{e.sqrt(h)D+...} that for every $d,d' \in \{1,\ldots D\}$,
\begin{equation*}  D'_{d,d'}\Ll(\sqrt{\lambda_d}+\sqrt{\lambda_{d'}}\Rr)=a'_{d,d'}.
\end{equation*}
This implies that
\begin{equation*}  |\mathsf{D}_{\sqrt{h}}(a)| = |D'|\leq \frac 1 2 |a'| \ {\max_{d \le D} \lambda_d^{-1/2}}= \frac 1 2 |a| \lambdamin(h)^{-\frac{1}{2}},
\end{equation*}
as announced in \eqref{e.|D|<...}.

Turning to \eqref{e.D^2<...}, we start with~\eqref{e.sqrt(h)D+...} applied to $h+\eps a$:
\begin{align*}
    2\sqrt{h+\eps a }\circledast \mathsf{D}_{\sqrt{h+\eps a}}(a) =a.
\end{align*}
By the definitions of the derivatives, this means that, as $\eps$ tends to zero,
\begin{align*}
    2\Ll(\sqrt{h}+\eps \mathsf{D}_{\sqrt{h}}(a)+o(\eps)\Rr)\circledast \Ll(\mathsf{D}_{\sqrt{h}}(a)+\eps \mathsf{D}^2_{\sqrt{h}}(a)+o(\eps)\Rr) = a
\end{align*}
Using~\eqref{e.sqrt(h)D+...} to cancel out terms and sending $\eps$ to zero, we get
\begin{align*}
    \sqrt{h}\circledast \mathsf{D}^2_{\sqrt{h}}(a) = -  \Ll(\mathsf{D}_{\sqrt{h}}(a)\Rr)^2.
\end{align*}
Using the same diagonalization argument as for \eqref{e.|D|<...} yields that 
\begin{align}
\label{e.D2squareroot}
    \Ll|\mathsf{D}^2_{\sqrt{h}}(a)\Rr|\leq  \Ll|\Ll(\mathsf{D}_{\sqrt{h}}(a)\Rr)^2\Rr| \lambdamin(h)^{-\frac{1}{2}}.
\end{align}
Observing also that, for every matrix $b \in S^D$ with eigenvalues $\lambda_1,\ldots, \lambda_D$, 
\begin{equation*}  \Ll|b^2\Rr|^2 =  \sum_{d = 1}^D\lambda_d^4 \le \Ll(\max_{d \le D} \lambda_d^2 \Rr)\, \sum_{d = 1}^D\lambda_d^2 \le \Ll(\sum_{d = 1}^D \lambda_d^2\Rr)^2 = |b|^4,
\end{equation*}
we obtain \eqref{e.D^2<...} from \eqref{e.|D|<...} and \eqref{e.D2squareroot}.

Now that this preliminary step is complete, we turn to the proof of Proposition~\ref{p.semiconc.disc} per se, which will be decomposed into two steps. For notational clarity we will only show the semi-concavity property in \eqref{e.semiconc.disc} in the special case of $t = t'$, but it will be clear from the proof that the argument also covers the general case. To lighten notation, we therefore fix $t \ge 0$ from now on.

\medskip

\noindent \emph{Step~1}.
For every $s\geq0$ and $y = (y_0,\ldots, y_K) \in (\S^\D_+)^{K+1}$, we define
\begin{align*}
    \tilde H_N^{s,y}(\sigma,\alpha) & =\sqrt{2}sH_N(\sigma) - s^2N\xi\Ll(\frac{\sigma\sigma^\intercal}{N}\Rr) +  \sqrt{2}\sum_{k=0}^Ky_k {z}_{\alpha_{|k}}\cdot \sigma - \sum_{k=0}^K y_k^2\cdot\sigma\sigma^\intercal,
    \\
    G_N(s,y)  & = -\frac{1}{N}\E\log \int \sum_{\alpha \in \N^K}  \exp\Ll(\tilde H_N^{s,y}(\sigma,\alpha)\Rr)\, v_\alpha \, \d P_N(\sigma).
\end{align*}
Notice that, with notation as in \eqref{e.def.q}-\eqref{e.def.piece.const.q}, we have
\begin{equation}
\label{e.turn.F.to.G}
\bar F_N(t,{q}) = G_N(\sqrt{t},(\sqrt{{q}_k-{q}_{k-1}})_{0\leq k\leq K}).
\end{equation}
As already mentioned, we will keep $t$ fixed in this proof for notational clarity, but the required modifications for the general case should be clear. We therefore fix $s = \sqrt{t}$ from now on and suppress it from the notation, simply writing $G_N(y)$ in place of $G_N(\sqrt{t}, y)$.

In this step, we show that the Hessian of $G_N$ is uniformly bounded from above. With the notation $y  = (y_0,\ldots, y_K) \in (S^D_+)^{K+1}$, we denote by $\partial_{y_k} G_N(y)$ the derivative with respect to the $k$-th coordinate of $y$. This derivative is an element of $S^D$. In this proof, it is at times convenient to think of each coordinate $y_k$ of $y$ simply as a $\frac{D(D+1)}{2}$-dimensional vector, so that we can view $\partial_{y_k} \partial_{y_{l}} G_N(y)$ as a $\frac{D(D+1)}{2}$-by-$\frac{D(D+1)}{2}$ matrix, and we can for instance write the Taylor expansion, as $y' =(y'_0,\ldots, y_K') \in (S^D_+)^{K+1}$ tends to $y \in (S^D_+)^{K+1}$,
\begin{multline*}  G_N(y') = G_N(y)  + \sum_{k = 0}^K (y_k'-y_k)\cdot \partial_{y_k} G_N(y) \\
+ \frac 1 2 \sum_{k,l = 0}^K (y'_k - y_k) \cdot \partial_{y_k} \partial_{y_l} G_N(y) (y'_l-y_l) + o(|y-y'|^2).
\end{multline*}
Notice that our definitions of scalar products are consistent regardless of whether we regard $y_k$ as a matrix or as a $\frac{D(D+1)}{2}$-dimensional vector. We also abbreviate the expansion above as
\begin{equation*}  G_N(y') = G_N(y)  + (y'-y)\cdot \nabla G_N(y) 
+ \frac 1 2 (y'-y) \cdot \nabla^2 G_N(y) (y'-y) + o(|y-y'|^2).
\end{equation*}
With this notation in place, the goal of this step is to show that, for every $k \in \{0,\ldots, K\}$, we have
\begin{align}\label{e.|nablaG_N|<}
    \Ll|\partial_{y_k} G_N(y)\Rr|\leq 2|y_k|,
\end{align}
and that for every $a \in (S^D)^{K+1}$,
\begin{align}\label{e.zGz<2}
    a\cdot  \nabla^2 G_N(y)a\leq 2|a|^2.
\end{align}
We start with the first-order derivative of $G_N$. For every $k\in\{0,\dots,K\}$ and $a \in \S^\D_+$, using the Gaussian integration by parts in \cite[Theorem~4.6]{HJbook}, we can compute 
\begin{align*}
    a\cdot \partial_{y_k} G_N(y) 
    & = \frac{\d}{\d\eps}G_N(y_1,\dots, y_{k-1},y_k+\eps a,y_{k+1},\dots,y_K)\Big|_{\eps =0}
    \\
    & = -\frac{1}{N} \E \la \sqrt{2} az_{\alpha_{|k}}\cdot\sigma - 2 ay_k \cdot \sigma\sigma^\intercal \ra = \frac{2}{N}\E \la ay_k \cdot \sigma\sigma'^\intercal \1_{\{\alpha_{|k} = \alpha'_{|k}\}}\ra.
\end{align*}
By the Cauchy--Schwarz inequality and \eqref{e.bound.on.sigma}, we have 
\begin{align}  
\label{e.bound.on.sisi'}
|\sigma \sigma'^\intercal|^2  
 = \sigma \sigma'^\intercal \cdot \sigma \sigma'^\intercal
 = \sigma^\intercal \sigma \cdot \sigma'^\intercal \sigma'
 \le |\sigma^\intercal \sigma| \, |\sigma'^\intercal \sigma'| \le N^2.
\end{align}
Combining the last two displays yields that
\begin{equation*}  \Ll| a\cdot \partial_{y_k} G_N(y) \Rr| \le 2 |a| \, |y_k|,
\end{equation*}
which is \eqref{e.|nablaG_N|<}. 

Turning to the proof of \eqref{e.zGz<2}, we first observe that, for every $a,b \in \R^{D\times D}$ and $\lambda \in \R$,
\begin{align*}
    (1-\lambda)a^2 + \lambda b^2 = ((1-\lambda)a+\lambda b)^2 + \lambda(1-\lambda)(a-b)^2.
\end{align*}
Using this and H\"older's inequality, we obtain that, for every $y,y' \in (S^D)^{K+1}$ and $\lambda \in [0,1]$,
\begin{align*}
    &(1-\lambda) G_N(y)+\lambda G_N(y') = -\frac{1}{N}\E\log (\cdots)^{1-\lambda}(\cdots)^\lambda 
    \\
    &\leq -\frac{1}{N}\E \log \int \sum_{\alpha \in \N^K}  \exp\Ll((1-\lambda)\tilde H_N^{s,y}(\sigma,\alpha) +\lambda \tilde H_N^{s,y'}(\sigma,\alpha)\Rr)\, v_\alpha \, \d P_N(\sigma)
    \\
    &= -\frac{1}{N}\E \log \int\sum_{\alpha \in \N^K}  \exp\Ll(\tilde H_N^{s,(1-\lambda)y+\lambda y'}(\sigma,\alpha)+ r \Rr)\, v_\alpha \, \d P_N(\sigma)
\end{align*}
where
\begin{align*}
    r := - \lambda(1-\lambda) \sum_{k=0}^K\Ll(y_{k}-y_{k}'\Rr)^2\cdot \sigma\sigma^\intercal  \ge - \lambda(1-\lambda) N\Ll|y-y'\Rr|^2,
   \end{align*}
using also \eqref{e.bound.on.sigma} in the last inequality. 
We thus deduce that
\begin{align*}
    (1-\lambda) G_N\Ll(y\Rr)+\lambda G_N\Ll(y'\Rr) \leq G_N\Ll((1-\lambda)y+\lambda y'\Rr) + \lambda(1-\lambda)\Ll|y-y'\Rr|^2.
\end{align*}
Substituting $y+\eps a$ for $y'$ in the above display, setting $\lambda$ to be $\frac{1}{2}$, and then sending $\eps$ to zero, we obtain \eqref{e.zGz<2}.

\medskip

\noindent \emph{Step~2}.
In this step, we change variables and use the results of the previous step to bound the second derivatives in the $q_k$'s of $\bar F_N$.
For every $x\in (\S^\D_{+})^{K+1}$, we write $\sqrt{x} := \Ll(\sqrt{x_k}\Rr)_{1\leq k\leq K}$ and set $\tilde G_N(x) := G_N\Ll(\sqrt{x}\Rr)$. For every $x \in (S^D_{++})^{K+1}$ and $a \in (S^D)^{K+1}$, we have
\begin{align*}  \frac{\d}{\d \ep} \tilde G_N(x+\ep a) = \sum_{k = 0}^K D_{\sqrt{x_k + \ep a_k}}(a_k) \cdot \partial_{y_k} G_N(\sqrt{x + \ep a}),
\end{align*}
and thus
\begin{align*}
    a \cdot  \nabla^2 \tilde G_N(x) a 
    & = \frac{\d^2}{\d \eps^2} \tilde G_N(x+\eps a)\Big|_{\eps =0}
    \\
    &= \sum_{k,l=0}^K \mathsf{D}_{\sqrt{x_k}}(a_k)\cdot \partial_{y_k} \partial_{y_l} G_N\Ll(\sqrt{x}\Rr) \mathsf{D}_{\sqrt{x_l}}(a_l)  + \sum_{k=0}^K \mathsf{D}_{\sqrt{x_k}}^2(a_k)\cdot \partial_{y_k} G_N\Ll(\sqrt{x}\Rr).
\end{align*}
Applying the estimates~\eqref{e.|nablaG_N|<} and ~\eqref{e.zGz<2}, and then \eqref{e.|D|<...} and~\eqref{e.D^2<...}, we get
\begin{align*}
    a\cdot   \nabla^2 \tilde G_N(x) a  
    & \leq 2\sum_{k=0}^K \Ll| \mathsf{D}_{\sqrt{x_k}}(a_k) \Rr|^2+ 2\sum_{k=0}^K \Ll|\mathsf{D}^2_{\sqrt{x_k}}(a_k)\Rr|\Ll|\sqrt{x_k}\Rr|
    \\
    & \le \frac 1 2 \sum_{k = 0}^K |a_k|^2 \lambdamin(x_k)^{-1} + \frac 1 2 \sum_{k = 0}^K |a_k|^2 \lambdamin(x_k)^{-\frac 3 2} |\sqrt{x_k}|.
\end{align*}
Notice that
\begin{equation*}  |\sqrt{x_k}|^2 \le D \, \ellipt(x_k) \lambdamin(x_k).
\end{equation*}
In particular, for every $x \in (S^D_+)^{K+1}$ satisfying
\begin{equation}  
\label{e.constraints.x}
\mbox{for every $k \in \{0,\ldots, K\}$, } \ \frac{c}{K+1} \id \le x_k \quad \text{and} \quad \ellipt(x_k) \le c^{-1},
\end{equation}
we have that, for some constant $C < +\infty$ depending only on $D$, 
\begin{equation*}  a \cdot  \nabla^2 \tilde G_N(x) a  \le C(K+1)c^{-2} |a|^2.
\end{equation*}
For every $x,x' \in (S^D_+)^{K+1}$ with $x$ satisfying \eqref{e.constraints.x} and $x'$ satisfying these same bounds as~$x$ displayed in \eqref{e.constraints.x}, we therefore have that, for every $\lambda \in [0,1]$,
\begin{align*}
    (1-\lambda) \tilde G_N(x) + \lambda \tilde G_N\Ll(x'\Rr) \leq \tilde G_N\Ll((1-\lambda)x +\lambda x'\Rr) + C \lambda (1-\lambda)  c^{-2} (K+1)|x-x'|^2.
\end{align*}
We now recall from \eqref{e.turn.F.to.G} that for $0 = q_{-1} < q_0 < \cdots < q_K$, and for $q$ as in \eqref{e.def.disc.q.q'}, we have that $\bar F_N(t,q) = \tilde G_N\big((q_k - q_{k-1})_{0 \le k \le K}\big)$. For $q,q'$ as in the assumptions of the theorem, a change of variables in the previous display therefore yields the desired result.
\end{proof}

We now extend this semi-concavity result to the case of continuous paths. For every $c > 0$, we write
\begin{multline}
\label{e.def.C_c}
    \C_{\uparrow,c} :=\big \{{q}\in\C_2 \ \mid \ q(0) = 0 \ \text{ and } \ \forall u \le v \in [0,1), \quad 
    q(v) - q(u) \ge c (v-u) \id \quad 
    \\ \text{and } \quad \ellipt(q(v) - q(u)) \le c^{-1}
    \big\}.
\end{multline}
We observe that $\bigcup_{c>0}\C_{\uparrow,c}=\C_\uparrow$, where we recall that $\C_\uparrow$ was introduced in the sentence containing~\eqref{e.def.Qup}. For every path $q \in \C$, we denote by $\dot q$ its distributional derivative. Whenever this distributional derivative can be represented as an element of $L^2$, we write that $\dot q \in L^2$.

\begin{proposition}[semi-concavity of $\bar F_N$ for continuous paths]
\label{p.semiconc}
There exists a constant $C < +\infty$ such that, for every $N\in\N$, $c>0$, $t,t' \ge c$, $q,q' \in \C_{\uparrow,c}$ with $\dot q- \dot q' \in L^2$, and $\lambda \in [0,1]$,
\begin{multline}
\label{e.semiconc}
    (1-\lambda)\bar F_N(t,{q}) + \lambda \bar F_N(t',q') 
    - \bar F_N \big((1-\lambda)(t,q) + \lambda(t',q')\big)
    \\
    \leq  C\lambda(1-\lambda)c^{-2}\Ll((t-t')^2+\Ll|\dot{q}-\dot{q}'\Rr|^2_\cH\Rr).
\end{multline}
\end{proposition}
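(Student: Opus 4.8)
The strategy is to reduce the continuous-path statement to the piecewise-constant one already established in Proposition~\ref{p.semiconc.disc} by a discretization-and-passage-to-the-limit argument. Given $q, q' \in \C_{\uparrow,c}$ with $\dot q - \dot q' \in L^2$, I would first approximate $q$ and $q'$ by regularly-spaced piecewise-constant paths adapted to a common uniform partition $\zeta_k = k/(K+1)$, $k = 0,\ldots, K+1$. The natural choice is the ``left-endpoint'' (or averaged) discretization $q^{(K)} := \sum_{k=0}^K q(\zeta_k) \1_{[\zeta_k,\zeta_{k+1})}$, and similarly for $q'$; one needs these to still satisfy the hypotheses of Proposition~\ref{p.semiconc.disc} with a comparable constant. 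Since $q \in \C_{\uparrow,c}$ implies $q(\zeta_{k}) - q(\zeta_{k-1}) \ge c(\zeta_k - \zeta_{k-1})\id = \frac{c}{K+1}\id$ and $\ellipt(q(\zeta_k) - q(\zeta_{k-1})) \le c^{-1}$, the conditions in~\eqref{e.semiconc.cond.q} hold verbatim with the same $c$, and likewise for $q'$; the condition $t, t' \ge c$ is assumed. So Proposition~\ref{p.semiconc.disc} applies and gives
\begin{multline*}
(1-\lambda)\bar F_N(t,q^{(K)}) + \lambda \bar F_N(t',q'^{(K)}) - \bar F_N\big((1-\lambda)(t,q^{(K)}) + \lambda(t',q'^{(K)})\big) \\
\le C\lambda(1-\lambda)c^{-2}\Ll((t-t')^2 + (K+1)\sum_{k=0}^K \Ll|(q(\zeta_k)-q(\zeta_{k-1})) - (q'(\zeta_k)-q'(\zeta_{k-1}))\Rr|^2 \Rr),
\end{multline*}
with the convention $q(\zeta_{-1}) = q'(\zeta_{-1}) = 0$.

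The next step is to control the discretization error on both sides. On the right-hand side, I would show $(K+1)\sum_{k=0}^K |(q(\zeta_k)-q(\zeta_{k-1})) - (q'(\zeta_k)-q'(\zeta_{k-1}))|^2 \to |\dot q - \dot q'|_\cH^2$ as $K \to \infty$. Writing $q(\zeta_k) - q(\zeta_{k-1}) = \int_{\zeta_{k-1}}^{\zeta_k} \dot q$, this is exactly the statement that the $L^2$-projection of $\dot q - \dot q'$ onto step functions constant on the intervals $[\zeta_{k-1},\zeta_k)$ (suitably rescaled by $(K+1)$ to undo the Jensen averaging) converges to $\dot q - \dot q'$ in $L^2$; this is a standard fact, using that $\dot q - \dot q' \in L^2$. (For the very first interval one should be slightly careful because $\zeta_{-1}$ is not in the partition, but $q(\zeta_0) = q(0) = 0$ and $q'(\zeta_0) = 0$ since both paths are in $\C_{\uparrow,c}$, so this term is simply $q(\zeta_0) - q'(\zeta_0) = 0$ and causes no trouble; the first genuine increment is $q(\zeta_1)-q(\zeta_0) = q(\zeta_1)$.) On the left-hand side, I would use the Lipschitz bound of Proposition~\ref{p.barF_N_Lip}: $|\bar F_N(t,q^{(K)}) - \bar F_N(t,q)| \le |q^{(K)} - q|_{L^1} \le |q^{(K)} - q|_{L^2}$, and $|q^{(K)} - q|_{L^2} \to 0$ because $q$ is continuous (indeed Lipschitz, being in $\C_{\uparrow,c}$ with bounded ellipticity implies a uniform modulus of continuity). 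The same applies to $q'$ and to the convex combination $(1-\lambda)q^{(K)} + \lambda q'^{(K)}$, whose discretization also converges in $L^2$ to $(1-\lambda)q + \lambda q'$. Passing to the limit $K \to \infty$ in the displayed inequality then yields~\eqref{e.semiconc}.

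The main obstacle — really the only place requiring care — is verifying that the right-hand side truly converges to $|\dot q - \dot q'|_\cH^2$ rather than overshooting: one must check that the rescaling by $(K+1)$ in Proposition~\ref{p.semiconc.disc} matches the Jensen normalization implicit in $q(\zeta_k) - q(\zeta_{k-1}) = \frac{1}{K+1}\cdot (K+1)\int_{\zeta_{k-1}}^{\zeta_k}\dot q$, so that $(K+1)|q(\zeta_k)-q(\zeta_{k-1})|^2 = \frac{1}{K+1}\big|(K+1)\int_{\zeta_{k-1}}^{\zeta_k}\dot q\big|^2$ and summing over $k$ gives the $L^2$ norm of the conditional expectation of $\dot q - \dot q'$ given the dyadic-type partition, which increases to $|\dot q - \dot q'|_\cH^2$ by martingale convergence (or simply by density of step functions). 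A secondary subtlety is that Proposition~\ref{p.semiconc.disc} requires the \emph{same} partition $(\zeta_k)$ for $q$ and $q'$, which is why I fix the uniform partition at the outset; since any two paths in $\C_{\uparrow,c}$ can be discretized on this common grid, there is no loss. Everything else is routine, and the constant $C$ in~\eqref{e.semiconc} can be taken equal to the one in Proposition~\ref{p.semiconc.disc}.
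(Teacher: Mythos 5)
Your proposal follows essentially the same route as the paper's proof: discretize $q$ and $q'$ on the common uniform grid $\zeta_k = k/(K+1)$, apply Proposition~\ref{p.semiconc.disc}, and pass to the limit using the $L^1$-Lipschitz continuity of $\bar F_N$ from Proposition~\ref{p.barF_N_Lip}. The only real difference is in how the right-hand side is handled: the paper first reduces to smooth $q,q'$ by density and then treats the discrete sum as a Riemann sum, whereas you keep general paths and identify the sum as the squared $L^2$ norm of the projection of $\dot q - \dot q'$ onto step functions. Your variant works and is arguably cleaner, since it avoids constructing smooth approximants compatible with the hypothesis $\dot q - \dot q' \in L^2$; in fact you do not even need convergence of the discrete sum, because Jensen's inequality gives the one-sided bound $(K+1)\,\bigl|\int_{\zeta_{k-1}}^{\zeta_k}(\dot q - \dot q')\bigr|^2 \le \int_{\zeta_{k-1}}^{\zeta_k}|\dot q - \dot q'|^2$ for every $k$ and $K$, which is all that is required. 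One inaccuracy to correct: membership in $\C_{\uparrow,c}$ does \emph{not} imply that $q$ is Lipschitz or even continuous, since \eqref{e.def.C_c} only provides a lower bound on increments and a bound on the ellipticity ratio, with no upper bound on increments (take $q(u) = g(u)\,\id$ with $g$ increasing, $g' \ge c$, and $g$ discontinuous). The convergence $|q^{(K)} - q|_{L^1} \to 0$ that you need still holds, because $0 \le q^{(K)} \le q$ pointwise and $q^{(K)} \to q$ at every continuity point of $q$, so dominated convergence applies; only the stated justification should be replaced. Finally, with the left-endpoint discretization the $k=0$ increment equals $q(0) - q_{-1} = 0$, so the lower bound in \eqref{e.semiconc.cond.q} fails at $k = 0$; this blemish is shared by the paper's own proof and is repaired by using right endpoints $q_k := q((k+1)/(K+1))$ instead.
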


\begin{proof}
By a density argument and Proposition~\ref{p.barF_N_Lip}, it suffices to show the result for smooth paths $q, q'$. We therefore choose two paths $q,q'$ in $\C_{\uparrow,c} \cap C^\infty([0,1];S^D)$, and for each $K\in\N$ and $k\in\{0,\dots,K\}$, we set
\begin{equation*}  q_k := q \Ll( \frac{k}{K+1} \Rr) ,
\end{equation*}
as well as the piecewise-constant path $q^K \in \C$ defined by
\begin{equation*}  q^K := \sum_{k = 0}^{K} q_k \1_{ \Ll[ \frac{k}{K+1}, \frac{k+1}{K+1} \Rr)} .
\end{equation*}
We do the same construction for the path $q'$, obtaining a family of parameters $(q'_k)_{0 \le k \le K}$ and a path $q'^K$. As always, we set $q_{-1} = q'_{-1} = 0$. An application of Proposition~\ref{p.semiconc.disc} yields  
\begin{multline}  
\label{e.semiconc.K}
(1-\lambda)\bar F_N\Ll(t,q^K\Rr) + \lambda \bar F_N\Ll(t',q'^K\Rr) - \bar F_N \Ll( (1-\lambda)\Ll(t,q^K\Rr) + \lambda\Ll(t',q'^K\Rr)\Rr)
\\
\le 
C\lambda (1-\lambda) c^{-2} \Ll(  (t-t')^2 
+  (K+1)\sum_{k=0}^K\Ll|\Ll(q_k-q_{k-1}\Rr)-\Ll(q'_k-q'_{k-1}\Rr)\Rr|^2\Rr).
\end{multline}
By Proposition~\ref{p.barF_N_Lip}, the left side of this inequality converges to the left side of \eqref{e.semiconc}. Under our smoothness assumption on $q$, the difference
\begin{equation*}  \dot q \Ll( \frac{k}{K+1} \Rr) - \frac{q_k - q_{k-1} }{K+1} = \dot q \Ll( \frac{k}{K+1} \Rr)  - \frac{1}{K+1} \Ll( q \Ll( \frac{k}{K+1} \Rr) -q \Ll( \frac{k-1}{K+1} \Rr) \Rr) 
\end{equation*}
converges to zero as $K$ tends to infinity, uniformly over $k \in \{0,\ldots, K\}$. Since the same goes for the path $q'$, we can replace the second term in the large parentheses on the right side of \eqref{e.semiconc.K} by
\begin{equation*}  \frac{1}{K+1} \sum_{k = 0}^K \Ll| \dot q \Ll( \frac{k}{K+1} \Rr) - \dot q' \Ll( \frac{k}{K+1} \Rr) \Rr|^2 ,
\end{equation*}
up to an error that tends to zero as $K$ tends to infinity. This is a Riemann-sum approximation of the integral $|\dot q - \dot q'|_{L^2}^2$, so under our smoothness assumption on $q$ and $q'$, it converges to this integral as $K$ tends to infinity. Recalling \eqref{e.semiconc.K}, we thus obtain the desired result. 
\end{proof}

In short, we think of Proposition~\ref{p.semiconc} as stating that the function $q \mapsto \bar F_N(t,q)$ is locally semi-concave with respect to the quadratic form $q \mapsto |\dot q|_{L^2}^2$. One of the useful consequences of this fact for our purposes is that if $\bar F_N$ converges to some limit $f$, and if $q \in \C_\uparrow$ is a point of differentiability of $f(t,\cdot)$, then the $q$-derivative of $\bar F_N$ at $(t,q)$ must converge to the $q$-derivative of $f$ at $(t,q)$. We postpone a precise statement to this effect to Section~\ref{s.def.cont}.

\section{Continuous cascades and overlap approximations}
\label{s.cascades}

Recall that in Section~\ref{s.def.disc}, we defined the free energy $\bar F_N(t,q)$ for piecewise-constant paths $q$, and then extended this function to all $q \in \C_1$ by continuity. In this section, we lay the groundwork that will allow us to give a direct definition of $\bar F_N(t,q)$ for arbitrary paths $q$. We also show some useful properties of the objects involved, and how to identify them with the discrete objects introduced in Section~\ref{s.def.disc} when the path $q$ is piecewise constant. One of the advantages of the direct construction is that it allows us to also directly manipulate the associated Gibbs measure for continuous paths $q$. 

The basic object in this construction is a Poisson--Dirichlet cascade whose overlap is uniformly distributed over the interval $[0,1]$. 
For the reader's convenience, we briefly review its construction here. Recall from the previous section that for each choice of the parameters $(\zeta_{k})_{1 \le k \le K}$ as in \eqref{e.def.zeta}, we can build the associated Poisson--Dirichlet cascade $(v_\alpha)_{\alpha \in \N^K}$, which encodes the weights of a random probability measure on $\N^K$. For each realization of $(v_\alpha)_{\alpha \in \N^K}$, we construct independent random variables $(\alpha^\ell)_{\ell \ge 1}$ taking values in $\N^K$ that are sampled according to these weights. Choosing also real numbers $(q_k)_{0 \le k \le K}$ satisfying~\eqref{e.def.q}, we then build the ``overlap'' array $(q_{\alpha^\ell \wedge \alpha^{\ell'}})_{\ell, \ell' \ge 1}$, where we recall the notation $\wedge$ from \eqref{e.def.wedge}. We have thus defined a mapping taking any piecewise-constant path~$q \in L^\infty([0,1];\R)$ of the form in \eqref{e.def.piece.const.q} as input, and returning a random array indexed by $\N\times \N$ (and in truth it is rather the law of this random array that is unambiguously well-defined). If we take any sequence of piecewise-constant paths $q^{(n)}$ that converges in $L^\infty([0,1];\R)$ to the identity map, then the sequence of resulting overlap arrays converges in law in the sense of finite-dimensional distributions, by \cite[Corollary~5.32]{HJbook}. The law of the limit does not depend on the approximating sequence, and we denote it by~$R = (R_{\ell,\ell'})_{\ell,\ell' \ge 1}$. By the Dovbysh--Sudakov theorem in \cite[Theorem~1.7]{pan}, this limit array can be represented as a ``true'' overlap array. That is, there exists a separable Hilbert space $\mathfrak{H}$, whose scalar product we denote by $\wedge$, and a random probability measure~$\fR$ on the unit sphere of $\mathfrak{H}$, such that if $(\alpha^\ell)_{\ell \ge 1}$ denote independent random variables sampled according to $\fR$, then the overlap array $(\alpha^\ell \wedge \alpha^{\ell'})_{\ell, \ell' \ge 1}$ has the same law as the limit array~$R$ constructed above. In this statement, the law of $(\alpha^\ell \wedge \alpha^{\ell'})_{\ell, \ell' \ge 1}$ is understood after we average both over the sampling of the random variables $(\alpha^\ell)_{\ell \ge 1}$ and over the randomness inherent to the random probability measure $\fR$ itself. 

We denote by $\langle \cdot \rangle_{\fR}$ the expectation with respect to the sampling of the independent variables $(\alpha^\ell)_{\ell \ge 1}$, defined for each fixed realization of $\fR$, and we denote by $(\bs \Omega, \P)$ the probability space, with associated expectation $\E$, corresponding to the sampling of $\fR$ itself. It will at times be convenient to denote explicitly that $\fR$ is a random variable, and we use the notation $\bomega \mapsto \fR_{\bomega}$ when we want to clarify that $\fR$ is a mapping from $\bs{\Omega}$ to the space of probability measures on $\mfk H$.

By construction, the overlap $\alpha^1 \wedge \alpha^2$ is distributed uniformly over $[0,1]$ under $\P \fR$. (Strictly speaking, we should rather write $\P \, \fR^{\otimes \N}$ or $\P \, \fR^{\otimes 2}$ in place of $\P \fR$, but we allow ourselves this abuse of notation.) In particular, we have $\P \fR$-almost surely that $\alpha^1 \wedge \alpha^2 \in [0,1]$. Equivalently, denoting by $\mfk U$ the (random) support of the measure $\fR$, we have with $\P$-probability one that for every $\alpha^1, \alpha^2 \in \mfk U$,
\begin{equation*}
\al^1 \wedge \al^2 \ge 0.
\end{equation*}
As a direct consequence of the construction of $\fR$ from discrete tree structures, we have with $\P\fR$-probability one that
\begin{equation}  
\label{e.fR.ultrametric}
\alpha^1 \wedge \alpha^3 \ge \min\Ll(\alpha^1 \wedge \alpha^2, \alpha^2 \wedge \alpha^3\Rr). 
\end{equation}
Equivalently, this states that $\P$-almost surely, the relation~\eqref{e.fR.ultrametric} holds for every $\alpha^1, \alpha^2, \alpha^3$ in $\mfk U$. Since~$\mfk U$ is a subset of the unit sphere of $\mfk H$, this property states that the set $\mfk U$ is ultrametric. Explicitly, this means that $\P$-almost surely, we have for every $\alpha^1, \alpha^2, \alpha^3 \in \mfk U$ that
\begin{equation*}
\Ll|\alpha^1 - \alpha^3\Rr|_{\mfk H} \leq \max \Ll( \Ll|\alpha^1 - \alpha^2\Rr|_{\mfk H}, \Ll|\alpha^2 - \alpha^3\Rr|_{\mfk H} \Rr) ,
\end{equation*}
where $|\cdot|_{\mfk H}$ denotes the norm in $\mfk H$.
The set $\mfk U$ is the continuous analogue of the leaves $\N^K$ of the tree $\mcl A$ that appeared in the construction of the discrete cascades.

To summarize, there exists a measurable subset $\bs \Omega^*$ of $\bs \Omega$ with $\P(\bs \Omega^*) = 1$ such that for every $\bomega \in \bs \Omega^*$, we have that 
\begin{equation}
\label{e.good.mfkU}
\begin{cases}
\mbox{$\mfk U_\bomega$ is a subset of the unit sphere in $\mfk H$},\\
\forall \al^1, \al^2 \in \mfk U_\bomega, \quad \al^1 \wedge \al^2 \ge 0,\\
\forall \al^1, \al^2, \al^3 \in \mfk U_\bomega, \quad \alpha^1 \wedge \alpha^3 \ge \min(\alpha^1 \wedge \alpha^2, \alpha^2 \wedge \alpha^3). 
\end{cases}
\end{equation}

Recall that in the construction of the free energy for piecewise-constant paths, we used the random Gaussian field in \eqref{e.Z(h_alpha)=}. Once such a Gaussian field is constructed for $N = 1$, it is easy to extend the construction to general $N$ by using independent copies. Our next goal is to construct an analogous object in the setting of continuous cascades. We recall that we write $\mcl Q_\infty = \mcl Q \cap L^\infty$. By monotonicity, for every $q \in \mcl Q_\infty$, the limit
\begin{equation}  
\label{e.def.q.1}
q(1) := \lim_{u \nearrow 1} q(u) \in S^D_+
\end{equation}
is well-defined. We always extend a path $q \in \mcl Q_\infty$ by continuity at $1$ according to \eqref{e.def.q.1}. With this convention, every path $q \in \mcl Q_\infty$ is continuous at $1$.
\begin{proposition}[Construction of Gaussian cascade]
\label{p.gaussian_cascade}
We fix $\mfk U = \mfk U_\bomega$ such that \eqref{e.good.mfkU} holds, and let $q \in \mcl Q_\infty$. There exists an $\R^\D$-valued  centered Gaussian process $(w^q(\alpha))_{\alpha \in \mathfrak{U}}$ such that for every $\alpha, \alpha' \in \mfk U$,
\begin{align}
\label{e.gaussian_cascade}
    \E  \Ll[w^q(\alpha) w^q(\alpha')^\intercal \Rr] = q(\alpha\wedge\alpha').
\end{align}
Moreover, if we display the randomness explicitly by writing $\omega \mapsto w^q(\alpha,\omega)$ for $\omega$ varying in the underlying probability space $\Omega$, then we can construct $w^q$ in such a way that the mapping $(\alpha,\omega)\mapsto w^q(\alpha,\omega)$ is jointly measurable.
\end{proposition}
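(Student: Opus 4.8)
The plan is to construct $w^q$ by first reducing to a Gaussian random field indexed by the ultrametric set $\mfk U$ whose covariance is a function of the overlap $\alpha \wedge \alpha' \in [0,1]$, and then tensoring appropriately. Concretely, since $q$ takes values in $S^D_+$, I would write $q$ as an increasing limit (in the Loewner order) of piecewise-constant paths, or more directly exploit the fact that for each fixed $u \in [0,1]$ the matrix $q(u)$ is positive semidefinite. The key structural idea is to diagonalize: pick a measurable family of square roots $q(u)^{1/2}$ (available since $u \mapsto q(u)$ is right-continuous with left limits, hence Borel, and the matrix square root is continuous on $S^D_+$), and build from a \emph{scalar} Gaussian cascade $(g(\alpha))_{\alpha \in \mfk U}$ with $\E[g(\alpha)g(\alpha')] = \alpha \wedge \alpha'$ the desired field. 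The point is that if $u \mapsto A(u) \in S^D_+$ is such that $\E[w^q(\alpha)w^q(\alpha')^\intercal] = q(\alpha \wedge \alpha')$, then one wants the increments of $w^q$ along the tree to have covariance $q(v) - q(u) \in S^D_+$ on each ``branch segment'', and this is exactly the monotonicity assumption built into $\mcl Q$.

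The cleanest route is via the discrete approximation already developed in Section~\ref{s.def.disc}. For piecewise-constant $q$ of the form \eqref{e.def.piece.const.q}, the object $\msf w^q$ in \eqref{e.Z(h_alpha)=} has exactly the covariance $N q_{\alpha \wedge \alpha'}$ (for $N=1$, $q_{\alpha\wedge\alpha'}$), realized as a telescoping sum $\sum_k (q_k - q_{k-1})^{1/2} z_{\alpha_{|k}}$ of independent increments indexed by the finite tree. So first I would treat piecewise-constant $q$: given $\mfk U$ satisfying \eqref{e.good.mfkU}, one knows that the overlap structure on $\mfk U$ restricted to the finite level set $\{0 = \zeta_0 < \zeta_1 < \cdots < \zeta_K < 1\}$ makes $\mfk U$ into (the leaves of) a finite ultrametric tree, and on this tree one copies the construction \eqref{e.Z(h_alpha)=} verbatim, obtaining a centered Gaussian process with covariance $q(\alpha \wedge \alpha')$. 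Then for general $q \in \mcl Q_\infty$, I would pick an increasing sequence of piecewise-constant paths $q^{(n)} \nearrow q$ pointwise with $q^{(n)} \le q$, and note that $q^{(n+1)} - q^{(n)} \in \mcl Q_\infty$ is increasing and nonnegative, so the associated $w^{q^{(n+1)} - q^{(n)}}$ can be taken independent across $n$; then set $w^q := \sum_n (\text{increment})$, which converges in $L^2$ pointwise since $\sum_n \var(w^{q^{(n+1)}-q^{(n)}}(\alpha)) \le |q(1)| < \infty$. The covariance identity \eqref{e.gaussian_cascade} then follows by summing the increment covariances, each of which is $q^{(n+1)}(\alpha \wedge \alpha') - q^{(n)}(\alpha \wedge \alpha')$, a telescoping series. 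The extension to general $N$ is, as remarked, just taking $N$ independent copies.

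The joint measurability claim is where I expect the real work to be, and it is the main obstacle: one must produce a single measurable map $(\alpha, \omega) \mapsto w^q(\alpha, \omega)$ rather than merely, for each $\alpha$, an a.s.-defined random variable. For the discrete/piecewise-constant case this is immediate, since $\alpha \mapsto \alpha_{|k}$ is measurable and the sum in \eqref{e.Z(h_alpha)=} is over a fixed finite index set with jointly measurable Gaussian inputs. For the limiting construction one needs to check that the $L^2$-limit can be taken in a jointly measurable way: here I would invoke that $\mfk U$ (with the metric inherited from $\mfk H$) is a separable metric space, that each finite-level approximant is jointly measurable, and that a pointwise (in $\alpha$) a.s.\ limit of jointly measurable functions, after passing to a suitable version, remains jointly measurable — for instance by working on a common probability space carrying all the independent Gaussian increments indexed by $\mfk U \times \N$, choosing for each $\alpha$ a measurable selection of the ancestor path, and appealing to the fact that the partial sums form a martingale (in $n$) converging in $L^2$ uniformly enough to extract a jointly measurable limit. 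One should also record that $w^q$ depends measurably on the realization $\bomega$ of $\fR$ (i.e.\ on which ultrametric set $\mfk U_\bomega$ we fixed), though strictly speaking the statement only fixes one such $\mfk U$; I would note in passing that the construction is uniform enough in $\bomega$ that this causes no difficulty downstream. Separability of $\mfk H$ is the crucial hypothesis making all of these measurable-selection arguments go through.
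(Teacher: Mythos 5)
Your construction for piecewise-constant $q$ is fine, but the passage to general $q\in\mcl Q_\infty$ has a genuine gap. You assert that one can choose piecewise-constant paths $q^{(n)}\nearrow q$ with each difference $q^{(n+1)}-q^{(n)}$ again an \emph{increasing} path, so that the corresponding fields $w^{q^{(n+1)}-q^{(n)}}$ exist and can be taken independent. Pointwise monotonicity $q^{(n)}\le q^{(n+1)}$ does not give monotonicity of the difference as a path: that would require $q^{(n+1)}(v)-q^{(n+1)}(u)\ge q^{(n)}(v)-q^{(n)}(u)$ for all $u\le v$, i.e.\ domination of the associated increment measures. For a piecewise-constant $q^{(n)}$ the increment measure is purely atomic with finitely many atoms, so if $\d q^{(n)}\le \d q$ then $\d q^{(n)}$ must be carried by the atoms of $\d q$; for a path such as $q(u)=u\,\id$ this forces $q^{(n)}$ to be constant, and no such approximating sequence exists. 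Without the increments being increasing paths, $\big(q^{(n+1)}-q^{(n)}\big)(\alpha\wedge\alpha')$ need not be a positive semi-definite kernel on $\mfk U$ (already for $D=1$ and three points one gets a violation when the kernel is not monotone), so the independent-increment series is not well defined. The natural repair — establish convergence of finite-dimensional distributions of $w^{q^{(n)}}$ and extend — is essentially the paper's argument: one verifies directly that $\big(q(\al^i\wedge\al^j)\big)_{i,j\le n}$ is positive semi-definite for \emph{any} finite subset of $\mfk U$, by embedding that finite ultrametric set into the leaves of a finite tree (Lemma~\ref{l.tree}) and writing the telescoping sum over the finitely many overlap values actually attained, and then invokes the Kolmogorov extension theorem. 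No global series construction or approximation of $q$ is needed.

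On joint measurability, your sketch (measurable selection of ancestor paths, martingale limits) rests on the flawed series construction and would in any case be delicate to make precise. The paper's route is shorter: from \eqref{e.gaussian_cascade} one gets $\E|w^q(\al)-w^q(\al')|^2=\tr\big(q(1)-q(\al\wedge\al')\big)$, which tends to $0$ as $\al'\to\al$ because $q$ is continuous at $1$; stochastic continuity then yields a jointly measurable version by a standard theorem (Gikhman--Skorokhod). If you want to keep a constructive flavor, I would still recommend isolating the positive semi-definiteness of the finite-dimensional covariance matrices as the key lemma and letting Kolmogorov extension plus stochastic continuity do the rest.
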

We understand that the set $\mathfrak{U}$ is equipped with the Borel $\sigma$-algebra induced by the norm topology of $\mfk H$.

Since we aim to prove Proposition~\ref{p.gaussian_cascade} by appealing to the Kolmogorov extension theorem, we need to construct such a process indexed by any finite subset of $\mathfrak{U}$ and verify the consistency condition. When $D = 1$, we thus need to show that for every $n \in \N$ and $\alpha^1, \ldots, \alpha^n \in \mfk U$, the matrix $(q(\alpha^i\wedge\alpha^j))_{1\leq i,j\leq n}$ is positive semi-definite. This can be obtained by observing that the matrix $(q(\alpha^i\wedge\alpha^j))_{1\leq i,j\leq n}$ is ultrametric in the sense of \cite[Definition~3.2]{dellacherie2014inverse}, and by quoting \cite[Theorem~3.5]{dellacherie2014inverse} to conclude. 

We can also give a direct proof, which is more in the spirit of the construction of the Gaussian process itself and clearly applies for general $D \in \N$. 
As a first step, we show that any finite ultrametric set can be embedded into the leaves of a tree in a natural way. This result is classical but we give a brief proof for the reader's convenience.

\begin{lemma}\label{l.tree}
Let $\{\alpha^1,\dots,\alpha^n\}$ be an ultrametric subset of the unit sphere in $\mathfrak{H}$. Let $K\ge 0$ be an integer and let real numbers
\begin{align*}
s_0< s_1< \dots< s_K=1
\end{align*}
be such that $\{s_k\}_{k=0}^K = \{\alpha^i\wedge\alpha^j\}_{1\leq i,j\leq n}$. There exists a rooted tree of depth $K$ with leaves $\{\bal^i\}_{1\leq i\leq n}$ such that for every $i,j \in \{1,\ldots, n\}$, we have
\begin{align}\label{e.overlap=tree-overlap}
    \alpha^i\wedge\alpha^j = s_{\bal^i\wedge \bal^j},
\end{align}
where $\bal^i\wedge \bal^j$ denotes the depth of the most recent common ancestor to $\bal^i$ and $\bal^j$.
\end{lemma}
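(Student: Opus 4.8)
The plan is to construct the tree explicitly by recursion on the depth, using the ultrametric structure to partition the points into clusters at each level. First I would set up the recursion: given the finite ultrametric set $\{\alpha^1,\dots,\alpha^n\}$ and the values $s_0 < s_1 < \dots < s_K = 1$, I would define, for each $k \in \{0,\dots,K\}$, an equivalence relation $\sim_k$ on $\{1,\dots,n\}$ by declaring $i \sim_k j$ if and only if $\alpha^i \wedge \alpha^j \ge s_k$. The ultrametric inequality \eqref{e.fR.ultrametric} is exactly what guarantees that $\sim_k$ is transitive, hence a genuine equivalence relation; reflexivity follows since $\alpha^i \wedge \alpha^i = 1 = s_K \ge s_k$ (here one uses that the $\alpha^i$ lie on the unit sphere, so $\alpha^i \wedge \alpha^i = |\alpha^i|^2_{\mfk H} = 1$, and that the diagonal value $1$ is among the $s_k$, forcing $s_K = 1$), and symmetry is obvious. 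Moreover the partition induced by $\sim_{k+1}$ refines that induced by $\sim_k$, since $s_{k+1} > s_k$.

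Next I would build the tree: the root is the single class of $\sim_0$ (note $s_0 \le \alpha^i \wedge \alpha^j$ for all $i,j$ since $s_0 = \min\{\alpha^i \wedge \alpha^j\}$, so $\sim_0$ has exactly one class — this is where one uses $s_0 = \min_{i,j} \alpha^i \wedge \alpha^j$), the vertices at depth $k$ are the equivalence classes of $\sim_k$, and a class $C$ at depth $k+1$ is declared a child of the unique class at depth $k$ containing it. The leaves at depth $K$ are the $\sim_K$-classes; since $\alpha^i \wedge \alpha^j = 1 = s_K$ forces $i = j$ when the $\alpha^i$ are distinct — and if some coincide one simply identifies them — each leaf is a singleton $\{i\}$, which I label $\bal^i$. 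It remains to index the children by elements of $\N$ (arbitrarily, via some fixed enumeration) so that the object is literally a rooted tree in the sense of \eqref{e.tree}, though for the purposes of the statement only the combinatorial tree structure and \eqref{e.overlap=tree-overlap} matter.

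Finally I would verify \eqref{e.overlap=tree-overlap}. By construction, $\bal^i \wedge \bal^j$ — the depth of the most recent common ancestor — equals the largest $k$ such that $i$ and $j$ lie in the same $\sim_k$-class, i.e. the largest $k$ with $\alpha^i \wedge \alpha^j \ge s_k$. Since $\alpha^i \wedge \alpha^j$ is itself one of the values $s_0,\dots,s_K$ (as $\{\alpha^i \wedge \alpha^j\}_{i,j} = \{s_k\}_k$), this largest $k$ is precisely the index $m$ with $\alpha^i \wedge \alpha^j = s_m$, giving $s_{\bal^i \wedge \bal^j} = s_m = \alpha^i \wedge \alpha^j$.

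I do not expect a serious obstacle here; the content is entirely the observation that the ultrametric inequality makes the "$\ge s_k$" relations transitive, and everything else is bookkeeping. The one point requiring a little care is the boundary behavior at the root and leaves — making sure $\sim_0$ has a single class (which needs $s_0 = \min_{i,j} \alpha^i \wedge \alpha^j$) and that $\sim_K$ separates distinct points (which needs $s_K = 1 = |\alpha^i|^2_{\mfk H}$) — so I would state these explicitly at the start. A secondary minor point is that the $s_k$ need not all actually be realized as $\bal^i \wedge \bal^j$ for the tree to have depth exactly $K$, but since the hypothesis stipulates $\{s_k\} = \{\alpha^i \wedge \alpha^j\}$, every level genuinely refines the previous one, so the tree does have depth exactly $K$.
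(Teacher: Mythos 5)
Your proposal is correct and follows essentially the same route as the paper's proof: defining the equivalence relations $i \sim_k j \iff \alpha^i \wedge \alpha^j \ge s_k$ (with ultrametricity supplying transitivity), taking equivalence classes as tree nodes with containment as the parent-child relation, and reading off \eqref{e.overlap=tree-overlap} from the largest $k$ at which $i$ and $j$ remain equivalent. Your extra remarks on the boundary behavior at the root and leaves are sound and merely make explicit what the paper treats in passing.
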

\begin{proof}
We recall that for every $i,j,l \in \{1\ldots, n\}$, we have
\begin{equation}
\label{e.ultrametric}
\alpha^{i} \wedge \alpha^{l} \ge \min(\alpha^i \wedge \alpha^{j}, \alpha^{j} \wedge \alpha^{l}). 
\end{equation}
For each $k \in \{0,\ldots, K\}$ and $i,j \in \{1,\ldots, n\}$, we write 
$i \sim_k j$ if and only if $\alpha^i \wedge \alpha^j \ge s_k$. By~\eqref{e.ultrametric}, the relation $\sim_k$ is an equivalence relation. We define the set of nodes at depth $k$ (of the tree we are going to construct) to be the set of equivalence classes of the relation~$\sim_k$. For $k = K$, the equivalence classes are simply singletons since $s_K = 1$, and we write $\Lambda^i := \{i\}$. By definition, for each $k \ge 1$, an equivalence class for the relation $\sim_k$ must be a subset of exactly one equivalence class for the relation $\sim_{k-1}$. We draw an edge between the equivalence classes at levels $k$ and $k-1$ whenever this occurs. This construction results in a graph of depth $K$ without loops. At level $0$, there is only one equivalence class since $\al^i \wedge \al^j \ge s_0$ for every $i,j \in \{1,\ldots,n\}$. So our graph is connected, i.e.\ it is a tree, and we take this unique equivalence class at level $0$ as the root of the tree. For every $i,j \in \{1,\ldots,n\}$ and $k \in \{0,\ldots, K\}$, if $\alpha^i \wedge \alpha^j = s_k$, then by definition $i$ and $j$ belong to the same equivalence class for the relation $\sim_\ell$ if and only if $\ell \le k$. This implies that $\Lambda^i \wedge \Lambda^j = k$, and thus that the identity~\eqref{e.overlap=tree-overlap} holds.
\end{proof}

\begin{proof}[Proof of Proposition~\ref{p.gaussian_cascade}]
We first show the existence of the Gaussian process $(w^q(\alpha))_{\alpha\in \mathfrak{U}}$. With a view towards applying the Kolmogorov extension theorem, we  verify that for every $n \in \N$ and $\al^1, \ldots, \al^n \in \mfk U$, there exists a centered Gaussian vector $(w(\alpha^i))_{1 \le i \le n}$ with the desired covariance structure. Letting $K \ge 0$ and $s_0 < \cdots < s_K = 1$ be such that $\{s_k\}_{k = 0}^K = \{\al^i \wedge \al^j\}_{1 \le i,j \le n}$, we appeal to Lemma~\ref{l.tree} to find a tree of depth $K$ with leaves $\Lambda^1,\ldots, \Lambda^n$ such that \eqref{e.overlap=tree-overlap} holds. We denote by $\mathscr{A}$ the set of vertices of the tree, and let $(g_\beta)_{\beta\in\mathscr{A}}$ consist of independent standard $\R^\D$-valued Gaussian vectors. For every $i \in \{1,\ldots, n\}$ and $k \in \{0,\ldots, K\}$, we write $\Lambda^i_{|k}$ to denote the ancestor of the leaf $\Lambda^i$ at depth $k$. For each $i \in \{1,\ldots, n\}$, we set
\begin{align*}
    w\Ll(\alpha^i\Rr) :=  \sum_{k=0}^K(q(s_k)-q(s_{k-1}))^{1/2} g_{\bal^i_{|k}},
\end{align*}
with the understanding that $q(s_{-1}) = 0$. 
Using \eqref{e.overlap=tree-overlap}, we see that
\begin{align*}
    \E w\Ll(\alpha^i\Rr)w\Ll(\alpha^j\Rr)^\intercal =  q\Ll(s_{\bal^i\wedge\bal^j}\Rr) =  q\Ll(\alpha^i\wedge\alpha^j\Rr)\quad\forall i,j\in\{1,\dots,n\}.
\end{align*}
This is the desired form for the covariance. It is straightforward to verify the consistency condition in the Kolmogorov extension theorem. Appealing to this theorem (see e.g.\ \cite[Theorem~2.4.3]{tao2011introduction}), we thus conclude for the existence of the Gaussian process $(w^q(\alpha))_{\alpha\in \mathfrak{U}}$.

Next, we verify the measurability. For every $\al, \al' \in \mfk U$, we have
\begin{align}\label{e.cts_at_1}
    \E \Ll|w^q(\alpha) - w^q(\alpha')\Rr|^2 = \tr \Ll(q(1) - q(\alpha\wedge\alpha')\Rr).
\end{align}
Since moreover, $q$ is continuous at $1$ due to \eqref{e.def.q.1},
we deduce that $w^q$ is stochastically continuous; in other words, for every $\alpha\in\mathfrak{U}$ and $\eps>0$, we have that $\P\Ll\{\Ll|w^q(\alpha) - w^q(\alpha')\Rr|>\eps\Rr\}$
converges to $0$ as $\alpha'$ approaches $\alpha$.
We can therefore apply \cite[Theorem~III.3.1]{gikhman2004theory} to find a version of the process that is jointly measurable.
\end{proof}

To summarize and introduce more explicit notation, there exists a measurable subset~$\bs \Omega^*$ of $\bs \Omega$ with $\P(\bs \Omega^*)$ such that \eqref{e.good.mfkU} holds for every $\bomega \in \bs \Omega^*$. For each such $\bomega$ and $q \in \mcl Q_\infty$, we can construct a probability space $(\Omega_{\bomega, q}, \P_{\bomega,q})$ with expectation $\E_{\bomega,q}$, and a centered Gaussian process $(w^q(\alpha))_{\alpha \in \mfk U_\bomega}$ on $(\Omega_{\bomega, q}, \P_{\bomega,q})$, such that for every $\al, \al' \in \mfk U_\bomega$, we have
\begin{equation}
\label{e.summary.gaussian}
\E_{\bomega,q} \Ll[ w^q(\alpha) w^q(\alpha')^\intercal \Rr] =  q(\alpha\wedge\alpha'),
\end{equation}
and the mapping 
\begin{equation*}  \Ll\{
\begin{array}{rcl}  \mfk U_\bom\times \Omega_{\bom, q} & \to & \R^D
\\
(\al, \omega) & \mapsto & w^q(\alpha,\omega)
\end{array}
\Rr.
\end{equation*}
is measurable. The object $w^q$ will serve as the continuous analogue of the process in \eqref{e.Z(h_alpha)=}. 

Notice that we do not assert any measurability property of this Gaussian process with respect to $\bomega$. We do not know of a simple way to improve upon this. Fortunately, as will be explained shortly, this will not be a problem as long as we take $\E_{\bomega,q}$-expectations first, provided that we show that these expectations themselves are measurable with respect to~$\bomega$. This is the content of Lemma~\ref{l.meas} below.

Lest the reader thinks that this is a technical detail that they would rather skip, it is worth noting that the justification of this measurability property will involve ingredients that we need to appeal to later as well. In a nutshell, we will show that the relevant $\E_{\bomega,q}$-expectations can be approximated arbitrarily closely by continuous functions of the overlap arrays. This result will not be of much surprise to specialists, as we will essentially state a general version of \cite[Theorem~1.3]{pan}.

\begin{definition}[Overlap structure]
\label{d.os}
For a measurable subset $S$ of a Euclidean space, we say that a pair $(\Gamma,\mathcal{R})$ is an \emph{$S$-valued overlap structure} if 
\begin{itemize}
    \item $\Gamma$ is a probability measure on some Polish space;
    \item $\mathcal{R}:(\supp\Gamma)\times(\supp\Gamma)\to S$ is measurable and satisfies $|\mathcal{R}|\leq 1$ a.s.
\end{itemize}
For $d \in\N$ and a locally bounded function $C:S\to\R^{d \times d}$, $(\Gamma,\mathcal{R})$ is said to be \emph{$C$-admissible} if there exists an $\R^d$-valued centered Gaussian process $(z(\rho))_{\rho\in\supp\Gamma}$ on some probability space $\Omega_{\Gamma,\mathcal{R}}$ such that
\begin{itemize}
    \item the covariance is given by
    \begin{align*}
        \E_{\Omega_{\Gamma,\mathcal{R}}} \Ll[z(\rho)z(\rho')^\intercal \Rr] = C(\mathcal{R}(\rho,\rho')),\quad\forall \rho,\rho'\in\supp\Gamma;
    \end{align*}
    \item the process $z$ is measurable as a function on $(\supp\Gamma)\times \Omega_{\Gamma,\mathcal{R}}$.
\end{itemize}
\end{definition}

For every $\eps>0$ and $a,b\in\R$, we write
\begin{align}\label{e.approx_eps}
    a \approx_\eps b \quad \Longleftrightarrow \quad |a-b|\leq \eps.
\end{align}
\begin{proposition}[Continuity w.r.t.\ overlap array]
\label{p.approx_os}
We take the setting and notation as in Definition~\ref{d.os}. For every Lipschitz $\bg:\R^d\times \R^{d\times d}\to\R$, every bounded measurable $\bh:S^n\to\R$ with $n\in\N$, and every $\eps>0$, there are $n_1,n_2\in\N$ and bounded continuous functions $F^i_\eps: (\R^{d\times d})^{n_i}\to\R$, with $i\in\{1,2\}$, such that
\begin{align}
    \E_{\Omega_{\Gamma,\mathcal{R}}} \log \int \exp\Ll(\mathbf{g}\Ll(z(\rho),C( \mathcal{R}(\rho,\rho))\Rr)\Rr) \d \Gamma(\rho) 
    &\approx_\eps \la F^1_\eps \Ll(C^{\leq n_1}\Rr) \ra_\Gamma,\label{e.1st_approx_os}
    \\
    \E_{\Omega_{\Gamma,\mathcal{R}}} \la \mathbf{h}\Ll(\mathcal{R}^{\leq n}\Rr) \ra_{\Gamma^\mathbf{g}} &\approx_\eps \la \mathbf{h}\Ll(\mathcal{R}^{\leq n}\Rr)F^2_\eps \Ll(C^{\leq n_2}\Rr) \ra_\Gamma, \label{e.2nd_approx_os}
\end{align}
uniformly over all $C$-admissible $S$-valued overlap structures $(\Gamma,\mathcal{R})$.
Here, 
$\la\cdot\ra_\Gamma$ and $\la\cdot\ra_{\Gamma^\mathbf{g}}$ denote the expectations with respect to $\Gamma^{\otimes \N}$ and $({\Gamma^\mathbf{g}})^{\otimes\N}$ respectively, with canonical random variable $(\rho^\ell)_{\ell \ge 1}$; $\mathcal{R}^{\leq n} := \Ll(\mathcal{R}\Ll(\rho^{\ell},\rho^{\ell'}\Rr)\Rr)_{1\leq \ell,\ell'\leq n}$, $C^{\leq n_i} = \Ll(C\Ll( \mathcal{R}\Ll(\rho^{\ell},\rho^{\ell'}\Rr)\Rr)\Rr)_{1\leq \ell,\ell'\leq n_i}$; and
\begin{align*}
    \d \Gamma^\mathbf{g}(\rho) := \frac{\exp\Ll(\mathbf{g}\Ll(z(\rho),\mathcal{R}(\rho,\rho)\Rr)\Rr) \d \Gamma(\rho)}{\int \exp\Ll(\mathbf{g}\Ll(z(\rho'),\mathcal{R}(\rho',\rho')\Rr)\Rr) \d \Gamma(\rho')}.
\end{align*}  
\end{proposition}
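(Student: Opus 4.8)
The plan is to reduce Proposition~\ref{p.approx_os} to a known approximation result of Panchenko's type by exhibiting the Gibbs average as a limit of finite-dimensional integrals over the overlap array, using the fact that a centered Gaussian process indexed by the support of $\Gamma$ is determined by its covariance kernel, which is itself a function of $\mathcal{R}$. The key observation is that the randomness of $z$ can be decoupled from the sampling of the $\rho^\ell$'s, so that the $\E_{\Omega_{\Gamma,\mathcal{R}}}$-expectation of any nice functional of $(z(\rho^1),\dots,z(\rho^m))$ is a deterministic function of the overlap block $\mathcal{R}^{\leq m}$ (through $C(\mathcal{R}(\rho^\ell,\rho^{\ell'}))$), and this function is continuous and bounded provided $\bg$ is Lipschitz.

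\medskip

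\noindent\emph{Step 1 (Discretize the integral over $\Gamma$).} First I would approximate the integral $\int \exp(\bg(z(\rho), C(\mathcal{R}(\rho,\rho)))) \, \d\Gamma(\rho)$ by a self-normalized empirical average over $m$ i.i.d.\ samples $\rho^1,\dots,\rho^m$ drawn from $\Gamma$. Concretely, one writes
\begin{equation*}
  \int \exp\Ll(\bg(z(\rho), C(\mathcal{R}(\rho,\rho)))\Rr) \d\Gamma(\rho)
  = \lim_{m\to\infty} \frac{1}{m}\sum_{\ell=1}^m \exp\Ll(\bg(z(\rho^\ell), C(\mathcal{R}(\rho^\ell,\rho^\ell)))\Rr),
\end{equation*}
with the convergence holding $\Gamma^{\otimes\N}$-almost surely and in $L^1$ by the law of large numbers, uniformly in a suitable sense because $\bg$ is Lipschitz and $|\mathcal{R}|\leq 1$, so the integrand is bounded by a constant depending only on $\bg$ and $d$. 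Taking $\log$ and using concentration (the function $\frac1m\log\sum_\ell e^{x_\ell}$ is $1$-Lipschitz in the $\ell^\infty$ norm of the $x_\ell$'s), this introduces an error $o_m(1)$ uniform over all admissible $(\Gamma,\mathcal{R})$.

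\medskip

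\noindent\emph{Step 2 (Integrate out the Gaussian.)} Next, conditionally on $\rho^1,\dots,\rho^m$, the vector $(z(\rho^1),\dots,z(\rho^m))$ is a centered Gaussian in $(\R^d)^m$ whose covariance is the block matrix $\Ll(C(\mathcal{R}(\rho^\ell,\rho^{\ell'}))\Rr)_{1\le\ell,\ell'\le m}$, a continuous function of $C^{\leq m}$. Hence $\E_{\Omega_{\Gamma,\mathcal{R}}}$ of the (bounded) quantity $\frac1m\log\sum_\ell e^{\bg(z(\rho^\ell), C(\mathcal R(\rho^\ell,\rho^\ell)))}$ equals a deterministic function $\Phi_m(C^{\leq m})$, where $\Phi_m:(\R^{d\times d})^{m^2}\to\R$ is bounded and continuous --- continuity follows since the Gaussian integral depends continuously on the covariance matrix (and one may truncate/smooth to guarantee boundedness and continuity without affecting the estimate by more than $\eps$, using $|\mathcal R|\le 1$ so the relevant arguments live in a compact set, where $C$ is bounded by local boundedness). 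Setting $F^1_\eps := \Phi_m$ for $m$ large gives \eqref{e.1st_approx_os} with $n_1 = m$. For \eqref{e.2nd_approx_os}, one applies the same scheme to the tilted measure: expand $\la \bh(\mathcal R^{\le n})\ra_{\Gamma^{\bg}}$ as a ratio of integrals against $\Gamma^{\otimes}$, discretize numerator and denominator by empirical averages over $\rho^1,\dots,\rho^{n_2}$ (with $n_2\ge n$), integrate out the Gaussian block $z(\rho^1),\dots,z(\rho^{n_2})$ conditionally on the $\rho$'s, and observe that $\bh(\mathcal R^{\le n})$ times the resulting ratio is a bounded measurable --- approximated by continuous --- function $F^2_\eps$ of $C^{\le n_2}$, yielding \eqref{e.2nd_approx_os}. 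The uniformity over $(\Gamma,\mathcal{R})$ is automatic because every error bound invoked --- LLN rate via a uniform second-moment bound, Lipschitz constant of the log-sum-exp, modulus of continuity of the Gaussian integral on a compact set --- depends only on $\bg$, $\bh$, $d$, $n$ and the (uniform, since $|\mathcal R|\le 1$) bounds on $C$, and not on the particular overlap structure.

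\medskip

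\noindent\emph{Main obstacle.} The delicate point is \textbf{not} the probabilistic approximation but the passage from ``bounded measurable function of $C^{\le n_i}$'' to ``bounded \emph{continuous} function of $C^{\le n_i}$'' uniformly over all admissible structures, especially in \eqref{e.2nd_approx_os} where $\bh$ itself is only assumed bounded measurable. The resolution is that we are allowed to change $F^2_\eps$ freely; what must be continuous is $F^2_\eps$ composed with $C$ and then averaged against $\Gamma$, and one can absorb the discontinuities of $\bh$ by a further $\eps$-approximation of $\bh$ by a continuous function in $L^1$ of the limiting overlap law, exactly as in the proof of \cite[Theorem~1.3]{pan}; the invariance and regularity properties of the Ruelle cascades are not needed here because we work at the level of a general overlap structure and the continuity is extracted purely from the Gaussian integral. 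Thus the proof is essentially a bookkeeping exercise wrapping the cavity-free ``Gaussian-integrating-out'' step of Panchenko's framework, and I would present it by first stating the $m\to\infty$ reduction, then the explicit form of $\Phi_m$ and its regularity, and finally the straightforward modification for the tilted average.
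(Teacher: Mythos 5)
Your route is genuinely different from the paper's and is viable in outline. The paper follows \cite[Theorem~1.3]{pan}: truncate $\mathbf{g}$ and $\log$ (resp.\ $x\mapsto 1/x$), justify the truncation by Gaussian concentration, approximate the truncated outer function by polynomials so that powers of the Gibbs average become multi-replica averages, and then move $\E_{\Omega_{\Gamma,\mathcal R}}$ inside $\la\cdot\ra_\Gamma$ to land on an explicit Gaussian integral of the covariance block. You instead discretize $\Gamma$ by an empirical average over $m$ replicas and integrate out the Gaussian conditionally on the replicas; this avoids the polynomial approximation of $\log$ and of $1/x$ (the conditional expectation of a ratio of finite sums is already a function of the covariance), at the price of needing a quantitative law of large numbers for the log of the empirical average. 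Both approaches end at the same object, namely a continuous function of the finite covariance block $C^{\le n_i}$ averaged under $\la\cdot\ra_\Gamma$.

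Two points need repair. First, your Step~1 asserts that the integrand $\exp(\mathbf{g}(z(\rho),C(\mathcal R(\rho,\rho))))$ is bounded by a constant depending only on $\mathbf{g}$ and $d$; this is false, since $z(\rho)$ is Gaussian and only its covariance is controlled by $|\mathcal R|\le 1$. Your uniform LLN rate and the Lipschitz bound on the log-sum-exp both rely on this boundedness, so you must first replace $\mathbf{g}$ by a truncation $\mathbf{g}_a$ and control the resulting error by Gaussian concentration (this is exactly the step the paper takes via \cite[Theorem~4.7]{HJbook}); you allude to truncation only in Step~2, where it is too late for the LLN estimate. Second, your ``main obstacle'' paragraph misreads the target: in \eqref{e.2nd_approx_os} the factor $\mathbf{h}(\mathcal R^{\le n})$ appears verbatim on the right-hand side and is \emph{not} to be approximated by anything continuous; only the tilting factor must be turned into a continuous function of $C^{\le n_2}$, with $\mathbf{h}(\mathcal R^{\le n})$ kept outside as a separate measurable factor (note that $\mathcal R^{\le n}$ is in general not a function of $C^{\le n_2}$, so ``$\mathbf{h}$ times the ratio is a function of $C^{\le n_2}$'' cannot be literally correct). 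Your proposed fix --- approximating $\mathbf{h}$ in $L^1$ of ``the limiting overlap law'' --- is not available here: the estimate must hold uniformly over all admissible overlap structures, there is no single reference law, and an $L^1(\mu)$-approximation of a bounded measurable $\mathbf{h}$ is $\mu$-dependent. Dropping that step entirely and writing $F^2_\eps(C^{\le n_2})$ as the conditional $\E_{\Omega_{\Gamma,\mathcal R}}$-expectation of your empirical ratio given $\rho^1,\dots,\rho^{n_2}$ (with the denominator bounded below after truncation) gives the statement in the required form.
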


\begin{proof}
For brevity, we write $\E = \E_{\Omega_{\Gamma,\mathcal{R}}}$. The estimates below are uniform in $(\Gamma,\mathcal{R})$ due to $|\mathcal{R}|\leq 1$.
For the first approximation \eqref{e.1st_approx_os}, we can use the same argument as in \cite[Theorem~1.3]{pan}. We sketch the main steps, omitting some details.
For $a> 0$, define $\log_a=\max\{-a, \min\{a,\log\}\}$ and $\mathbf{g}_a$ similarly. Since $\mathbf{g}$ is Lipschitz, we can use a Gaussian concentration inequality such as \cite[Theorem~4.7]{HJbook} to approximate the left-hand side in~\eqref{e.1st_approx_os} by
\begin{align*}
    \E \log_a \la\exp\Ll(\mathbf{g}_a\Ll(z(\rho),C(\mathcal{R}(\rho,\rho))\Rr)\Rr) \ra_\Gamma
\end{align*}
for large $a$.
Noticing that $\log_a$ is continuous and that its argument in the expression above remains in a compact set, we can approximate it by polynomials, so that the expression above becomes a linear combination of terms of the form
\begin{align*}
    \E \la\exp\Ll(\mathbf{g}_a\Ll(z(\rho),C(\mathcal{R}(\rho,\rho))\Rr)\Rr) \ra_\Gamma^j 
    &= \E \la \prod_{\ell=1}^j\exp\Ll(\mathbf{g}_a\Ll(z(\rho^\ell),C(\mathcal{R}(\rho^\ell,\rho^\ell))\Rr)\Rr) \ra_\Gamma,
\end{align*}
for $j\in\N$. We can move $\E$ inside $\la\cdot\ra_\Gamma$ to see that the right-hand side is equal to $ \la F^{1}_{\eps,j}\Ll(C^{\leq j}\Rr)\ra_\Gamma$ for a continuous function $F^1_{\eps,j}$ satisfying
\begin{align*}
    F^{1}_{\eps,j}\Ll(C^{\leq j}\Rr) = \E\prod_{\ell=1}^j\exp\Ll(\mathbf{g}_a\Ll(z(\rho^\ell),C(\mathcal{R}(\rho^\ell,\rho^\ell))\Rr)\Rr).
\end{align*}
This is possible since we can write the right-hand side above explicitly as a Gaussian integral. 
Collecting $F^{1}_{\eps,j}$ back into the linear combination, we obtain $F^1_\eps$.

The second approximation~\eqref{e.2nd_approx_os} is done in a similar way. 
Instead of $\log$, we approximate the function $x\mapsto \frac{1}{x}$. The detail can be seen in \cite[Section~3.4]{pan.spin_dist}, more precisely, the approximation after \cite[(3.25)]{pan.spin_dist}.
One can also see the argument in the second half of \cite[Proof (Theorem~4.2) in Section~4.3]{pan}. Again, we only give a sketch. Writing $V(\rho) := \exp\Ll(\mathbf{g}(z(\rho),C(\mathcal{R}(\rho,\rho)))\Rr)$, we have
\begin{align*}
    \E \la \mathbf{h}\Ll(\mathcal{R}^{\leq n}\Rr) \ra_{\Gamma^\mathbf{g}} = \E \frac{\la\mathbf{h}\Ll(\mathcal{R}^{\leq n}\Rr)\prod_{\ell=1}^n V\Ll(\rho^\ell\Rr)\ra_\Gamma}{\la V\Ll(\rho\Rr) \ra_\Gamma^n}.
\end{align*}
Using the Gaussian concentration, we can approximate $\la V\Ll(\rho\Rr) \ra_\Gamma^{-n}$ by a polynomial of $\la V\Ll(\rho\Rr) \ra_\Gamma$. Hence, the right-hand side in the above is approximated by a linear combination of
\begin{align*}
    \E \la\mathbf{h}\Ll(\mathcal{R}^{\leq n}\Rr)\prod_{\ell=1}^n V\Ll(\rho^\ell\Rr)\ra_\Gamma \la V\Ll(\rho\Rr) \ra_\Gamma^j = \E \la\mathbf{h}\Ll(\mathcal{R}^{\leq n}\Rr)\prod_{\ell=1}^{n+j} V\Ll(\rho^l\Rr)\ra_\Gamma =  \la\mathbf{h}\Ll(\mathcal{R}^{\leq n}\Rr)F^2_{\eps,j}\Ll(C^{\leq n+j}\Rr)\ra_\Gamma
\end{align*}
for some $j\in\N$, where the continuous function $F^2_{\eps,j}$ satisfies
\begin{align*}
    F^2_{\eps,j}\Ll(C^{\leq n+j}\Rr) = \E \prod_{\ell=1}^{n+j} V\Ll(\rho^\ell\Rr).
\end{align*}
Again, collecting $F^2_{\eps,j}$ back into the linear combination, we obtain $F^2_\eps$.
\end{proof}

We now come back to the setting explained in the paragraph containing \eqref{e.summary.gaussian}, and settle the question of measurability of $\E_{\bom, q}$-expectations.

\begin{lemma}[Measurability of expectations]
\label{l.meas}
For every $q\in \mcl Q_\infty$, every Lipschitz $\mathbf{g}:\R^\D\to\R$, and every bounded measurable $\mathbf{h}:\R^{n\times n}\to \R$ with $n\in\N$, the functions
\begin{align}
    \bomega &\mapsto \E_{{\bomega,q}} \log\int \exp\Ll(\mathbf{g}\Ll(w^q(\alpha)\Rr)\Rr) \d \fR_\bomega(\alpha),\label{e.1_bomega_mapsto}
    \\
    \bomega &\mapsto \E_{{\bomega,q}} \la  \mathbf{h}\Ll(\Ll(\alpha^\ell\wedge\alpha^{\ell'}\Rr)_{1\leq \ell,\ell'\leq n}\Rr) \ra_{\fR^\mathbf{g}_\bomega}\label{e.2_bomega_mapsto}
\end{align}
are measurable on $\boldsymbol{\Omega}^\star$, where $\la\cdot\ra_{\fR^\mathbf{g}_\bomega}$ denotes the expectation with respect to the measure $(\fR^\mathbf{g}_\bomega)^{\otimes \N}$ with canonical random variables $(\alpha^\ell)_{\ell \in \N}$, and with
\begin{align}\label{e.dfR^g}
    \d \fR^\mathbf{g}_\bomega (\alpha):= \frac{\exp\Ll(\mathbf{g}\Ll(w^q(\alpha)\Rr)\Rr) \d \fR_\bomega(\al)}{\int \exp\Ll(\mathbf{g}\Ll(w^q(\al')\Rr)\Rr) \d \fR_\bomega(\al')}.
\end{align}
\end{lemma}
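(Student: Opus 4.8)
The plan is to prove Lemma~\ref{l.meas} by reducing both expectations to limits of continuous functions of the overlap array $\left(\alpha^\ell\wedge\alpha^{\ell'}\right)_{\ell,\ell'}$ sampled from $\fR_\bomega$, and then to invoke the known measurability of $\bomega\mapsto\la F((\alpha^\ell\wedge\alpha^{\ell'})_{\ell,\ell'\le m})\ra_{\fR_\bomega}$ for continuous bounded $F$. This last fact is essentially built into the construction of $\fR$: the law of the overlap array under $\P\fR$ is the limit of the laws of discrete overlap arrays, and the map $\bomega\mapsto \fR_\bomega$ is by construction measurable in the appropriate sense (the overlap array $\left(\alpha^\ell\wedge\alpha^{\ell'}\right)$ is a measurable function on $\bs\Omega\times(\text{sampling space})$). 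I would state this at the outset, possibly as a short preliminary observation, citing \cite[Section~5.6]{HJbook} or \cite{pan} for the measurability of these functionals of a random measure obtained via the Dovbysh--Sudakov representation.

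The core step is to apply Proposition~\ref{p.approx_os} with the overlap structure $(\Gamma,\mathcal{R})=(\fR_\bomega,\wedge)$ and the choice $S=[0,1]$, $C=q$ (extended to a locally bounded function on $\R$, e.g.\ by $q(1)$ above $1$ and $q(0)$ below $0$), $d=D$. One must first check that $(\fR_\bomega,\wedge)$ is an $[0,1]$-valued overlap structure and that it is $q$-admissible: the former holds because of \eqref{e.good.mfkU} (the overlaps lie in $[0,1]$ since they are bounded by $1$ as inner products of unit vectors, and are nonnegative), and the latter is exactly the content of Proposition~\ref{p.gaussian_cascade}, which provides the jointly measurable centered Gaussian process $w^q$ with covariance $q(\alpha\wedge\alpha')$. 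With $\mathbf{g}$ Lipschitz and $\mathbf{h}$ bounded measurable as in the statement, Proposition~\ref{p.approx_os} gives, for each $\eps>0$, continuous bounded functions $F^1_\eps,F^2_\eps$ such that the quantities in \eqref{e.1_bomega_mapsto} and \eqref{e.2_bomega_mapsto} are within $\eps$ of $\la F^1_\eps((q(\alpha^\ell\wedge\alpha^{\ell'}))_{\ell,\ell'\le n_1})\ra_{\fR_\bomega}$ and $\la \mathbf{h}((\alpha^\ell\wedge\alpha^{\ell'})_{\ell,\ell'\le n})F^2_\eps((q(\alpha^\ell\wedge\alpha^{\ell'}))_{\ell,\ell'\le n_2})\ra_{\fR_\bomega}$ respectively, \emph{uniformly in $\bomega$}. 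The uniformity is crucial: it means that as $\eps\to0$, these approximating functions of $\bomega$ converge uniformly to the functions \eqref{e.1_bomega_mapsto} and \eqref{e.2_bomega_mapsto}.

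Each approximating function is measurable in $\bomega$: it has the form $\bomega\mapsto\la \Phi((\alpha^\ell\wedge\alpha^{\ell'})_{\ell,\ell'\le m})\ra_{\fR_\bomega}$ with $\Phi$ bounded measurable (a continuous function of the overlaps composed with $q$, possibly multiplied by the bounded measurable $\mathbf h$), and such functionals of $\fR_\bomega$ are measurable in $\bomega$ by the measurability of the overlap-array law as a function of $\bomega$. Since a uniform (hence pointwise) limit of measurable functions is measurable, we conclude that \eqref{e.1_bomega_mapsto} and \eqref{e.2_bomega_mapsto} are measurable on $\bs\Omega^\star$. I expect the main obstacle to be the bookkeeping needed to verify rigorously that $\bomega\mapsto\la\Phi(\text{overlaps})\ra_{\fR_\bomega}$ is genuinely measurable — i.e.\ pinning down in what precise sense the map $\bomega\mapsto\fR_\bomega$ is measurable and why that entails measurability of these moment functionals — since the construction of $\fR$ in the excerpt is phrased somewhat informally (``in truth it is rather the law of this random array that is well-defined''). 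One clean way around this is to observe that for continuous bounded $\Phi$, the functional $\la\Phi\ra_{\fR_\bomega}$ is, up to the Dovbysh--Sudakov representation, a pointwise limit of the corresponding functionals of discrete Poisson--Dirichlet cascades, which are manifestly measurable in their defining Poisson randomness; alternatively, one invokes directly that the map sending a realization to its overlap array is measurable by construction. I would present the argument taking the latter as given, with an appropriate citation, rather than reconstructing the measurable structure from scratch.
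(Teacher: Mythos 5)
Your proposal is correct and follows essentially the same route as the paper: apply Proposition~\ref{p.approx_os} with $\Gamma=\fR_\bomega$, $\mathcal{R}(\alpha,\alpha')=\alpha\wedge\alpha'$, and $C=q$ to approximate both quantities uniformly in $\bomega$ by functionals of the form $\la \Phi((\alpha^\ell\wedge\alpha^{\ell'})_{\ell,\ell'\le m})\ra_{\fR_\bomega}$, then conclude by noting that these approximants are measurable in $\bomega$ and that a uniform limit of measurable functions is measurable. The paper states the measurability of the approximating functionals without elaboration, whereas you correctly identify it as the point requiring a citation to the construction of $\fR$; this is a matter of exposition, not of substance.
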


\begin{proof}[Proof of Lemma~\ref{l.meas}]
First of all, due to the joint measurability of the Gaussian process, both functions are well-defined. We show that they can be approximated uniformly in $\bomega$ by measurable functions.
For the first function \eqref{e.1_bomega_mapsto}, we can apply Proposition~\ref{p.approx_os} (with $\Gamma=\fR_\bomega$, $R(\alpha,\alpha')=\alpha\wedge\alpha'$, and $C=q$) to get that, for every $\eps>0$, there is an integer $n_1$ and a continuous function $F_\eps^1:(\R^{\D\times\D})^{n_1}\to\R$ such that
the the function \eqref{e.1_bomega_mapsto} is approximated by the function
\begin{align}\label{e.3_bomega_mapsto}
    \bomega\mapsto \la F^1_\eps\Ll(\Ll(q\Ll(\alpha^l\wedge\alpha^{l'}\Rr)\Rr)_{1\leq l,l'\leq n_1}\Rr) \ra_{\fR_\bomega}
\end{align}
uniformly in $\bomega$ up to an error bounded by $\eps$.
Since \eqref{e.3_bomega_mapsto} is measurable, we can conclude the measurability of the function~\eqref{e.1_bomega_mapsto}.
Similarly, for the second function~\eqref{e.2_bomega_mapsto}, we can deduce the measurability using Proposition~\ref{p.approx_os}.
\end{proof}

Henceforth, we omit $\omega$ and $\bomega$ for brevity, and simply write $\P$ and $\E$ to denote the probability and the expectation over both $\bs \Omega$ and $\Omega_{\bomega,q}$, with the understanding that we always integrate over the space $\Omega_{\bomega,q}$ first.

We now show that for piecewise-constant paths, the free energies and Gibbs expectations defined using the discrete construction introduced in the previous section coincide with those defined using the continuous cascade. 
    
\begin{proposition}[Discrete and continuous constructions are equivalent]
\label{p.equiv_cascade}

Let $K \in \N$, let $(q_k)_{0 \le k \le K} \subset S^D_+$ and $(\zeta_k)_{1 \le k \le K} \subset \R_+$ be such that \eqref{e.def.q} and \eqref{e.def.zeta} hold, let $(\nu_\bal)_{\bal\in\N^K}$ be the associated Poisson--Dirichlet cascade as constructed below~\eqref{e.def.wedge}, let $q \in \mcl Q$ be the piecewise-constant path in \eqref{e.def.piece.const.q}, and let $(\msf w^q(\bal))_{\bal\in\N^K}$ and $(w^q(\alpha))_{\alpha \in \mfk U}$ be as in \eqref{e.Z(h_alpha)=} (with $N = 1$) and \eqref{e.summary.gaussian} respectively.
For every Lipschitz $\mathbf{g}:\R^\D\to\R$ and every bounded measurable $\mathbf{h}:\R^{n\times n}\to\R$ with $n\in\N$, it holds that
\begin{gather}\label{e.l.equiv_cascade_1}
    \E \log \sum_{\bal \in \N^K} \nu_\bal \exp\Ll(\mathbf{g}\Ll(\msf w^q(\bal)\Rr)\Rr) = \E \log \int \exp\Ll(\mathbf{g}\Ll(w^{q}(\alpha)\Rr)\Rr) \d \fR(\alpha),
    \\
    \label{e.l.equiv_cascade_2}
    \E  \sum_{\bal^1,\dots,\bal^n \in \N^K} \nu^{\mathbf{g}}_{\bal^1}\cdots \nu^{\mathbf{g}}_{\bal^n} \mathbf{h}\Ll(\Ll(\zeta_{\bal^\ell\wedge\bal^{\ell'}}\Rr)_{1\leq \ell,\ell'\leq n}\Rr) = \E  \la \mathbf{h}\Ll(\Ll(\zeta\Ll(\alpha^\ell\wedge\alpha^{\ell'}\Rr)\Rr)_{1\leq \ell,\ell'\leq n}\Rr)\ra_{\fR^\mathbf{g}}
\end{gather}
where $\fR^\mathbf{g}$ is as defined in \eqref{e.dfR^g}, $\langle \cdot \rangle_{\fR^\mathbf{g}}$ as in Lemma~\ref{l.meas},
and
\begin{align}\label{e.nu^g_bal=}
    \nu^\mathbf{g}_\bal := \frac{ \nu_\bal\exp\Ll(\mathbf{g}\Ll(\msf w^q(\bal)\Rr)\Rr)}{\sum_{\bal' \in \N^K} \nu_{\bal'} \exp\Ll(\mathbf{g}\Ll(\msf w^q(\bal')\Rr)\Rr)}, \qquad\forall \bal \in \N^K.
\end{align}
\end{proposition}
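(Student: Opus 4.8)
The plan is to prove Proposition~\ref{p.equiv_cascade} by reducing both identities to the fundamental invariance principle for Poisson--Dirichlet cascades, namely that the discrete cascade $(\nu_\bal)_{\bal \in \N^K}$ together with its overlap array $(\zeta_{\bal \wedge \bal'})$ has the same law as the restriction of the continuous object $(\fR, \alpha \wedge \alpha')$ to a ``piecewise-constant'' approximation. First I would recall from the construction in Section~\ref{s.cascades} that $\fR$ was defined precisely as the Dovbysh--Sudakov representation of the limiting overlap array obtained from piecewise-constant paths $q^{(n)}$ converging to the identity in $L^\infty$; in particular, by \cite[Corollary~5.32]{HJbook}, for the specific piecewise-constant function $\zeta = \sum_{k} \zeta_k \1_{[\zeta_k, \zeta_{k+1})}$ (whose distribution of overlaps under the cascade with parameters $(\zeta_k)$ is, by \eqref{e.first.cascade.identity}, exactly governed by the $\zeta_k$), the array $(\zeta_{\bal^\ell \wedge \bal^{\ell'}})_{\ell,\ell'}$ sampled from the discrete cascade $(\nu_\bal)$ has the same law as $(\zeta(\alpha^\ell \wedge \alpha^{\ell'}))_{\ell,\ell'}$ sampled from $\fR$. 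This is the bridge between the discrete tree $\N^K$ and the continuous set $\mfk U$.

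Next I would transfer the Gaussian fields. Both $(\msf w^q(\bal))_{\bal \in \N^K}$ and $(w^q(\alpha))_{\alpha \in \mfk U}$ are centered Gaussian processes whose covariances are $q_{\bal \wedge \bal'}$ and $q(\alpha \wedge \alpha')$ respectively; since $q$ is the piecewise-constant path \eqref{e.def.piece.const.q}, we have $q(\alpha \wedge \alpha') = q_k$ exactly when $\alpha \wedge \alpha' \in [\zeta_k, \zeta_{k+1})$, so the covariance of $w^q$ is a deterministic function of the $\zeta$-overlap $\zeta(\alpha\wedge\alpha')$, and likewise $\msf w^q$ has covariance a deterministic function of the discrete overlap index. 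Conditionally on the respective cascades, both Gaussian families are therefore determined in law by their overlap arrays. Combining this with the equality in law of the overlap arrays from the previous step, the joint laws of $\big((\nu_\bal), (\msf w^q(\bal))\big)$ and $\big((\fR), (w^q(\alpha))\big)$ agree after pushing forward through the overlap map; this is exactly what is needed to match expectations of functionals built from $\mathbf g$ applied to the Gaussian field and $\mathbf h$ applied to the overlap array. The identities \eqref{e.l.equiv_cascade_1} and \eqref{e.l.equiv_cascade_2} then follow by writing both sides as such functionals: on the discrete side the sums $\sum_\bal \nu_\bal(\cdots)$ and $\sum_{\bal^1,\dots} \nu^{\mathbf g}_{\bal^1}\cdots$ are exactly $\langle \cdot \rangle$-averages over the cascade, matching the continuous-side integrals $\int \cdots \d\fR$ and $\langle \cdot \rangle_{\fR^{\mathbf g}}$.

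To make the last step rigorous rather than heuristic, I would invoke Proposition~\ref{p.approx_os} on both sides: it expresses each of the quantities $\E \log \int \exp(\mathbf g(\cdot)) \d\Gamma$ and $\E \langle \mathbf h(\mathcal R^{\le n})\rangle_{\Gamma^{\mathbf g}}$, uniformly over $C$-admissible overlap structures, as a limit of quantities of the form $\langle F_\eps(C^{\le m})\rangle_\Gamma$ — that is, as expectations of bounded continuous functions of the overlap array alone. Apply this with $\Gamma = $ the discrete cascade (viewed as a measure on $\N^K$, which is Polish) and with $\Gamma = \fR$, in both cases with $\mathcal R$ the overlap map and $C = q$ (for the first identity) or with the additional data $\zeta$ (for the second); since the $F_\eps$ are continuous and bounded, and since the overlap arrays agree in law in the sense of finite-dimensional distributions, the approximating quantities coincide exactly, and letting $\eps \to 0$ yields the claimed equalities. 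The main obstacle I anticipate is bookkeeping rather than conceptual: one must be careful that the function $\mathbf g$ in Proposition~\ref{p.approx_os} is allowed to depend on $C(\mathcal R(\rho,\rho))$ as well as $z(\rho)$, whereas here $\mathbf g$ depends only on $w^q(\alpha)$; this is harmless since one can simply take the $\mathbf g$ in the proposition to ignore its second argument. A second point of care is that the discrete cascade, as a random probability measure on the countable set $\N^K$, genuinely fits the definition of an overlap structure (Definition~\ref{d.os}) with $S$ the finite set $\{\zeta_0,\dots,\zeta_K\}$ (or $\{q_0,\dots,q_K\}$), and that the discrete Gaussian field $\msf w^q$ furnishes the required admissibility — both of which are immediate from the constructions recalled above. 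Finally, the measurability needed to even state the continuous-side expectations has already been secured in Lemma~\ref{l.meas}, so no further work is required there.
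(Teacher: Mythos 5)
Your proposal is correct and follows essentially the same route as the paper: both constructions are recognized as $q$-admissible overlap structures in the sense of Definition~\ref{d.os}, Proposition~\ref{p.approx_os} reduces both identities to the equality in law of the finite-dimensional overlap arrays $(\zeta_{\bal^\ell\wedge\bal^{\ell'}})_{\ell,\ell'\le n}$ and $(\zeta(\alpha^\ell\wedge\alpha^{\ell'}))_{\ell,\ell'\le n}$, and that equality is deduced from the matching of the single-overlap distributions via \eqref{e.first.cascade.identity}. The only step you leave implicit is the mechanism upgrading the matching of one-overlap laws to the matching of the full arrays: the paper spells this out by observing that both arrays satisfy the Ghirlanda--Guerra identities (the discrete cascade by \cite[Theorem~2.10]{pan}, the continuous one because it is constructed as a limit of discrete cascades, and these identities are preserved under composing the overlap with the increasing map $\zeta$) and that such arrays are uniquely determined by their one-overlap law (\cite[Proposition~5.31]{HJbook}), whereas your appeal to \cite[Corollary~5.32]{HJbook} alone is a continuity-in-law statement that does not by itself supply this uniqueness.
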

In \eqref{e.l.equiv_cascade_1} and~\eqref{e.l.equiv_cascade_2}, the expectations integrate all the randomness. For the expectations on the right side, we first integrate the Gaussian randomness in $(w^{q}(\alpha))_{\alpha \in \mfk U}$ and then the randomness of $\mathfrak{R}$, which is justified by Lemma~\ref{l.meas}.

\begin{proof}[Proof of Proposition~\ref{p.equiv_cascade}]
We start by observing that, for every $\bal, \bal' \in \N^K$,
\begin{equation*}  \E \Ll[ \msf w^q(\bal) \msf w^q(\bal')^\intercal \Rr]  = q \Ll( \zeta_{\bal \wedge \bal'} \Rr) ,
\end{equation*}
where we recall that $(q(u))_{u \in [0,1)}$ denotes the piecewise-constant path in \eqref{e.def.piece.const.q}. In the language of Definition~\ref{d.os}, the probability measure with weights $(\nu_\bal)_{\bal \in \N^K}$ and the field $(\msf w^q(\bal))_{\bal \in \N^K}$ form a $q$-admissible overlap structure, with the mapping $\mcl R$ chosen to be $(\bal, \bal') \mapsto \zeta_{\bal \wedge \bal'}$.

By \eqref{e.summary.gaussian}, the measure $\mfk R$ and the Gaussian field $(w^q(\al))_{\al \in \mfk U}$ also form a $q$-admissible overlap structure, with $\mcl R$ chosen to be $(\al,\al') \mapsto \al \wedge \al'$. Since $q$ is constant on each interval $[\zeta_k, \zeta_{k+1})$, we can also choose $\mcl R$ to be the mapping $(\al,\al') \mapsto \zeta(\al \wedge \al')$, where we write
\begin{equation}  
\label{e.def.zetapath}
\zeta := \sum_{k = 0}^K \zeta_k \1_{ \Ll[ \zeta_k, \zeta_{k+1} \Rr) }.
\end{equation}
Denoting by $(\bal^\ell)_{\ell \in \N}$ and $(\alpha^\ell)_{\ell \in \N}$ two families of independent random variables sampled according to $(\nu_\bal)_{\bal \in \N^K}$ and $\mfk R$ respectively, we see from Proposition~\ref{p.approx_os} that in order to prove the result, it suffices to argue that for each $n \in \N$, the overlap arrays $(\zeta_{\bal^\ell \wedge \bal^{\ell'}})_{\ell, \ell' \le n}$ and $(\zeta(\al^\ell \wedge \al^{\ell'}))_{\ell, \ell' \le n}$ have the same law after averaging over all sources of randomness. By \cite[Theorem~2.10]{pan} (or   \cite[Theorem~5.28]{HJbook}), every discrete Poisson--Dirichlet cascade satisfies the Ghirlanda--Guerra identities, and these identities are preserved whether we look at the overlap as being the mapping $(\bal, \bal') \mapsto \bal \wedge \bal'$ or the mapping $(\bal, \bal') \mapsto \zeta_{\bal \wedge \bal'}$. We recall that we built the continuous Poisson--Dirichlet cascade as a limit of discrete Poisson--Dirichlet cascades, using e.g.\ \cite[Corollary~5.32]{HJbook}. It therefore follows that the continuous Poisson--Dirichlet cascade $\mfk R$ still satisfies the Ghirlanda--Guerra identities, and in particular the overlap array built from the overlap mapping $(\al,\al') \mapsto \zeta(\al \wedge \al')$ satisfies the Ghirlanda--Guerra identities. By e.g.\ \cite[Proposition~5.31]{HJbook}, the laws of the overlap arrays $(\zeta_{\bal^\ell \wedge \bal^{\ell'}})_{\ell, \ell' \le n}$ and $(\zeta(\al^\ell \wedge \al^{\ell'}))_{\ell, \ell' \le n}$ are uniquely determined by the laws of $\zeta_{\bal \wedge \bal'}$ and $\zeta(\al \wedge \al')$ respectively. By \cite[Theorem~2.10]{pan} (or \cite[Theorem~5.28]{HJbook}), we have that the probability for $\zeta_{\bal \wedge \bal'}$ to be equal to $\zeta_k$ is $\zeta_{k+1} - \zeta_k$, for each $k \in \{0, \ldots, K\}$. By the construction of $\mfk R$ from \cite[Corollary~5.32]{HJbook}, we have that $\al \wedge \al'$ is uniformly distributed over $[0,1]$, and in particular, by \eqref{e.def.zetapath}, the probability for $\zeta(\al \wedge \al')$ to be equal to $\zeta_k$ is equal to $\zeta_{k+1} - \zeta_k$ as well. The proof is therefore complete. 
\end{proof}

To close this section, we show that the law of a Poisson--Dirichlet cascade under $\E \la \cdot \ra_\fR$ remains the same under any probability measure of the form of $\E \la \cdot \ra_{\fR^\mathbf{g}}$, with $\fR^{\mathbf{g}}$ as in \eqref{e.dfR^g} with $\mathbf{g}$ an arbitrary Lipschitz function. In particular, the overlap $\alpha \wedge \alpha'$ under $\E \la \cdot \ra_{\fR^{\mathbf{g}}}$ is uniformly distributed over $[0,1]$, a fact that we will use repeatedly later. 

We start by recalling the following invariance property of discrete Poisson--Dirichlet cascades from \cite[Theorem~4.4]{pan}.

\begin{lemma}[Invariance of discrete cascades]
\label{l.invar_discrete}
Let $K \in \N$, let $\mathcal{A}$ be the rooted tree in \eqref{e.tree},  let $(q_k)_{0 \le k \le K} \subset S^D_+$ and $(\zeta_k)_{1 \le k \le K} \subset \R_+$ be such that \eqref{e.def.q} and \eqref{e.def.zeta} hold, let $(\nu_\bal)_{\bal\in\N^K}$ be the associated Poisson--Dirichlet cascade as constructed below~\eqref{e.def.wedge}, and let $(\msf w^q(\bal))_{\bal\in\N^K}$ be as in \eqref{e.Z(h_alpha)=}.
For every Lipschitz $\mathbf{g}:\R^\D\to\R$, there is a random bijection $\pi:\mathcal{A}\to\mathcal{A}$, preserving the parent-child relation, such that
\begin{align*}
    \Ll(\nu^\mathbf{g}_{\pi(\bal)}\Rr)_{\bal \in\N^K} \stackrel{\d}{=} \Ll(\nu_\bal\Rr)_{\bal\in\N^K}
\end{align*}
where $ \Ll(\nu^\mathbf{g}_{\bal}\Rr)_{\bal \in\N^K}$ is given in \eqref{e.nu^g_bal=}.
In particular, for every bounded measurable function $\mathbf{h}:\R^{n\times n}\to\R$ with $n\in\N$, we have
\begin{align*}
    \E  \sum_{\bal^1,\dots,\bal^n \in \N^K} \nu^{\mathbf{g}}_{\bal^1}\cdots \nu^{\mathbf{g}}_{\bal^n} \mathbf{h}\Ll(\Ll(\zeta_{\bal^\ell\wedge\bal^{\ell'}}\Rr)_{1\leq \ell,\ell'\leq n}\Rr) = \E  \sum_{\bal^1,\dots,\bal^n \in \N^K} \nu_{\bal^1}\cdots \nu_{\bal^n} \mathbf{h}\Ll(\Ll(\zeta_{\bal^\ell\wedge\bal^{\ell'}}\Rr)_{1\leq \ell,\ell'\leq n}\Rr).
\end{align*}
\end{lemma}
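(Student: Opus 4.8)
The plan is to derive the first assertion from \cite[Theorem~4.4]{pan}. The only verification needed is that the Gaussian family $(\msf w^q(\bal))_{\bal\in\N^K}$ fits the framework of that theorem: by the construction in \eqref{e.Z(h_alpha)=}, its covariance $\E[\msf w^q(\bal)\msf w^q(\bal')^\intercal] = q_{\bal\wedge\bal'}$ depends on $\bal,\bal'$ only through the tree distance $\bal\wedge\bal'$, which is exactly the hierarchical structure exploited in \cite{pan}. The sole difference with the setting there is that $\msf w^q$ is $\R^\D$-valued rather than scalar; but the proof of \cite[Theorem~4.4]{pan} only uses the independence of the increments $z_\beta$ across the branches of $\mcl A$ and never the dimension $D$, so I would simply note that it applies verbatim for every $D \ge 1$.

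For the reader's benefit I would recall the mechanism of that proof, as it is what forces $\pi$ to respect the parent--child relation. The key input is the classical invariance of a Poisson point process on $(0,\infty)$ of intensity $\zeta x^{-1-\zeta}\,\d x$ under multiplication by an independent i.i.d.\ sequence of positive factors with finite $\zeta$-th moment: up to a deterministic rescaling of the points and a random re-enumeration, the point process is unchanged, and the rescaling constant may be absorbed one level up. I would apply this recursively, from the leaves of $\mcl A$ towards the root. Writing $X_\bal := \exp(\mathbf g(\msf w^q(\bal)))$ and recalling that $\msf w^q(\bal) = \sum_{k=0}^K (q_k-q_{k-1})^{1/2} z_{\bal_{|k}}$ with independent $z_\beta$, conditionally on the increments of depth $\le K-1$ the factors $(X_{\beta n})_n$ attached to the children of any depth-$(K-1)$ vertex $\beta$ form an i.i.d.\ sequence; the Poisson invariance then yields a random permutation of the children of $\beta$, the leftover normalization gets absorbed into the depth-$(K-1)$ weights, where the depth-$(K-1)$ increments provide a fresh i.i.d.\ family, and one iterates. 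Composing the level-by-level permutations produces a random bijection $\pi:\mcl A\to\mcl A$ preserving the parent--child relation with $(\nu^{\mathbf g}_{\pi(\bal)})_{\bal}\stackrel{\d}{=}(\nu_\bal)_{\bal}$.

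Granting this, the ``in particular'' part is a short formal manipulation I would carry out directly. Being a bijection of $\mcl A$ preserving the parent--child relation, $\pi$ is an automorphism of the rooted tree $\mcl A$; it therefore preserves depth and maps $\N^K$ onto $\N^K$, and $\pi(\bal)\wedge\pi(\bal') = \bal\wedge\bal'$ for all leaves. Reindexing the multiple sum $\sum_{\bal^1,\dots,\bal^n\in\N^K}\nu^{\mathbf g}_{\bal^1}\cdots\nu^{\mathbf g}_{\bal^n}\,\mathbf h((\zeta_{\bal^\ell\wedge\bal^{\ell'}}))$ through $\bal^\ell=\pi(\gamma^\ell)$ and using this identity to replace each $\zeta_{\pi(\gamma^\ell)\wedge\pi(\gamma^{\ell'})}$ by $\zeta_{\gamma^\ell\wedge\gamma^{\ell'}}$, I would obtain $\sum_{\gamma^1,\dots,\gamma^n\in\N^K}\nu^{\mathbf g}_{\pi(\gamma^1)}\cdots\nu^{\mathbf g}_{\pi(\gamma^n)}\,\mathbf h((\zeta_{\gamma^\ell\wedge\gamma^{\ell'}}))$; taking expectations and invoking $(\nu^{\mathbf g}_{\pi(\bal)})_{\bal}\stackrel{\d}{=}(\nu_\bal)_{\bal}$ converts this into $\E\sum_{\gamma^1,\dots,\gamma^n\in\N^K}\nu_{\gamma^1}\cdots\nu_{\gamma^n}\,\mathbf h((\zeta_{\gamma^\ell\wedge\gamma^{\ell'}}))$, as desired. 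The only delicate point in the whole argument is the bookkeeping in the recursion used for the first assertion --- checking that the level-wise Poisson re-enumerations compose into a single tree automorphism and that the rescaling constants are absorbed without spoiling the i.i.d.\ structure needed to iterate --- but this is precisely what \cite[Theorem~4.4]{pan} supplies, so I would not reproduce it.
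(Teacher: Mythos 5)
Your proposal is correct and follows the same route as the paper, which simply records this lemma as a restatement of \cite[Theorem~4.4]{pan} without further proof. Your additional checks — that the $\R^D$-valued increments still furnish the i.i.d.\ structure across branches needed for the Poisson invariance, and that the ``in particular'' part follows from the first assertion because a parent--child-preserving bijection fixes the root, preserves depth, and satisfies $\pi(\bal)\wedge\pi(\bal')=\bal\wedge\bal'$ — are exactly the right ones and are carried out correctly.
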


We can now prove that this invariance property transfers to continuous cascades.

\begin{proposition}[Invariance of continuous cascades]
\label{p.invar_cts}
For every $q \in \mcl Q_\infty$, every Lipschitz $\mathbf{g}:\R^\D\to\R$, and every bounded measurable $\mathbf{h}:\R^{n\times n}\to\R$ with $n\in\N$, we have
\begin{align*}
    \E  \la \mathbf{h}\Ll(\Ll(\alpha^\ell\wedge\alpha^{\ell'}\Rr)_{1\leq \ell,\ell'\leq n}\Rr)\ra_{\fR^\mathbf{g}}=\E  \la \mathbf{h}\Ll(\Ll(\alpha^\ell\wedge\alpha^{\ell'}\Rr)_{1\leq \ell,\ell'\leq n}\Rr)\ra_{\fR}
\end{align*}
where $\fR^\mathbf{g}$ is defined in~\eqref{e.dfR^g}, and we recall that $\langle \cdot \rangle_\fR = \langle \cdot \rangle_{\fR^0}$. 
\end{proposition}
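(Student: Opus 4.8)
The plan is to reduce the statement to the case of piecewise-constant paths, where it follows by combining Proposition~\ref{p.equiv_cascade} with the discrete invariance of Lemma~\ref{l.invar_discrete}, and then to pass to the limit. Since the laws of the overlap array $\Ll(\alpha^\ell\wedge\alpha^{\ell'}\Rr)_{1\le \ell,\ell'\le n}$ under $\E\la\cdot\ra_{\fR^\mathbf{g}}$ and under $\E\la\cdot\ra_\fR$ are Borel probability measures on the compact set $[0,1]^{n\times n}$, it suffices to prove the identity for every bounded \emph{continuous} $\mathbf{h}:\R^{n\times n}\to\R$; the general bounded measurable case then follows since two such measures agreeing on all bounded continuous functions coincide.

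\emph{Piecewise-constant case.} Let $q\in\mcl Q$ be a piecewise-constant path of the form \eqref{e.def.piece.const.q}, with associated step function $\zeta$ as in \eqref{e.def.zetapath}. Applying \eqref{e.l.equiv_cascade_2} of Proposition~\ref{p.equiv_cascade} once with $\mathbf{g}$ and once with $\mathbf{g}=0$ (so that $\nu^0=\nu$ and $\fR^0=\fR$), and inserting the discrete invariance identity of Lemma~\ref{l.invar_discrete} between the two, one obtains
\begin{equation*}
\E\la \mathbf{h}\Ll(\Ll(\zeta(\alpha^\ell\wedge\alpha^{\ell'})\Rr)_{1\le \ell,\ell'\le n}\Rr)\ra_{\fR^\mathbf{g}} = \E\la \mathbf{h}\Ll(\Ll(\zeta(\alpha^\ell\wedge\alpha^{\ell'})\Rr)_{1\le \ell,\ell'\le n}\Rr)\ra_{\fR}
\end{equation*}
for every bounded measurable $\mathbf{h}$.

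\emph{Approximation and the easy part of the limit.} Now let $q\in\mcl Q_\infty$ be arbitrary. I would choose piecewise-constant paths $q^{(m)}\in\mcl Q$ of the form \eqref{e.def.piece.const.q} whose breakpoint sets form a nested, refining family of partitions of $[0,1]$ with mesh tending to $0$, and with $q^{(m)}\to q$ almost everywhere on $[0,1]$ (take $q^{(m)}$ to interpolate $q$ on a dyadic grid, perturbing the values by a vanishing multiple of the identity to keep them strictly increasing as required by \eqref{e.def.q}). Writing $\zeta^{(m)}$ for the step function of $q^{(m)}$ and $\fR^\mathbf{g}_m$ for the tilted measure \eqref{e.dfR^g} built with the Gaussian cascade $w^{q^{(m)}}$, the piecewise-constant case gives, for each $m$,
\begin{equation}
\label{e.invar.step1}
\E\la \mathbf{h}\Ll(\Ll(\zeta^{(m)}(\alpha^\ell\wedge\alpha^{\ell'})\Rr)_{\ell,\ell'\le n}\Rr)\ra_{\fR^\mathbf{g}_m} = \E\la \mathbf{h}\Ll(\Ll(\zeta^{(m)}(\alpha^\ell\wedge\alpha^{\ell'})\Rr)_{\ell,\ell'\le n}\Rr)\ra_{\fR}.
\end{equation}
By the refining property, $\sup_{u\in[0,1]}\Ll|\zeta^{(m)}(u)-u\Rr|\to 0$; since $\mathbf{h}$ is uniformly continuous on $[0,1]^{n\times n}$, the right-hand side of \eqref{e.invar.step1} converges to $\E\la\mathbf{h}\Ll(\Ll(\alpha^\ell\wedge\alpha^{\ell'}\Rr)_{\ell,\ell'\le n}\Rr)\ra_\fR$ (bounded convergence), and the left-hand side differs from $\E\la\mathbf{h}\Ll(\Ll(\alpha^\ell\wedge\alpha^{\ell'}\Rr)_{\ell,\ell'\le n}\Rr)\ra_{\fR^\mathbf{g}_m}$ by at most $\sup_{[0,1]^{n\times n}}\Ll|\mathbf{h}\circ(\zeta^{(m)})^{\otimes n^2}-\mathbf{h}\Rr|\to0$. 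So the proof reduces to showing that $\E\la\mathbf{h}\Ll(\Ll(\alpha^\ell\wedge\alpha^{\ell'}\Rr)\Rr)\ra_{\fR^\mathbf{g}_m}\to\E\la\mathbf{h}\Ll(\Ll(\alpha^\ell\wedge\alpha^{\ell'}\Rr)\Rr)\ra_{\fR^\mathbf{g}}$, where $\fR^\mathbf{g}$ is built with $w^q$.

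\emph{Convergence of the tilted field --- the main obstacle.} This last step is the delicate point, since the tilting field $w^{q^{(m)}}$ varies with $m$. Fix $\eps>0$ and apply Proposition~\ref{p.approx_os} with $\Gamma=\fR$, $\mathcal{R}(\alpha,\alpha')=\alpha\wedge\alpha'$ (so $S=[0,1]$, and $|\mathcal{R}|\le 1$ by construction of $\fR$), $\mathbf{g}$ viewed as a function of its first argument only, and $C=q^{(m)}$ (respectively $C=q$). Inspecting the proof of Proposition~\ref{p.approx_os}, the function $F^2_\eps$ and the integer $n_2$ there can be chosen to depend only on $\mathbf{g},\mathbf{h},\eps,n$ and an upper bound on $\sup|C|$ --- which can be taken uniform in $m$ since $q^{(m)}\to q$ in $L^\infty$ --- and not on $C$ itself, because $F^2_\eps$ is expressed purely through the covariance/overlap array fed into it. Hence, with one fixed $F^2_\eps$,
\begin{equation*}
\E\la\mathbf{h}\Ll(\Ll(\alpha^\ell\wedge\alpha^{\ell'}\Rr)\Rr)\ra_{\fR^\mathbf{g}_m} \ \approx_\eps\ \la\mathbf{h}\Ll(\Ll(\alpha^\ell\wedge\alpha^{\ell'}\Rr)_{\ell,\ell'\le n}\Rr) F^2_\eps\Ll(\Ll(q^{(m)}(\alpha^\ell\wedge\alpha^{\ell'})\Rr)_{\ell,\ell'\le n_2}\Rr)\ra_\fR,
\end{equation*}
and likewise with $q^{(m)}$ replaced by $q$. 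Under $\E\la\cdot\ra_\fR$ the off-diagonal overlaps are uniformly distributed on $[0,1]$, so their law is absolutely continuous; the a.e. convergence $q^{(m)}\to q$ then forces $q^{(m)}(\alpha^\ell\wedge\alpha^{\ell'})\to q(\alpha^\ell\wedge\alpha^{\ell'})$ almost surely, and since $F^2_\eps$ and $\mathbf{h}$ are bounded continuous, bounded convergence shows the right-hand side above converges to the corresponding expression with $q$. Combining the two approximations gives $\limsup_{m\to\infty}\Ll|\E\la\mathbf{h}(\cdots)\ra_{\fR^\mathbf{g}_m}-\E\la\mathbf{h}(\cdots)\ra_{\fR^\mathbf{g}}\Rr|\le 2\eps$; letting $\eps\to0$ finishes the proof. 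The crux is thus the observation, pulled from the proof of Proposition~\ref{p.approx_os}, that its approximating functions of the overlap array do not depend on the covariance function $C$, which is what lets the limit in $m$ be taken cleanly despite the field $w^{q^{(m)}}$ changing at every step.
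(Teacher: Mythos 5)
Your proposal is correct, and it uses exactly the same ingredients as the paper's proof (reduction to continuous $\mathbf{h}$, Proposition~\ref{p.approx_os}, Proposition~\ref{p.equiv_cascade}, Lemma~\ref{l.invar_discrete}, and the fact that the off-diagonal overlaps are uniform so that the discontinuities of $q$ are hit with probability zero). The one genuine difference is organizational, and it is precisely the point you flag as the crux. You approximate the \emph{path} $q$ by piecewise-constant $q^{(m)}$ while keeping the overlap $\alpha\wedge\alpha'$ fixed, which forces you to compare tilted measures built from the two different Gaussian fields $w^{q^{(m)}}$ and $w^{q}$; to close that gap you must argue that the approximating function $F^2_\eps$ of Proposition~\ref{p.approx_os} can be chosen uniformly over the covariance functions $C\in\{q^{(m)}\}_m\cup\{q\}$. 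That claim is defensible (the functions $F^2_{\eps,j}$ are Gaussian integrals depending only on the array of covariance values fed into them, and the truncation/polynomial-degree parameters depend only on the Lipschitz constant of $\mathbf{g}$ and on $\sup_m|q^{(m)}|_{L^\infty}\le|q|_{L^\infty}$), but it goes beyond the literal statement of Proposition~\ref{p.approx_os}, whose uniformity is only over overlap structures for a \emph{fixed} $C$. The paper instead approximates the \emph{overlap}, replacing $\alpha\wedge\alpha'$ by $\zeta^{(K)}(\alpha\wedge\alpha')$ while keeping $C=q$ in every application of Proposition~\ref{p.approx_os} (the discrete cascade built from $q\circ\zeta^{(K)}$ is again a $q$-admissible overlap structure, with overlap map $(\bal,\bal')\mapsto\zeta^{(K)}(\bal\wedge\bal'/(K+1))$), so only the stated uniformity is ever used and one fixed $F_\eps$ serves throughout. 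If you keep your route, you should make the uniformity-in-$C$ claim explicit as a small strengthening of Proposition~\ref{p.approx_os}; otherwise the two arguments are equivalent up to this bookkeeping.
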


\begin{proof}
For every $\ell, \ell',n \in \N$, we write $Q^{\ell, \ell'} := \al^\ell \wedge \al^{\ell'}$ and $Q^{\le n} := (Q^{\ell,\ell'})_{1 \le \ell, \ell' \le n}$. 
Without loss of generality, we can assume that $\mathbf{h}$ is bounded and continuous. We fix $\ep > 0$. By Proposition~\ref{p.approx_os}, there exists $n \in \N$ and a bounded continuous function $F_\ep$ such that 
\begin{align}\label{e.cts_inv_eq_1}
    \E \la \mathbf{h}\Ll(Q^{\leq n}\Rr)\ra_{\fR^\mathbf{g}} \approx_\eps \E \la  \mathbf{h}\Ll(Q^{\leq n}\Rr) F_\eps\Ll( q\Ll(Q^{\leq n}\Rr)\Rr)\ra_\fR,
\end{align}
where $q$ acts entrywise. For each $K \in \N$, we write
\begin{equation*}
\zeta^{(K)} := \sum_{k = 0}^K \frac{k}{K+1} \1_{ \Ll[ \frac{k}{K+1}, \frac{k+1}{K+1} \Rr)  },
\end{equation*}
with the usual understanding that $\zeta^{(K)}(1) = \lim_{u \nearrow 1} \zeta^{(K)}(u) = K/(K+1)$. 
In particular, the sequence $(\zeta^{(K)})_{K \in \N} \subset \mcl Q_\infty$ converges in $L^1$ to the identity map $x \mapsto x$ as $K$ tends to infinity. For every $\ell, \ell',n \in \N$, we write $Q_K^{\ell, \ell'} := \zeta^{(K)}(\al^\ell \wedge \al^{\ell'})$ and $Q_K^{\le n} := (Q_K^{\ell, \ell'})_{1 \le \ell, \ell' \le n}$. Since $Q$ satisfies the Ghirlanda--Guerra identities by construction of the continuous Poisson--Dirichlet cascade, it is immediate that $Q_K$ also satisfies the Ghirlanda--Guerra identities. By \cite[Corollary~5.32]{HJbook}, we have that $Q_K^{\le n}$ converges in law to $Q^{\le n}$. We want to argue that $(Q_K^{\le n}, q(Q_K^{\le n}))$ also converges in law to $(Q^{\le n}, q(Q^{\le n}))$ as $K$ tends to infinity. This is not immediate since $q$ is not continuous. Notice first that the on-diagonal overlaps are deterministic, and for every $\ell \in \N$, we have
\begin{equation*}  Q_K^{\ell, \ell} = \frac{K}{K+1} \xrightarrow{K \to \infty} 1 = Q^{\ell,\ell}. 
\end{equation*}
Since $q$ is continuous at $1$, the question thus only concerns the off-diagonal overlaps. We recall that each of these off-diagonal overlaps is uniformly distributed over $[0,1]$. Since the set of points of discontinuity of $q$ is countable, a union bound guarantees that the probability for $Q^{\le n}_K$ to belong to the set of points of discontinuity of the mapping $q$ (as applied entrywise to the array) is zero. 
We can therefore apply the continuous mapping theorem (e.g. \cite[Exercise~A.10 and solution]{HJbook}) to deduce that indeed $(Q_K^{\le n}, q(Q_K^{\le n}))$ converges in law to $(Q^{\le n}, q(Q^{\le n}))$ as $K$ tends to infinity. In particular, for every $K$ sufficiently large, we have that 
\begin{equation*}  \E \la  \mathbf{h}\Ll(Q^{\leq n}\Rr) F_\eps\Ll( q\Ll(Q^{\leq n}\Rr)\Rr)\ra_\fR \approx_\ep \E \la  \mathbf{h}\Ll(Q_K^{\leq n}\Rr) F_\eps\Ll( q\Ll(Q_K^{\leq n}\Rr)\Rr)\ra_\fR.
\end{equation*}
By Proposition~\ref{p.equiv_cascade}, the latter quantity can be represented using the discrete Poisson--Dirichlet cascade based on the path $q \circ \zeta^{(K)}$, which is defined on a tree of depth $K$. Denoting the weights of this discrete Poisson--Dirichlet cascade by $(\nu_{\bal})_{\bal \in \N^K}$, Proposition~\ref{p.equiv_cascade} ensures that
\begin{equation*}  \E \la  \mathbf{h}\Ll(Q_K^{\leq n}\Rr) F_\eps\Ll( q\Ll(Q_K^{\leq n}\Rr)\Rr)\ra_\fR = \E  \sum_{\bal^1,\dots,\bal^n \in \N^K} \nu_{\bal^1}\cdots \nu_{\bal^n} \mathbf{h}\Ll(R^{\le n}\Rr) F_\ep \Ll( q \Ll( R^{\le n} \Rr)  \Rr) ,
\end{equation*}
where we used the shorthand $R^{\ell,\ell'} := (\bal^\ell \wedge \bal^{\ell'})/(K+1)$ and $R^{\le n} := (R^{\ell, \ell'})_{1 \le \ell, \ell' \le n}$. Another application of Proposition~\ref{p.approx_os}, in the same setting of $q$-admissible overlap structures as for \eqref{e.cts_inv_eq_1}, yields that 
\begin{equation*}  \E  \sum_{\bal^1,\dots,\bal^n \in \N^K} \nu_{\bal^1}\cdots \nu_{\bal^n} \mathbf{h}\Ll(R^{\le n}\Rr) F_\ep \Ll( q \Ll( R^{\le n} \Rr)  \Rr) 
\approx_\ep 
\sum_{\bal^1,\dots,\bal^n \in \N^K} \nu^{\mathbf{g}}_{\bal^1}\cdots \nu^{\mathbf{g}}_{\bal^n} \mathbf{h}\Ll(R^{\le n}\Rr).
\end{equation*}
By the invariance property from Lemma~\ref{l.invar_discrete}, we have that
\begin{equation*}  \sum_{\bal^1,\dots,\bal^n \in \N^K} \nu^{\mathbf{g}}_{\bal^1}\cdots \nu^{\mathbf{g}}_{\bal^n} \mathbf{h}\Ll(R^{\le n}\Rr) 
= \sum_{\bal^1,\dots,\bal^n \in \N^K} \nu_{\bal^1}\cdots \nu_{\bal^n} \mathbf{h}\Ll(R^{\le n}\Rr).
\end{equation*}
To summarize, we have shown that
\begin{equation*}  \E \la \mathbf{h}\Ll(Q^{\leq n}\Rr)\ra_{\fR^\mathbf{g}} \approx_{3\ep} \sum_{\bal^1,\dots,\bal^n \in \N^K} \nu_{\bal^1}\cdots \nu_{\bal^n} \mathbf{h}\Ll(R^{\le n}\Rr).
\end{equation*}
Using this relation also with $\mathbf{g} = 0$, we conclude that
\begin{equation*}  \E \la \mathbf{h}\Ll(Q^{\leq n}\Rr)\ra_{\fR^\mathbf{g}} \approx_{6\ep}    \E \la \mathbf{h}\Ll(Q^{\leq n}\Rr)\ra_{\fR} .
\end{equation*}
Since $\ep > 0$ was arbitrary, the proof is complete. 
\end{proof}

\begin{remark}[Left-continuous paths]\label{r.left-cts}
All results in this section also hold if, instead of~$\C_\infty$, we take $q$ from the set
\begin{align}\label{e.Pi=}
    \Pi:= \Ll\{ q : [0,1] \to S^\D_+ \ : \ q \text{ is left-continuous with right limits, and is increasing} \Rr\}. 
\end{align}
The only place where any kind of continuity of a path is needed is in the proof of Proposition~\ref{p.gaussian_cascade}. There, to show that $w^q$ is jointly measurable, one requires $q$ to be continuous at~$1$, as used in~\eqref{e.cts_at_1}.  Every path in $\Pi$ is also continuous at $1$ by definition.
\end{remark}

\section{The free energy with continuous cascades}
\label{s.def.cont}

We recall from the beginning of the previous section that we denote by $\fR$ a Poisson--Dirichlet cascade such that the law of one overlap is uniformly distributed over~$[0,1]$. The object $\fR$ is a (random) probability measure on the Hilbert space $\mfk H$, and we denote its (random) support by $\mfk U$. For each realization of $\mfk R$, we denote by $\la \cdot \ra_{\fR}$ the expectation with respect to the measure $\fR^{\otimes \N}$, with canonical random variables $(\alpha^\ell)_{\ell \in \N}$. The expectation~$\E$ takes the average over the randomness of $\fR$ itself. The event in \eqref{e.good.mfkU} has $\P$-probability one, and whenever this event is realized and for each $q \in \mcl Q_\infty$, there exists an $\R^D$-valued Gaussian process $(w^q(\al))_{\al \in \mfk U}$ with covariance given by \eqref{e.gaussian_cascade}. On this same event, we take $(w^{q}_i)_{i\in\N}$ to be i.i.d.\ copies of $w^{q}$ and define, for every $N\in\N$ and $\alpha \in \mfk U$,
\begin{align}\label{e.W^mu(alpha)}
    W^{q}_N(\alpha) := (w^{q}_1(\alpha),\dots, w^{q}_N(\alpha)).
\end{align}
The random variable $W^q_N(\alpha)$ takes values in $\R^{D\times N}$. We can now define, for each $t \ge 0$, $q \in \mcl Q_\infty$, and $N \in \N$, the free energy
\begin{equation}
\label{e.def.barFN.continuous}
\bar F_N(t,q) := -\frac{1}{N}\E \log \iint \exp \Ll(H_N^{t,{q}}(\sigma,\alpha) \Rr)\d P_N(\sigma)\d \fR(\alpha),
\end{equation}
where
\begin{equation}
\label{e.H^t,mu_N}
H_N^{t,{q}}(\sigma,\alpha)  = \sqrt{2t}H_N(\sigma) - tN\xi\Ll(\frac{\sigma\sigma^\intercal}{N}\Rr)+\sqrt{2}W^{q}_N(\alpha)\cdot \sigma -  {q}(1)\cdot \sigma\sigma^\intercal.
\end{equation}
We also define the associated Gibbs measure
\begin{align}
\label{e.<>_N}
    \la \cdot\ra_N \propto  \exp\Ll( H^{t,q}_N(\sigma,\alpha)\Rr) \d P_N(\sigma) \d \fR(\alpha),
\end{align}
defined for each fixed realization of $\fR$ and of the random Gaussian field $(w^q(\alpha))_{\alpha \in \mfk U}$. We use the symbol $\propto$ as shorthand for more precise expressions as in \eqref{e.def.disc.gibbs}.  The canonical random variable under $\la \cdot \ra_N$ is denoted by $(\si, \al)$. We also write $(\si^\ell, \al^\ell)_{\ell \in \N}$ to denote independent copies of $(\si, \al)$ under $\la\cdot \ra_N$. The expectation $\E$ in \eqref{e.def.barFN.continuous} integrates all sources of randomness, and we recall that for measurability reasons, we always take the average over the Gaussian process $(W_N^q(\alpha))_{\alpha \in \mfk U}$ first.

In \eqref{e.def.barFN.continuous}, we allowed ourselves to use the same notation as in \eqref{e.def.FN.piece.const}. When $q$ is piecewise constant, these two quantities coincide, by Proposition~\ref{p.equiv_cascade}. This proposition also ensures that Gibbs averages computed under the measures in \eqref{e.<>_N} and in \eqref{e.def.disc.gibbs} coincide.

Recall that we extended the free energy defined in \eqref{e.def.FN.piece.const} to all paths $q \in \mcl Q_1$ using the Lipschitz continuity property from Proposition~\ref{p.barF_N_Lip}. The next proposition states that the right-hand side of \eqref{e.def.barFN.continuous} satisfies the same Lipschitz continuity property, and therefore the two constructions do coincide for all paths and there is no abuse of notation here. This proposition in fact identifies the derivatives of $\bar F_N$ with respect to $t$ and $q$ precisely. 
We set
\begin{align}\label{e.L^infty<1}
    L^\infty_{\leq 1} := \Ll\{q \in L^\infty :\: |\kappa|_{L^\infty}\leq 1\Rr\}.
\end{align}
\begin{proposition}[Differentiability of $\bar F_N$]
\label{p.F_N_smooth}
Let $N \in \N$, and let $\bar F_N : \R_+ \times \mcl Q_\infty \to \R$ be the free energy defined in \eqref{e.def.barFN.continuous}. We have for every $t,t' \in \R_+$ and $q,q' \in \mcl Q_\infty$ that 
\begin{equation*}
\Ll|\bar F_N(t,{q}) - \bar F_N(t',{q'})\Rr|\leq \Ll|{q}-{q'}\Rr|_{L^1} + |t-t'| \, \sup_{|a| \le 1} |\xi(a)|.
\end{equation*}
In particular, the free energy in \eqref{e.def.barFN.continuous} can be extended by continuity to $\R_+\times \mcl Q_1$, and it coincides everywhere with the free energy defined in \eqref{e.def.FN.piece.const} and extended by continuity to $\R_+\times \mcl Q_1$ in Proposition~\ref{p.barF_N_Lip}. Moreover, the restriction of the function $\bar F_N$ to $\R_+ \times \mcl Q_2$ is Fréchet (and Gateaux) differentiable everywhere, jointly in its two variables. We denote its Fréchet (and Gateaux) derivative in $q$ by $\dr_q \bar F_N(t,q) = \dr_q \bar F_N(t,q, \cdot) \in L^2([0,1]; S^D)$. For every $t \ge 0$, we have, for every $q \in \mcl Q_2$,
\begin{equation}
\label{e.bounds.der.FN}
\partial_q \bar F_N(t,q) \in \mcl Q \cap L^\infty_{\le 1}, \qquad \Ll|\partial_t  \bar F_N(t,q) \Rr| \le \sup_{|a| \le 1} |\xi(a)|,
\end{equation}
and, for every $q \in \mcl Q_\infty$  and $\kappa \in L^2([0,1]; S^D)$,
\begin{equation}
\label{e.def.der.FN}
\la \kappa,\dr_q \bar F_N(t,{q})\ra_\cH = \E \la \kappa \Ll(\alpha\wedge\alpha'\Rr)\cdot \frac{\sigma\sigma'^\intercal}{N}\ra_N,\qquad \partial_t \bar F_N(t,{q}) = \E \la \xi \Ll(\frac{\sigma\sigma'^\intercal}{N}\Rr)\ra_N.
\end{equation}
Finally, for every $r \in [1,+\infty]$,  $t \ge 0$ and $q, q' \in \mcl Q_2$ with $q'-q \in L^r$, we have
\begin{equation}
\label{e.continuity.der.FN}
\Ll|\dr_q \bar F_N(t,{q'}) -\dr_q \bar F_N(t,{q})\Rr|_{L^r}\leq 16N\Ll|q'-{q}\Rr|_{L^r}.
\end{equation}
In particular, the mapping $q \mapsto \dr_q \bar F_N(t,q)$ can be extended to $\mcl Q_1$ by continuity, and the properties in \eqref{e.bounds.der.FN} and \eqref{e.continuity.der.FN} remain valid with $q, q' \in \mcl Q_1$. 
\end{proposition}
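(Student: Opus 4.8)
The plan is to prove Proposition~\ref{p.F_N_smooth} by exploiting the explicit formula \eqref{e.def.barFN.continuous}, the Gaussian cascade construction, and standard Gaussian calculus, reducing the continuous case to the discrete one via Proposition~\ref{p.equiv_cascade} whenever convenient.

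\medskip

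\noindent\emph{Step 1: Lipschitz continuity and coincidence of the two constructions.}
I would first establish the Lipschitz bound for the right-hand side of \eqref{e.def.barFN.continuous}. The $t$-dependence is handled exactly as in the proof of Proposition~\ref{p.barF_N_Lip}: differentiate in $t$ using Gaussian integration by parts (\cite[Theorem~4.6]{HJbook}), obtain $\partial_t \bar F_N(t,q) = \E\la \xi(\sigma\sigma'^\intercal/N)\ra_N$, and bound it using \eqref{e.bound.on.sigma} and the compact support assumption \eqref{e.bound.support}. For the $q$-dependence, I would first prove the bound for piecewise-constant paths, where by Proposition~\ref{p.equiv_cascade} the continuous free energy agrees with \eqref{e.def.FN.piece.const}, so Proposition~\ref{p.barF_N_Lip} applies directly; then pass to general $q\in\mcl Q_\infty$ by approximating in $L^1$ by piecewise-constant paths and using that the continuous construction depends continuously on $q$ (which itself follows once the Lipschitz estimate is in place, so one should be slightly careful and instead differentiate directly in $q$, see Step 2). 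Once the Lipschitz bound holds on $\mcl Q_\infty$, both constructions admit the same continuous extension to $\R_+\times\mcl Q_1$ and therefore coincide, since they already coincide on the dense set of piecewise-constant paths.

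\medskip

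\noindent\emph{Step 2: Identification of the derivatives.}
The key computation is \eqref{e.def.der.FN}. For $\kappa\in L^2$ and $q\in\mcl Q_\infty$ with $q+\eps\kappa$ still admissible for small $\eps$ (e.g.\ $q\in\mcl Q_\uparrow$, then extend by density/monotonicity), differentiate $\bar F_N(t,q+\eps\kappa)$ in $\eps$. The only $\eps$-dependent terms are $\sqrt 2\, W^q_N(\alpha)\cdot\sigma$ and $q(1)\cdot\sigma\sigma^\intercal$. Writing $W^{q+\eps\kappa}_N = W^q_N + (\text{increment})$ and applying Gaussian integration by parts to the Gaussian field $w$ with covariance \eqref{e.gaussian_cascade}, the variance term $q(1)\cdot\sigma\sigma^\intercal$ cancels the diagonal contribution exactly (this is the purpose of that term), leaving $\la\kappa,\partial_q\bar F_N(t,q)\ra_\cH = \E\la\kappa(\alpha\wedge\alpha')\cdot\sigma\sigma'^\intercal/N\ra_N$. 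This is cleanest to carry out first for piecewise-constant $q$ using the discrete cascade \eqref{e.Z(h_alpha)=}--\eqref{e.def.FN.piece.const}, where $\partial_{q_k}$-derivatives are genuine finite-dimensional derivatives and \eqref{e.first.cascade.identity} controls the combinatorics; then transfer to general $q$ via Proposition~\ref{p.equiv_cascade} and an approximation argument. The inclusion $\partial_q\bar F_N(t,q)\in\mcl Q\cap L^\infty_{\le 1}$ in \eqref{e.bounds.der.FN} then follows: the $L^\infty_{\le 1}$ bound from $|\sigma\sigma'^\intercal/N|\le 1$ via \eqref{e.bound.on.sisi'}, and the monotonicity (membership in $\mcl Q$, equivalently positivity of increments in $\C_2^*$) from Proposition~\ref{p.monotone} together with Lemma~\ref{l.charact.C2star} — or directly from the Ghirlanda--Guerra / synchronization structure, using that $\kappa\in\C_2^*$ implies $\la\kappa,\partial_q\bar F_N\ra_\cH\ge 0$.

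\medskip

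\noindent\emph{Step 3: Fréchet differentiability and the continuity estimate \eqref{e.continuity.der.FN}.}
For the quantitative continuity of $q\mapsto\partial_q\bar F_N(t,q)$, I would differentiate the expression \eqref{e.def.der.FN} once more, or more robustly, estimate $\la\kappa,\partial_q\bar F_N(t,q')-\partial_q\bar F_N(t,q)\ra_\cH$ directly by interpolating along the segment $q_s = (1-s)q+sq'$ and bounding $\partial_s$ of the Gibbs average. Each such derivative produces a covariance-type term of the form $N\,\E\la(\cdots)\cdot(\cdots)\ra_N^c$ (a connected correlation), and crude bounds using $|\sigma\sigma'^\intercal/N|\le 1$ together with $|q'(\alpha\wedge\alpha')-q(\alpha\wedge\alpha')|\le|q'-q|_{L^\infty}$ (or the appropriate $L^r$ version after integrating the overlap, which is uniform on $[0,1]$) give a bound of the form $CN|q'-q|_{L^r}|\kappa|_{L^{r'}}$; taking the supremum over $\kappa$ in the dual unit ball yields \eqref{e.continuity.der.FN} with the constant $16$ after tracking factors of $2$ from the $\sqrt 2$'s and the integration-by-parts. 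The Fréchet differentiability jointly in $(t,q)$ on $\R_+\times\mcl Q_2$ then follows from the existence of the Gateaux derivative identified in \eqref{e.def.der.FN}, its Lipschitz continuity in $q$ from \eqref{e.continuity.der.FN} (for fixed $N$), smoothness in $t$ (real-analytic, by repeated Gaussian integration by parts), and the fact that a function whose Gateaux derivative exists and is continuous is Fréchet differentiable. Finally, the extension to $\mcl Q_1$ is immediate from the uniform (in the relevant norms) continuity estimate.

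\medskip

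\noindent I expect the main obstacle to be Step 2, specifically the bookkeeping in the differentiation-in-$q$ of the continuous-cascade free energy: one must justify interchanging the $\eps$-derivative with the $\E\log\iint$, handle the fact that the Gaussian process $w^q$ is not jointly measurable in $\bomega$ (so one integrates $\E_{\bomega,q}$ first, using Lemma~\ref{l.meas}), and verify that the increment $W^{q+\eps\kappa}_N - W^q_N$ can be coupled to be Gaussian with the right covariance. The pragmatic route around this is to do all derivative computations for piecewise-constant paths — where everything is a genuine finite-dimensional Gaussian calculation — and then invoke Proposition~\ref{p.equiv_cascade} plus density; the continuity estimate \eqref{e.continuity.der.FN}, being quantitative and uniform, is exactly what makes this passage to the limit legitimate.
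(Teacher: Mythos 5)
Your proposal is correct and follows essentially the same route as the paper's proof: the key device is the coupling $w^{q+s\kappa}\stackrel{\d}{=}w^{q}+\sqrt{s}\,w^{\kappa}$ (exactly the "increment with the right covariance" you flag as the main thing to verify), followed by Gaussian integration by parts for the first and second $s$-derivatives, the invariance of Poisson--Dirichlet cascades (Proposition~\ref{p.invar_cts}) to convert Gibbs averages of $|\kappa(\alpha\wedge\alpha')|$ into $L^r$-norms of $\kappa$, duality for the $L^\infty_{\le 1}$ bound, the $\mcl Q_2^*$-monotonicity of Proposition~\ref{p.monotone} together with $(\mcl Q_2^*)^*=\mcl Q_2$ for membership in $\mcl Q$, and an interpolation in $s$ of the Gibbs average $\E\la\eta(\alpha\wedge\alpha')\cdot\sigma\sigma'^\intercal/N\ra_{N,q+s\kappa}$ for \eqref{e.continuity.der.FN}. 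Two cautions. First, your "pragmatic" fallback of doing all derivative computations on piecewise-constant paths and passing to the limit is circular as stated: identifying the continuous-cascade quantity \eqref{e.def.barFN.continuous} at a general $q\in\mcl Q_\infty$ with the limit along piecewise-constant approximations already requires continuity in $q$ of the continuous construction, which is part of what is being proved; so the direct computation on the continuous cascade (your primary route, and the paper's) cannot be bypassed. Second, the paper does not invoke "Gateaux plus continuous derivative implies Fréchet" (a statement usually formulated for open sets, whereas $\mcl Q_2$ has empty interior in $L^2$); it instead establishes the explicit second-order Taylor bound $\Ll|\bar F_N(t,q')-\bar F_N(t,q)-\la q'-q,\dr_q\bar F_N(t,q)\ra_\cH\Rr|\le 8N|q'-q|_{L^2}^2$ directly from the second-derivative estimate. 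Your mean-value argument along the segment $q+s(q'-q)$, which stays inside the convex set $\mcl Q_2$, yields the same bound from \eqref{e.continuity.der.FN} with $r=2$, so this is a presentational difference rather than a gap.
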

For the statement of \eqref{e.def.der.FN}, we impose that $q \in \mcl Q_\infty$ so that the Gibbs measure appearing there	is well-defined. 
\begin{proof}[Proof of Proposition~\ref{p.F_N_smooth}]
For clarity of presentation we only prove the results concerning $\dr_q \bar F_N$; showing the joint differentiability in $(t,q)$ does not pose additional difficulties, the property of $\partial_t \bar F_N(t,q)$ in \eqref{e.def.der.FN} is a Gaussian integration by parts as in \eqref{e.der.t}, and the property of $\partial_t \bar F_N(t,q)$ in \eqref{e.bounds.der.FN} follows from that in \eqref{e.def.der.FN} and \eqref{e.bound.on.sisi'}. 

We fix $t \ge 0$, $q, q' \in \mcl Q_\infty$, and write $\kappa := q'-q \in L^\infty([0,1]; S^D)$. 
Conditionally on $\fR$, we construct the Gaussian processes $(w^{q}(\alpha))_{\alpha \in \mfk U}$ and $(w^{\kappa}(\alpha))_{\alpha \in \mfk U}$ from Proposition~\ref{p.gaussian_cascade} independently of one another, and we observe that, for every $s \in [0,1]$,
\begin{align}
\label{e.wq.distr.identity}
    \Ll(w^{{q}+ s  \kappa}(\alpha) \Rr)_{\alpha\in \mfk U}\stackrel{\d}{=} \Ll(w^{{q}}(\alpha) + \sqrt{s} w^{{\kappa}}(\alpha)\Rr)_{\alpha \in \mfk U}.
\end{align}
We may as well take the right-hand side of \eqref{e.wq.distr.identity} as our definition of $w^{q+s\kappa}$, so that these processes are realized on the same probability space and coupled together in a natural way as $s$ varies in $[0,1]$. It follows from \eqref{e.gaussian_cascade} that, for every $\al, \al' \in \mfk U$, 
\begin{align*}
    \E\Ll[ \Ll(\frac{\d}{\d  s }w^{{q}+ s  \kappa}(\alpha) \Rr)\Ll(w^{{q}+ s  \kappa}(\alpha')\Rr)^\intercal \Rr] = \frac{1}{2}\kappa\Ll(\alpha\wedge\alpha'\Rr).
\end{align*}
Let $(w^{{q}+ s  \kappa}_i)_{1\leq i\leq N}$ be i.i.d.\ copies of $w^{{q}+ s  \kappa}$ and define $W^{{q}+ s  \kappa}_N=\Ll(w^{{q}+ s  \kappa}_1,\dots, w^{{q}+ s  \kappa}_N\Rr)$ as in~\eqref{e.W^mu(alpha)}.
By this, the Gaussian integration by parts in \cite[Theorem~4.6]{HJbook}, and the fact that $\alpha\wedge\alpha=1$ almost surely under $\P\fR$, we deduce that
\begin{align*}
    \frac{\d}{\d  s }\bar F_N(t,{q}+ s \kappa) \big|_{ s  =0}
    & = -\frac{1}{N}\E \la \sqrt{2}\Ll(\frac{\d}{\d  s }W^{{q}+ s  \kappa}_N(\alpha)\Rr)\cdot\sigma - \kappa(1)\cdot\sigma\sigma^\intercal\ra_N
    = \E \la R^{1,2}\ra_N,
\end{align*}
where we write 
\begin{equation}
\label{e.def.R.kappa}
R^{\ell,\ell'} := \kappa(\alpha^\ell\wedge\alpha^{\ell'})\cdot \frac{\sigma^\ell(\sigma^{\ell'})^\intercal}{N},
\end{equation}
and we observe that $\kappa(1)$ is well-defined since $q(1)$ and $q'(1)$ are well-defined. Differentiating once more yields that
\begin{align}
\label{e.diff.q.second}
    \frac{\d^2}{\d  s ^2}\bar F_N(t,{q}+ s \kappa)\big|_{ s  =0} = 2N\E\la R^{1,2}R^{1,2}-4R^{1,2}R^{1,3}+3R^{1,2}R^{3,4}\ra_N.
\end{align}
We recall from \eqref{e.bound.on.sisi'} that for every $\sigma, \sigma'$ in the support of $P_N$, we have $\Ll|\frac{\si \si'^\intercal}{N}\Rr| \le 1$. Using also  the invariance of Poisson--Dirichlet cascades in Proposition~\ref{p.invar_cts} and the Cauchy--Schwarz inequality, we obtain that 
\begin{align}
\label{e.|d/dF_N|<}
    \Ll|\frac{\d}{\d  s }\bar F_N(t,{q}+ s \kappa)\big|_{ s  =0}\Rr| &  \leq \E \la \Ll|R^{1,2}\Rr|\ra_N\leq \E \la |\kappa\Ll(\alpha\wedge\alpha'\Rr)| \ra_N = \E \la |\kappa\Ll(\alpha\wedge\alpha'\Rr)| \ra_\fR = |\kappa|_{L^1}, 
\end{align}
as well as
\begin{multline}
\label{e.|d^2F_N|<}
    \Ll|\frac{\d^2}{\d  s ^2}\bar F_N(t,{q}+ s \kappa)\big|_{ s =0}\Rr|  \leq 16 N\E \la \Ll|R^{1,2}\Rr|^2\ra_N \leq  16 N\E \la \Ll|\kappa\Ll(\alpha\wedge\alpha'\Rr)\Rr|^2\ra_N  
    \\
    =16 N \E \la \Ll|\kappa\Ll(\alpha\wedge\alpha'\Rr)\Rr|^2\ra_\fR =16 N|\kappa|_\cH^2.  
\end{multline}
We define the function $\partial_q \bar F_N(t,q) = \dr_q \bar F_N(t,q,\cdot) \in L^2([0,1]; S^D)$ by duality so that for every $\kappa \in L^2([0,1]; S^D)$,
\begin{equation}
\label{e.def.drq.FN}
\la \kappa, \dr_q\bar F_N(t,{q})\ra_\cH = \E \la \kappa\Ll(\alpha\wedge\alpha'\Rr)\cdot\frac{\sigma\sigma'^\intercal}{N}\ra_N.
\end{equation}
The estimates \eqref{e.|d/dF_N|<} and \eqref{e.|d^2F_N|<} imply that
\begin{equation}
\label{e.FN.frechet}
\Ll| \bar F_N(t,q') - \bar F_N(t,q) - \la q'-q, \dr_q\bar F_N(t,{q})\ra_\cH\Rr| \le 8N|q'-q|_{L^2}^2. 
\end{equation}
It follows from \eqref{e.bound.on.sisi'} and \eqref{e.def.drq.FN} that  for every $\kappa \in L^2([0,1]; S^D)$, 
\begin{equation}  
\label{e.bound.drq}
\la \kappa, \dr_q\bar F_N(t,{q})\ra_\cH \leq |\kappa|_{L^1}.
\end{equation}
By duality, we deduce that $\partial_q \bar F_N(t,q) \in L^\infty_{\le 1}$. 
Therefore, we also have that
\begin{equation*}  |\bar F_N(t,q) - \bar F_N(t,q')| \le |q-q'|_{L^1}. 
\end{equation*}
In particular, we can extend the free energy $\bar F_N$ in \eqref{e.def.barFN.continuous} to all $\R_+ \times \mcl Q_1$ by continuity. We have already observed that the definitions of $\bar F_N(t,q)$ in \eqref{e.def.barFN.continuous} and \eqref{e.def.FN.piece.const} coincide when $q$ is piecewise constant, and each has been extended continuously to $\R_+\times \mcl Q_1$, so the two definitions coincide everywhere on $\R_+ \times \mcl Q_1$.

By continuity, the estimate in \eqref{e.FN.frechet}, which we showed for $q,q' \in \mcl Q_\infty$, extends to every $q' \in \mcl Q_2$. This already shows that every point $q \in \mcl Q_\infty$ is a point of Fréchet differentiability of $\bar F_N(t,\cdot) : \mcl Q_2 \to \R$, and the Fréchet derivative at this point is indeed $\partial_q \bar F_N(t,q)$. We now wish to extend this to every $q \in \mcl Q_2$. We thus fix $q \in \mcl Q_2$ and take a sequence $(q_n)_{n \in \N}$ of elements of $\mcl Q_\infty$ that converges to $q$ in $L^2$. Since $\partial_q \bar F_N(t,q_n) \in L^\infty_{\le 1}$ for every $n \in \N$, we can extract a subsequence that converges weakly in $L^2$; we denote the limit by $p$. Since $(q_n)_{n \in \N}$ converges to $q$ strongly in $L^2$, substituting $q$ by $q_n$ in \eqref{e.FN.frechet} and letting $n$ tend to infinity along the subsequence yields that, for every $q' \in \mcl Q_2$,
\begin{equation*}  \Ll| \bar F_N(t,q') - \bar F_N(t,q) - \la q'-q, p\ra_\cH\Rr| \le 8N|q'-q|_{L^2}^2. 
\end{equation*}
This characterizes $p$ uniquely, so in fact the subsequence extraction was not necessary and $\partial_q \bar F_N(t,q_n)$ converges weakly to an object that depends only on $q$ itself, not on the approximating sequence. We denote this object by $\dr_q \bar F_N(t,q)$, a notation that is consistent with the previous definition that was restricted to the case $q \in \mcl Q_\infty$. We have thus shown that $\bar F_N(t,\cdot) : \mcl Q_2 \to \R$ is Fréchet differentiable everywhere. 

Since the inequality \eqref{e.bound.drq} is preserved by taking weak $L^2$ limits on $\dr_q \bar F_N(t,q)$, we also have that $\dr_q \bar F_N(t,q)$ belongs to $L^\infty_{\le 1}$ for every $q \in \mcl Q_2$.

We recall from Proposition~\ref{p.monotone} that $\bar F_N(t,\cdot)$ is $\mcl Q_2^\star$-increasing, as defined in~\eqref{e.C_2^*-nondec}. For every smooth $q \in \mcl Q$ with derivative bounded away from zero and every smooth $\kappa \in \mcl Q_2^\star$, we must therefore have that
\begin{equation*}  \la \kappa, \dr_q\bar F_N(t,{q})\ra_\cH   \ge 0. 
\end{equation*}
By density, this is in fact valid for every $\kappa \in \mcl Q_2^\star$. This means that $\partial_q \bar F_N(t,q)$ belongs to the dual cone 
\begin{equation*}  (\C^*_2)^*:=\Ll\{{q}\in L^2([0,1]; S^D) \, \mid \, \forall {q'} \in \C_2^*, \  \la {q},{q'}\ra_\cH\geq 0 \Rr\}.
\end{equation*}
Since $\C_2$ is a non-empty closed convex cone in a Hilbert space, we have by \cite[Corollary~6.33]{bauschke2011convex}  that $(\C^*_2)^* = \C_2$. (One can also prove this more directly by proceeding similarly to the proof of Lemma~\ref{l.charact.C2star}.) We have therefore verified that $\partial_q \bar F_N(t,q) \in \mcl Q_2$ for every smooth $q \in \mcl Q$ whose derivative is bounded away from zero. Since $\mcl Q_2$ is closed and convex, it is also weakly closed, by \cite[Corollary~1.4]{ET} (or again by direct arguments). For general $q \in \mcl Q_2$, since we can obtain $\dr_q \bar F_N(t,q)$ as a weak $L^2$ limit of similar objects computed at smooth paths with derivative bounded away from zero, we deduce that $\dr_q \bar F_N(t,q)$ belongs to $\mcl Q_2$ for every $q \in \mcl Q_2$. 

So far we have only observed the continuity of the mapping $q \mapsto \dr_q \bar F_N(t,q)$ in the sense of weak $L^2$ convergence. Now that we also know that $\dr_q \bar F_N(t,q) \in \mcl Q \cap L^\infty_{\le 1}$, we can appeal to Lemma~\ref{l.compact.embed} and upgrade the continuity property of the mapping $q \mapsto \dr_q \bar F_N(t,q)$ to the strong $L^r$ topology, for every $r < +\infty$. Precisely, for every $r < +\infty$ and every sequence $(q_n)_{n \in \N} \subset \mcl Q_2$ converging to $q \in \mcl Q_2$ in $L^2$, we have 
\begin{equation}  
\label{e.conv.der.Lr}
\lim_{n \to + \infty} \Ll|\dr_q \bar F_N(t,q_n) - \dr_q \bar F_N(t,q) \Rr|_{L^r} = 0.
\end{equation}

It remains to show \eqref{e.continuity.der.FN}. 
We temporarily display the dependence of $\la\cdot\ra_N$ on ${q}$ explicitly by writing $\la\cdot\ra_N = \la\cdot\ra_{N,{q}}$. We fix ${q}, {q}'\in\C_\infty$ as in the beginning of this proof, with $\kappa = q'-q$, and for each $\eta \in L^\infty$, we aim to compute the derivative in $s$ of 
\begin{equation*}  \E \la \eta\Ll(\alpha\wedge\alpha'\Rr)\cdot \frac{\sigma\sigma'^\intercal}{N}\ra_{N, {q} + s \kappa }.
\end{equation*}
We realize the Gaussian processes $(w^{q + s \kappa}(\alpha))_{\alpha \in \mfk U}$ as discussed below \eqref{e.wq.distr.identity}. As for \eqref{e.diff.q.second}, recalling the notation $R^{\ell, \ell'}$ in \eqref{e.def.R.kappa} and denoting 
$Q^{\ell,\ell'} := \eta(\alpha^\ell\wedge\alpha^{\ell'})\cdot \frac{\sigma^\ell(\sigma^{\ell'})^\intercal}{N}$, we have
\begin{align*}
    \frac{\d}{\d s } \E \la \eta\Ll(\alpha\wedge\alpha'\Rr)\cdot\frac{ \sigma\sigma'^\intercal}{N}\ra_{N,{q} + s\kappa } = 2N\E \la Q^{1,2}\Ll(R^{1,2}-4R^{1,3}+3R^{3,4}\Rr)\ra_{N,{q} + s \kappa }.
\end{align*}
By H\"older's inequality and the invariance of Poisson--Dirichlet cascades from Proposition~\ref{p.invar_cts}, we get, for every $r, r' \in [1,+\infty]$ with $\frac 1 r + \frac 1 {r'} = 1$, 
\begin{align*}
\Ll|\frac{\d}{\d s } \E \la \eta\Ll(\alpha\wedge\alpha'\Rr)\cdot \frac{\sigma\sigma'^\intercal}{N}\ra_{N,{q} + s \kappa }\Rr| 
    &\leq 16N \E \la \Ll|Q^{1,2}\Rr|^{r'}\ra_{N,{q}  + s \kappa}^{1/r'}\E \la \Ll|R^{1,2}\Rr|^r \ra_{N,{q}  + s \kappa}^{1/r}
    \\
    &\leq 16N\E \la \Ll|\eta\Ll(\alpha\wedge\alpha'\Rr)\Rr|^{r'} \ra_{N,{q}  + s \kappa }^{1/r'}\E \la \Ll|\kappa\Ll(\alpha\wedge\alpha'\Rr)\Rr|^r \ra_{N,{q}  + s \kappa }^{1/r}\notag
    \\
     &= 16N \E \la \Ll|\eta\Ll(\alpha\wedge\alpha'\Rr)\Rr|^{r'} \ra_{\fR}^{1/r'}\la \Ll|\kappa\Ll(\alpha\wedge\alpha'\Rr)\Rr|^r \ra_{\fR}^{1/r}
     \notag
     \\
     & = 16N |\eta|_{L^{r'}}|q'-q|_{L^r}. \notag
\end{align*}
Integrating the above in $ s $ from $0$ to $1$ and using~\eqref{e.def.drq.FN}, we get
\begin{align}
\label{e.estim.der.der}
    \Ll|\la \eta, \dr_q\bar F_N(t,{q}')\ra_\cH - \la \eta, \dr_q\bar F_N(t,{q})\ra_\cH\Rr|\leq 16 N|\eta|_{L^{r'}}|{q}'-{q}|_{L^r}.
\end{align}
We established \eqref{e.estim.der.der} for every $q, q' \in \mcl Q_\infty$ and $\eta \in L^\infty$. By duality, this implies that, for every $q, q' \in \mcl Q_\infty$,
\begin{equation*}  \Ll| \dr_q \bar F_N(t,q') - \dr_q \bar F_N(t,q) \Rr|_{L^r} \le 16N|q'-q|_{L^r}. 
\end{equation*}
For $r < +\infty$, we can use \eqref{e.conv.der.Lr} to extend this estimate to every $q,q' \in \mcl Q_2$ with $q'-q \in L^r$. The case $r = +\infty$ can then be obtained by using the result for finite $r$ and letting $r$ tend to infinity. The proof is therefore complete.
\end{proof}

Recall from Proposition~\ref{p.init.cond} that the initial condition is given by $\psi = \bar F_1(0,\cdot)$, which can be written explicitly as
\begin{align}\label{e.psi=}
    \psi({q}) = - \E\log  \iint \exp\Ll(\sqrt{2}w^{q}(\alpha)\cdot \sigma - {q}(1)\cdot \sigma\sigma^\intercal\Rr) \d P_1(\sigma) \d \fR(\alpha).
\end{align}
Proposition~\ref{p.F_N_smooth} immediately yields similar statements concerning the function $\psi$. We record them here for future reference. For every ${q}\in\C_\infty$, we write
\begin{align}\label{e.<>_mu}
    \la \cdot \ra_{q}\propto \exp \Ll(w^{{q}}(\alpha)\cdot\sigma-{q}(1)\cdot\sigma\sigma^\intercal\Rr) \d P_1(\sigma)\d\fR(\alpha).
\end{align}

\begin{corollary}\label{c.psi_smooth}
The function $\psi$ is $L^1$-Lipschitz continuous, with Lipschitz constant equal to $1$. 
The restriction of the function $\psi$ to $\mcl Q_2$ is Fréchet (and Gateaux) differentiable everywhere; we denote its Fréchet (and Gateaux) derivative by $\dr_q \psi(q) = \dr_q \psi(q, \cdot) \in L^2([0,1]; S^D)$. We have, for every $q \in \mcl Q_2$,
\begin{equation}
\label{e.bound.der.psi}
\partial_q \psi(q) \in \mcl Q \cap L^\infty_{\le 1},
\end{equation}
and, for every $q \in \mcl Q_\infty$ and $\kappa \in L^2([0,1]; S^D)$,
\begin{equation}
\label{e.def.der.psi}
\la \kappa,\dr_q \psi({q})\ra_\cH = \E \la \kappa \Ll(\alpha\wedge\alpha'\Rr)\cdot \sigma\sigma'^\intercal\ra_q.
\end{equation}
Moreover, for every $r \in [1,+\infty]$ and $q, q' \in \mcl Q_2$ with $q'-q \in L^r$, we have
\begin{equation}
\label{e.continuity.der.psi}
\Ll|\dr_q \psi({q}) -\dr_q \psi({q'})\Rr|_{L^r}\leq 16\Ll|{q}-{q'}\Rr|_{L^r}.
\end{equation}
In particular, the mapping $q \mapsto \dr_q \psi(q)$ can be extended to $\mcl Q_1$ by continuity, and the properties in \eqref{e.bound.der.psi} and \eqref{e.continuity.der.psi} remain valid with $q, q' \in \mcl Q_1$.  
\end{corollary}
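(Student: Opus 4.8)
The plan is to deduce Corollary~\ref{c.psi_smooth} directly from Proposition~\ref{p.F_N_smooth} by specializing to $N = 1$ and $t = 0$. The first step is to recall from Proposition~\ref{p.init.cond} that $\psi = \bar F_1(0, \cdot)$ on $\mcl Q_1$. The second step is a routine identification: when $N = 1$, the vector $W^{q}_1(\alpha)$ in \eqref{e.W^mu(alpha)} equals $w^q(\alpha)$, and when in addition $t = 0$ the Hamiltonian $H^{t,q}_N(\sigma,\alpha)$ in \eqref{e.H^t,mu_N} reduces to $\sqrt{2}\, w^q(\alpha)\cdot\sigma - q(1)\cdot\sigma\sigma^\intercal$; hence the free energy \eqref{e.def.barFN.continuous} for $(N,t) = (1,0)$ coincides with the expression for $\psi$ in \eqref{e.psi=}, and the Gibbs measure $\la\cdot\ra_1$ coincides with $\la\cdot\ra_{q}$ defined in \eqref{e.<>_mu}.

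With these identifications in place, every claim of the corollary is the $(N,t) = (1,0)$ instance of the corresponding claim in Proposition~\ref{p.F_N_smooth}. The $L^1$-Lipschitz bound with constant $1$ is the Lipschitz estimate of that proposition with $t = t' = 0$, which removes the term proportional to $\sup_{|a|\le 1}|\xi(a)|$. Fréchet (hence Gateaux) differentiability of $\psi$ on $\mcl Q_2$, together with \eqref{e.bound.der.psi}, follows from the differentiability statement of Proposition~\ref{p.F_N_smooth} and from \eqref{e.bounds.der.FN}. The duality formula \eqref{e.def.der.psi} is \eqref{e.def.der.FN} read with $N = 1$, so that $\sigma\sigma'^\intercal/N = \sigma\sigma'^\intercal$. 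The Lipschitz continuity \eqref{e.continuity.der.psi} of $q\mapsto\dr_q\psi(q)$ with constant $16$ is \eqref{e.continuity.der.FN} with $N = 1$. Finally, the extension of $q\mapsto\dr_q\psi(q)$ to $\mcl Q_1$ by continuity, with \eqref{e.bound.der.psi} and \eqref{e.continuity.der.psi} preserved, is the last assertion of Proposition~\ref{p.F_N_smooth}.

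There is no genuine difficulty here: the proof is purely a matter of unwinding definitions, and the only point demanding a line of verification is the identification of the $(N,t) = (1,0)$ specialization of the objects of Section~\ref{s.def.cont} with the objects $\psi$ and $\la\cdot\ra_{q}$ introduced in \eqref{e.psi=} and \eqref{e.<>_mu}, which is immediate. We only claim and use the bound $\le 1$ on the Lipschitz constant of $\psi$; that it cannot in general be improved can be seen, if desired, by evaluating \eqref{e.def.der.psi} along a path at which $\dr_q\psi$ has unit $L^\infty$-norm.
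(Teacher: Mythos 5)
Your proposal is correct and is exactly the argument the paper intends: the paper states that Proposition~\ref{p.F_N_smooth} "immediately yields" the corollary, and your specialization to $(N,t)=(1,0)$ together with the identification $\psi = \bar F_1(0,\cdot)$ from Proposition~\ref{p.init.cond} is precisely that deduction. The only cosmetic remark is that the displayed definition \eqref{e.<>_mu} in the paper omits the factor $\sqrt{2}$ in front of $w^q(\alpha)\cdot\sigma$ that appears in \eqref{e.psi=} and in $H^{0,q}_1$; this is an inconsistency in the paper's own notation (it elsewhere identifies \eqref{e.<>_mu} with the $M=1$ case of \eqref{e.<>_R,pi}, which does carry the $\sqrt 2$), so your identification of $\la\cdot\ra_1$ with $\la\cdot\ra_q$ matches the intended meaning.
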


We recall that Proposition~\ref{p.precompact} asserts that the sequence $(\bar F_N)_{N \in \N}$ is precompact. The regularity estimates on $\bar F_N$ that hold uniformly over $N$ naturally transfer to any subsequential limit. Using also the results from Section~\ref{s.diff}, we infer that any such limit must be Gateaux differentiable ``almost everywhere''. We summarize these points in the next proposition. 

\begin{proposition}[Regularity of the limit]
\label{p.reg.lim}
Suppose that $\bar F_N$ converges to some limit $f$ pointwise along some subsequence. 
Then for every $r \in (1,+\infty]$, the function $\bar F_N$ converges locally uniformly to $f$ in $\R_+ \times \mcl Q_r$ along the subsequence. We have that $f$ satisfies the same Lipschitz and local semi-concavity properties as $\bar F_N$ stated in Propositions~\ref{p.barF_N_Lip} and~\ref{p.semiconc}, and that $f$ is $\mcl Q_2^\star$-increasing as $\bar F_N$ is from Proposition~\ref{p.monotone}. Moreover, 
\begin{itemize}
    \item for each $t\geq 0$, 
    there is a Gaussian null set $\mathcal{N}_t$ of $\cH$ such that $f(t,\cdot) : \mcl Q_2 \to \R$ is Gateaux differentiable at every point in $\C_2\setminus\mathcal{N}_t$ and $ (\C_\uparrow\cap L^\infty)\setminus\mathcal{N}_t$ is dense in $\C_2$;

    \item there is a Gaussian null set $\mathcal{N}$ of $\R\times\cH$ such that $f : \R_+ \times \mcl Q_2 \to \R$ is Gateaux differentiable on $(\R_+\times \C_2)\setminus\mathcal{N}$ and $(\R_+\times (\C_\uparrow\cap L^\infty))\setminus\mathcal{N}$ is dense in $\R_+\times \C_2$,
\end{itemize}
\end{proposition}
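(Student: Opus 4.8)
The plan is to deduce every assertion from the uniform-in-$N$ estimates already established for $\bar F_N$, together with the abstract differentiability result of Section~\ref{s.diff}; no new idea is needed. First I would upgrade the pointwise convergence to local uniform convergence. By Proposition~\ref{p.precompact}, for each $r \in (1,+\infty]$ the family $(\bar F_N)_{N \in \N}$ is precompact for the topology of local uniform convergence on $\R_+ \times \mcl Q_r$. Given the subsequence along which $\bar F_N$ converges pointwise to $f$, any further subsequence admits, by precompactness, a sub-subsequence that converges locally uniformly, and its limit must coincide with $f$ since local uniform convergence implies pointwise convergence and the pointwise limit is unique. The usual subsequence-of-subsequences argument then shows that the original subsequence itself converges to $f$ locally uniformly on $\R_+ \times \mcl Q_r$, and this holds for every $r > 1$.

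Next I would transfer the structural properties of $\bar F_N$ to $f$, each being an inequality (or implication) that is stable under pointwise limits. Evaluating the bound of Proposition~\ref{p.barF_N_Lip} at fixed points and letting $N \to \infty$ along the subsequence gives $|f(t,q) - f(t',q')| \le |q-q'|_{L^1} + |t-t'| \sup_{|a| \le 1}|\xi(a)|$ for all $(t,q),(t',q') \in \R_+ \times \mcl Q_1$; carrying out the same passage to the limit in the inequality of Proposition~\ref{p.semiconc} gives $f$ the same local semi-concavity property; and passing the implication of Proposition~\ref{p.monotone} to the limit shows that $f(t,\cdot)$ is $\mcl Q_2^*$-increasing. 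In particular $f$ is $L^1$-Lipschitz in $(t,q)$, hence, using $|\cdot|_{L^1} \le |\cdot|_{L^2}$ on $[0,1]$, also $L^2$-Lipschitz in $q$ for each fixed $t$ and jointly $L^2$-Lipschitz in $(t,q)$.

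Finally I would obtain the two differentiability statements by invoking Proposition~\ref{p.Gateaux_dff_dense}. Since $f(t,\cdot) : \mcl Q_2 \to \R$ is $L^2$-Lipschitz for each fixed $t$, the first part of that proposition furnishes a Gaussian null set $\mathcal{N}_t$ of $\cH$ outside of which $f(t,\cdot)$ is Gateaux differentiable, with $(\mcl Q_\uparrow \cap L^\infty) \setminus \mathcal{N}_t$ dense in $\mcl Q_2$; and since $f : \R_+ \times \mcl Q_2 \to \R$ is jointly Lipschitz, the second part furnishes the Gaussian null set $\mathcal{N}$ of $\R \times \cH$ and the analogous joint statement. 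This completes the proof.

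I do not expect any serious obstacle: the argument is entirely a matter of passing uniform estimates to the limit and then quoting already-proved results. The only points deserving a little care are the correct phrasing of the subsequence argument that deduces local uniform convergence from precompactness combined with uniqueness of the pointwise limit, and the observation that all points at which the semi-concavity and monotonicity inequalities are evaluated lie in the common domain $\R_+ \times \mcl Q_1$ on which $f$ is unambiguously defined as the pointwise limit, so that the limiting inequalities are meaningful.
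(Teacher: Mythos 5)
Your proposal is correct and follows essentially the same route as the paper, which likewise deduces the local uniform convergence from Proposition~\ref{p.precompact}, transfers the Lipschitz, semi-concavity and monotonicity properties by passing the uniform-in-$N$ inequalities to the limit, and obtains the differentiability statements from Proposition~\ref{p.Gateaux_dff_dense}. Your write-up merely spells out the subsequence-of-subsequences argument and the $|\cdot|_{L^1}\le|\cdot|_{L^2}$ observation that the paper leaves implicit.
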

In the statement of Proposition~\ref{p.reg.lim} and throughout the paper, we allow ourselves some imprecision when saying that $\bar F_N$ converges pointwise to $f$ along a subsequence, since we do not clearly specify over which set this pointwise convergence is valid. As long as there exists a dense set of points $(t,q) \in \R_+ \times \mcl Q_1$ such that $\bar F_N(t,q)$ converges to $f(t,q)$ along the subsequence, the convergence immediately extends to every $(t,q) \in \R_+ \times \mcl Q_1$ by the Lipschitz continuity property in Proposition~\ref{p.barF_N_Lip}.

\begin{proof}[Proof of Proposition~\ref{p.reg.lim}]
The fact that the convergence of $\bar F_N$ to $f$ along the subsequence holds locally uniformly in $\R_+\times \mcl Q_r$, for every $r \in (1,+\infty]$, was already mentioned and follows from Proposition~\ref{p.precompact}. The fact that $f$ is Lipschitz, locally semi-concave, and $\mcl Q_2^*$-increasing is immediate from Propositions~\ref{p.barF_N_Lip} and \ref{p.semiconc}. The differentiability statements follow from Proposition~\ref{p.Gateaux_dff_dense}.
\end{proof}

The next proposition asserts that at points of differentiability of the limit $f$, the derivatives of $\bar F_N$ converge to those of $f$.

\begin{proposition}[Convergence of derivatives]
\label{p.conv.der}
Suppose that $\bar F_N$ converges pointwise to some limit $f$ along a subsequence $(N_k)_{k \in \N}$. 

(1) For each $t \ge 0$, if $f(t,\cdot)$ is Gateaux differentiable at $q \in \C_\uparrow$, then $\partial_q \bar F_{N_k}(t, q, \cdot)$ converges in $L^r$ to $\partial_q f(t,q,\cdot)$ for every $r \in [1,+\infty)$.

(2) For each $q \in \mcl Q_1$, if $f(\cdot,q)$ is differentiable at $t > 0$, then $\partial_t \bar F_{N_k}(t, q)$ converges to $\partial_t f(t,q)$. 
\end{proposition}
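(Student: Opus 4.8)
The plan is to establish both parts by the same scheme: from the uniform-in-$N$ bounds on the derivatives of $\bar F_N$ I extract subsequential limits of those derivatives; from a uniform-in-$N$ semi-concavity property of $\bar F_N$ I obtain, at the base point, a one-sided Taylor inequality for $\bar F_{N_k}$ that survives the passage to the limit $k\to\infty$ (using the local uniform convergence $\bar F_{N_k}\to f$ from Proposition~\ref{p.reg.lim}); and the assumed differentiability of $f$ at the base point then forces any such subsequential limit to coincide with the relevant derivative of $f$, so that the whole sequence converges.

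\emph{Part~(1).} If $t=0$ there is nothing to prove, since $\bar F_N(0,q)=\psi(q)$ for every $N$ by Proposition~\ref{p.init.cond}, while $f(0,\cdot)=\psi$. Assume now $t>0$. By Proposition~\ref{p.F_N_smooth} each $\dr_q\bar F_{N_k}(t,q)$ lies in $\C\cap L^\infty_{\le 1}$, so by Lemma~\ref{l.compact.embed} the sequence $\big(\dr_q\bar F_{N_k}(t,q)\big)_k$ is precompact in $L^r$ for every $r\in[1,\infty)$, with limit points in $\C_\infty$ that also coincide with its weak limit points in $\cH$; it is therefore enough to show that any such limit point $p$ equals $\dr_q f(t,q)$. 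Fix a direction $e$ in the set $W$ from the proof of Proposition~\ref{p.Gateaux_dff_dense}, so $e(0)=0$, $0\le\dot e\le\id$ and $\dot e\in L^\infty$; arguing exactly as in that proof (see also \eqref{e.def.C_c}) one finds $c>0$ and $s_0>0$, possibly depending on $q$ and $e$, with $q\pm s e\in\C_{\uparrow,c}$ for all $s\in[0,s_0]$, and one may assume $t\ge c$. Combining the semi-concavity estimate of Proposition~\ref{p.semiconc} with the Fréchet differentiability of $\bar F_{N_k}(t,\cdot)$ from Proposition~\ref{p.F_N_smooth} (via the standard $\lambda\searrow 0$ argument) yields, for $s\in[0,s_0]$,
\begin{equation*}
\bar F_{N_k}(t,q\pm s e)\le\bar F_{N_k}(t,q)\pm s\,\la e,\dr_q\bar F_{N_k}(t,q)\ra_\cH+C c^{-2}s^2|\dot e|_\cH^2.
\end{equation*}
Letting $k\to\infty$ along the subsequence realizing $p$ (the left side tends to $f(t,q\pm s e)$ and the pairing to $\la e,p\ra_\cH$), then dividing by $s>0$ and letting $s\searrow 0$ while invoking the Gateaux differentiability of $f(t,\cdot)$ at $q$ in the admissible directions $\pm e$, I get $\la e,\dr_q f(t,q)\ra_\cH\le\la e,p\ra_\cH$ and $-\la e,\dr_q f(t,q)\ra_\cH\le-\la e,p\ra_\cH$, hence $\la e,p\ra_\cH=\la e,\dr_q f(t,q)\ra_\cH$. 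Since the directions $e\in W$ have dense linear span in $\cH$, this gives $p=\dr_q f(t,q)$, and part~(1) follows.

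\emph{Part~(2).} By Proposition~\ref{p.F_N_smooth}, $|\dr_t\bar F_N(t,q)|\le\sup_{|a|\le1}|\xi(a)|$ for all $N$, so the real sequence $\big(\dr_t\bar F_{N_k}(t,q)\big)_k$ is bounded and it suffices to identify its limit points. The key preliminary point is that for \emph{every} $q\in\C_1$ and every $c\in(0,t)$ the map $t'\mapsto\bar F_N(t',q)$ is semi-concave on $[c,\infty)$ with a constant depending only on $c$ and $\sup_{|a|\le1}|\xi(a)|$, not on $N$. Indeed, the restriction to $\C_\uparrow$ in Propositions~\ref{p.semiconc.disc}--\ref{p.semiconc} is needed only to handle the $q$-variable; inspecting the Hölder argument in Step~1 of the proof of Proposition~\ref{p.semiconc.disc} and applying it to the compensating term $-s^2N\xi(\sigma\sigma^\intercal/N)$ alone (this is the ``general case'' alluded to there) shows that $s\mapsto\bar F_N(s^2,q)$ is semi-concave in $s$, uniformly in $N$ and $q$, once one observes that $\sigma\sigma^\intercal/N\in S^\D_+$ with $|\sigma\sigma^\intercal/N|\le1$ by \eqref{e.bound.on.sigma} and that $\xi\ge0$ on $S^\D_+$ by \eqref{e.explicit.xi}, so that $0\le\xi(\sigma\sigma^\intercal/N)\le\sup_{|a|\le1}|\xi(a)|$; the change of variables $t=s^2$ together with the bound on $\dr_t\bar F_N$ then gives the claimed semi-concavity for piecewise-constant $q$, and it extends to all $q\in\C_1$ by the $L^1$-Lipschitz continuity in $q$ (Proposition~\ref{p.barF_N_Lip}). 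Granting this, semi-concavity and differentiability in $t'$ give, for $t'\ge c$, $\bar F_N(t',q)\le\bar F_N(t,q)+(t'-t)\dr_t\bar F_N(t,q)+C'(t'-t)^2$ with $C'$ depending only on $c$ and $\sup_{|a|\le1}|\xi(a)|$; passing to the limit along $(N_k)$, dividing by $t'-t$, and letting $t'\to t^\pm$ while using the differentiability of $f(\cdot,q)$ at $t$ shows that every limit point of $\dr_t\bar F_{N_k}(t,q)$ equals $\dr_t f(t,q)$, which proves part~(2).

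The step I expect to be the main obstacle is the uniform-in-$N$ semi-concavity of $t'\mapsto\bar F_N(t',q)$ for arbitrary $q\in\C_1$ used in part~(2): this goes slightly beyond the literal statements of Propositions~\ref{p.semiconc.disc}--\ref{p.semiconc}, and its proof hinges on the nonnegativity and boundedness of the compensating term $\xi(\sigma\sigma^\intercal/N)$ on the support of $P_N$. A secondary point is the bookkeeping in part~(1) guaranteeing that the admissible directions $\pm e$ keep $q\pm s e$ inside a single set $\C_{\uparrow,c}$ for $s$ in a fixed interval while still ranging over a subset of $\cH$ with dense linear span.
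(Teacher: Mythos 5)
Your part~(1) is correct and follows essentially the same route as the paper: identify a class of admissible directions along which $q\pm se$ stays in a fixed $\C_{\uparrow,c}$, use the uniform semi-concavity of Proposition~\ref{p.semiconc} together with the differentiability of $\bar F_{N_k}(t,\cdot)$ to get the one-sided Taylor inequality, pass to the limit, and invoke the Gateaux differentiability of $f(t,\cdot)$ at $q$; the only cosmetic differences are that you test against directions in $W$ rather than smooth $\kappa$ with $\kappa(0)=0$ (both families span a dense subspace, and both keep $q\pm se$ in $\C_{\uparrow,c}$ for small $s$), and that you extract strong $L^r$ limit points via Lemma~\ref{l.compact.embed} up front rather than first taking weak $L^2$ limits and upgrading afterwards.

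For part~(2) the paper offers no argument (it declares the second part ``only easier''), so your detailed treatment is a genuine contribution rather than a deviation. Your key observation is accurate: the literal statements of Propositions~\ref{p.semiconc.disc}--\ref{p.semiconc} restrict $q$ to uniformly increasing paths, a restriction that originates entirely from the change of variables $y_k=\sqrt{q_k-q_{k-1}}$ in the $q$-direction, whereas the H\"older computation of Step~1 applied to the compensating term $-s^2N\xi(\sigma\sigma^\intercal/N)$ alone gives semi-concavity of $s\mapsto\bar F_N(s^2,q)$ uniformly in $N$ and in piecewise-constant $q$, using only that $\xi\ge 0$ and is bounded on $\{a\in S^\D_+: |a|\le 1\}$; the change of variables $t=s^2$ then costs a factor controlled by $t\ge c>0$, and the inequality passes to all $q\in\C_1$ by the uniform $L^1$-Lipschitz continuity. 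The remaining steps (boundedness of $\partial_t\bar F_{N_k}(t,q)$, the two-sided limit $t'\to t^\pm$, and the assumed differentiability of $f(\cdot,q)$ at $t>0$) are all in order. I see no gap.
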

\begin{proof}
We only show part (1) of the statement, since the second part is only easier. We decompose the proof into three steps.

\medskip

\noindent \emph{Step 1.} We start by recalling that for every $a,b \in S^D$, the largest eigenvalue of $a+b$ is smaller than the sum of the largest eigenvalues of $a$ and $b$, and similarly with the smallest eigenvalue. This can be seen from the variational representation of the largest and smallest eigenvalues of a symmetric matrix. 

Recall also the definition of $\mcl Q_{\uparrow, c}$ in \eqref{e.def.C_c}, and that $\mcl Q_{\uparrow} = \bigcup_{c > 0} \mcl Q_{\uparrow, c}$. Let $q \in \mcl Q_{\uparrow, c}$, and let $\kappa \in C^\infty([0,1]; S^D)$ be a smooth function with $\kappa(0) = 0$. In this step, we show that there is $\delta > 0$ such that for every $s \in [-\de, \de]$, we have $q + s \kappa \in \mcl Q_{\uparrow,\frac{c}{2}}$.

Let $\eta \in (0, c)$ be a parameter to be determined in the course of the argument, and let $\delta > 0$ be sufficiently small that for every $u\le v \in [0,1]$, every eigenvalue of $\delta (\kappa(u)- \kappa(v))$ lies in the interval $[-\eta (v-u),\eta (v-u)]$. (This amounts to taking $\delta > 0$ sufficiently small that every eigenvalue of $\de \dot \kappa(u)$ belongs to $[-\eta , \eta ]$ for every $u \in [0,1]$.) For  every $s \in [-\de, \de]$ and $0 \le u \le v < 1$, we have
\begin{equation*}  (q+s\kappa)(v)-(q+s\kappa)(u)\geq (c-\eta )(v-u)\id. 
\end{equation*}
 Let $\lambda_{\max}$ and $\lambda_{\min}$ be the largest and the smallest eigenvalues of $q(v)-q(u)$ respectively. We have 
\begin{align*}
    \ellipt((q+s\kappa)(v)-(q+s\kappa)(u))\leq 
    \frac{\lambda_{\max} + \eta(v-u)}{\lambda_{\min} - \eta(v-u)} \leq \frac{1 + \tfrac \eta c}{1 - \tfrac \eta c} \, \frac{\lambda_{\max}}{\lambda_{\min}},
\end{align*}
where we used that $\lambda_{\max} \ge \lambda_{\min}\geq c(v-u)$ in the second inequality. In view of the last two displays, we obtain the announced result by choosing $\eta = \frac c 3$. 

\medskip

\noindent \emph{Step 2.} In this step, we show part (1) of the statement except that we only prove that $\partial_q \bar F_{N_k}(t, q) = \partial_q \bar F_{N_k}(t_{N_k}, q_{N_k}, \cdot)$ converges weakly in $L^2$ to $\partial_q f(t,q) = \partial_q f(t,q,\cdot)$. The notion of convergence will be upgraded in the next step. 

By assumption, there exists $c > 0$ such that $q \in \mcl Q_{\uparrow, c}$. Let $\kappa \in C^\infty([0,1]; S^D)$ be a smooth function with $\kappa(0) = 0$. By the result of the previous step, we can find $\delta > 0$ such that for every $s \in [-\de, \de]$, we have $q + s \kappa \in \mcl Q_{\uparrow, \frac c 2}$. 
By Proposition~\ref{p.semiconc}, there is a constant $C < +\infty$ (depending on $c$) such that for every $N \in \N$, $s \in [-\delta, \delta]$, and  $\lambda \in [0,1]$, we have
\begin{align*}
    (1-\lambda)\bar F_{N_k}(t, {q}) + \lambda \bar F_{N_k}(t,{q}+ s \kappa) - \bar F_{N_k}(t, {q}+\lambda s \kappa) \leq C\lambda(1-\lambda)s^2 |\dot\kappa|^2_\cH.
\end{align*}
Dividing both sides by $\lambda$ and sending $\lambda$ to zero, we get
\begin{align*}
    \bar F_{N_k}(t,{q}+s\kappa) - \bar F_{N_k}(t, {q}) -s\la \kappa,\dr_q \bar F_{N_k}(t, {q})\ra_\cH \leq Cs^2 |\dot\kappa|^2_\cH.
\end{align*}
Since $|\dr_q \bar F_{N_k}(t, {q})|_\cH$ is bounded uniformly in $k \in \N$ due to Proposition~\ref{p.F_N_smooth}, for any subsequence $(N'_k)_{k\in\N}$ of $(N_k)_{k\in\N}$, we can extract a further subsequence $(N''_k)_{k\in\N}$ along which $\dr_q \bar F_{N}(t, {q})$ converges weakly to some ${p}\in\cH$. Passing to the limit along $(N''_k)_{k\in\N}$, we get
\begin{align*}
    f(t,{q}+s\kappa) -f(t,{q}) - s \la \kappa, {p}\ra_\cH \leq Cs^2|\dot\kappa|^2_\cH
\end{align*}
for every $s\in[-\delta,\delta]$.
Sending $s$ to zero and using the Gateaux differentiability of $f(t,\cdot)$ at~${q}$, we obtain that
\begin{equation*}  \la \kappa ,\dr_q f(t,{q}) \ra_\cH = \la \kappa, {p}\ra_\cH. 
\end{equation*}
Since the set of $\kappa$'s that we have considered is dense in $\cH$, we  conclude that $\dr_q f(t, {q}) = {p}$. 

We have thus shown that any subsequence $(N'_k)_{k\in\N}$ of $(N_k)_{k\in\N}$ has a further subsequence $(N''_k)_{k\in\N}$ along which $\dr_q \bar F_{N}(t, {q})$ converges weakly to $\dr_q f(t, {q})$. This therefore completes the argument for the weak convergence in $L^2$ of $\dr_q \bar F_{N_k}(t,q)$ to $\dr_q f(t,q)$.

\medskip

\noindent \emph{Step 3.} In this step, we improve the convergence obtained in the previous step from weakly in $L^2$ to strongly in $L^r$ for every $r \in [1,+\infty)$.  We recall from \eqref{e.bounds.der.FN} that $\dr_q \bar F_N(t,q) \in \mcl Q \cap L^\infty_{\le 1}$. The strong convergence in $L^r$ therefore follows from Lemma~\ref{l.compact.embed}. The proof is therefore complete.
\end{proof}

\section{Cavity calculations and ultrametricity}
\label{s.cavity}

The goal of this section is to present a number of cavity calculations and exploit the asymptotic ultrametricity of the Gibbs measure. Our arguments contain and extend the method introduced in \cite{aizenman2003extended} and now often called the Aizenman--Sims--Starr scheme. Roughly speaking, the cavity calculation of \cite{aizenman2003extended} consists in comparing $\bar F_{N+1}$ with $\bar F_N$ by separating out the integration of one of the spin variables. Next, if one knows that the Gibbs measure is approximately ultrametric, then one can observe that the integration of the spun-off spin variable can be expressed in the form of 
\begin{equation*}  (N+1) \bar F_{N+1} - N \bar F_N \simeq
\psi({q}+t\nabla\xi({p})) -t \int_0^1\big(p \cdot \nabla \xi(p) - \xi(p)\big),
\end{equation*}
for some $p \in \C_\infty$ which may depend on $N$. Ultrametricity can indeed be enforced by means of small perturbations of the Hamiltonian, as was shown in \cite{pan.aom} and reviewed in detail in \cite{pan}. Intuitively, for each fixed $N$, the natural choice of $p$ is such that the law of the overlap under the ``cavity Gibbs measure'' with $N$ spins is the law of $p(U)$, where $U$ is uniformly distributed over $[0,1]$. 

We will perform a family of other cavity calculations that will allow us to keep track of arbitrary continuous observables of the overlap. This will have two main benefits that will be detailed in the next section. The first one is that we will indeed be able to track the identity of $p$ in the calculation sketched above. Under a differentiability assumption on the limit free energy $f$, we will justify that we can choose $p = \dr_q f(t,q)$. The second crucial benefit is that we will discover another property that this $p$ must satisfy, namely, that
\begin{equation*}  p = \dr_q \psi(q + t \nabla \xi(p)).
\end{equation*}
Collecting all this information then essentially yields a proof of our main results. 

This section is organized as follows. We first define the various terms that will enter into the cavity calculations. We next perform the Aizenman--Simms--Starr calculation for the free energy. We then do a cavity calculation for a general observable of the overlap of the spun-off spin variables. We continue by establishing that the quantities we obtain in these calculations can be approximated arbitrarily closely by continuous functions of the law of the overlap array. We next discuss Ghirlanda--Guerra identities, which are enforced by small perturbations of the Hamiltonian and ensure ultrametricity. The ultrametricity property implies that the law of the overlap array is a continuous function of the law of the overlap between only two replicas; and Poisson--Dirichlet cascades provide us with a canonical representation of these objects. So all the ``cavity averages'' we obtained in the cavity calculations, which involve the ``cavity Gibbs measure'', can in fact be re-expressed as cavity averages with respect to some fixed Poisson--Dirichlet cascade.

Throughout this section, we fix $(t,{q}) \in \R_+\times \C_\infty$, and the integer $M\in \N$ denotes the number of cavity spins. For the purpose of this work, we only need to consider the case when~$M=1$, but we prefer to carry the cavity calculations with a general $M$, as these might be useful for future applications.

\subsection{Definitions and notation}
\subsubsection{Hamiltonians and perturbation}
We start by introducing the various Hamiltonians that will enter into the cavity calculations. The basic idea of a cavity calculation is to decompose an element $\rho \in \R^{D \times (N+M)}$ into $\rho = (\sigma, \tau)$, with $\sigma \in \R^{D \times N}$ and $\tau \in \R^{D \times M}$, and to express the free energy of Gibbs averages involving $N+M$ variables as averages of the $\tau$ variables under some Gibbs measure over the $\sigma$ variables. We are in particular interested in separating out the dependence in $\sigma$ and $\tau$ of the Hamiltonian $H_{N+M}(\rho) = H_{N+M}((\sigma,\tau))$. Temporarily focusing on the case $M = 1$, we would like for instance to argue that 
\begin{equation}  
\label{e.expansion.HN}
H_{N+1}(\sigma,\tau) \simeq H_{N+1}(\sigma,0) + \tau \cdot \nabla_\tau H_{N+1}(\sigma,0),
\end{equation}
where we use the somewhat informal notation $\nabla_\tau H_{N+1}$ to denote the gradient in $\tau$ of the mapping $(\sigma,\tau) \mapsto H_{N+1}(\sigma,\tau)$. Since $\rho \rho^\intercal = \si \si^\intercal + \tau \tau^\intercal$, we have from \eqref{e.def.xi} that for every $(\sigma,\tau), (\sigma',\tau') \in \R^{D\times (N+1)}$,
\begin{equation*}  \E \Ll[ H_{N+1}(\sigma,\tau) H_{N+1}(\sigma',\tau') \Rr] = (N+1) \xi \Ll( \frac{\sigma \sigma'^\intercal + \tau \tau'^\intercal}{N+1} \Rr) .
\end{equation*}
Differentiating this expression, we get that for every $a, b \in \R^D$, 
\begin{equation}  
\label{e.diff1.defxi}
\E \Ll[ H_{N+1}(\sigma,\tau) \big(b\cdot \nabla_\tau H_{N+1}(\sigma',\tau')\big) \Rr] = \tau b^\intercal \cdot \nabla \xi \Ll( \frac{\sigma \sigma'^\intercal + \tau \tau'^\intercal}{N+1} \Rr) 
\end{equation}
and
\begin{equation}  
\label{e.diff2.defxi}
\E \Ll[ \big(a \cdot \nabla_\tau H_{N+1}(\sigma,0)\big) \big(b \cdot \nabla_\tau H_{N+1}\big)(\sigma',0) \Rr] = b a^\intercal \cdot \nabla \xi \Ll( \frac{\sigma \sigma'^\intercal}{N+1} \Rr) .
\end{equation}
We see from \eqref{e.diff1.defxi} that $(H_{N+1}(\sigma,0))_{\sigma \in \R^{D\times N}}$ and $(\nabla_\tau H_{N+1}(\sigma,0))_{\sigma \in \R^{D \times N}}$ are independent Gaussian fields. The Hamiltonian $H_{N+1}(\sigma,0)$ will serve as our reference Hamiltonian, the common part between $H_{N+1}$ and $H_N$, and we will similarly expand $H_N(\sigma)$ into $H_{N+1}(\sigma,0)$ plus an independent Gaussian field. From now on, we revert to the setting with general $M \in \N$, and introduce convenient notation to denote the various Hamiltonians entering our calculations. While we may not always repeat it, each time we introduce a new Hamiltonian or other Gaussian field, we impose that it be independent of all the previously-defined random fields; and the expectation $\E$ is understood to average all these processes.

In order to enforce the asymptotic validity of the Ghirlanda--Guerra identities, and thereby the ultrametricity of the Gibbs measure, we need to introduce a rich family of perturbations of the Hamiltonian.
Let $(\lambda_n)_{n\in\N}$ be an enumeration of $[0,1]\cap \Q$, and $(a_n)_{n\in\N}$ be an enumeration of $\{a\in \S^\D_+ :|a|\leq 1\}\cap \Q^{D\times D}$. For every $h\in \N^4$, let $(H^h_N(\sigma,\alpha))_{\sigma\in\R^{\D\times N},\,\alpha\in \mfk U}$ 
be an independent centered Gaussian process with covariance
\begin{align*}\E \Ll[H^h_N(\sigma,\alpha)H^h_N(\sigma',\alpha')\Rr] = N\Ll(a_{h_1}\cdot \Ll(\frac{\sigma\sigma'^\intercal}{N}\Rr)^{\odot h_2}+\lambda_{h_3} \alpha\wedge\alpha'\Rr)^{h_4}
\end{align*}
where $\odot$ is the Schur product of matrices, i.e.\ $a\odot b = (a_{i,j}b_{i,j})_{i,j}$.
Let us explain why this process exists. By \cite[(6.1)-(6.2)]{mourrat2023free}, there exists a Gaussian process with covariance $a\cdot \Ll(\frac{\sigma\sigma'^\intercal}{N}\Rr)^{\odot n}$ for every $a\in\S^\D_+$ and $n\in\N$. Proposition~\ref{p.gaussian_cascade} ensures the existence of a Gaussian process with covariance $\alpha\wedge\alpha'$. Linear combinations with non-negative coefficients of covariance functions are still covariance functions, since we can simply take superpositions of independent processes to realize them, and \cite[Proposition~6.5]{mourrat2023free} also allows for the multiplication of two covariance functions. Finally, the measurability can be handled as in the proof of Proposition~\ref{p.gaussian_cascade}.

For each $h \in \N^4$, let $c_h>0$ be a constant such that
\begin{align*}c_h \sqrt{\frac{1}{N}\E  H^h_N(\sigma,\alpha)H^h_N(\sigma,\alpha) }\leq 2^{-|h|_1},
\end{align*}
uniformly over $\sigma \in \supp P_N$, $\alpha \in \mfk U$, and $N \in \N$, and where we write $|h|_1 := \sum_{i=1}^4 h_i$.
Let $\{e_i\}_{i=1}^{D(D+1)/2}$ be an orthonormal basis of $\S^{D}$.
For every
\begin{align}\label{e.x_pert}
    x = \Ll((x_h)_{h\in\N^4}, (x_i)_{i=1}^{D(D+1)/2} \Rr)\in [0,3]^{\N^4 \times \frac{D(D+1)}{2}},
\end{align}
we set
\begin{align}\label{e.H^pert}
\begin{split}
    H_{N}^{x}(\sigma,\alpha) &:= \check H_N^{x}(\sigma,\alpha)+ \sum_{i=1}^{D(D+1)/2} x_i e_i\cdot \sigma\sigma^\intercal, \qquad \text{where} 
    \\
    \check H_N^{x}(\sigma,\alpha)&:= \sum_{h\in \N^4} x_h  c_h H^h_N(\sigma,\alpha).
\end{split}
\end{align}
The second term in the definition of $H_N^x$ will ensure the concentration of the ``self-overlap''~$\frac{\sigma \sigma^\intercal}{N}$. Notice that we impose each coordinate of $x$ to take values in the interval~$[0,3]$. 
We define the free energy with perturbation 
\begin{align}
    \bar F^x_N(t,{q}) &:= -\frac{1}{N}\E\log \iint \exp\Ll(H^{t,{q}}_N(\sigma,\alpha) + N^{-\frac{1}{16}}H^{x}_N(\sigma,\alpha)\Rr)\d P_N(\sigma) \d \fR(\alpha),
    \label{e.F^x_N=}
\end{align}
and its associated Gibbs measure
\begin{align}
    \la\cdot\ra^\orig_{N,x} &\propto  \exp\Ll(H^{t,{q}}_N(\sigma,\alpha) + N^{-\frac{1}{16}}H^{x}_N(\sigma,\alpha)\Rr)\d P_N(\sigma) \d \fR(\alpha).\label{e.<>^orig_Nx=}
\end{align}
The exponent $1/16$ in \eqref{e.F^x_N=} is chosen for convenience, any smaller exponent would also do as long as it is strictly positive. 
We keep denoting by $(\sigma,\alpha)$ the canonical random variable under $\la\cdot\ra^\orig_{N,x}$, and we write $(\sigma^\ell, \alpha^\ell)_{\ell \ge 1}$ to denote independent copies of $(\sigma,\alpha)$. 
The expectation $\E$ in \eqref{e.F^x_N=} integrates all the sources of Gaussian randomness and the randomness of $\fR$.

Our ``reference'' Hamiltonian for the cavity calculation is denoted by $(\tilde H_N(\sigma))_{\sigma\in \R^{D\times N}}$, and it is the centered Gaussian process such that, for every $\si, \si' \in \R^{D\times N}$,
\begin{align*}
    \E \Ll[\tilde H_N(\sigma)\tilde H_N(\sigma')\Rr] = (N+M)\xi\Ll(\frac{\sigma\sigma'^\intercal}{N+M}\Rr).
\end{align*}
This is simply an independent copy of what we were denoting $H_{N+M}(\sigma,0)$ at the opening of this section. Notice that the parameter $M$ appears in the definition of the covariance above. Abusing notation, we prefer to keep the dependence of $\tilde H_N$ on $M$ implicit, as it only takes $\sigma \in \R^{D\times N}$ as input. 
We let $\tilde W^{q}_N$ be an independent copy of $W^{q}_N$ defined in~\eqref{e.W^mu(alpha)}, and, for every $\sigma \in \R^{D\times N}$ and $\alpha \in \mfk U$, we set
\begin{align*}\tilde H^{t,{q}}_N(\sigma,\alpha) := \sqrt{2t}\tilde H_N(\sigma) - t(N+M)\xi\Ll(\frac{\sigma\sigma^\intercal}{N+M}\Rr) +\tilde W^{q}_N(\alpha)\cdot \sigma - {q}(1)\cdot\sigma\sigma^\intercal.
\end{align*}
We denote the free energy and the Gibbs measure arising in the cavity calculation by
\begin{align}\label{e.tildeF_N=}
    \tilde F_N^x(t,{q}) := -\frac{1}{N}\E\log\iint \exp\Ll(\tilde H^{t,{q}}_N(\sigma,\alpha)+N^{-\frac{1}{16}} H^{x}_{N}(\sigma,\alpha)\Rr)\d P_N(\sigma)\d\fR(\alpha)
\end{align}
and
\begin{align}\label{e.<>^cav_N,x}
    \la\cdot\ra^\cav_{N,x} \propto \exp\Ll(\tilde H^{t,{q}}_N(\sigma,\alpha) +  N^{-\frac{1}{16}}H^{x}_N(\sigma,\alpha)\Rr)\d P_N(\sigma)\d\fR(\alpha).
\end{align}

\subsubsection{Notation for the overlaps}

For every $N\in\N$, $\ell,\ell'\in\N$, $h\in \N^4$, and $n\in \N$, we write
\begin{align}\label{e.overlap_notation}
    \begin{cases}
        R^{\ell,\ell'}_{N,\sigma} := \frac{\sigma^\ell\Ll(\sigma^{\ell'}\Rr)^\intercal}{N},\qquad R^{\ell,\ell'}_\alpha :=\alpha^\ell\wedge\alpha^{\ell'},\qquad R^{\ell,\ell'}_N :=\Ll(R^{\ell,\ell'}_{N,\sigma}, R^{\ell,\ell'}_\alpha\Rr);
        \\
        R^{\ell,\ell'}_{N,h} := \Ll(a_{h_1}\cdot \Ll(R^{\ell,\ell'}_{N,\sigma}\Rr)^{\odot h_2}+\lambda_{h_3} R^{\ell,\ell'}_\alpha\Rr)^{h_4};
        \\
        R_N := \Ll(R^{\ell,\ell'}_N\Rr)_{\ell,\ell'\in\N},\qquad R^{\leq n}_N := \Ll(R^{\ell,\ell'}_N\Rr)_{\ell,\ell'\leq n} .
    \end{cases}
\end{align}
For every ${p} \in \C_\infty$, we set
\begin{align}\label{e.overlap_alpha}
    Q^{\ell,\ell'}_{p} := \Ll({p}\Ll(R^{\ell,\ell'}_\alpha\Rr), R^{\ell,\ell'}_\alpha\Rr),\qquad
    Q_{p} := \Ll(Q^{\ell,\ell'}_{p}\Rr)_{\ell,\ell'\in \N},\qquad Q^{\leq n}_{p} := \Ll(Q^{\ell,\ell'}_{p}\Rr)_{1\leq \ell,\ell'\leq n}.
\end{align}
Under some circumstances, we will be able to argue that the $R_{N,\sigma}$-overlaps synchronize with the $R_\alpha$-overlaps; this means that the $R_N$-overlaps are close to the $Q_p$-overlaps for a suitable choice of $p$.

\subsubsection{Definitions for the free-energy cavity calculation}
We now define the Gibbs average that will show up in the free-energy cavity calculation, or in other words in the Aizenman--Sims--Starr scheme \cite{aizenman2003extended}. We define $\theta:\R^{\D\times\D}\to\R$ such that, for every $a \in \R^{D \times D}$,
\begin{align}\label{e.theta=}
    \theta(a) := a\cdot \nabla \xi(a) - \xi(a).
\end{align}
We define the following independent centered Gaussian processes indexed by $\sigma\in \R^{D\times N}$:
\begin{itemize}
    \item let $\msf Z(\sigma) = (\msf Z_1(\sigma),\dots, \msf Z_M(\sigma))$ be an $M$-tuple of independent $\R^{D}$-valued centered Gaussian vectors $\msf Z_i(\sigma)$ with covariance $\E \msf Z_i(\sigma) \msf Z_i(\sigma')^\intercal =\nabla \xi (\frac{\sigma\sigma'^\intercal}{N})$;
    \item let $\msf Y(\sigma)$ be real-valued with covariance $\E \msf Y(\sigma)\msf Y(\sigma') =  M\theta (\frac{\sigma\sigma'^\intercal}{N})$.
\end{itemize}
The fact that there indeed exists such a Gaussian process $\msf Z$ follows from \eqref{e.diff2.defxi} and a simple rescaling. Concerning the existence of the process $\msf Y$, we recall from Subsection~\ref{ss.explicit.H} that $\xi$ takes the form given in \eqref{e.explicit.xi}, where the matrices $\mathsf{C}^{(p)}$ belong to $S^{D^p}_+$ and the series is absolutely convergent. A direct calculation yields that
\begin{equation*}  \theta(a) = \sum_{p = 1}^\infty (p-1)\mathsf{C}^{(p)}\cdot a^{\otimes p}.
\end{equation*}
This is of the same form as \eqref{e.explicit.xi}, so the existence of the process $\msf Y$ follows from the second part of Subsection~\ref{ss.explicit.H}.

For every $\sigma \in \R^{D\times N}$, $\tau \in \R^{D \times M}$, and $\alpha \in \mfk U$, we set
\begin{align}\label{e.U=}
    U(\sigma,\alpha,\tau) := \sqrt{2t}\msf Z(\sigma)\cdot \tau - t\nabla\xi\Ll(\frac{\sigma\sigma^\intercal}{N}\Rr)\cdot\tau\tau^\intercal + \sqrt{2}W^{{q}}_M(\alpha)\cdot\tau-{q}(1)\cdot\tau\tau^\intercal,
\end{align}
and for every $x \in[0,3]^{\N^4 \times \frac{D(D+1)}{2}}$,
\begin{multline}
\label{e.A_N(x)}
    A_N(x)  := \E \log \la \int \exp\Ll(U(\sigma,\alpha,\tau)\Rr)\d P_M(\tau) \ra^\cav_{N,x}
    \\
     - \E \log \la \exp\Ll(\sqrt{2t}\msf Y(\sigma)- tM\theta \Ll( \frac{\sigma\sigma^\intercal}{N}\Rr)\Rr) \ra^\cav_{N,x}.
\end{multline}
With $\psi$ and $\theta$ defined in~\eqref{e.psi=} and~\eqref{e.theta=}, we set, for every ${p}\in \C_\infty$,
\begin{align}\label{e.mathscrP}
    \mathscr{P}_{t,{q}}({p}) := \psi({q}+t\nabla\xi({p})) -t \int_0^1\theta({p}(s))\d s.
\end{align}
We will show that the limit of $A_N(x)$ is related to $\mathscr{P}_{t,{q}}({p})$ for some ${p}$.

\subsubsection{Definitions for the Gibbs-average cavity calculation}
We write $\rho \in \R^{\D\times (N+ M)}$ as $\rho = (\sigma,\tau) \in \R^{\D\times N}\times \R^{\D\times M}$. 
For a bounded continuous function $g:\R^{\D\times\D}\times \R\to\R$, we want to find the limit of $\E \la g(\tau\tau'^\intercal,\alpha\wedge\alpha')\ra^\orig_{N+M,x}$ as $N$ tends to infinity. To study this, we need a few more definitions. 
We define the Gibbs measure $\la\cdot\ra^\star_{N+M,x}$ by
\begin{align}\label{e.<>_N^star}
    \la\cdot\ra^\star_{N+M,x}\propto\exp\Ll(\tilde H_N^{t,{q}}(\sigma,\alpha) +  U(\sigma,\alpha,\tau)
    + N^{-\frac{1}{16}}  H^{x}_N(\sigma,\alpha)\Rr)\d P_{N+M}(\sigma,\tau) \d\fR(\alpha).
\end{align}
The canonical random variable under $\la\cdot\ra^\star_{N+M,x}$ is denoted by $(\rho, \alpha) = (\sigma,\tau,\alpha)$. 
For every $\pi \in \C_\infty$, we define the Gibbs measure $\la\cdot\ra_{\fR,\pi}$ by
\begin{align}\label{e.<>_R,pi}
    \la \cdot\ra_{\fR,\pi} \propto \exp\Ll(\sqrt{2}W^\pi_M(\alpha)\cdot\tau -\pi(1)\cdot \tau\tau^\intercal\Rr) \d P_M(\tau) \d \fR(\alpha),
\end{align}
and we denote by $(\tau,\alpha)$ the canonical random variable under $ \la \cdot\ra_{\fR,\pi}$. 
We will show that the limit of $\E \la g(\tau\tau'^\intercal,\alpha\wedge\alpha')\ra^\orig_{N+M,x}$ is related to $\E \la g(\tau\tau'^\intercal,\alpha\wedge\alpha') \ra_{\fR, \pi}$ for some $\pi$.

\subsection{Cavity calculations per se}

\subsubsection{Free-energy cavity calculation}
\begin{proposition}[Free-energy cavity calculation]
\label{p.cavity_perturbation}
We have, uniformly over $M,N \in \N$ and $x \in [0,3]^{\N^4 \times \frac{D(D+1)}{2}}$,
\begin{align*}&-(N+M)\bar F_{N+M}^x(t,{q}) + N\bar F_N^x(t,{q}) = A_N(x)  + O\Ll(M^2N^{-\frac{1}{16}}\Rr).
\end{align*}
\end{proposition}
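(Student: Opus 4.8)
The plan is to carry out the Aizenman--Sims--Starr telescoping argument, interpolating between the Hamiltonians $H^{t,q}_{N+M}$ on $N+M$ spins and the ``reference'' setup $\tilde H^{t,q}_N + U$ on $N+M$ spins, and then comparing the latter with $\bar F^x_N(t,q)$ built from $\tilde H^{t,q}_N$ on $N$ spins. First I would record the elementary identity $(N+M)\bar F^x_{N+M}(t,q)-N\bar F^x_N(t,q) = \bigl[(N+M)\bar F^x_{N+M}(t,q)-N\tilde F^x_N(t,q)\bigr] + \bigl[N\tilde F^x_N(t,q)-N\bar F^x_N(t,q)\bigr]$, wait—more precisely, the scheme expresses $-(N+M)\bar F^x_{N+M}+N\bar F^x_N$ as the difference of two ``cavity free energies'' both computed against $\langle\cdot\rangle_{N,x}$: one in which the $\tau$-variables are integrated against $\exp(U(\sigma,\alpha,\tau))$, and one in which a scalar correction $\exp(\sqrt{2t}\,\msf Y(\sigma)-tM\theta(\sigma\sigma^\intercal/N))$ is integrated. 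That difference is exactly $A_N(x)$ by definition \eqref{e.A_N(x)}, so the content of the proposition is that each of the two replacements — $H^{t,q}_{N+M}(\rho,\alpha)$ on $N+M$ spins versus $\tilde H^{t,q}_N(\sigma,\alpha)+U(\sigma,\alpha,\tau)$, and likewise for the pure reference terms — costs only $O(M^2 N^{-1/16})$.

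The main tool is a Gaussian interpolation (Slepian/Guerra-type): for $s\in[0,1]$ set $\sqrt{s}$ times one Gaussian field plus $\sqrt{1-s}$ times the other, differentiate the corresponding free energy in $s$, apply Gaussian integration by parts (\cite[Theorem~4.6]{HJbook}), and bound the resulting two-replica term. The key algebraic inputs are the covariance computations \eqref{e.diff1.defxi}--\eqref{e.diff2.defxi}: they show that, up to the difference between dividing by $N$ and by $N+M$, the covariance of $H_{N+M}(\sigma,\tau)$ matches that of $\tilde H_N(\sigma)+\msf Z(\sigma)\cdot\tau$ plus a $\tau$-only piece, and the covariance of $\msf Y$ captures precisely the $O(M)$ mismatch $(N+M)\xi(\sigma\sigma^\intercal/(N+M)) - N\xi(\sigma\sigma^\intercal/N) \approx M\theta(\sigma\sigma^\intercal/N)$ to leading order. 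Concretely I would Taylor-expand $\xi$ and $\nabla\xi$ in the argument $\tfrac{\sigma\sigma'^\intercal}{N+M}$ around $\tfrac{\sigma\sigma'^\intercal}{N}$; since $\tfrac{1}{N+M}-\tfrac1N = -\tfrac{M}{N(N+M)} = O(M/N^2)$ and, by \eqref{e.bound.on.sigma}--\eqref{e.bound.on.sisi'} together with the support bound \eqref{e.bound.support}, all overlaps $\sigma\sigma'^\intercal/N$ lie in the ball $\{|a|\le 1\}$ where $\xi$ and its derivatives are bounded, the interpolation derivative is controlled by $O(M^2/N)$ from the covariance mismatch. The perturbation term $N^{-1/16}H^x_N$ contributes identically on both sides of each comparison, so it simply cancels in the derivative of the interpolation — this is why the error bound must be uniform over $x\in[0,3]^{\N^4\times D(D+1)/2}$, and it is the reason for inserting the $N^{-1/16}$ prefactor: one checks the perturbation field has variance $\le N$ (by the choice of $c_h$), so $N^{-1/16}H^x_N$ has fluctuations $O(N^{15/16})$, giving free-energy contributions $O(N^{-1/16})$, hence an overall additive error of the stated size $O(M^2 N^{-1/16})$ after collecting the $O(M^2/N)$ Gaussian-mismatch term and the $O(M^2 N^{-1/16})$ perturbation term. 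The bookkeeping of the $\fR$-cascade field $W^q_N$ versus $\tilde W^q_N\!,\,W^q_M$ is done the same way: their covariances are $q(\alpha\wedge\alpha')$ by \eqref{e.summary.gaussian}, exact on both sides, so they contribute nothing to the interpolation derivative (or are handled by an exact distributional identity as in \eqref{e.wq.distr.identity}).

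I expect the main obstacle to be organizing the two-step comparison cleanly while keeping the $M$-dependence explicit: one must split $H_{N+M}(\rho,\alpha)$ into its $\sigma$-part (matched with $\tilde H_N$) plus its genuinely $\tau$-dependent part (matched with $U$), and the cross-covariance terms between these do not vanish exactly — they vanish only to leading order in $1/N$, with the second-order Taylor remainders in $\xi$ producing the $O(M^2/N)$ bound. A careful application of Gaussian integration by parts to a free energy with a two-body interaction yields, after the usual cancellations, a remainder of the schematic form $\frac{1}{N}\sum$ (derivative bounds on $\xi$) $\times$ $\E\langle|\tfrac{\sigma\sigma'^\intercal+\tau\tau'^\intercal}{N+M} - \tfrac{\sigma\sigma'^\intercal}{N}|^2\rangle\times(\text{combinatorial }M^2\text{ factor})$, and since $|\tau\tau'^\intercal|\le M$ and $|\sigma\sigma'^\intercal|\le N$ this is $O(M^2/N)$. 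Once the interpolation bounds are in place, recognizing the resulting expression as exactly $A_N(x)$ is a matter of unwinding the definitions in \eqref{e.U=}--\eqref{e.A_N(x)}. I would present the argument by first stating the generic Gaussian-interpolation lemma with its quantitative remainder, then applying it twice, and finally collecting terms; the perturbation uniformity is automatic once one notices the perturbation field is present and identical in every free energy being compared.
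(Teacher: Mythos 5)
Your proposal follows essentially the same route as the paper: the same Aizenman--Sims--Starr decomposition into the two comparisons ($N+M$ spins versus the cavity form $\tilde H^{t,q}_N+U$, and $N$ spins versus $\tilde H^{t,q}_N+\sqrt{2t}\,\msf Y-tM\theta$), each handled by a Guerra-type interpolation with Gaussian integration by parts, with the covariance mismatches controlled by Taylor-expanding $\xi$ around $\sigma\sigma'^\intercal/N$ to produce the $O(M^2/N)$ term. One small correction: in the first comparison the perturbation does \emph{not} cancel identically, since one side carries $(N+M)^{-\frac{1}{16}}H^{x}_{N+M}(\rho,\alpha)$ (a field on all $N+M$ spins) while the other carries $N^{-\frac{1}{16}}H^{x}_N(\sigma,\alpha)$; the paper estimates this mismatch explicitly in \eqref{e.EHpertHpert=} and \eqref{e.sumxg(sigma)=}, and it is precisely the source of the $N^{-\frac{1}{16}}$ in the final error --- your closing error budget does account for an $O(M^2N^{-\frac{1}{16}})$ perturbation contribution, so the slip is only in the intermediate claim that it ``simply cancels.''
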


\begin{proof}
The argument consists in a rigorous justification of the expansion \eqref{e.expansion.HN} and of a similar expansion for $H_N$.
We keep writing $\rho = (\sigma,\tau) \in \R^{D\times (N+M)}$, so that $\rho\rho^\intercal = \sigma\sigma^\intercal + \tau\tau^\intercal$ and
\begin{align*}
    W^{q}_{N+M}(\alpha)\cdot \rho \stackrel{\d}{=} \tilde W^{q}_N(\alpha)\cdot \sigma + W^{q}_M(\alpha)\cdot \tau.
\end{align*}
Using the Taylor expansion of $\xi$, the local Lipschitzness of $\nabla \xi$, and $|\frac{\sigma\sigma^\intercal}{N}-\frac{\sigma\sigma^\intercal}{N+M}| = O(\frac{M}{N})$, we get, for every $\rho$ in the support of $P_{N+M}$,
\begin{align}\label{e.(N+M)xi(...)=}
\begin{split}
    (N+M)\xi\Ll(\frac{\rho\rho^\intercal}{N+M}\Rr) & = (N+M)\xi \Ll(\frac{\sigma\sigma^\intercal}{N+M}\Rr)+ \nabla\xi\Ll(\frac{\sigma\sigma^\intercal}{N}\Rr)\cdot \tau\tau^\intercal + O\Ll(\frac{M^2}{N}\Rr),
    \\
    N\xi\Ll(\frac{\sigma\sigma^\intercal}{N}\Rr) & = (N+M)\xi\Ll(\frac{\sigma\sigma^\intercal}{N+M}\Rr) + M \theta \Ll(\frac{\sigma\sigma^\intercal}{N}\Rr) + O\Ll(\frac{M^2}{N}\Rr).
\end{split}
\end{align}
These two relations yield, respectively,
\begin{align}\label{e.EHH=EHH}
\begin{split}
    \E H_{N+M}(\rho) H_{N+M}(\rho') &= \E \tilde H_N(\sigma)\tilde H_N(\sigma') + \E (\msf Z(\sigma)\cdot \tau)(\msf Z(\sigma')\cdot \tau') + O\Ll(\frac{M^2}{N}\Rr),
    \\
    \E H_N(\sigma)H_N(\sigma') &= \E\tilde H_N(\sigma)\tilde H_N(\sigma') + \E \msf Y(\sigma)\msf Y(\sigma') + O\Ll(\frac{M^2}{N}\Rr)
\end{split}
\end{align}
uniformly over $\rho$ and $\rho'$ in the support of $P_{N+M}$.
Recalling $H^{x}_N(\sigma,\alpha)$ and $\check H^{x}_N(\sigma,\alpha)$ in~\eqref{e.H^pert}, we verify that
\begin{multline}
\label{e.EHpertHpert=}
    (N+M)^{-\frac{1}{8}}\E \check H^{x}_{N+M}(\rho,\alpha) \check H^{x}_{N+M}(\rho',\alpha') 
    \\ =N^{-\frac{1}{8}} \E \check H^{x}_N(\sigma,\alpha) \check H^{x}_N(\sigma',\alpha') + O\Ll(MN^{-\frac{1}{8}}\Rr).
\end{multline}
Using again that $\rho\rho'^\intercal = \sigma\sigma'^\intercal + \tau\tau'^\intercal$, we see that the second term on the right-hand side in~\eqref{e.H^pert} satisfies
\begin{align}\label{e.sumxg(sigma)=}
    (N+M)^{-\frac{1}{16}}\sum_{i=1}^{D(D+1)/2} x_ie_i\cdot \rho\rho^\intercal = N^{-\frac{1}{16}}\sum_{i=1}^{D(D+1)/2} x_ie_i\cdot \sigma\sigma^\intercal+ O\Ll(MN^{-\frac{1}{16}}\Rr).
\end{align}
We want to insert the above relations into 
\begin{align*}
    &-(N+M)\bar F_{N+M}^x(t,{q}) + N\bar F_N^x(t,{q}) 
    \\ &= \E \log \iint \exp \Ll(H^{t,{q}}_{N+M}(\rho,\alpha)+(N+M)^{-\frac{1}{16}}H^{x}_{N+M}(\rho,\alpha) \Rr)\d P_{N+M}(\rho) \d \fR(\alpha)
    \\
    &- \E \log \iint \exp\Ll( H^{t,{q}}_{N}(\sigma,\alpha) +N^{-\frac{1}{16}}H^{x}_{N}(\sigma,\alpha)\Rr)\d P_{N}(\sigma) \d \fR(\alpha)=: \mathrm{I}_0 - \mathrm{II}_0 .
\end{align*}
Recalling $\la\cdot\ra^\cav_{N,x}$ in \eqref{e.<>^cav_N,x},
we rewrite $A_N(x)$ in \eqref{e.A_N(x)} as
\begin{align*}
    A_N(x)  = \E \log  \iint \exp\Ll(U(\sigma,\alpha,\tau)+ \tilde H^{t,{q}}_N(\sigma,\alpha) +  N^{-\frac{1}{16}}H^{x}_N(\sigma,\alpha)\Rr)\d P_{N+M}(\sigma,\tau) \d \fR(\alpha)
    \\
    - \E \log \iint\exp\Ll(\sqrt{2t}\msf Y(\sigma)- tM\theta \Ll( \frac{\sigma\sigma^\intercal}{N}\Rr) +\tilde H^{t,{q}}_N(\sigma,\alpha) +  N^{-\frac{1}{16}}H^{x}_N(\sigma,\alpha)\Rr)\d P_N(\sigma) \d \fR(\alpha) 
    \\
    =: \mathrm{I}_1 - \mathrm{II}_1.
\end{align*}
Note that all the Hamiltonians are of the form $\mathbf{G}(\mathbf{s}) + \mathbf{D}(\mathbf{s})$ where the first term $\mathbf{G}(\mathbf{s})$ is centered Gaussian, the second term $\mathbf{D}(\mathbf{s})$ is deterministic, and $\mathbf{s}$ is $(\rho,\alpha)$ or $(\sigma,\alpha)$. 
To compare $\mathrm{I}_0$ with $\mathrm{I}_1$ and $\mathrm{II}_0$ with $\mathrm{II}_1$, we consider two interpolations of the form
\begin{gather*}
    \varphi(r) := \E \log\int \exp \Ll(\sqrt{1-r}\mathbf{G}^{0}(\mathbf{s})+(1-r)\mathbf{D}^{0}(\mathbf{s})+\sqrt{r}\mathbf{G}^{1}(\mathbf{s})+r\mathbf{D}^{1}(\mathbf{s})\Rr)\d\mathfrak{P}(\mathbf{s}),
\end{gather*}
where $\mathbf{G}^{0}$ and $\mathbf{G}^{1}$ are independent and $\mathfrak{P}$ is $P_{N+M}\otimes \fR$ or $P_N\otimes \fR$.
For the first interpolation between $\mathrm{I}_0$ and $\mathrm{I}_1$, we take 
\begin{align}
\label{e.def.GD0}
    \mathbf{G}^{0}(\mathbf{s})+\mathbf{D}^{0}(\mathbf{s}) & = H^{t,{q}}_{N+M}(\rho,\alpha)+N^{-\frac{1}{16}}H^{x}_{N+M}(\rho,\alpha),
    \\
\label{e.def.GD1}
    \mathbf{G}^{1}(\mathbf{s})+\mathbf{D}^{1}(\mathbf{s})  & = U(\sigma,\alpha,\tau)+ \tilde H^{t,{q}}_N(\sigma,\alpha) +  N^{-\frac{1}{16}}H^{x}_N(\sigma,\alpha).
\end{align}
We understand that the Gaussian fields $H_{N+M}^x$ and $H_{N}^x$ appearing in \eqref{e.def.GD0} and \eqref{e.def.GD1} respectively are independent.
For the second interpolation, between $\mathrm{II}_0$ and $\mathrm{II}_1$, we take
\begin{align}
\label{e.def.GD0.bis}
    \mathbf{G}^{0}(\mathbf{s})+\mathbf{D}^{0}(\mathbf{s}) & = H^{t,{q}}_{N}(\sigma,\alpha) +N^{-\frac{1}{16}}H^{x}_{N}(\sigma,\alpha),
    \\
\label{e.def.GD1.bis}
    \mathbf{G}^{1}(\mathbf{s})+\mathbf{D}^{1}(\mathbf{s})  & = \sqrt{2t}\msf Y(\sigma)- tM\theta \Ll( \frac{\sigma\sigma^\intercal}{N}\Rr) +\tilde H^{t,{q}}_N(\sigma,\alpha) +  N^{-\frac{1}{16}}H^{x}_N(\sigma,\alpha).
\end{align}
To remain faithful with what we announced, we should take the Gaussian fields $H^x_N(\si, \al)$ appearing in \eqref{e.def.GD0.bis} and \eqref{e.def.GD1.bis} to be independent. This is of course not necessary here, and taking them to be equal is just as convenient; this can be phrased in our setting by absorbing this term into the definition of $\mathfrak{P}$.

We control $|\varphi(1)-\varphi(0)|$ by computing the derivative
\begin{align*}
    \frac{\d}{\d r}\varphi(r) = \E \la \frac{1}{2\sqrt{r}}\mathbf{G}^{1}(\mathbf{s}) - \frac{1}{2\sqrt{1-r}}\mathbf{G}^{0}(\mathbf{s}) + \mathbf{D}^{1}(\mathbf{s}) -\mathbf{D}^{0}(\mathbf{s})\ra_r,
\end{align*}
where $\la\cdot\ra_r$ is the Gibbs measure naturally associated with the free energy expression defining $\varphi(r)$.
We use Gaussian integration by parts (e.g.\ \cite[Theorem~4.6]{HJbook}) to get
\begin{align*}
    \frac{\d}{\d r}\varphi(r) = \E \la \mathbf{V}(\mathbf{s},\mathbf{s})-\mathbf{V}(\mathbf{s},\mathbf{s}')+ \mathbf{D}^{1}(\mathbf{s}) -\mathbf{D}^{0}(\mathbf{s})\ra_r,
\end{align*}
where $\mathbf{s}'$ is an independent copy of $\mathbf{s}$ under $\la\cdot\ra_r$, and
\begin{align}\label{e.V(s,s')=}
    \mathbf{V}(\mathbf{s},\mathbf{s}') := \frac{1}{2}\Ll(\E \mathbf{G}^{1}(\mathbf{s})\mathbf{G}^{1}(\mathbf{s}') - \E \mathbf{G}^{0}(\mathbf{s})\mathbf{G}^{0}(\mathbf{s}') \Rr).
\end{align}
We can then bound $\mathbf{V}$ and $\mathbf{D}^{1} -\mathbf{D}^{0}$ using the estimates in~\eqref{e.(N+M)xi(...)=},~\eqref{e.EHH=EHH},~\eqref{e.EHpertHpert=}, and~\eqref{e.sumxg(sigma)=}. 
For both interpolations, we have $|\frac{\d}{\d r}\varphi(r)|=O(M^2N^{-\frac{1}{16}})$ where $M^2$ comes from~\eqref{e.(N+M)xi(...)=} and~\eqref{e.EHH=EHH}, and $N^{-\frac{1}{16}}$ comes from~\eqref{e.EHpertHpert=} and~\eqref{e.sumxg(sigma)=}.
\end{proof}

\begin{lemma}[Cascade representation of $\mathscr P$]
\label{l.-MP^t,mu(zeta)}
For each ${p}\in\mathcal{Q}_\infty$, conditionally on $\fR$ and on the event where $\mfk U = \supp \fR$ satisfies \eqref{e.good.mfkU}, we define the following independent centered Gaussian processes  indexed by $\alpha \in \mfk U$:
\begin{itemize}
    \item let $Z(\alpha) = (Z_1(\alpha),\dots, Z_M(\alpha))$ be an $M$-tuple of independent $\R^\D$-valued centered Gaussian vectors $Z_i(\alpha)$ with covariance $\E Z_i(\alpha) Z_i(\alpha')^\intercal = \nabla\xi({p}\Ll(\alpha\wedge\alpha'\Rr))$;
    \item let $Y(\alpha)$ be real-valued with covariance $\E Y(\alpha)Y(\alpha') = M\theta({p}\Ll(\alpha\wedge\alpha'\Rr))$.
\end{itemize}
Letting also 
\begin{equation*}  V(\alpha,\tau):= \sqrt{2t} Z(\alpha)\cdot \tau - t \nabla\xi\Ll({p}(1)\Rr)\cdot \tau\tau^\intercal + \sqrt{2}W^{q}_M(\alpha)\cdot \tau - {q}(1)\cdot \tau\tau^\intercal,
\end{equation*}
we have
\begin{multline}
\label{e.-MP^t}
    -M\mathscr{P}_{t,{q}}({p})
    =\E \log  \iint \exp\Ll(V(\alpha,\tau)\Rr)\d P_M(\tau)\d\fR(\alpha)
    \\
      - \E \log \int \exp\Ll(\sqrt{2t}Y(\alpha)- tM\theta \Ll( {p}(1)\Rr)\Rr) \d\fR(\alpha).
\end{multline}
\end{lemma}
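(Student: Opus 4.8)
The plan is to establish the identity \eqref{e.-MP^t} by recognizing the right-hand side as a Gaussian-cascade version of the quantity $-M\mathscr{P}_{t,q}(p)$, and to compute it directly using the structure of Poisson--Dirichlet cascades. Recall that $\mathscr{P}_{t,q}(p) = \psi(q + t\nabla\xi(p)) - t\int_0^1 \theta(p(s))\,\d s$, so we must match each of the two terms on the right-hand side of \eqref{e.-MP^t} against one of these two contributions.

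First I would treat the second term. Since $Y(\alpha)$ is a real-valued centered Gaussian process with covariance $\E Y(\alpha)Y(\alpha') = M\theta(p(\alpha\wedge\alpha'))$, and $\theta \ge 0$ componentwise (from the representation $\theta(a) = \sum_{p\ge 1}(p-1)\mathsf{C}^{(p)}\cdot a^{\otimes p}$ with $\mathsf{C}^{(p)} \in S^{D^p}_+$, which makes $\theta$ itself of the admissible form and in particular $\theta(p(u)) \ge 0$ for $p(u) \in S^D_+$), the quantity $\sqrt{2t}Y(\alpha) - tM\theta(p(1))$ is exactly a ``compensated'' Gaussian field of the type whose free energy one computes via the standard cascade calculation. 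Concretely, I would invoke the Gaussian cascade machinery: for a centered Gaussian field $g(\alpha)$ on $\mfk U$ with covariance $g(\alpha)\cdot g(\alpha') = \phi(\alpha \wedge \alpha')$ for an increasing $\phi$ with $\phi(1)$ equal to its variance, one has $\E \log \int \exp(\sqrt{2t}\,g(\alpha) - t\phi(1))\,\d\fR(\alpha) = 0$; more generally when one subtracts only $t\phi_{\text{diag}}$ the value is $t\int_0^1(\phi(1) - \phi(u))\,\d u$ up to sign, by the same telescoping computation that underlies \eqref{e.first.cascade.identity} and the construction of $\psi$ in Section~\ref{s.def.cont}. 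Applying this with $\phi = M\theta(p(\cdot))$ gives that the second term on the right of \eqref{e.-MP^t} equals $-tM\int_0^1 \theta(p(u))\,\d u$ (with the appropriate sign), which is precisely $+tM\int_0^1\theta(p(s))\,\d s$ once moved across; this matches the $-t\int_0^1\theta(p(s))\,\d s$ piece of $-M\mathscr P_{t,q}(p)$ after multiplying by $M$. I would also need to double-check that $\theta(p(1))$ is indeed the value at which the telescoping terminates, using that $p \in \mcl Q_\infty$ is continuous at $1$ via \eqref{e.def.q.1}.

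Next I would treat the first term. The key observation is that the process $\tau \mapsto \sqrt{2t}\,Z(\alpha)\cdot\tau - t\nabla\xi(p(1))\cdot\tau\tau^\intercal$ together with $\sqrt{2}\,W^q_M(\alpha)\cdot\tau - q(1)\cdot\tau\tau^\intercal$ can be recombined: the sum $\sqrt{2t}\,Z(\alpha) + \sqrt{2}\,W^q_M(\alpha)$, viewed as a single $\R^{D\times M}$-valued Gaussian cascade field, has covariance structure $\E[(\sqrt{2t}Z_i(\alpha)+\sqrt2 W^q_{M,i}(\alpha))(\cdots)^\intercal] = 2t\nabla\xi(p(\alpha\wedge\alpha')) + 2q(\alpha\wedge\alpha') = 2(q + t\nabla\xi(p))(\alpha\wedge\alpha')$, using \eqref{e.gaussian_cascade} for $W^q_M$ and the stated covariance of $Z$, together with the fact that $\nabla\xi(p(\cdot))$ is itself increasing (so that $t\nabla\xi(p)$ is a valid path, needed for the cascade Gaussian field to exist; this increasingness is exactly the monotonicity used implicitly in the definition of critical points). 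Hence $\sqrt{2t}Z(\alpha)\cdot\tau + \sqrt{2}W^q_M(\alpha)\cdot\tau$ has the same law as $\sqrt{2}\,W^{q+t\nabla\xi(p)}_M(\alpha)\cdot\tau$, and the deterministic quadratic terms combine to $-(q + t\nabla\xi(p))(1)\cdot\tau\tau^\intercal$, again using continuity of $p$ at $1$ to identify $\nabla\xi(p(1))$ with $\nabla\xi(p)(1)$. Therefore the first term on the right of \eqref{e.-MP^t} equals $\E\log\iint\exp(\sqrt{2}W^{q+t\nabla\xi(p)}_M(\alpha)\cdot\tau - (q+t\nabla\xi(p))(1)\cdot\tau\tau^\intercal)\,\d P_M(\tau)\,\d\fR(\alpha)$, which by the definition \eqref{e.psi=} of $\psi$ (applied with $N = M$ and using Proposition~\ref{p.init.cond}, i.e.\ $\bar F_M(0,\cdot) = \bar F_1(0,\cdot) = \psi$, so the tensorization over the $M$ coordinates of $\tau$ is what turns $\psi(q + t\nabla\xi(p))$ into this $M$-fold expression) equals $-M\psi(q + t\nabla\xi(p))$.

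Combining the two pieces yields exactly $-M\mathscr P_{t,q}(p) = -M\psi(q+t\nabla\xi(p)) - (-tM\int_0^1\theta(p(s))\,\d s) \cdot(-1)$, i.e.\ the claimed identity \eqref{e.-MP^t}, up to careful sign bookkeeping. I expect the main obstacle to be the rigorous justification of the ``compensated Gaussian cascade has zero (or explicitly computable) free energy'' fact in the precise generality needed here, and the correct handling of the behavior at $u = 1$: one must be sure that $q$, $p$, $\nabla\xi(p)$, and $\theta(p)$ are all continuous at $1$ so that the boundary term in the telescoping is unambiguous, and that the various Gaussian cascade processes actually exist (which requires $q$, $t\nabla\xi(p)$, and hence their sum, as well as $M\theta(p(\cdot))$, to be legitimate increasing paths — this is where one cites Proposition~\ref{p.gaussian_cascade} and the admissibility discussion surrounding \eqref{e.explicit.xi}). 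The algebraic recombination of Gaussian fields is routine once the existence and covariance identities are in hand.
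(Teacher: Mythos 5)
Your proposal follows essentially the same route as the paper: the first term is identified with $-M\psi(q+t\nabla\xi(p))$ by recombining $\sqrt{2t}\,Z(\alpha)+\sqrt{2}\,W^q_M(\alpha)$ into $\sqrt{2}\,W^{q+t\nabla\xi(p)}_M(\alpha)$ and invoking $\bar F_M(0,\cdot)=\psi$ (Proposition~\ref{p.init.cond}), and the second term is evaluated by the standard Poisson--Dirichlet telescoping computation (as in \cite[Lemma~3.1]{pan}, first for step functions and then by approximation), yielding $-tM\int_0^1\theta(p(s))\,\d s$. One auxiliary claim in your treatment of the second term is false, though you do not actually rely on it: for a centered cascade field $g$ with $\E[g(\alpha)g(\alpha')]=\phi(\alpha\wedge\alpha')$ one has
\begin{equation*}
\E\log\int\exp\Ll(\sqrt{2t}\,g(\alpha)\Rr)\d\fR(\alpha)=t\Ll(\phi(1)-\int_0^1\phi(u)\,\d u\Rr),
\end{equation*}
so the fully compensated quantity $\E\log\int\exp(\sqrt{2t}\,g(\alpha)-t\phi(1))\,\d\fR(\alpha)$ equals $-t\int_0^1\phi(u)\,\d u$, not $0$; it vanishes only for a single Gaussian (or when $\phi\equiv 0$ on $[0,1)$), precisely because $\E\log$ does not commute with the integral over $\alpha$. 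Since your subsequent application uses the correct telescoped value and lands on $-tM\int_0^1\theta(p(u))\,\d u$, the argument as a whole is sound; just replace the parenthetical with the displayed identity and the sign bookkeeping becomes unambiguous.
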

\begin{proof}
We denote the right-hand side of \eqref{e.-MP^t} by $\mathrm{I}-\mathrm{II}$.
We have
\begin{equation*}  \sqrt{2t}Z(\alpha)+\sqrt{2}W^{q}_M(\alpha) \stackrel{\d}{=} \sqrt{2}W^{{q} + t\nabla\xi({p})}_M(\alpha),
\end{equation*}
and thus
\begin{align*}
    \mathrm{I} = -M\bar F_M(0,{q} + t\nabla\xi({p})) =- M\psi(0,{q} + t\nabla\xi({p})),
\end{align*}
where we used Proposition~\ref{p.init.cond} in the last equality. 

To compute $\mathrm{II}$, we assume that ${p}$ is a step function; the general case can be recovered by approximation (one can verify that $\mathrm{II}$ is continuous in ${p}$ by a similar argument to that for the continuity in $q$ of $\bar F_N$ from Proposition~\ref{p.F_N_smooth}). Hence, we assume that ${p}=\sum_{l=1}^kp_l\1_{\Ll[s_{l-1},s_l\Rr)}$ for $0=s_0<s_1<\dots<s_k=1$ and $0\leq p_0\leq p_1\leq\cdots \leq p_k$. For brevity, we also write $\tilde \theta := 2tM\theta$. Following the computation of the second term in \cite[Lemma~3.1]{pan} (comparing the second term in \cite[(3.11)]{pan} with that in \cite[(3.15)]{pan}; substituting $s_l, p_l, \tilde\theta$ for $\zeta_p, q_p, \theta$ therein), we obtain that 
\begin{align*}
    &\E \log \int \exp\sqrt{2t}Y(\alpha) \d\fR(\alpha) = \frac{1}{2}\sum_{ l=0}^{ k-1}s_l\left(\tilde\theta(p_{l+1})-\tilde\theta(p_l)\right)
    \\
    &= \frac{1}{2}\left(-\sum_{l=1}^k (s_{l}-s_{l-1})\tilde\theta(p_l) + s_k\tilde\theta(p_k)-s_0\tilde\theta(p_0)\right)
     =\frac{1}{2}\left( -\int_0^1\tilde\theta({p}(s))\d s + \tilde\theta({p}(1))\right),
\end{align*}
which yields
\begin{align*}
    \mathrm{II}= \E \log \int \exp\sqrt{2t}Y(\alpha) \d\fR(\alpha) -tM\theta({p}(1)) = -tM\int_0^1\theta({p}(s))\d s.
\end{align*}
Comparing $\mathrm{I}$ and $\mathrm{II}$ with the definition of $\mathscr{P}_{t,q}(p)$ in~\eqref{e.mathscrP}, we get the desired result.
\end{proof}

\subsubsection{Gibbs-average cavity calculation}
Recall $\la\cdot\ra^\orig_{N,x}$ in~\eqref{e.<>^orig_Nx=} and $\la\cdot\ra^\star_{N+M,x}$ in~\eqref{e.<>_N^star}.
\begin{proposition}[Gibbs-average cavity calculation]
\label{l.pre_comp_2}
For every continuous function $g:\R^{\D\times\D}\times \R\to\R$, we have uniformly over $M, N \in \N$ and $x \in [0,3]^{\N^4 \times \frac{D(D+1)}{2}}$ that
\begin{align*}
    \E \la g\Ll(\tau\tau'^\intercal,\alpha\wedge\alpha'\Rr)\ra^\orig_{N+M,x} = \E \la g\Ll(\tau\tau'^\intercal,\alpha\wedge\alpha'\Rr)\ra^\star_{N+M,x} + O\Ll(M^2 N^{-\frac{1}{16}}\Rr) .
\end{align*}
\end{proposition}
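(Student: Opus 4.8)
The plan is to prove Proposition~\ref{l.pre_comp_2} by the same Gaussian interpolation scheme used in the proof of Proposition~\ref{p.cavity_perturbation}, the only difference being that we now track a bounded continuous observable of the $\tau$-overlaps and the cascade overlaps rather than the free energy itself. Recall from~\eqref{e.<>^circ_Nx=} that $\la\cdot\ra^\circ_{N+M,x}$ is built from the genuine Hamiltonian $H^{t,q}_{N+M}(\rho,\alpha) + (N+M)^{-1/16} H^x_{N+M}(\rho,\alpha)$ on $N+M$ spins, while~\eqref{e.<>_N^star} defines $\la\cdot\ra^\star_{N+M,x}$ using the ``cavity'' Hamiltonian $\tilde H^{t,q}_N(\sigma,\alpha) + U(\sigma,\alpha,\tau) + N^{-1/16} H^x_N(\sigma,\alpha)$, which is the first-order expansion of the former in the cavity coordinate $\tau$. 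So the whole content is a stability statement: replacing a Gaussian Hamiltonian by another Gaussian Hamiltonian whose covariance differs by $O(M^2/N + N^{-1/16})$ perturbs Gibbs averages of bounded observables by $O(M^2 N^{-1/16})$.

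Concretely, I would introduce the interpolating Hamiltonian $\mathbf G_r(\rho,\alpha) + \mathbf D_r(\rho,\alpha)$ with $\mathbf G_r = \sqrt{1-r}\,\mathbf G^0 + \sqrt{r}\,\mathbf G^1$ and $\mathbf D_r = (1-r)\mathbf D^0 + r\mathbf D^1$, where $\mathbf G^0 + \mathbf D^0$ is the Hamiltonian of $\la\cdot\ra^\circ_{N+M,x}$ and $\mathbf G^1 + \mathbf D^1$ is that of $\la\cdot\ra^\star_{N+M,x}$ (the Gaussian parts $\mathbf G^0, \mathbf G^1$ taken independent, and the reference measure $\mathfrak P = P_{N+M}\otimes\fR$), exactly as in~\eqref{e.def.GD0}--\eqref{e.def.GD1}. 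Setting $\varphi(r) := \E\la g(\tau\tau'^\intercal,\alpha\wedge\alpha')\ra_r$, where $\la\cdot\ra_r$ is the Gibbs measure with Hamiltonian $\mathbf G_r + \mathbf D_r$, one has $\varphi(0) = \E\la g\ra^\circ_{N+M,x}$ and $\varphi(1) = \E\la g\ra^\star_{N+M,x}$. Differentiating and integrating by parts (using $|g|$ bounded and the Gaussian integration by parts of \cite[Theorem~4.6]{HJbook}), $\frac{\d}{\d r}\varphi(r)$ is a sum of Gibbs averages of $g$ times terms of the form $\mathbf V(\mathbf s,\mathbf s) - \mathbf V(\mathbf s, \mathbf s')$ and $\mathbf D^1(\mathbf s) - \mathbf D^0(\mathbf s)$, with $\mathbf V$ as in~\eqref{e.V(s,s')=}. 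The key point is that these quantities are already controlled: the estimates~\eqref{e.(N+M)xi(...)=},~\eqref{e.EHH=EHH},~\eqref{e.EHpertHpert=},~\eqref{e.sumxg(sigma)=} established in the proof of Proposition~\ref{p.cavity_perturbation} give $|\mathbf V(\mathbf s, \mathbf s')| = O(M^2/N + N^{-1/8})$ and $|\mathbf D^1 - \mathbf D^0| = O(M^2/N + M N^{-1/16})$ uniformly over $\rho, \rho'$ in the support of $P_{N+M}$ and over $x \in [0,3]^{\N^4\times \frac{D(D+1)}{2}}$, because the on-diagonal normalizations of the perturbation field $H^x$ are bounded by construction via the constants $c_h$. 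Hence $|\varphi(1) - \varphi(0)| \le \sup_r |\varphi'(r)| = O(M^2 N^{-1/16})$, using $\|g\|_\infty < \infty$ to pull the bound out of the Gibbs expectation, which is precisely the claim.

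One small subtlety to address is that $g$ is merely continuous, not Lipschitz, which is why we cannot use a concentration argument à la \cite[Theorem~4.7]{HJbook}; but since $\tau$ ranges over the compact support of $P_M$ and $\alpha\wedge\alpha' \in [0,1]$, the argument of $g$ lives in a fixed compact set, so $g$ is bounded and uniformly continuous there, and the interpolation argument only uses $\|g\|_\infty$. I expect the only real work — and the main (modest) obstacle — to be bookkeeping: verifying carefully that every covariance difference entering $\mathbf V$, including the cross terms between $\tilde H_N$, $U$, and $H^x_N$, is genuinely of the claimed order uniformly in $x$, which amounts to rereading the four displayed estimates from the proof of Proposition~\ref{p.cavity_perturbation} and checking they apply verbatim when an extra bounded factor $g(\tau\tau'^\intercal,\alpha\wedge\alpha')$ sits inside the Gibbs bracket. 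Since those estimates were derived precisely for this purpose, no new idea is needed, and the proof is essentially a transcription of the previous one with $-1$ replaced by the observable $g$.
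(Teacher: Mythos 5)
Your proposal is correct and takes essentially the same route as the paper: an interpolation between the two Hamiltonians, Gaussian integration by parts, and the reuse of the four covariance-difference estimates from the proof of Proposition~\ref{p.cavity_perturbation}, with the boundedness of $g$ replacing any Lipschitz requirement. The one detail worth flagging is that since $g(\tau\tau'^\intercal,\alpha\wedge\alpha')$ depends on two replicas, differentiating the Gibbs average produces covariance terms involving up to four replicas (the paper introduces $\mathbf{s}^1,\dots,\mathbf{s}^4$ and writes the derivative as $\E\la\Theta(\sum_{i,j}c_{i,j}\mathbf{V}^{i,j}+\sum_i c_i\mathbf{U}^i)\ra_r$), rather than the one- and two-replica form you wrote for the free-energy case; but all these terms are bounded by the same estimates, so your conclusion stands.
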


\begin{proof}
We use the estimates in the proof of Proposition~\ref{p.cavity_perturbation} and a similar interpolation computation.
We write $\Theta := g\Ll(\tau\tau'^\intercal,\alpha\wedge\alpha'\Rr)$, $\mathbf{s}:=(\rho,\alpha)$, and $\mathfrak{P} := P_{N+M}\otimes \fR$. 
We rewrite $\la\cdot\ra^\orig_{N+M,x}$ and $\la\cdot\ra^\star_{N+M,x}$ as follows,
\begin{gather*}
    \la\cdot\ra^\orig_{N+M,x} \propto \exp\Ll(\mathbf{G}^0(\mathbf{s}) + \mathbf{D}^0(\mathbf{s})\Rr) \d \mathfrak{P}(\mathbf{s}),
    \\
    \la\cdot\ra^\star_{N+M,x} \propto \exp\Ll(\mathbf{G}^1(\mathbf{s}) + \mathbf{D}^1(\mathbf{s})\Rr) \d \mathfrak{P}(\mathbf{s}),
\end{gather*}
where $\mathbf{G}^i(\mathbf{s})$ collects centered Gaussian terms, $\mathbf{D}^i(\mathbf{s})$ collects deterministic terms in the respective Hamiltonians, and they satisfy
\begin{align*}
\mathbf{G}^0(\mathbf{s}) + \mathbf{D}^0(\mathbf{s}) &= H^{t,{q}}_{N+M}(\rho,\alpha) + (N+M)^{-\frac{1}{16}}H^{x}_{N+M}(\rho,\alpha),
\\
\mathbf{G}^1(\mathbf{s}) + \mathbf{D}^1(\mathbf{s}) &= \tilde H_N^{t,{q}}(\sigma,\alpha) +  U(\sigma,\alpha,\tau)
    + N^{-\frac{1}{16}}  H^{x}_N(\sigma,\alpha).
\end{align*}
Again we understand that $H^x_{N+M}$ and $H^x_N$ are independent here. 
For every $r\in[0,1]$, we define the interpolating Gibbs measure
\begin{align*}
    \la\cdot\ra_r \propto \exp\Ll(\sqrt{1-r}\mathbf{G}^0(\mathbf{s})+\sqrt{r}\mathbf{G}^1(\mathbf{s})+(1-r)\mathbf{D}^0(\mathbf{s})+r \mathbf{D}^1(\mathbf{s})\Rr)\d \mathfrak{P}(\mathbf{s}),
\end{align*}
and we set $\varphi(r) := \E \la \Theta\ra_r$. We aim to show that $|\frac{\d}{\d r}\varphi(r)|=o(1)$ uniformly in $r$. Notice that $\Theta$ depends on two independent copies of $\mathbf{s}$, which we  denote by $\mathbf s^1$ and $\mathbf s^2$. We also give ourselves $\mathbf{s}^3$ and $\mathbf{s}^4$ two additional independent copies of $\mathbf{s}$. We write the Gibbs measure in the definition of $\varphi(r)$ explicitly as a ratio and differentiate to obtain that $\frac{\d}{\d r} \varphi(r)$ is the $\E$-expectation of the $\langle \cdot \rangle_r$-covariance between $\Theta$ and 
\begin{equation*}  \frac{1}{2\sqrt{r}}\big(\mathbf{G}^{1}(\mathbf{s}^1) + \mathbf{G}^{1}(\mathbf{s}^2)\big) - \frac{1}{2\sqrt{1-r}}\big(\mathbf{G}^{0}(\mathbf{s}^1) + \mathbf{G}^0(\mathbf{s}^2)\big) + \mathbf{D}^{1}(\mathbf{s}^1)+ \mathbf{D}^{1}(\mathbf{s}^2) -\mathbf{D}^{0}(\mathbf{s}^1) - \mathbf{D}^{0}(\mathbf{s}^2).
\end{equation*}
We can rewrite this covariance as a Gibbs average by utilizing the additional variable $\mathbf{s}^3$. We next perform a Gaussian integration by parts as in e.g.\ \cite[Theorem~4.6]{HJbook} to obtain a Gibbs average of the form
\begin{align*}
    \frac{\d}{\d r}\varphi(r) = \E \la \Theta\Ll(\sum_{i,j=1}^4 c_{i,j}\mathbf{V}^{i,j}+ \sum_{i=1}^3 c_i \mathbf{U}^i\Rr)\ra_r,
\end{align*}
where
\begin{align*}
    \mathbf{V}^{i,j} &= \E \Ll[\mathbf{G}^1\Ll(\mathbf{s}^i\Rr)\mathbf{G}^1\Ll(\mathbf{s}^j\Rr) - \mathbf{G}^0\Ll(\mathbf{s}^i\Rr)\mathbf{G}^0\Ll(\mathbf{s}^j\Rr)\Rr],\qquad \mathbf{U}^i = \mathbf{D}^1\Ll(\mathbf{s}^i\Rr) - \mathbf{D}^0\Ll(\mathbf{s}^i\Rr),
\end{align*}
and $(c_{i,j})_{i,j=1}^4$ and $(c_i)_{i=1}^3$ are absolute constants. 
Then, we can use the estimates~\eqref{e.(N+M)xi(...)=}, \eqref{e.EHH=EHH}, \eqref{e.EHpertHpert=}, and~\eqref{e.sumxg(sigma)=} to see that $|\mathbf{V}^{i,j}|$ and $|\mathbf{U}^i|$ are $O\Ll(M^2N^{-\frac{1}{16}}\Rr)$. Since $\Theta$ is also bounded a.s., we deduce $|\frac{\d}{\d r}\varphi(r)|=O\Ll(M^2N^{-\frac{1}{16}}\Rr)$ uniformly in $r$. The desired result thus follows.
\end{proof}

\subsubsection{Other useful interpolation estimates} We now collect two more useful estimates that are obtained using similar arguments. Recall $\bar F^x_N(t,{q})$ in~\eqref{e.F^x_N=} and $\tilde F^x_N(t,{q})$ in~\eqref{e.tildeF_N=}.
\begin{lemma}[Control of perturbative part]
\label{l.limF^x-F}
For each fixed $M \in \N$, we have
\begin{align*}
    \lim_{N\to\infty} \sup_{x} \Ll|\bar F^x_N(t,{q}) - \bar{F}_N(t,{q})\Rr| =0,\qquad \lim_{N\to\infty} \sup_{x} \Ll|\tilde F^x_N(t,{q}) - \bar F_N(t,{q})\Rr| =0 ,
\end{align*}
where the suprema are over $x \in [0,3]^{\N^4 \times \frac{D(D+1)}{2}}$.
\end{lemma}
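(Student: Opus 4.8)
The plan is to prove both limits by the same interpolation technique already deployed in the proofs of Propositions~\ref{p.cavity_perturbation} and~\ref{l.pre_comp_2}, namely turning off the perturbation term and the substitution of the reference Hamiltonian one at a time and estimating the derivative of the resulting one-parameter family of free energies. For the first limit, I would compare $\bar F^x_N(t,q)$ with $\bar F_N(t,q)$ directly: set
\begin{equation*}
\varphi(r) := -\frac1N \E \log \iint \exp\Ll(H_N^{t,q}(\sigma,\alpha) + r N^{-\frac1{16}} H_N^x(\sigma,\alpha)\Rr) \d P_N(\sigma)\, \d\fR(\alpha),
\end{equation*}
so that $\varphi(1) = \bar F^x_N(t,q)$ and $\varphi(0) = \bar F_N(t,q)$. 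Differentiating in $r$ and applying Gaussian integration by parts as in \cite[Theorem~4.6]{HJbook}, one gets $\frac{\d}{\d r}\varphi(r)$ expressed through $N^{-\frac18}\E H_N^x(\sigma,\alpha) H_N^x(\sigma',\alpha')$ and the deterministic quadratic term; the key point is the uniform bound
\begin{equation*}
c_h \sqrt{\tfrac1N \E H_N^h(\sigma,\alpha) H_N^h(\sigma,\alpha)} \le 2^{-|h|_1}
\end{equation*}
built into the definition of $\check H_N^x$, which (together with $x_h \in [0,3]$ and the orthonormal expansion $\sum_i x_i e_i \cdot \sigma\sigma^\intercal = O(N)$) gives $|\frac{\d}{\d r}\varphi(r)| = O(N^{-\frac1{16}} + N^{-\frac1{16}}) = O(N^{-\frac1{16}})$ uniformly in $x$ and $r$. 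Integrating over $r \in [0,1]$ yields $\sup_x |\bar F^x_N(t,q) - \bar F_N(t,q)| = O(N^{-\frac1{16}}) \to 0$.

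For the second limit I would proceed analogously but now also need to absorb the change from $H_{N}$ to $\tilde H_N$ (the reference Hamiltonian with covariance scaled by $N+M$ rather than $N$) and the change from $W^q_N$ to $\tilde W^q_N$. Since $\tilde W^q_N$ is by construction an exact independent copy of $W^q_N$, that part contributes nothing to the free energy. For the Gaussian-field change, I would use the estimates already recorded in \eqref{e.(N+M)xi(...)=}: for $\sigma$ in the support of $P_N$,
\begin{equation*}
\Ll| N\xi\Ll(\tfrac{\sigma\sigma^\intercal}{N}\Rr) - (N+M)\xi\Ll(\tfrac{\sigma\sigma^\intercal}{N+M}\Rr)\Rr| = O\Ll(\tfrac{M^2}{N} + M\Rr) = O(M),
\end{equation*}
using the Taylor expansion of $\xi$ and $|\frac{\sigma\sigma^\intercal}N - \frac{\sigma\sigma^\intercal}{N+M}| = O(M/N)$. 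Hence the covariance kernels of $\sqrt{2t}H_N(\sigma) - tN\xi(\tfrac{\sigma\sigma^\intercal}N)$ and of $\sqrt{2t}\tilde H_N(\sigma) - t(N+M)\xi(\tfrac{\sigma\sigma^\intercal}{N+M})$ differ by $O(M^2/N)$ entrywise, and the deterministic compensators differ by $O(M)$. Setting up the interpolation $\psi(r)$ that goes, as $r: 0 \to 1$, from $\bar F^x_N(t,q)$ (with $H_N, W^q_N$) to $\tilde F^x_N(t,q)$ (with $\tilde H_N, \tilde W^q_N$, perturbation kept fixed), the derivative $\frac{\d}{\d r}\psi(r)$ is controlled by these covariance differences plus the deterministic-compensator difference, divided by $N$, giving $|\frac{\d}{\d r}\psi(r)| = O(M^2/N \cdot N / N) + O(M/N) = O(M/N)$ — in any case $o(1)$ for fixed $M$. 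Combining with the first limit via the triangle inequality gives $\sup_x |\tilde F^x_N(t,q) - \bar F_N(t,q)| \to 0$.

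I expect the main obstacle to be bookkeeping rather than conceptual: one must check carefully that the factor $N^{-\frac1{16}}$ and the quadratic-in-$M$ errors interact correctly, i.e.\ that after dividing the log-partition-function difference by $N$ one really gets a bound of the form $O(M^2 N^{-\frac1{16}})$ (uniform in $x$) for the perturbation part, and $O(M/N)$ or $O(M^2/N)$ for the reference-Hamiltonian part, so that each tends to $0$ as $N \to \infty$ with $M$ fixed. The uniformity over $x \in [0,3]^{\N^4 \times D(D+1)/2}$ is guaranteed precisely because the bound $c_h \sqrt{\cdots} \le 2^{-|h|_1}$ is summable over $h \in \N^4$ and independent of $N$; this is the only place where the specific choice of the constants $c_h$ and the power $N^{-\frac1{16}}$ matters, and it should be spelled out. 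One small point to be careful about: in the interpolation one should keep the $\fR$-randomness fixed and take $\E_{\bomega,q}$-type expectations inside first, exactly as in the measurability discussion around Lemma~\ref{l.meas}, so that all the Gaussian integration-by-parts manipulations are legitimate; but since all the relevant quantities have already been shown to be measurable and the Gaussian fields are jointly measurable, this causes no difficulty. Apart from that, the proof is a routine repetition of the interpolation arguments given just above in the section.
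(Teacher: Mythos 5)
Your proposal is correct and follows essentially the same route as the paper, which likewise handles both limits by the interpolation of Proposition~\ref{p.cavity_perturbation}, using the $N^{-1/16}$ prefactor with the summable bounds $c_h\sqrt{\tfrac1N\E H_N^h H_N^h}\le 2^{-|h|_1}$ for the perturbative part and, for the second limit, the second line of \eqref{e.EHH=EHH} together with the boundedness of $\E\,\msf Y(\sigma)\msf Y(\sigma')$. One harmless slip: the covariance kernels of the two reference Hamiltonians differ by $O(M)$ (the $M\theta$ term, which is exactly what $\E\,\msf Y\msf Y'$ accounts for), not by $O(M^2/N)$ as you state entrywise, but your own displayed estimate is $O(M)$ and the resulting derivative bound $O(M/N)$ is what is actually needed.
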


\begin{proof}
We use a similar interpolation as in the proof of Proposition~\ref{p.cavity_perturbation}. We recall the normalizing factor $\frac{1}{N}$ in the definition of the free energy.
The first convergence follows from the interpolation and the presence of $N^{-\frac{1}{16}}$ in front of $H^{x}_N$. The second convergence follows from these facts and, additionally, the second line in~\eqref{e.EHH=EHH} together with the boundedness of $\E \msf Y(\sigma)\msf Y(\sigma')$.
\end{proof}

\begin{lemma}[Regularity in $N$ of the free energy]
\label{l.NF_N-(N+M)F_(N+M)}
There exists $C < +\infty$ such that for every $N, M \in \N$, $t \ge 0$ and $q \in \mcl Q_1$, 
\begin{align*}
    \Ll|N\bar F_N(t,{q})- (N+M)\bar F_{N+M}(t,{q})\Rr|\leq CM\Ll(t + |{q}|_{L^1}\Rr).
\end{align*}
\end{lemma}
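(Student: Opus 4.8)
The plan is to estimate the difference $N\bar F_N(t,q) - (N+M)\bar F_{N+M}(t,q)$ directly by an interpolation argument, mirroring the scheme used in the proof of Proposition~\ref{p.cavity_perturbation}, but without the perturbation term and tracking only crude bounds. By the Lipschitz continuity in $q$ (Proposition~\ref{p.barF_N_Lip}, or its continuous-cascade counterpart Proposition~\ref{p.F_N_smooth}) and a density argument, it suffices to establish the bound for piecewise-constant $q \in \mcl Q$, or even just to work with the continuous-cascade definition~\eqref{e.def.barFN.continuous}; I will phrase everything with the latter for convenience. First I would write $\rho = (\sigma,\tau) \in \R^{D\times(N+M)} \simeq \R^{D\times N}\times\R^{D\times M}$, so that $\rho\rho^\intercal = \sigma\sigma^\intercal + \tau\tau^\intercal$ and $W^q_{N+M}(\alpha)\cdot\rho \stackrel{\d}{=} \tilde W^q_N(\alpha)\cdot\sigma + W^q_M(\alpha)\cdot\tau$. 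Using the Taylor-expansion estimates collected in~\eqref{e.(N+M)xi(...)=} and~\eqref{e.EHH=EHH}, one decomposes $H_{N+M}(\rho) \stackrel{\d}{=} \tilde H_N(\sigma) + (\text{term involving }\msf Z(\sigma)\cdot\tau)$ up to a deterministic error of size $O(M^2/N)$ in the covariance.

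The key step is then to interpolate between the free energy of the $(N+M)$-spin system and a decoupled system in which the $\tau$-variables are integrated against the ``cavity'' field $U(\sigma,\alpha,\tau)$ of~\eqref{e.U=}, exactly as in Proposition~\ref{p.cavity_perturbation}: setting $\varphi(r)$ to be the interpolated free energy between $\mathbf{G}^0 + \mathbf{D}^0 = H^{t,q}_{N+M}(\rho,\alpha)$ and $\mathbf{G}^1 + \mathbf{D}^1 = U(\sigma,\alpha,\tau) + \tilde H^{t,q}_N(\sigma,\alpha)$, Gaussian integration by parts (\cite[Theorem~4.6]{HJbook}) gives $|\varphi'(r)| = O(M^2/N)$. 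This yields $-(N+M)\bar F_{N+M}(t,q) + N\bar F_N(t,q) = A_N^0(x=0) + O(M^2/N)$ where $A_N^0$ is the $x=0$ version of $A_N(x)$ from~\eqref{e.A_N(x)}; but we do not even need to identify the limit — we only need that $A_N^0$, being a difference of two ``$M$-spin'' free-energy-type quantities, is bounded by $CM(t + |q|_{L^1})$ uniformly in $N$. To see this one bounds each of the two $\E\log$ terms in~\eqref{e.A_N(x)} by Jensen's inequality from above and below (or by Gaussian concentration) against the sup-norm of the corresponding integrand, noting that $|\tau| \le \sqrt M$ on $\supp P_M$, that $\msf Z(\sigma)$ and $\msf Y(\sigma)$ have variance $O(M)$ by~\eqref{e.bound.on.sigma} and the absolute convergence of the series for $\xi$, and that $W^q_M(\alpha)\cdot\tau$ contributes $O(|q|_{L^1}^{1/2} M)$-scale fluctuations while $q(1)\cdot\tau\tau^\intercal = O(|q|_{L^1} M)$. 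Collecting these, each $\E\log$ term is $O\big(M(t + t|q|_{\text{something}} + |q|_{L^1})\big)$; after a harmless rescaling (recall $|q|_{L^\infty}$ is controlled by $|q|_{L^1}$ only for increasing paths, so one should be slightly careful and either restrict first to bounded $q$ or note $q(1) = \lim_{u\nearrow 1} q(u)$ need not be $L^1$-controlled — this is where I expect the only real friction).

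The main obstacle is precisely this last point: the term $q(1)\cdot\tau\tau^\intercal$ and $W^q_M(\alpha)\cdot\tau$ involve the endpoint value $q(1)$, which is \emph{not} bounded by $|q|_{L^1}$ for a general increasing path (e.g.\ a path that is $0$ on $[0,1-\eps)$ and jumps to a huge value). To handle this cleanly I would instead run the interpolation argument so that the only $q$-dependence appearing in the estimates is through Gibbs averages of $\kappa(\alpha\wedge\alpha')\cdot\tau\sigma^\intercal$-type quantities that get re-expressed, via the invariance of Poisson--Dirichlet cascades (Proposition~\ref{p.invar_cts}) and the fact that $\alpha\wedge\alpha'$ is uniform on $[0,1]$, as genuine $L^1$-integrals $\int_0^1 |q|$. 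Concretely, rather than estimating $A_N^0$ itself, I would differentiate $N\bar F_N(t,q)$ in the direction of $t$ and of $q$ using~\eqref{e.def.der.FN}: $\partial_t[N\bar F_N(t,q)] = N\E\la\xi(\sigma\sigma'^\intercal/N)\ra_N$ has absolute value $\le N\sup_{|a|\le 1}|\xi(a)|$, which is the wrong order in $N$, so this naive route fails — confirming that the interpolation/cavity route is the right one and that the $q(1)$ issue must be absorbed by working with the \emph{difference} $q(1)\cdot(\tau\tau^\intercal) $ paired against $-q(1)\cdot(\rho\rho^\intercal) + q(1)\cdot(\sigma\sigma^\intercal) = -q(1)\cdot\tau\tau^\intercal$ appearing in $H^{t,q}_{N+M}$ versus $H^{t,q}_N$, so that these endpoint contributions \emph{cancel} in the interpolation and never need to be bounded in isolation. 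With that cancellation in place, the residual terms genuinely only see $t$ (through $\xi$, $\nabla\xi$, $\theta$, all bounded on the unit ball) and $q$ through cascade overlaps that average to $|q|_{L^1}$, giving the claimed bound $CM(t + |q|_{L^1})$; the constant $C$ depends only on $D$ and $\xi$, and after extending back to $\mcl Q_1$ by density the proof is complete.
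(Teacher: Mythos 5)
Your final argument — a direct Gaussian interpolation between the compensated Hamiltonians, with the $q(1)\cdot\tau\tau^\intercal$ endpoint terms cancelling against the deterministic compensators (equivalently, against the diagonal covariance term produced by integration by parts), the remaining off-diagonal contribution $q(\alpha\wedge\alpha')\cdot\tau\tau'^\intercal$ bounded by $M|q(\alpha\wedge\alpha')|$ and averaged to $M|q|_{L^1}$ via cascade invariance, the $\xi$-terms contributing $O(Mt)$, and a density argument to pass from $\mcl Q_\infty$ to $\mcl Q_1$ — is exactly the paper's proof. The paper simply skips your initial detour through $A_N(x)$ and interpolates directly between $H^{t,q}_{N+M}(\rho,\alpha)$ and $H^{t,q}_N(\sigma,\alpha)$ from the outset, which avoids both the $O(M^2/N)$ remainder and the $q(1)$ obstruction you correctly diagnosed.
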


\begin{proof}
Let us first assume that ${q} \in \C_\infty$.
As before, we write $\rho =(\sigma,\tau)$, and carry out an interpolation computation.
Let $\mathbf{s}:=(\rho,\alpha)$, and $\mathfrak{P} := P_{N+M}\otimes \fR$.
We set
\begin{align*}
\mathbf{G}^0(\mathbf{s}) + \mathbf{D}^0(\mathbf{s}) &= H^{t,{q}}_{N+M}(\rho,\alpha) ,\quad 
\mathbf{G}^1(\mathbf{s}) + \mathbf{D}^1(\mathbf{s}) =  H^{t,{q}}_{N}(\sigma,\alpha)
\end{align*}
where $\mathbf{G}^i(\mathbf{s})$ collects centered Gaussian terms and $\mathbf{D}^i(\mathbf{s})$ collects deterministic terms.
For every $r\in[0,1]$, we set
\begin{gather*}
    \varphi(r) := \E \log\int \exp \Ll(\sqrt{1-r}\mathbf{G}^{0}(\mathbf{s})+(1-r)\mathbf{D}^{0}(\mathbf{s})+\sqrt{r}\mathbf{G}^{1}(\mathbf{s})+r\mathbf{D}^{1}(\mathbf{s})\Rr)\d\mathfrak{P}(\mathbf{s}).
\end{gather*}
We have $\varphi(0) = (N+M)\bar F_{N+M}(t,{q})$ and $\varphi(1) = N\bar F_N(t,{q})$.
Again, using Gaussian integration by parts, we get
\begin{align*}
    \frac{\d}{\d r}\varphi(r) = \E \la \mathbf{V}(\mathbf{s},\mathbf{s})-\mathbf{V}(\mathbf{s},\mathbf{s}')+ \mathbf{D}^{1}(\mathbf{s}) -\mathbf{D}^{0}(\mathbf{s})\ra_r
\end{align*}
for $\mathbf{V}$ given in \eqref{e.V(s,s')=}, and where $\la\cdot\ra_r$ is the corresponding Gibbs measure. 
This time, we bound $\mathbf{V}$ and $\mathbf{D}^{1} -\mathbf{D}^{0}$ using
\begin{align*}
    Nt\xi\Ll(\frac{\sigma\sigma'^\intercal}{N}\Rr) & = (N+M)t\xi\Ll(\frac{\rho\rho'^\intercal}{N+M}\Rr) + O(M)t,
    \\
    {q}\Ll(\alpha\wedge\alpha'\Rr)\cdot \sigma\sigma'^\intercal &= {q}\Ll(\alpha\wedge\alpha'\Rr)\cdot \rho\rho'^\intercal + O\Ll(M\Rr)\Ll|{q}\Ll(\alpha\wedge\alpha'\Rr)\Rr|
\end{align*}
uniformly in $N$. We get
\begin{align*}
    \Ll|\frac{\d}{\d r}\varphi(r) \Rr|\leq O(M)\Ll(1 + \E \la \Ll|{q}\Ll(\alpha\wedge\alpha'\Rr)\Rr|\ra_r\Rr) = O(M)\Ll(t+|q|_{L^1}\Rr)
\end{align*}
and the announced result follows for ${q}\in \C_\infty$. The full result follows by approximation using Proposition~\ref{p.barF_N_Lip}.
\end{proof}

\subsection{Overlap approximations}
Recall the notation $\approx_\eps$ in \eqref{e.approx_eps}.

\subsubsection{Overlap approximations for the free-energy cavity calculation}
We show that $A_N(x)$ in \eqref{e.A_N(x)} and $-M\mathscr{P}_{t,q}({p})$ in \eqref{e.mathscrP} can be approximated by continuous functions of finitely many entries in the overlap arrays. Recall $R^{\leq n}_N$ in~\eqref{e.overlap_notation}, $Q^{\leq n}_{p}$ in~\eqref{e.overlap_alpha},  $L^\infty_{\leq 1}$ in \eqref{e.L^infty<1}, $\la\cdot\ra^\cav_{N,x}$ in~\eqref{e.<>^cav_N,x}, and that $\la\cdot\ra_\fR$ denotes the expectation with respect to the measure $\fR^{\otimes \N}$. 

\begin{lemma}\label{l.approx_finite_overlap}
For every $\eps>0$, there exists $n\in\N$ and a bounded continuous function $F_\eps : (\R^{D\times D}\times \R^{D\times D})^{n\times n}\to \R $ such that the following two properties hold:
\begin{itemize}
    \item $A_N(x) \approx_\eps \E \la F_\eps\Ll(R^{\leq n}_{N},q\Ll(R^{\leq n}_{\alpha}\Rr)\Rr) \ra^\cav_{N,x}$ uniformly over $N \in \N$ and $x \in [0,3]^{\N^4 \times \frac{D(D+1)}{2}}$;
    \item $-M\mathscr{P}_{t,{q}}({p})\approx_\eps \E \la F_\eps \Ll(Q^{\leq n}_p,q\Ll(R^{\leq n}_\alpha\Rr)\Rr) \ra_\fR$ uniformly over ${p} \in \C\cap L^\infty_{\leq 1}$.
\end{itemize}
\end{lemma}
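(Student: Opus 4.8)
The plan is to apply Proposition~\ref{p.approx_os} twice, once to each of the two differences of logarithms that make up $A_N(x)$ and $-M\mathscr{P}_{t,q}(p)$ respectively, and then to combine the resulting continuous functions. Recall that by definition,
\begin{align*}
A_N(x) &= \E \log \Ll\la \int \exp(U(\sigma,\alpha,\tau)) \, \d P_M(\tau) \Rr\ra_{N,x} - \E \log \Ll\la \exp\Ll( \sqrt{2t}\msf Y(\sigma) - tM\theta\Ll(\tfrac{\sigma\sigma^\intercal}{N}\Rr) \Rr) \Rr\ra_{N,x},
\end{align*}
and that by Lemma~\ref{l.-MP^t,mu(zeta)},
\begin{align*}
-M\mathscr{P}_{t,q}(p) &= \E \log \iint \exp(V(\alpha,\tau)) \, \d P_M(\tau) \, \d\fR(\alpha) - \E \log \int \exp\Ll( \sqrt{2t}Y(\alpha) - tM\theta(p(1)) \Rr) \, \d\fR(\alpha).
\end{align*}
The key observation is that the two pairs of expressions are formally identical once one identifies the overlap structures: in $A_N(x)$ the relevant overlap structure is $(\Gamma,\mathcal R)$ with $\Gamma$ the measure $\la\cdot\ra_{N,x}$ restricted to the $(\sigma,\alpha)$ variables and $\mathcal R((\sigma,\alpha),(\sigma',\alpha')) = (R_{N,\sigma},R_\alpha) = (\tfrac{\sigma\sigma'^\intercal}{N},\alpha\wedge\alpha')$, while in $-M\mathscr{P}_{t,q}(p)$ the overlap structure is $(\fR,\mathcal R)$ with $\mathcal R(\alpha,\alpha') = (p(\alpha\wedge\alpha'),\alpha\wedge\alpha') = Q_p$. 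In both cases the admissibility covariance $C$ is built from the same ingredients: $\nabla\xi$ applied to the first coordinate of the overlap (for the Gaussian vectors $\msf Z$, resp.\ $Z$, entering $U$, resp.\ $V$), $M\theta$ applied to the first coordinate (for $\msf Y$, resp.\ $Y$), and $q$ evaluated at the second coordinate (for $W^q_M$). The diagonal terms $t\nabla\xi(\tfrac{\sigma\sigma^\intercal}{N})\cdot\tau\tau^\intercal$ versus $t\nabla\xi(p(1))\cdot\tau\tau^\intercal$ and $tM\theta(\tfrac{\sigma\sigma^\intercal}{N})$ versus $tM\theta(p(1))$ are likewise functions of the on-diagonal overlap $\mathcal R(\rho,\rho)$, which is covered by the function $\mathbf g$ in the notation of Proposition~\ref{p.approx_os}.

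Concretely, I would proceed as follows. First, apply the first approximation \eqref{e.1st_approx_os} of Proposition~\ref{p.approx_os} to each of the four $\E\log(\cdots)$ terms separately, choosing in each case the Lipschitz function $\mathbf g$ so that $\exp(\mathbf g(z(\rho),C(\mathcal R(\rho,\rho))))$ reproduces the relevant integrand (with the Gaussian vector part given by $z$ and the deterministic self-overlap part absorbed into $\mathbf g$). A technical point: the functions $a\mapsto\nabla\xi(a)$ and $a\mapsto\theta(a)$ need not be globally Lipschitz, but by the constraint \eqref{e.bound.support} on the support of $P_1$ together with \eqref{e.bound.on.sigma}, all relevant self-overlaps $\tfrac{\sigma\sigma^\intercal}{N}$ lie in the closed unit ball, and for $p\in\C\cap L^\infty_{\le 1}$ the value $p(1)$ also lies in the unit ball; hence one may replace $\nabla\xi$ and $\theta$ by globally Lipschitz modifications that agree with them on a neighborhood of this ball, which does not affect any of the quantities. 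Proposition~\ref{p.approx_os} then gives, for each term and each $\eps>0$, an integer and a bounded continuous function of finitely many entries of the array $(C(\mathcal R(\rho^\ell,\rho^{\ell'})))_{\ell,\ell'}$; since $C$ is itself a fixed continuous (indeed, after the Lipschitz modification, locally Lipschitz) function of $(\tfrac{\sigma^\ell\sigma^{\ell'\intercal}}{N},\alpha^\ell\wedge\alpha^{\ell'})$ — namely built from $\nabla\xi$, $M\theta$ and $q$ as above — we may compose and re-express the approximation as a bounded continuous function $F_{\eps,i}$ of finitely many entries of $(R^{\ell,\ell'}_N, q(R^{\ell,\ell'}_\alpha))_{\ell,\ell'}$ in the first case, and of $(Q^{\ell,\ell'}_p, q(R^{\ell,\ell'}_\alpha))_{\ell,\ell'}$ in the second. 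Crucially, because the overlap structures in the two settings differ only in which measure plays the role of $\Gamma$ — not in the functions $\mathbf g$ and $C$ — the \emph{same} choice of $\mathbf g$, and hence the same continuous function produced by Proposition~\ref{p.approx_os}, serves both. Taking a common refinement $n$ of the four integers and setting $F_\eps := (F_{\eps,1} - F_{\eps,2})$ with the understanding that we pad all functions to depend on the same $n\times n$ block, we obtain the desired $F_\eps$ with both bullet points holding up to error $\eps$ (after relabeling $2\eps$ as $\eps$).

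The main subtlety to be careful about is the bookkeeping that ensures a \emph{single} function $F_\eps$ works for both asserted approximations simultaneously and uniformly in the stated parameters. The uniformity over $N$ and $x$ in the first bullet, and over $p\in\C\cap L^\infty_{\le 1}$ in the second, comes directly from the uniformity clause of Proposition~\ref{p.approx_os} (which holds because $|\mathcal R|\le 1$ in all cases: $|\tfrac{\sigma\sigma'^\intercal}{N}|\le 1$ by \eqref{e.bound.on.sisi'}, $|\alpha\wedge\alpha'|\le 1$, and $|p(\alpha\wedge\alpha')|\le 1$ for $p\in L^\infty_{\le 1}$), together with the fact that the Lipschitz constants of the chosen $\mathbf g$'s depend only on $t$, $M$, $\xi$, $D$ and not on $N$, $x$, or $p$. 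One must also check that the on-diagonal overlap $\mathcal R(\rho,\rho)$ that $\mathbf g$ sees is the \emph{self-overlap} $\tfrac{\sigma\sigma^\intercal}{N}$ in the $A_N(x)$ setting and $p(1) = p(\alpha\wedge\alpha)$ in the $\mathscr P$ setting — but since $\alpha\wedge\alpha=1$ under $\P\fR$, this is $p(1)$ as required, and the matching of the two diagonal quadratic terms $t\nabla\xi(\cdot)\cdot\tau\tau^\intercal$ and $tM\theta(\cdot)$ across the two settings is exactly the content of the formal identity noted above. I do not anticipate a genuine obstacle here; the work is entirely in organizing the application of Proposition~\ref{p.approx_os} so that the two approximating functions can be taken equal, which is possible precisely because the construction of $w^q$, $\msf Z/Z$, $\msf Y/Y$ is set up so that the relevant covariances are the \emph{same} function of the \emph{same} overlap variables in the $\sigma$-Gibbs and the cascade-Gibbs pictures.
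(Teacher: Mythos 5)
Your proposal is correct and follows essentially the same route as the paper: both cast $A_N(x)$ and $-M\mathscr P_{t,q}(p)$ as the same abstract functional $\mathcal F_1-\mathcal F_2$ of a $C$-admissible overlap structure (with $\la\cdot\ra_{N,x},R_N$ in one case and $\fR,Q_p$ in the other), apply Proposition~\ref{p.approx_os} to each piece with the same $\mathbf g$ and $C$, absorb the continuous maps $\nabla\xi$ and $\theta$ into the approximating function while keeping $q(R_\alpha^{\le n})$ as a separate argument because $q$ need not be continuous, and invoke the uniformity clause of Proposition~\ref{p.approx_os} together with $|\mathcal R|\le 1$ for the stated uniformities. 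Your extra remark about replacing $\nabla\xi,\theta$ by globally Lipschitz modifications on the unit ball is a fine way to justify a point the paper leaves implicit.
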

In the above $q$ acts entrywise (as $p$ does in the definition of $Q_p$).
\begin{proof}
We show that $A_N(x)$ and $-M\mathscr{P}_{t,{q}}({p})$ have similar overlap structures as defined in Definition~\ref{d.os}, which will allow us to apply Proposition~\ref{p.approx_os}. Let $\Gamma$ be a probability measure on some separable Hilbert space $\mathcal{X}$, and let $\mathcal{R} =(\mathcal{R}_1,\mathcal{R}_2) :\mathcal{X}\times \mathcal{X}\to \R^{D\times D}\times \R$ be a measurable function satisfying $|\mathcal{R}_1|,|\mathcal{R}_2|\leq 1$.

Assume that there are independent centered Gaussian processes indexed by $\rho\in \supp\Gamma$:
\begin{itemize}
    \item $Z(\rho) = (Z_1(\rho),\dots, Z_M(\rho))$ is an $M$-tuple of independent $\R^\D$-valued centered Gaussian vectors $z_i(\rho)$ with covariance $\E Z_i(\rho)Z_i(\rho')^\intercal = \nabla \xi\Ll(\mathcal{R}_1(\rho,\rho')\Rr)$;
    \item $W(\rho)=(w_1(\rho),\dots,w_M(\rho))$ is an $M$-tuple of independent $\R^\D$-valued centered Gaussian vectors $w_i(\rho)$ with covariance $\E w_i(\rho)w_i(\rho')^\intercal = {q} \Ll(\mathcal{R}_2(\rho,\rho')\Rr)$;
    \item $Y(\rho)$ is real-valued with covariance $\E Y(\rho)Y(\rho') = M\theta (\mathcal{R}_1(\rho,\rho'))$.
\end{itemize}
We also assume a stochastic continuity property of these processes so that we can construct jointly measurable versions of these as in Proposition~\ref{p.gaussian_cascade}. 
Writing $\la\cdot\ra_\Gamma$ to denote the expectation with respect to $\Gamma ^{\otimes \N}$, we consider
\begin{align*}
    \mathcal{F}_1(\Gamma,\mathcal{R}) & :=\E \log \bigg\langle\int \exp\Big(\sqrt{2t} Z(\rho)\cdot \tau - t \nabla\xi\Ll(\mathcal{R}_1(\rho,\rho)\Rr)\cdot \tau\tau^\intercal 
    \\
    &\qquad \qquad+ \sqrt{2}W(\rho)\cdot \tau - {q}(\mathcal{R}_2(\rho,\rho))\cdot \tau\tau^\intercal\Big)\d P_M(\tau)\bigg\rangle_\Gamma,
    \\
    \mathcal{F}_2(\Gamma,\mathcal{R}) &:= \E \log \la \exp\Ll(\sqrt{2t}Y(\rho)- tM\theta \Ll( \mathcal{R}_1(\rho,\rho)\Rr)\Rr)\ra_\Gamma,
\end{align*}
where $\E$ integrates the Gaussian randomness in $Z(\rho)$, $W(\rho)$ and $Y(\rho)$.
Comparing with the expression for $A_N(x)$ in~\eqref{e.A_N(x)}, we see that $A_N(x)$ is the average of $\mathcal{F}_1(\Gamma,\mathcal{R})-\mathcal{F}_2(\Gamma,\mathcal{R})$ with $(\frac{\sigma}{\sqrt{N}},\alpha)$, $R_N$, and $\la\cdot\ra^\cav_{N,x}$ substituted for $\rho$, $\mathcal{R}$, and $\la\cdot\ra_\Gamma$ (this further averaging is with respect to the randomness intrinsic to $\la \cdot \ra^\cav_{N,x}$ itself).
Using Lemma~\ref{l.-MP^t,mu(zeta)}, we see that $-M\mathscr{P}_{t,{q}}({p})$ is the average of $\mathcal{F}_1(\Gamma,\mathcal{R})-\mathcal{F}_2(\Gamma,\mathcal{R})$ with $\alpha$, $Q_{p}$, and $\fR$ substituted for $\rho$, $\mathcal{R}$, and $\Gamma $.

We will apply Proposition~\ref{p.approx_os} to $\mathcal{F}_1(\Gamma,\mathcal{R})$ and $\mathcal{F}_2(\Gamma,\mathcal{R})$. For $\mathcal{F}_1(\Gamma,\mathcal{R})$, the corresponding function $C$ is $C_1:\mathcal{R}\mapsto 2t\nabla\xi(\mathcal{R}_1)+2q(\mathcal{R}_2)$. One can also identify the corresponding Lipschitz function $\mathbf{g}$ as an integral in $\tau$. Note that there are $M$ independent copies of the $\R^\D$-valued processes here. Proposition~\ref{p.approx_os} is still applicable and we can approximate
\begin{align*}
    \mathcal{F}_1(\Gamma,\mathcal{R}) \approx_{\frac{\eps}{2}} \E \la F_{1,\eps}(C_1(\mathcal{R}^{\leq n_1}))\ra_\Gamma
\end{align*}
for some $n_1$ and some continuous $F_{1,\eps}$, where $C_1$ acts entrywise.

For $\mathcal{F}_2(\Gamma,\mathcal{R})$, the corresponding function $C$ is $C_2:\mathcal{R}\mapsto M\theta(\mathcal{R}_1)$ and $\mathbf{g}$ is linear. Proposition~\ref{p.approx_os} is directly applicable and we obtain the approximation
\begin{align*}
    \mathcal{F}_2(\Gamma,\mathcal{R}) \approx_{\frac{\eps}{2}} \E \la F_{2,\eps}(C_2(\mathcal{R}^{\leq n_2}))\ra_\Gamma
\end{align*}
for some $n_2$ and some continuous $F_{2,\eps}$, where $C_2$ acts entrywise.

Since both $\nabla\xi$ and $\theta$ are continuous, we can absorb them into continuous functions to get
\begin{align*}
    \mathcal{F}_1(\Gamma,\mathcal{R}) -\mathcal{F}_1(\Gamma,\mathcal{R})\approx_\eps \E \la F_\eps(\mathcal{R}^{\leq n}_1,q(\mathcal{R}^{\leq n}_2))\ra_\Gamma.
\end{align*}
We emphasize that $q$ cannot be absorbed because it is in general not continuous.
For simplicity, we also add the variable $\mathcal{R}_2$ (which forms $\mathcal{R}$ together with $\mathcal{R}_1$) and express the approximating function as $F_\eps(\mathcal{R}^{\leq n},q(\mathcal{R}^{\leq n}_2))$.
In view of \eqref{e.overlap_notation} and \eqref{e.overlap_alpha}, for $\mathcal{R}$ being $R_N$ or $Q_p$, we have $\mathcal{R}_2 = R_\alpha$ in both cases. Hence, we arrive at the approximations of the announced form.
\end{proof}

\subsubsection{Overlap approximations for the Gibbs-average cavity calculation}

In the same fashion, we want to approximate $\E \la g(\tau\tau'^\intercal, \alpha\wedge\alpha')\ra^\star_{N+M,x}$ and $\E \la g(\tau\tau'^\intercal, \alpha\wedge\alpha')\ra_{\fR,{q}+t\nabla\xi({p})}$, where the two Gibbs measures are defined in \eqref{e.<>_N^star} and~\eqref{e.<>_R,pi}, respectively. Also, recall the definition of $\la\cdot\ra^\cav_{N,x}$ in \eqref{e.<>^cav_N,x}, and that $\la \cdot\ra_\fR$ denotes the expectation with respect to~$\fR^{\otimes \N}$.

\begin{lemma}\label{l.approx_finite_overlap_2}
For every bounded continuous function $g:\R^{D\times D}\times \R\to\R$ and every $\eps>0$, there is $n\in\N$ and a bounded continuous function $F_\eps : (\R^{D\times D}\times \R^{D\times D}\times \R)^{n\times n}\to \R $ such that the following two properties hold:
\begin{itemize}
    \item $\E \la g(\tau\tau'^\intercal, \alpha\wedge\alpha')\ra^\star_{N+M,x} \approx_\eps \E \la F_\eps\Ll(R^{\leq n}_{N},q\Ll(R^{\leq n}_{\alpha}\Rr)\Rr) \ra^\cav_{N,x}$ uniformly over $N,x$;
    \item $\E \la g(\tau\tau'^\intercal, \alpha\wedge\alpha')\ra_{\fR,{q}+t\nabla\xi({p})}\approx_\eps \E \la F_\eps \Ll(Q^{\leq n}_p,q\Ll(R^{\leq n}_\alpha\Rr)\Rr) \ra_\fR$ uniformly over ${p} \in \C\cap L^\infty_{\leq 1}$.
\end{itemize}
\end{lemma}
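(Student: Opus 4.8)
The plan is to prove Lemma~\ref{l.approx_finite_overlap_2} by exactly the same method used for Lemma~\ref{l.approx_finite_overlap}, namely by recognizing both quantities as averages of a common functional $\mcl F(\Gamma,\mcl R)$ over suitable overlap structures, and then invoking Proposition~\ref{p.approx_os} (specifically the second approximation~\eqref{e.2nd_approx_os}, which handles Gibbs averages of a bounded observable rather than a free energy). First I would set up the abstract functional: let $\Gamma$ be a probability measure on a separable Hilbert space and $\mcl R = (\mcl R_1, \mcl R_2) : (\supp\Gamma)^2 \to \R^{D\times D}\times \R$ be measurable with $|\mcl R_1|,|\mcl R_2| \le 1$. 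Given independent centered Gaussian processes $Z(\rho) = (Z_i(\rho))_{i\le M}$ with $\E Z_i(\rho)Z_i(\rho')^\intercal = \nabla\xi(\mcl R_1(\rho,\rho'))$ and $W(\rho) = (w_i(\rho))_{i \le M}$ with $\E w_i(\rho)w_i(\rho')^\intercal = q(\mcl R_2(\rho,\rho'))$, define the perturbed measure
\begin{equation*}
\d\Gamma^{\mathbf{g}}(\rho,\tau) := \frac{\exp\big(\sqrt{2t}Z(\rho)\cdot\tau - t\nabla\xi(\mcl R_1(\rho,\rho))\cdot\tau\tau^\intercal + \sqrt 2 W(\rho)\cdot\tau - q(\mcl R_2(\rho,\rho))\cdot\tau\tau^\intercal\big)\,\d P_M(\tau)\,\d\Gamma(\rho)}{\int\exp(\cdots)\,\d P_M(\tau')\,\d\Gamma(\rho')},
\end{equation*}
and set $\mcl G(\Gamma,\mcl R) := \E\la g(\tau\tau'^\intercal, \mcl R_2(\rho,\rho'))\ra_{\Gamma^{\mathbf{g}}}$, where $\la\cdot\ra_{\Gamma^{\mathbf g}}$ averages independent copies $(\rho^\ell,\tau^\ell)$ of $(\rho,\tau)$ under $\Gamma^{\mathbf g}$.

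The second step is to identify the two sides of the claimed approximations as specializations of $\mcl G$. For the left-hand quantity $\E\la g(\tau\tau'^\intercal,\alpha\wedge\alpha')\ra^\star_{N+M,x}$: comparing~\eqref{e.<>_N^star} and~\eqref{e.U=}, the measure $\la\cdot\ra^\star_{N+M,x}$ is obtained from $\la\cdot\ra_{N,x}$ (itself a $C$-admissible overlap structure with $C(\mcl R) = 2t\nabla\xi(\mcl R_1) + 2q(\mcl R_2)$ in the language of Lemma~\ref{l.approx_finite_overlap}, after rescaling $\sigma$ by $N^{-1/2}$) by tilting with exactly the $U(\sigma,\alpha,\tau)$ weight, which matches the $\mathbf g$-tilt above with $\mcl R_1 = R_{N,\sigma}$, $\mcl R_2 = R_\alpha$. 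Hence $\E\la g(\tau\tau'^\intercal,\alpha\wedge\alpha')\ra^\star_{N+M,x} = \E[\mcl G(\Gamma,\mcl R)]$ with $\Gamma = \la\cdot\ra_{N,x}$, $\mcl R = R_N$, the outer $\E$ averaging the randomness intrinsic to $\la\cdot\ra_{N,x}$. Similarly, comparing~\eqref{e.<>_R,pi} with $\pi = q + t\nabla\xi(p)$ to the $\mathbf g$-tilt, and using the distributional identity $\sqrt{2t}Z(\alpha) + \sqrt 2 W^q_M(\alpha) \overset{d}{=} \sqrt 2 W^{q+t\nabla\xi(p)}_M(\alpha)$ (the same identity used in the proof of Lemma~\ref{l.-MP^t,mu(zeta)}, valid because $Z_i$ has covariance $\nabla\xi(p(\alpha\wedge\alpha'))$), one sees $\E\la g(\tau\tau'^\intercal,\alpha\wedge\alpha')\ra_{\fR,q+t\nabla\xi(p)} = \E[\mcl G(\Gamma,\mcl R)]$ with $\Gamma = \fR$, $\mcl R = Q_p = (p(R_\alpha), R_\alpha)$. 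In both cases the relevant $C$-function is $C(\mcl R) = 2t\nabla\xi(\mcl R_1) + 2q(\mcl R_2)$, which is locally bounded since $|\mcl R|\le 1$.

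The third step is to apply Proposition~\ref{p.approx_os}, estimate~\eqref{e.2nd_approx_os}, with $\mathbf h(\mcl R^{\le n}) = g(\mcl R_1\cdot\text{(something)},\ldots)$ — more precisely, since $\tau\tau'^\intercal$ is not itself an overlap of the structure $(\Gamma,\mcl R)$, I note that $\tau\tau'^\intercal$ is exactly the $z$-independent part of a \emph{single coordinate} built into the tilted measure; the cleanest route is to regard the enlarged structure $(\Gamma^{\mathbf g},\widetilde{\mcl R})$ with $\widetilde{\mcl R}((\rho,\tau),(\rho',\tau')) = (\mcl R_1(\rho,\rho'), \mcl R_2(\rho,\rho'), \tau\tau'^\intercal)$ — this is still an overlap structure, bounded because $|\tau|,|\tau'|\le 1$ on $\supp P_M$ — and observe that $g(\tau\tau'^\intercal,\mcl R_2)$ is a bounded continuous function of $\widetilde{\mcl R}$ alone. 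Then Proposition~\ref{p.approx_os} (the $\eqref{e.2nd_approx_os}$ form) gives $\mcl G(\Gamma,\mcl R) = \E\la \mathbf h(\widetilde{\mcl R}^{\le n})\ra_{\Gamma^{\mathbf g}} \approx_\eps \la \mathbf h(\widetilde{\mcl R}^{\le n}) F^2_\eps(C^{\le n'})\ra_\Gamma$, and since $\widetilde{\mcl R}^{\le n}$ restricted to $\Gamma$ (i.e. forgetting $\tau$, which is integrated in the ambient Gaussian/$\tau$-measure already incorporated into $\mathbf g$) depends only on $R_{N,\sigma}$, $R_\alpha$ through $C$, one absorbs $\nabla\xi$ (continuous) and collects everything into a single bounded continuous $F_\eps$ of $(\mcl R_1^{\le n}, q(\mcl R_2^{\le n}))$ — crucially \emph{not} absorbing $q$ since it is discontinuous — obtaining $F_\eps(R_N^{\le n}, q(R_\alpha^{\le n}))$ and $F_\eps(Q_p^{\le n}, q(R_\alpha^{\le n}))$ respectively, with $n$ and $F_\eps$ independent of $N$, $x$, and $p$ by the uniformity clause of Proposition~\ref{p.approx_os} (the estimates there are uniform over all $C$-admissible structures with $|\mcl R|\le 1$). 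The main obstacle — and really the only nontrivial point — is the bookkeeping in this last step: one must be careful that the argument of $g$, namely $\tau\tau'^\intercal$, is genuinely expressible within the overlap-structure framework of Proposition~\ref{p.approx_os} applied to $(\Gamma^{\mathbf g},\widetilde{\mcl R})$, and that after the approximation the resulting continuous function depends on the $\Gamma$-overlaps only through $C$ and the (inserted) $R_\alpha$-coordinate, so that the final form has the dependence on $q$ only through $q(R_\alpha^{\le n})$ applied entrywise. Everything else is a verbatim repetition of the proof of Lemma~\ref{l.approx_finite_overlap}, and I would present it as such rather than reproduce all details.
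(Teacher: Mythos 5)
Your overall strategy coincides with the paper's: identify both quantities as the same functional of a $C$-admissible overlap structure (with $C(\mcl R) = 2t\nabla\xi(\mcl R_1)+2q(\mcl R_2)$, $\Gamma$ being $\la\cdot\ra_{N,x}$ or $\fR$, and $\mcl R$ being $R_N$ or $Q_p$), then invoke Proposition~\ref{p.approx_os} in its second form~\eqref{e.2nd_approx_os} and absorb the continuous ingredients into $F_\eps$ while keeping $q$ outside. Steps 1 and 2, including the distributional identity $\sqrt{2t}Z+\sqrt2 W^q_M \overset{\d}{=} \sqrt2 W^{q+t\nabla\xi(p)}_M$ and the uniformity over $N,x$ and over $p\in\C\cap L^\infty_{\le1}$, are fine and match the paper.

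The gap is in your step 3, and it is exactly at the point you yourself flag as the only nontrivial one. As written, your application of~\eqref{e.2nd_approx_os} to the enlarged structure $(\Gamma^{\mathbf g},\widetilde{\mcl R})$ produces a right-hand side $\la \mathbf h(\widetilde{\mcl R}^{\le n})F^2_\eps(C^{\le n'})\ra_\Gamma$ in which $\mathbf h$ still depends on $\tau\tau'^\intercal$ --- but the average there is with respect to the \emph{untilted} measure $\Gamma$, under which no $\tau$-variable exists ($\tau$ enters only through the tilt $\mathbf g$, as an integral). So the displayed approximation is not meaningful, and the subsequent ``forgetting $\tau$, which is integrated \dots\ already incorporated into $\mathbf g$'' cannot be reconciled with having made $\tau\tau'^\intercal$ a coordinate of the base overlap. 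Two repairs are available. One (the paper's) is to re-open the proof of Proposition~\ref{p.approx_os}: in the expansion into terms $\E\la \mathbf h\prod_{\ell\le n+j}V(\rho^\ell)\ra_\Gamma$ with $V(\rho)=\int\exp(\mathbf g(\cdot,\tau))\,\d P_M(\tau)$, one simply keeps $g(\tau\tau'^\intercal,\cdot)$ inside the $\tau^1,\tau^2$-integrals; the resulting $\E\,[\,\cdots]$ is still an explicit Gaussian integral and hence a continuous function of $C^{\le n+j}$, which is all that is needed. The other is to make your enlargement correct by taking the \emph{base} measure to be $\Gamma\otimes P_M$ (so the $\tau$'s are genuine i.i.d.\ coordinates of the structure, with overlap $\widetilde{\mcl R}$ and a real-valued admissible Gaussian field $\sqrt{2t}Z(\rho)\cdot\tau+\sqrt2 W(\rho)\cdot\tau$), and the tilt to be the full $U$-weight; then the right-hand side of~\eqref{e.2nd_approx_os} is an average over $\Gamma\otimes P_M$, and one concludes by integrating the $\tau$'s out explicitly against $P_M^{\otimes n}$, which yields a bounded continuous function of $(\mcl R^{\le n}, q(\mcl R_2^{\le n}))$ of the announced form. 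Either repair is routine, but one of them must be carried out; the lemma does not follow from Proposition~\ref{p.approx_os} as a black box.
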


Again, $q$ acts entrywise.

\begin{proof}
Comparing the definition of $\la\cdot\ra^\star_{N+M,x}$ and that of $\la\cdot\ra^\cav_{N,x}$, we can rewrite
\begin{align}
    &\E \la g\Ll(\tau\tau'^\intercal, \alpha\wedge\alpha'\Rr)\ra^\star_{N+M,x} \notag
    \\
    &= \E \frac{\la \iint g\Ll(\tau\tau'^\intercal, \alpha\wedge\alpha'\Rr) e^{U(\sigma,\alpha,\tau)+U(\sigma',\alpha',\tau')} \d P_1(\tau) \d P_1(\tau')\ra^\cav_{N,x}}{\la \iint e^{U(\sigma,\alpha,\tau)+U(\sigma',\alpha',\tau')} \d P_1(\tau)\d P_1(\tau')\ra^\cav_{N,x} } \label{e.<>/<>_1}
\end{align}
for $U$ given in~\eqref{e.U=}.
Similarly, we can rewrite
\begin{align}&\E \la g\Ll(\tau\tau'^\intercal, \alpha\wedge\alpha'\Rr)\ra_{\fR,{q}+t\nabla\xi({p})}\notag
    \\
    &= \E \frac{\la \iint g\Ll(\tau\tau'^\intercal, \alpha\wedge\alpha'\Rr)e^{V(\alpha,\tau)+V(\alpha',\tau')}\d P_1(\tau) \d P_1(\tau')\ra_\fR}{\la \iint e^{V(\alpha,\tau)+V(\alpha',\tau')}\d P_1(\tau) \d P_2(\tau')\ra_\fR}   \label{e.<>/<>_2}
\end{align}
where
$V(\alpha,\tau)$ is in Lemma~\ref{l.-MP^t,mu(zeta)} and satisfies $V(\alpha,\tau) {=} \sqrt{2t} Z(\alpha)\cdot \tau - t \nabla\xi\Ll({p}(1)\Rr)\cdot \tau\tau^\intercal + \sqrt{2}W^{q}_M(\alpha)\cdot \tau - {q}(1)\cdot \tau\tau^\intercal$.

In the formalism of Definition~\ref{d.os}, we can see that~\eqref{e.<>/<>_1} and~\eqref{e.<>/<>_2} are expressed in the same overlap and covariance structure. Here, $\Gamma$ corresponds to $\la\cdot\ra^\cav_{N,x}$ and $\fR$; $\mathcal{R}=(\mathcal{R}_1,\mathcal{R}_2)$ (which is $\R^{\D\times\D}\times\R$-valued) corresponds to $R_N$ and $Q_p$; and $C$ is $\mathcal{R}\mapsto 2t\nabla\xi(\mathcal{R}_1)+2q(\mathcal{R}_2)$.

To apply Proposition~\ref{p.approx_os}, let us first assume that $g$ does not depend on $\tau\tau'^\intercal$.  Then, we can identify the function $\mathbf{g}$ as an integral in $\tau$ and set $\mathbf{h}=g$ so that both~\eqref{e.<>/<>_1} and~\eqref{e.<>/<>_2} can be rewritten as the left-hand side of~\eqref{e.2nd_approx_os}. Applying Proposition~\ref{p.approx_os}, we can approximate \eqref{e.<>/<>_1} and \eqref{e.<>/<>_1} by
\begin{align*} 
    \E \la g\Ll(\mathcal{R}^{1,2}_2\Rr) F_\eps \Ll(C(\mathcal{R}^{\leq n})\Rr) \ra_\Gamma
\end{align*}
for some $n\in\N$ and continuous $F_\eps$, where we used the observation that $\mathcal{R}_2=R_\alpha$ for $\mathcal{R}$ being either $R_N$ or $Q_p$ (see~\eqref{e.overlap_notation} and~\eqref{e.overlap_alpha}). Since the functions $g$ and $\nabla\xi$ in the definition of $C$ are continuous, we can absorb them into $F_\eps$ so that the above approximating term becomes
\begin{align*} 
    \E \la F_\eps \Ll(\mathcal{R}^{\leq n},q\Ll(\mathcal{R}_2^{\leq n}\Rr)\Rr) \ra_\Gamma,
\end{align*}
as announced. Since $q$ is not continuous in general, it cannot be absorbed.

If $g$ depends on $\tau\tau'^\intercal$, then Proposition~\ref{p.approx_os} is not directly applicable. Nevertheless, we can easily modify the approximation argument in its proof to get the same result.
\end{proof}

\subsection{Ghirlanda--Guerra identities}

Recall the notation of overlaps in~\eqref{e.overlap_notation}.
We denote by $\E_x$ the expectation under which $x$ is an i.i.d.\ sequence of uniform random variables over $[1,2]$.
For $N\in\N$, integer $n\geq 2$, $h\in\N^4$, and a bounded measurable function $\bff:\Ll(\R^{\D\times \D} \times \R\Rr)^{n\times n}\to \R$, we define, for $\la\cdot\ra^\cav = \la \cdot \ra^\cav_{N,x}$ in~\eqref{e.<>^cav_N,x},
\begin{align}\label{e.Delta^x}
    \Delta^x_N(\bff,n,h) = \Ll|\E \la \bff\Ll(R^{\leq n}_N\Rr)R^{1,n+1}_{N,h} \ra^\cav - \frac{1}{n}\E\la \bff\Ll(R^{\leq n}_N\Rr)\ra^\cav \E \la  R^{1,2}_{N,h}\ra^\cav - \frac{1}{n}\sum_{l=2}^n \E \la \bff\Ll(R^{\leq n}_N\Rr) R^{1,l}_{N,h}\ra^\cav\Rr|.
\end{align}
In \eqref{e.Delta^x} and throughout, $\E$ integrates the Gaussian randomness in the Hamiltonian and the randomness in $\fR$. In particular, it does not integrate $x$. 

\begin{proposition}\label{p.perturbation}
The following holds:
\begin{enumerate}
    \item \label{i.self-overlap_concent} $\lim_{N\to\infty} \E_x \E \la \Ll|R^{1,1}_{N,\sigma} - \E \la R^{1,1}_{N,\sigma} \ra^\cav_{N,x}\Rr|\ra^\cav_{N,x} =0$;
    \item \label{i.GG} $\lim_{N\to\infty} \E_x \Delta^x_N(\bff,n,h)=0$ for every $n$, $h$, and $\bff$.
\end{enumerate}

\end{proposition}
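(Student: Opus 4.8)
The plan is to establish both assertions of Proposition~\ref{p.perturbation} by the now-standard argument, originating in \cite{pan.aom} and presented in detail in \cite{pan}, that adding a sufficiently rich family of small perturbations to the Hamiltonian forces the self-overlap to concentrate and the Ghirlanda--Guerra identities to hold on average over the perturbation parameters. The key point is that, once the relevant ``cavity'' quantities have been shown to be stable under perturbations (which is the content of Lemma~\ref{l.limF^x-F}), the only missing ingredient is a control on the fluctuations of a well-chosen functional of the Hamiltonian with respect to the parameters~$x$.

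\textbf{Structure of the argument.} For part~\eqref{i.self-overlap_concent}, I would use the last term in the definition of $H_N^x$ in~\eqref{e.H^pert}, namely $\sum_i x_i e_i \cdot \sigma\sigma^\intercal$, whose role is precisely to enforce concentration of the self-overlap $R^{1,1}_{N,\sigma} = \frac{\sigma\sigma^\intercal}{N}$. Differentiating $\bar F_N^x(t,q)$ with respect to $x_i$ produces $-N^{-1/16}\,\E\la e_i \cdot \sigma\sigma^\intercal\ra^\circ_{N,x} = -N^{15/16}\,\E\la e_i \cdot R^{1,1}_{N,\sigma}\ra^\circ_{N,x}$, and differentiating twice produces a variance term of the form $N^{-1/8}\cdot N^2 \cdot \var$ of $e_i \cdot R^{1,1}_{N,\sigma}$; convexity of $\bar F_N^x$ in each $x_i$ together with the uniform bound on $\bar F_N^x$ from Lemma~\ref{l.limF^x-F} (and the a priori bound $|R^{1,1}_{N,\sigma}| \le 1$ from~\eqref{e.bound.on.sigma}) then gives, after averaging $x_i$ over $[1,2]$, that $\E_x \E\la |e_i \cdot R^{1,1}_{N,\sigma} - \E\la e_i\cdot R^{1,1}_{N,\sigma}\ra|^2\ra \to 0$. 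Summing over the finitely many basis vectors $e_i$ of $S^D$ and using Cauchy--Schwarz to pass from the $L^2$ to the $L^1$ statement yields~\eqref{i.self-overlap_concent}. This is essentially \cite[Theorem~3.2]{pan} adapted to the vector setting, with the exponent $N^{-1/16}$ chosen so that the error terms $N^{-1/8}\cdot N^2$ are beaten by a concentration gain.

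\textbf{The Ghirlanda--Guerra identities.} For part~\eqref{i.GG}, the classical route is to consider, for each $h \in \N^4$, the function $x_h \mapsto \bar F_N^x(t,q)$ and exploit that its first derivative equals (up to the factor $N^{-1/16} c_h$) a Gibbs average of $H_N^h(\sigma,\alpha)/N$, whose Gaussian integration by parts produces the overlap $R^{1,2}_{N,h}$. A Gaussian concentration inequality (such as \cite[Theorem~4.7]{HJbook}) bounds the fluctuations of $\frac{1}{N}\log(\cdots)$ uniformly, and convexity in $x_h$ transfers this to a bound on the fluctuations of $\frac 1 N H_N^h$ under the Gibbs measure; combined with the boundedness of $\bar F_N^x$ in $x_h$, averaging $x_h$ over $[1,2]$ forces $\E_x \E\la |\frac{1}{N}H_N^h(\sigma,\alpha) - \E\la\cdot\ra|\ra \to 0$. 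The identity $\Delta^x_N(\bff,n,h) \to 0$ then follows exactly as in \cite[Theorem~3.6]{pan} or \cite[(3.20)--(3.23)]{pan} (the "cavity in the parameter" computation): one expands the Gaussian average $\E\la \bff(R^{\le n}_N) \frac 1 N H_N^h(\sigma^1,\alpha^1)\ra$ by Gaussian integration by parts, producing exactly the combination $\E\la \bff R^{1,n+1}_{N,h}\ra - \frac 1 n \E\la\bff\ra\E\la R^{1,2}_{N,h}\ra - \frac 1 n \sum_{l=2}^n \E\la \bff R^{1,l}_{N,h}\ra$ up to a remainder controlled by the fluctuation bound just established, using that the maps $h \mapsto (a_{h_1}, h_2, \lambda_{h_3}, h_4)$ enumerate a dense set so the family of test functions $R_{N,h}$ is rich enough. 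The normalization $c_h$ and the summability $\sum_h 2^{-|h|_1} < \infty$ guarantee the perturbation term $\check H_N^x$ is almost surely finite and that the derivative estimates are uniform in $h$.

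\textbf{Main obstacle.} The genuinely delicate point, compared to the scalar case, is bookkeeping the vector structure: one must check that the quadratic form appearing in the second-derivative (variance) computation is controlled by $|R^{1,1}_{N,\sigma}|^2 \le 1$ and its counterpart $|R^{1,2}_{N,\sigma}| \le 1$ from~\eqref{e.bound.on.sisi'}, and that the finitely many directions $e_i$ in $S^D$ combined with the countable family indexed by $h \in \N^4$ still give a dense enough family to later deduce synchronization and ultrametricity. Everything else is a routine adaptation of \cite[Sections~3.2--3.4]{pan}; I would therefore state the proof compactly, quoting \cite{pan} for the interpolation and concentration lemmas and only spelling out the places where the matrix-valued overlap $R^{1,1}_{N,\sigma}$ and the two perturbation families (the $e_i$'s and the $H_N^h$'s) require care, and invoking Lemma~\ref{l.limF^x-F} for the uniform boundedness that makes the $x$-averaging argument run.
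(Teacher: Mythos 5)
Your proposal follows the same route as the paper's proof: the deterministic perturbation $\sum_i x_i e_i\cdot\sigma\sigma^\intercal$ gives the self-overlap concentration via derivative bounds, convexity and averaging over $x_i\in[1,2]$, and the Gaussian perturbations $H^h_N$ give the Ghirlanda--Guerra identities via free-energy concentration, convexity and Gaussian integration by parts, exactly as in \cite[Theorems~3.2 and~3.3]{pan}. Two points deserve care when writing this out. First, for part (1) the second derivative in $x_i$ only controls the thermal variance $\E\langle|X-\langle X\rangle|^2\rangle$; the disorder fluctuation $\E|\langle X\rangle-\E\langle X\rangle|$ requires the Gaussian concentration of $\log Z$ around its mean combined with the convexity lemma \cite[Lemma~3.2]{pan}, and the uniform derivative bound you need is just $|\partial_{x_i}\E\varphi|\le N^{15/16}$ from $|e_i\cdot\sigma\sigma^\intercal|\le N$ --- Lemma~\ref{l.limF^x-F} is not the relevant input here. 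Second, in part (2) the Gaussian integration by parts leaves a residual term $\E\langle\bff R^{1,1}_{N,h}\rangle-\E\langle\bff\rangle\E\langle R^{1,1}_{N,h}\rangle$ involving the self-overlap, which is random in this setting (unlike for Ising spins); the paper kills it by bounding it with $\E\langle|R^{1,1}_{N,h}-\E\langle R^{1,1}_{N,h}\rangle|\rangle$ and reducing to part (1) through the Lipschitz dependence of $R^{1,1}_{N,h}$ on $R^{1,1}_{N,\sigma}$ (using $R^{1,1}_\alpha=1$), so part (2) genuinely depends on part (1) --- a dependence your sketch does not make explicit.
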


We start with a lemma which leads to part~\eqref{i.self-overlap_concent} of the proposition. We recall the notation for the coordinates of $x$ in \eqref{e.x_pert}. 

\begin{lemma}
There exists a constant $C < +\infty$ not depending on $x$ such that, for every $i\in \{1,\dots,D(D+1)/2\}$,
\begin{align*}
    \int_1^2\E \la\Ll|e_i\cdot \sigma\sigma^\intercal - \E \la e_i\cdot \sigma\sigma^\intercal\ra^\cav_{N,x}\Rr| \ra^\cav_{N,x} \d x_i \leq CN^{\frac{25}{32}}.
\end{align*}
\end{lemma}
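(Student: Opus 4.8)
The strategy is the standard one for establishing concentration of the self-overlap via the convexity of the free energy in the external-field parameter, going back to \cite{pan} and reviewed in \cite{HJbook}. Fix $i \in \{1,\ldots,D(D+1)/2\}$. The idea is to view the map $x_i \mapsto \bar F^x_N(t,q)$ (with all other parameters held fixed) and to observe that, up to the normalization $N^{-1/16}$ carried by $H^x_N$, its first derivative in $x_i$ is essentially $-\frac{1}{N}N^{-1/16}\E\la e_i \cdot \sigma\sigma^\intercal\ra$, and its second derivative controls the variance of $e_i \cdot \sigma\sigma^\intercal$ under the Gibbs measure. Concretely, writing $G(x_i)$ for the free energy as a function of $x_i$ alone, one computes
\begin{equation*}
  \partial_{x_i} G(x_i) = -\frac{1}{N}N^{-\frac{1}{16}}\E\la e_i \cdot \sigma\sigma^\intercal\ra_{N,x}, \qquad
  \partial_{x_i}^2 G(x_i) = \frac{1}{N}N^{-\frac{1}{8}}\E\la \Ll(e_i\cdot\sigma\sigma^\intercal - \la e_i\cdot\sigma\sigma^\intercal\ra\Rr)^2 \ra_{N,x},
\end{equation*}
where the second identity uses that the term $x_i e_i \cdot \sigma\sigma^\intercal$ enters linearly in the Hamiltonian, so differentiating the log-partition function twice produces exactly the Gibbs variance (no Gaussian integration by parts is needed for this term). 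In particular $G$ is convex in $x_i$ on $[1,2]$ (indeed on all of $[0,3]$).

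\textbf{Key steps.} First I would record the two derivative formulas above and note that, by \eqref{e.bound.on.sigma}, $|e_i \cdot \sigma\sigma^\intercal| \le |\sigma\sigma^\intercal| \le N$ on the support of $P_N$, so $|\partial_{x_i} G|$ is bounded by $N^{-1/16}$ uniformly in $x_i$ and all other parameters. Hence the total variation of $\partial_{x_i} G$ over $[1,2]$ is at most $2N^{-1/16}$. By convexity, $\partial_{x_i} G$ is monotone, so
\begin{equation*}
  \int_1^2 \partial_{x_i}^2 G(x_i)\, \d x_i = \partial_{x_i}G(2) - \partial_{x_i}G(1) \le 2 N^{-\frac{1}{16}}.
\end{equation*}
Plugging in the variance formula, this gives $\frac{1}{N}N^{-1/8}\int_1^2 \E\la (e_i\cdot\sigma\sigma^\intercal - \la e_i\cdot\sigma\sigma^\intercal\ra)^2\ra_{N,x}\,\d x_i \le 2N^{-1/16}$, i.e.
\begin{equation*}
  \int_1^2 \E\la \Ll(e_i\cdot\sigma\sigma^\intercal - \la e_i\cdot\sigma\sigma^\intercal\ra\Rr)^2\ra_{N,x}\,\d x_i \le 2 N^{1+\frac{1}{8}-\frac{1}{16}} = 2 N^{\frac{17}{16}}.
\end{equation*}
Finally, to pass from the variance (an $L^2$-type quantity with $\la\cdot\ra$ inside) to the $L^1$-quantity $\E\la|e_i\cdot\sigma\sigma^\intercal - \E\la e_i\cdot\sigma\sigma^\intercal\ra|\ra$ appearing in the statement, I would use the triangle inequality $\E\la |X - \E\la X\ra|\ra \le \E\la |X - \la X\ra|\ra + |\E\la X\ra - \E\la\la X\ra\ra|$ — wait, the second term vanishes — more carefully: $|X - \E\la X\ra| \le |X - \la X\ra| + |\la X\ra - \E\la X\ra|$, take $\E\la\cdot\ra$, and bound each resulting term. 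The first becomes $\E\la|X-\la X\ra|\ra \le (\E\la(X-\la X\ra)^2\ra)^{1/2}$ by Cauchy--Schwarz (this is the Gibbs variance just controlled), and the second, $\E|\la X\ra - \E\la X\ra| \le (\var(\la X\ra))^{1/2}$, is controlled by a separate Gaussian-concentration estimate for $\la e_i\cdot\sigma\sigma^\intercal\ra$ as a function of the disorder — or, more simply, since we are integrating in $x_i$ anyway, by the same convexity argument applied with the concentration of the free energy (e.g.\ \cite[Theorem~4.7]{HJbook}) giving $\var(\partial_{x_i}G)$ small in an averaged sense. Then integrate in $x_i$ over $[1,2]$, applying Cauchy--Schwarz once more to move the square root outside the integral: $\int_1^2 (\cdots)^{1/2}\d x_i \le (\int_1^2 (\cdots)\d x_i)^{1/2} \le (2N^{17/16})^{1/2} = \sqrt 2\, N^{17/32}$, which is comfortably below $CN^{25/32}$. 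The exponent $25/32$ in the statement is generous, so there is substantial slack — one does not need the sharpest constants anywhere.

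\textbf{Main obstacle.} None of the individual steps is hard; the only point requiring care is the handling of the ``second term'' $\E|\la X\ra - \E\la X\ra|$, i.e.\ the fluctuation of the (expected) Gibbs average of $e_i\cdot\sigma\sigma^\intercal$ with respect to the Gaussian disorder and the cascade randomness. The cleanest route is to note that $\la e_i\cdot\sigma\sigma^\intercal\ra = -N^{1+1/16}\partial_{x_i}(\text{free energy before }\E)$, so its disorder-fluctuations are controlled by the Gaussian concentration of the (un-averaged) free energy around its mean, which is a Lipschitz function of the Gaussian field $H^x_N$ with Lipschitz constant of order $N^{-1/16}\cdot N^{-1/16}\cdot(\text{variance bound})$; tracking the powers of $N$ through this shows it is negligible on the scale $N^{25/32}$. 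Alternatively one can simply absorb this into a slightly larger power of $N$ using the crude deterministic bound $|\la X\ra|, |\E\la X\ra| \le N$, which already gives $\E|\la X\ra - \E\la X\ra| \le 2N$ and hence a contribution $\int_1^2 2N\,\d x_i = 2N$, again below $CN^{25/32}$ — no wait, $2N$ exceeds $N^{25/32}$. So the crude bound is \emph{not} enough for this second term, and one genuinely needs the Gaussian concentration of the free energy to control it; this is the one place where a non-trivial input (beyond convexity and the deterministic overlap bound \eqref{e.bound.on.sigma}) is required. I would therefore invoke \cite[Theorem~4.7]{HJbook} to get $\var_{\text{disorder}}(\log(\text{partition function})) = O(N \cdot N^{-1/8})$ uniformly in $x$, differentiate/interpolate in $x_i$ to transfer this to $\la e_i\cdot\sigma\sigma^\intercal\ra$, and conclude.
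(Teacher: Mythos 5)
Your proposal follows essentially the same route as the paper: compute the first and second $x_i$-derivatives of the free energy, integrate the second derivative over $[1,2]$ to bound the Gibbs variance of $e_i\cdot\sigma\sigma^\intercal$ by $2N^{17/16}$, and split $|X-\E\la X\ra|$ into the Gibbs fluctuation $|X-\la X\ra|$ (handled by Cauchy--Schwarz) and the disorder fluctuation $|\la X\ra - \E\la X\ra|$ (handled by concentration of the free energy plus convexity). Two corrections on the second term, which you rightly identify as the only delicate point. First, the disorder fluctuation of the un-averaged log-partition function $\varphi$ is $O(\sqrt N)$, not $O(\sqrt{N^{7/8}})$: the dominant Gaussian contribution comes from the main Hamiltonian $\tilde H_N^{t,q}$ (variance of order $N$), not from the $N^{-1/16}$-scaled perturbation; also, your suggestion to control $\la e_i\cdot\sigma\sigma^\intercal\ra$ directly as a Lipschitz function of the disorder does not work, since it is a \emph{derivative} of a Lipschitz convex function and derivatives of Lipschitz functions need not concentrate a priori. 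The correct transfer is the classical convexity lemma $|\varphi'(x_i)-\phi'(x_i)|\le \phi'(x_i+y_i)-\phi'(x_i-y_i)+\delta/y_i$ (as in \cite[Lemma~3.2]{pan}), followed by optimizing $y_i$; with the correct $O(\sqrt N)$ input this optimization (at $y_i=N^{-7/32}$) yields exactly $CN^{25/32}$ after undoing the $N^{-1/16}$ normalization, so for this term the exponent in the statement is tight rather than generous.
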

\begin{proof}
We write $\la\cdot\ra=\la \cdot\ra^\cav_{N,x}$ for brevity.
Fixing any $x$ except for the entry $x_i$,
we set
\begin{align*}
    \varphi(x_i) := \log \iint \exp\Ll(\tilde H^{t,{q}}_N(\sigma,\alpha) +  N^{-\frac{1}{16}}H^{x}_N(\sigma,\alpha)\Rr) \d P_N(\sigma) \d \fR(\alpha)
\end{align*}
and $\phi(x_i) :=\E \varphi(x_i)$.
We compute the derivatives:
\begin{gather*}
    \varphi'(x_i)  = N^{-\frac{1}{16}} \la e_i\cdot \sigma\sigma^\intercal\ra, \qquad \varphi''(x_i) = N^{-\frac{1}{8}}  \la (e_i\cdot \sigma\sigma^\intercal)^2 - \la e_i\cdot \sigma\sigma^\intercal\ra^2 \ra,
    \\
    \phi'(x_i)  = N^{-\frac{1}{16}} \E \la e_i\cdot \sigma\sigma^\intercal\ra,\qquad 
    \phi''(x_i)  = N^{-\frac{1}{8}} \E \la (e_i\cdot \sigma\sigma^\intercal)^2 - \la e_i\cdot \sigma\sigma^\intercal\ra^2 \ra.
\end{gather*}
Since $|e_i\cdot \sigma\sigma^\intercal|\leq N$, we have, for all $x_i\in[0,3]$,
\begin{align}\label{e.|phi'|<}
    |\phi'(x_i)|\leq N^\frac{15}{16}.
\end{align}
Integrating $\phi''(x_i)$ over $x_i\in[1,2]$ yields
\begin{align*}
    \int_1^2 \E \la (e_i\cdot \sigma\sigma^\intercal)^2 - \la e_i\cdot \sigma\sigma^\intercal\ra^2 \ra \, \d x_i=N^\frac{1}{8}(\phi'(2)-\phi'(1)) \leq 2N^\frac{17}{16}.
\end{align*}
By a  concentration argument such as \cite[Theorem~1.2]{pan}, we have
\begin{align*}
    \sup_{x}\E |\varphi-\E \varphi| \leq CN^\frac{1}{2}
\end{align*} 
for some constant $C< +\infty$. Defining
\begin{align*}
    \delta := |\varphi(x_i+y_i) -\phi(x_i+y_i)| + |\varphi(x_i-y_i)-\phi(x_i-y_i)| + |\varphi(x_i)-\phi(x_i)|,
\end{align*}
we thus have $\int_1^2\delta \d x_i\leq CN^\frac{1}{2}$ for all $y_i\in[0,1]$. By H\"older's inequality (as in e.g.\ \cite[Exercise~2.6]{HJbook}), the functions $\varphi$ and $\phi$ are convex. By \cite[Lemma~3.2]{pan}, we can obtain from basic properties of convex functions that
\begin{align*}
    |\varphi'(x_i) - \phi'(x_i)|\leq \phi'(x_i+y_i) - \phi'(x_i-y_i) + \frac{\delta}{y_i}.
\end{align*}
Using~\eqref{e.|phi'|<} and the mean value theorem, we get
\begin{align*}
    \int_1^2 \Ll(\phi'(x_i+y_i)-\phi'(x_i-y_i)\Rr)\d x_i = \phi(2+y_i) -\phi(2-y_i) - \phi(1+y_i) +\phi(1-y_i) \leq 4 N^\frac{15}{16} y_i.
\end{align*}
Therefore,
\begin{align*}
    \int_1^2 \E|\varphi'(x_i) - \phi'(x_i)| \d x_i \leq 4 N^\frac{15}{16} y_i + \frac{CN^\frac{1}{2}}{y_i}.
\end{align*}
Inserting the expressions of $\varphi'$ and $\phi'$ and setting $y_i= N^{-\frac{7}{32}}$, we get the desired result. 
\end{proof}

\begin{proof}[Proof of Proposition~\ref{p.perturbation}]

We write $\la\cdot\ra = \la\cdot\ra^\cav_{N,x}$ for brevity.
The previous lemma implies
\begin{align*}
    \int_1^2 \E \la   \Ll|e_i\cdot\frac{\sigma\sigma^\intercal }{N} - \E \la e_i\cdot \frac{\sigma\sigma^\intercal }{N} \ra \Rr|\ra\d x_i \leq C N^{-\frac{7}{32}}
\end{align*}
for every $i \in \{1,\ldots D(D+1)/2\}$.
Since $\{e_i\}_{i=1}^{D(D+1)/2}$ is an orthonormal basis, we deduce that
\begin{align*}
    \lim_{N\to\infty} \int_{\Ll[1,2\Rr]^{D(D+1)/2}} \E \la \Ll|\frac{\sigma\sigma^\intercal }{N} - \E \la \frac{\sigma\sigma^\intercal }{N} \ra \Rr|\ra\d x=0,
\end{align*}
which proves Proposition~\ref{p.perturbation}~\eqref{i.self-overlap_concent}.

To prove the second part, we proceed as in \cite[Theorem~3.3]{pan} (with $h$, $N^\frac{7}{16}$, $N^\frac{1}{2}$, $N^{-\frac{1}{2}}x_hc_h H^h_N(\sigma,\alpha)$ substituted for $p$, $s$, $\nu_N(s)$, $g_p(\sigma)$ therein) to get that for every $h$, there is a constant $C_h$ such that
\begin{align*}
    \int_1^2 \E \la \Ll| H^h_N -\E \la H^h_N\ra\Rr|\ra \d x_h \leq C_h N^\frac{3}{4}.
\end{align*}
Let $\bff$ be bounded and measurable and write $\bff = \bff(R^{\leq n})$. Without loss of generality, we assume $\|\bff\|_\infty\leq 1$. Then
\begin{align*}
    \Ll|\E\la \bff H^h_N\ra - \E \la \bff\ra \E \la H^h_N\ra\Rr| \leq  \E \la \Ll|H^h_N - \E \la H^h_N\ra \Rr|\ra.
\end{align*}
For brevity, we omit the subscript $N$ from overlaps.
By the Gaussian integration by parts in e.g.\ \cite[Theorem~4.6]{HJbook}, we have
\begin{align*}
    &\E\la \bff H^h_N\ra - \E \la \bff\ra \E \la H^h_N\ra
    \\
    & = x_hc_hN^\frac{15}{16}\Ll(\E \la \bff\Ll(\sum_{\ell=1}^n R^{1,\ell}_h - n R^{1,n+1}_h\Rr)\ra + \E\la \bff\ra \E \la  R^{1,2}_h -R^{1,1}_h\ra\Rr).
\end{align*}
The above three displays yield
\begin{align*}
\E_x\Ll| n\E \la \bff R^{1,n+1}_h \ra - \E \la \bff\ra \E \la R^{1,2}\ra  -  \sum_{\ell=2}^n \E \la \bff R^{1,\ell}_h\ra\Rr| 
\leq  \E_x \E \la \Ll|R^{1,1}_h - \E \la R^{1,1}_h\ra  \Rr|\ra +C_h N^{-\frac{3}{16}} .
\end{align*}
Since $R^{1,1}_\alpha =1$ and $R^{1,1}_\sigma$ is bounded, there exists a constant $C'_h < +\infty$ such that $\Ll|R^{1,1}_h - R^{2,2}_h\Rr|\leq C'_h \Ll|R^{1,1}_\sigma - R^{2,2}_\sigma\Rr|$ and thus
\begin{align*}
     \E \la \Ll|R^{1,1}_h - \E \la R^{1,1}_h\ra  \Rr|\ra \leq \E \la \Ll| R^{1,1}_h - R^{2,2}_h \Rr|\ra  \leq C'_h \E \la \Ll| R^{1,1}_\sigma - R^{2,2}_\sigma \Rr|\ra
     \\
     \leq 2C'_h \E \la \Ll| R^{1,1}_\sigma  -  \E \la R^{1,1}_\sigma\ra \Rr|\ra.
\end{align*}
Taking $\E_x$ in this display, by Proposition~\ref{p.perturbation}~\eqref{i.self-overlap_concent}, we see that the right-hand side vanishes as $N\to\infty$.
Inserting this into the previous display, we get Proposition~\ref{p.perturbation}~\eqref{i.GG}.
\end{proof}

\subsection{Identifying the limits}

Let $\Delta^x_N(\bff,n,h)$ be as given in~\eqref{e.Delta^x}. We enumerate as $((\bff_j,n_j,h_j))_{j\in\N}$ all the triples $(\bff,n,h)$ where $\bff:\Ll(\R^{\D\times\D}\times\R\Rr)^{n\times n}\to\R$ is a monomial (with coefficient $1$), $n\in\N$, $h\in\N^4$. 
Then, we modify each $\bff_j$ in two steps:
first, since $R^{\leq n}$ is bounded, we can adjust $\bff_j$ outside a bounded set to make it bounded;
secondly, we rescale~$\bff_j$ to make sure that $\Delta^x_N(\bff_j,n_j,h_j)\leq 1$. For each $N \in \N$ and $x \in [0,3]^{\N^4 \times \frac{D(D+1)}{2}}$, we set
\begin{align}\label{e.Delta_N(x)}
    \Delta_N(x) := \E \la \Ll|R^{1,1}_{N,\sigma} -\E \la R^{1,1}_{N,\sigma}\ra^\cav_{N,x}\Rr|\ra^\cav_{N,x}
    +\sum_{j=1}^\infty 2^{-j} \Delta^x_N(\bff_j,n_j,h_j).
\end{align}
\begin{align}\label{e.mathcal_K}
 \mbox{We denote by $\mathcal{K}$ the convex hull of the set} \Ll\{\tau\tau^\intercal:\:\tau\in\supp P_1\Rr\}.
\end{align}
The set $\mathcal{K}$ is closed, is such that $\mathcal{K}\subset \S^\D_+$, and we recall from \eqref{e.bound.on.sigma} that we have $|a|\leq 1$ for every $a\in\mathcal{K}$.
Recall the definitions of $R_N$ in \eqref{e.overlap_notation}, $A_N(x)$ in~\eqref{e.A_N(x)}, $\sP_{t,{q}}$ in \eqref{e.mathscrP}, $\la \cdot\ra^\orig_{N,x}$ in \eqref{e.<>^orig_Nx=}, $\la \cdot\ra^\cav_{N,x}$ in \eqref{e.<>^cav_N,x}, and $\la\cdot\ra_{\fR,\pi}$ in \eqref{e.<>_R,pi}. Whenever we speak of the convergence in law of an overlap array, we always mean this in the sense of finite-dimensional distributions. 

\begin{proposition}\label{p.cavity_lim}
We fix $M\in\N$ and $(t,q)\in\R_+\times \C_\infty$, and suppose that there is a sequence $(N_k,x_k)_{k\in\N}$ such that $\lim_{k\to\infty} N_k=+\infty$ and $\lim_{k\to\infty}\Delta_{N_k}(x_k)=0$.
Then, there are a subsequence $(N'_k,x'_k)_{k\in\N}$, ${p}\in \C\cap L^\infty_{\leq 1}$, and $a\in\mathcal{K}$ satisfying $a\geq{p}$ such that
\begin{enumerate}
    \item \label{i.p.cavity_lim_1} $R_{N'_k}$ under $\E\la\cdot\ra^\cav_{N'_k,x'_k}$ converges in law to
    \begin{align*}
        \Ll(Q^{\ell,\ell'}_{p}\1_{\ell\neq \ell'}+ (a,1) \1_{\ell =\ell'}\Rr)_{\ell,\ell'\in\N}
\end{align*}
    under $\E \la\cdot\ra_\fR$ as $k$ tends to infinity;
    \item \label{i.p.cavity_lim_2} we have $\lim_{k\to\infty}A_{N'_k}\Ll(x'_k\Rr) =- M \mathscr{P}_{t,{q}}({p})$;
    \item \label{i.p.cavity_lim_3}
    for every bounded continuous $g:\R^{\D\times\D}\times \R\to\R$,
    \begin{equation*}  \lim_{k\to\infty} \E \la g\Ll(\tau\tau'^\intercal, \alpha\wedge\alpha'\Rr)\ra^\orig_{N'_k+M,x'_k} =\E \la g\Ll(\tau\tau'^\intercal, \alpha\wedge\alpha'\Rr)\ra_{\fR,{q}+t\nabla\xi({p})}.
    \end{equation*}
\end{enumerate}
\end{proposition}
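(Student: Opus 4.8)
The plan is to combine the three cavity ingredients already set up: the compactness of overlap arrays, the overlap approximations of Lemmas~\ref{l.approx_finite_overlap} and~\ref{l.approx_finite_overlap_2}, and the Ghirlanda--Guerra identities encoded in $\Delta_N(x)$. First I would extract a subsequence along which the overlap array $R_{N_k}$ under $\E\la\cdot\ra_{N_k,x_k}$ converges in law in the sense of finite-dimensional distributions; this is possible because $|R^{\ell,\ell'}_N|\le 1$, so the array is tight, and by a diagonal argument one can make all joint distributions converge. Along this subsequence the self-overlaps $R^{1,1}_{N_k,\sigma}$ concentrate (the first term of $\Delta_{N_k}(x_k)$ tends to $0$), so in the limit $R^{\ell,\ell}_{N,\sigma}$ is a deterministic element $a\in\mcl K$ and $R^{\ell,\ell}_\alpha = 1$ deterministically (the latter holds exactly, since $\alpha\wedge\alpha=1$ a.s.\ under $\fR$). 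The off-diagonal part $R_\alpha^{\ell,\ell'}$ is always uniform on $[0,1]$ and satisfies ultrametricity and, in the limit, the Ghirlanda--Guerra identities (because $\sum_j 2^{-j}\Delta^{x_k}_{N_k}(\bff_j,n_j,h_j)\to 0$ forces each $\Delta^{x_k}_{N_k}(\bff_j,n_j,h_j)\to 0$). By the standard consequence of the Ghirlanda--Guerra identities together with ultrametricity (e.g.\ \cite[Theorem~2.13 or Theorem~3.3]{pan}, or \cite[Chapter~5]{HJbook}), the limiting array is that of a Poisson--Dirichlet cascade; that is, the law of $(R^{\ell,\ell'}_{N_k})_{\ell,\ell'}$ converges to the law of $((Q^{\ell,\ell'}_p\,\1_{\ell\ne\ell'} + (a,1)\,\1_{\ell=\ell'}))_{\ell,\ell'}$ under $\E\la\cdot\ra_\fR$ for some $p\in\mcl Q$, where $p(u)$ is determined as the conditional law of the $\sigma$-overlap given that the $\alpha$-overlap equals $u$; the synchronization/ultrametricity machinery of \cite{pan.multi,pan.potts,pan.vec} is what guarantees that the $\sigma$-overlap is a \emph{function} of the $\alpha$-overlap, hence the use of a single path $p$. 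Since $|R^{\ell,\ell'}_{N,\sigma}|\le 1$ we get $p\in\mcl Q\cap L^\infty_{\le 1}$, and the comparison of diagonal and off-diagonal values together with monotonicity of the limiting measure gives $a\ge p(u)$ for all $u$, i.e.\ $a\ge p$; this proves~\eqref{i.p.cavity_lim_1}.

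For~\eqref{i.p.cavity_lim_2}: fix $\eps>0$ and let $F_\eps$ and $n$ be as in Lemma~\ref{l.approx_finite_overlap}. Then $A_{N'_k}(x'_k)\approx_\eps \E\la F_\eps(R^{\le n}_{N'_k}, q(R^{\le n}_\alpha))\ra_{N'_k,x'_k}$ uniformly in $k$. The subtlety is that $q$ need not be continuous, so I cannot naively pass to the limit in the continuous-mapping sense. But $q$ is applied entrywise only to $R^{\le n}_\alpha$, whose off-diagonal entries are uniform on $[0,1]$, and the diagonal entries equal $1$ where $q$ is continuous (recall every $q\in\mcl Q_\infty$ is continuous at $1$ by the convention \eqref{e.def.q.1}); since the set of discontinuities of $q$ is countable, a union bound shows the limiting array $R^{\le n}_\alpha$ avoids that set almost surely, so $(R^{\le n}_{N'_k},q(R^{\le n}_\alpha))$ does converge in law to $((Q^{\ell,\ell'}_p\1_{\ell\ne\ell'}+(a,1)\1_{\ell=\ell'}), q(\cdot))$, exactly as in the proof of Proposition~\ref{p.invar_cts}. (One also uses that $F_\eps$ is bounded and continuous so the integral passes to the limit, and that $F_\eps$ depends on finitely many entries.) Therefore the right-hand side converges to $\E\la F_\eps(Q^{\le n}_p, q(R^{\le n}_\alpha))\ra_\fR$, which by the second bullet of Lemma~\ref{l.approx_finite_overlap} is $\approx_\eps -M\mathscr P_{t,q}(p)$. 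Hence $\limsup_k|A_{N'_k}(x'_k)+M\mathscr P_{t,q}(p)|\le 2\eps$, and letting $\eps\to 0$ gives~\eqref{i.p.cavity_lim_2}. Part~\eqref{i.p.cavity_lim_3} is identical, using Proposition~\ref{l.pre_comp_2} to replace $\la\cdot\ra^\circ_{N'_k+M,x'_k}$ by $\la\cdot\ra^\star_{N'_k+M,x'_k}$ up to $O(M^2N^{-1/16})$, and then Lemma~\ref{l.approx_finite_overlap_2} in place of Lemma~\ref{l.approx_finite_overlap} to squeeze $\E\la g(\tau\tau'^\intercal,\alpha\wedge\alpha')\ra^\star$ between continuous functions of finitely many overlap entries and pass to the limit the same way, arriving at $\E\la g(\tau\tau'^\intercal,\alpha\wedge\alpha')\ra_{\fR,q+t\nabla\xi(p)}$.

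\textbf{Main obstacle.} The technically delicate point is the identification of the limiting overlap array as a \emph{synchronized} cascade governed by a single path $p$, i.e.\ that in the limit the $\sigma$-overlap is a deterministic increasing function of the $\alpha$-overlap taking values in $S^D_+$. This is where the ultrametricity forced by the perturbation (Proposition~\ref{p.perturbation}) and the vector-spin synchronization results of \cite{pan.multi,pan.potts,pan.vec} have to be invoked carefully: one must check that the perturbation Hamiltonian $H^h_N$ is rich enough to enforce the Ghirlanda--Guerra identities for all the relevant linear functionals of the spin overlaps simultaneously (this is the role of the enumeration over $a_{h_1}$, Schur powers $h_2$, weights $\lambda_{h_3}$, and powers $h_4$ in the covariance), so that synchronization applies to the joint $(\sigma,\alpha)$-overlap structure and not just to one coordinate. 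Everything else is a routine $3\eps$-argument built on the approximation lemmas and the invariance of cascades.
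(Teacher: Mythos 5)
Your overall route is the same as the paper's: extract a convergent overlap array, use the concentration of the self-overlap and the Ghirlanda--Guerra identities inherited from $\Delta_{N_k}(x_k)\to 0$ together with Panchenko's synchronization theorem to identify the limit as a synchronized cascade array governed by a single path $p$, and then chain the $\approx_\eps$ approximations of Lemmas~\ref{l.approx_finite_overlap} and~\ref{l.approx_finite_overlap_2} (with Proposition~\ref{l.pre_comp_2} for part~\eqref{i.p.cavity_lim_3}). You also correctly isolate synchronization as the key structural input and handle the discontinuities of $q$ via the continuous mapping theorem.

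There is, however, one genuine gap in your parts~\eqref{i.p.cavity_lim_2} and~\eqref{i.p.cavity_lim_3}: you pass from ``the approximant converges to $\E \la F_\eps(R_\infty^{\le n}, q(R_\alpha^{\le n}))\ra_\fR$'' directly to ``$=\E \la F_\eps(Q^{\le n}_p, q(R_\alpha^{\le n}))\ra_\fR$'', and only the latter is controlled by the second bullet of Lemma~\ref{l.approx_finite_overlap}. These two quantities differ in general, because $F_\eps$ depends on the diagonal entries: as you yourself state in part~\eqref{i.p.cavity_lim_1}, the limit array has diagonal $(a,1)$, whereas $Q_p$ has diagonal $(p(1),1)$ with $p(1)=\lim_{s\nearrow 1}p(s)$. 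Writing $\kappa$ for the quantile function of the trace of one off-diagonal overlap and $r$ for the trace of the diagonal limit, one typically has $\kappa(1)<r$ (e.g.\ whenever the off-diagonal overlap is essentially bounded away from the self-overlap, as at high temperature), so $p(1)<a$ and the two arrays genuinely disagree on the diagonal. The paper repairs this by introducing step functions $\kappa_m\to\kappa$ in $L^1$ with $\kappa_m(1)=r$, setting $p_m:=\Psi_\sigma\circ\kappa_m$ so that $Q_{p_m}$ has the correct diagonal $(a,1)$ while its off-diagonal law converges to that of $R_\infty$, and then using $\lim_m \mathscr P_{t,q}(p_m)=\mathscr P_{t,q}(p)$ (via the $L^1$-Lipschitz continuity of $\psi$ from Corollary~\ref{c.psi_smooth}), and similarly $\lim_m\mathfrak r(p_m)=\mathfrak r(p)$ for part~\eqref{i.p.cavity_lim_3}. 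Without this intermediate approximation your chain of $\approx_\eps$'s does not close. The rest of your argument, including the identification $a\ge p$ via the Cauchy--Schwarz comparison of diagonal and off-diagonal overlaps, is sound.
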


In the statement of Proposition~\ref{p.cavity_lim}, when we say that $(N_k', x_k')_{k \in \N}$ is a subsequence, we mean that $\lim_{k \to \infty} N_k' = +\infty$ and that $\{(N_k', x_k') \mid k \in \N\} \subset \{(N_k, x_k) \mid k \in \N\}$.

We define the \emph{quantile function} $\kappa:[0,1)\to \R$ of a real-valued random variable $X$ as the right-continuous increasing function satisfying
\begin{align*}
    \E g(X) = \int_0^1 g(\kappa(s))\d s
\end{align*}
for every bounded measurable function $g:\R\to\R$. We can obtain $\kappa$ by taking the right-continuous inverse of the probability distribution function of $X$. 
Consistently with~\eqref{e.def.q.1}, we also set $\kappa(1) := \lim_{s\nearrow 1}\kappa(s)$ whenever $\kappa$ is bounded.

\begin{proof}[Proof of Proposition~\ref{p.cavity_lim}]

We denote by $R^\mathrm{diag}_N := \Ll(R^{\ell,\ell}_N\Rr)_{\ell\in\N}$ the diagonal part consisting of self-overlaps, and by $R^\mathrm{off}_N := \Ll(R^{\ell,\ell'}_N\Rr)_{\ell\neq \ell'}$ the off-diagonal part. We will use similar notation for the diagonal and off-diagonal parts of other overlap arrays.

\medskip

\noindent \emph{Part~\eqref{i.p.cavity_lim_1}}.
Since $R^{1,1}_{N,\sigma} \in \S^\D_+$ and $\Ll|R^{1,1}_{N,\sigma}\Rr|\leq 1$, we can extract a subsequence $\Ll(N'_k,\,x'_k\Rr)_{k\in\N}$ from $\Ll(N_k,\,x_k\Rr)_{k\in\N}$ such that
\begin{align}\label{e.E<R11>=q}
    \lim_{k\to\infty} \E \la R^{1,1}_{N'_k,\sigma} \ra^\cav_{N'_k,x'_k} = z
\end{align}
for some $z\in\S^\D_+$.
Since each $ R^{\ell,\ell'}_N$ is bounded, by passing to a further subsequence, we can assume that
\begin{align}\label{e.RtoR}
\mbox{$R_{N'_k}$ under $\E \la\cdot\ra^\cav_{N'_k,x'_k}$ converges in law to $R_\infty$ as $k$ tends to infinity,}
\end{align}
for some random array $R_\infty$ (and in the sense of finite-dimensional distributions).
We will show that the law of $R_\infty$ has a natural representation under $\E\la\cdot\ra_{\fR}$. 
We write $R^{\ell,\ell'}_\infty := \Ll(R^{\ell,\ell'}_{\infty,\sigma}, R^{\ell,\ell'}_{\infty,\alpha}\Rr)$ where $R^{\ell,\ell'}_{\infty,\bullet}$ is the limit of $R^{\ell,\ell'}_{N,\bullet}$ with $\bullet$ being $\sigma$ or $\alpha$. For $R^{\ell,\ell'}_\infty$, the subscripts $\sigma$ and $\alpha$ are purely symbolic.

First, we determine $R^\mathrm{diag}_\infty$.
Due to the first term on the right of~\eqref{e.Delta_N(x)}, the assumption $\lim_{k\to\infty}\Delta_{N_k}\Ll(x^{N_k}\Rr)=0$ ensures the concentration of $R^{\ell,\ell}_{N,\sigma}$.
Also, it is clear from $R^{\ell,\ell}_{N,\alpha}=1$ that $R^{\ell,\ell}_{\infty,\alpha}=1$, which together with \eqref{e.E<R11>=q} yields that $R_\infty^{\mathrm{diag}}$ is deterministic and
\begin{align}\label{e.diag_R}
    R_{\infty}^{\ell,\ell} =(z,1) \quad \forall \ell\in\N.
\end{align}

Next, we determine $R^\mathrm{off}_\infty$.
The same assumption on $\Delta_{N_k}(x^{N_k})$ also ensures that~$R_\infty$ satisfies the Ghirlanda--Guerra identities. Panchenko's synchronization result \cite[Theorem~4]{pan.vec} guarantees the existence of a Lipschitz function $\Psi:\R_+\to\S^\D_+\times \R_+$ satisfying $\Psi(s)\geq \Psi(s')$ for all $s\geq s'$ such that 
\begin{align}\label{e.R=Phi(tr(R))}
    R^{\ell,\ell'}_\infty = \Psi\Ll(\tr\Ll(R^{\ell,\ell'}_\infty\Rr)\Rr) \quad \text{a.s.}\ \forall \ell,\ell'\in\N,
\end{align}
where we used the shorthand $\tr\Ll(R^{\ell,\ell'}_\infty\Rr):=\tr\Ll(R^{\ell,\ell'}_{\infty,\sigma}\Rr)+ R^{\ell,\ell'}_{\infty,\alpha}$.

To proceed, we determine the distribution of $\tr\Ll(R^{\ell,\ell'}_\infty\Rr)$.
We denote the quantile function of $\tr\Ll(R^{1,2}_\infty\Rr)$ by $\kappa$. The assumption $\lim_{k\to\infty}\Delta_{N_k}\Ll(x^{N_k}\Rr)=0$ implies that the Ghirlanda--Guerra identities hold for $\Ll(\tr\Ll(R^{\ell,\ell'}_\infty\Rr)\Rr)_{\ell,\ell'\in\N}$. Using the Dovbysh--Sudakov representation in \cite[Theorem~1.7]{pan} together with the characterization of Poisson--Dirichlet cascades by the overlap distribution in \cite[Theorems~2.13 and~2.17]{pan}, we deduce that $\Ll(\tr\Ll(R^{\ell,\ell'}_\infty\Rr)\Rr)_{\ell\neq \ell'}$ has the same law as the overlap array associated with a Poisson--Dirichlet cascade, and this cascade must be such that the quantile function of one overlap is $\kappa$. 
Using this and~\eqref{e.diag_R}, we set $r:=\tr(z)+1$ so that
\begin{align}\label{e.tr(R)=kappa}
    \tr\Ll(R^{\ell,\ell'}_\infty\Rr) = \kappa\Ll(\alpha^\ell\wedge\alpha^{\ell'}\Rr)\1_{\ell\neq \ell'} + r\1_{\ell=\ell'},
\end{align}
where $\Ll(\alpha^{\ell}\Rr)_{\ell\in\N}$ is sampled from $\la\cdot\ra_\fR$. To be precise, we have argued that the two sides of~\eqref{e.tr(R)=kappa} have the same law, jointly over $\ell, \ell' \in \N$, where the law of the right-hand side is under $\E \la \cdot \ra_\fR$. Since so far we had only specified the law of $R_\infty$ but not its realization itself, we may as well choose the probability space to be that for $\E \langle \cdot \rangle_\fR$ and realize $\tr\Ll(R^{\ell,\ell'}_\infty\Rr)$ according to \eqref{e.tr(R)=kappa}; the full array $R_\infty$ is then specified according to \eqref{e.R=Phi(tr(R))}, and we get the representation
\begin{align}\label{e.R^l,l'_first}
    R^{\ell,\ell'}_\infty = \Psi\Ll(\kappa\Ll(\alpha^\ell\wedge\alpha^{\ell'}\Rr)\1_{\ell\neq \ell'}+r \1_{\ell=\ell'}\Rr),\quad\forall \ell,\ell'\in\N.
\end{align}
In order to simplify~\eqref{e.R^l,l'_first}, we write $\Psi=:(\Psi_\sigma,\Psi_\alpha)$ so that $\Psi_\sigma$ is $\S^\D_+$-valued and $\Psi_\alpha$ is $\R_+$-valued.
By the invariance of Poisson--Dirichlet cascades from Proposition~\ref{p.invar_cts}, we have that $R^{\ell,\ell'}_{N,\alpha}$ under $\E\la\cdot\ra^\cav_{N,x}$ has the same law as $\alpha^\ell\wedge \alpha^{\ell'}$ under $\E\la\cdot\ra_\fR$. Therefore, $R^{\ell,\ell'}_{\infty,\alpha}$ must also have the same law, which means that $\Psi_\alpha(\kappa(\alpha^\ell\wedge\alpha^{\ell'}))$ is uniformly distributed over $[0,1]$ under $\E\la\cdot\ra_\fR$ for $\ell\neq \ell'$. Since $\alpha^\ell\wedge\alpha^{\ell'}$ is itself uniform over $[0,1]$ and $\Psi\circ\kappa$ is increasing and right-continuous, we must have
\begin{align*}\Psi_\alpha(\kappa(s))= s,\quad\forall s\in[0,1].
\end{align*}
Thus, we can simplify~\eqref{e.R^l,l'_first} into
\begin{align}\label{e.R^l,l'}
    R^{\ell,\ell'}_\infty = \bigg(\Psi_\sigma\Ll(\kappa\Ll(\alpha^\ell\wedge\alpha^{\ell'}\Rr)\1_{\ell\neq \ell'}+r \1_{\ell=\ell'}\Rr),\quad \alpha^\ell\wedge\alpha^{\ell'}\bigg),\quad\forall \ell,\ell'\in\N.
\end{align}
Before concluding, we show that
\begin{align}\label{e.zeta(s)<r}
    \kappa(s)\leq r,\quad\forall s\in[0,1].
\end{align}
Note that~\eqref{e.R=Phi(tr(R))} implies that $R^{\ell,\ell'}_{\infty,\sigma} \in\S^\D_+$ for $\ell\neq \ell'$. The Cauchy--Schwarz inequality implies that $v\cdot R^{\ell,\ell'}_{N,\sigma} v\leq \frac{1}{2} v\cdot \Ll(R^{\ell,\ell}_{N,\sigma}+R^{\ell',\ell'}_{N,\sigma}\Rr)v$ for every $v\in\R^\D$, which yields
\begin{align}\label{e.R<(R+R)/2}
    R^{\ell,\ell'}_{\infty,\sigma} \leq \frac{1}{2}\Ll(R^{\ell,\ell}_{\infty,\sigma}+R^{\ell',\ell'}_{\infty,\sigma}\Rr).
\end{align}
Taking the trace on both sides of \eqref{e.R<(R+R)/2} and using \eqref{e.tr(R)=kappa}, we get
\begin{align*}
    \kappa\Ll(\alpha^\ell\wedge\alpha^{\ell'}\Rr) \leq r.
\end{align*}
Since $\alpha^\ell\wedge\alpha^{\ell'}$ is uniformly distributed on $[0,1]$ for $\ell\neq \ell'$ and $\kappa$ is right-continuous, we arrive at \eqref{e.zeta(s)<r}.

We set ${p}:= \Psi_\sigma\circ \kappa$ and $a := \Psi_\sigma(r)$. 
Since $a$ is the subsequential limit of $\frac{\sigma\sigma^\intercal}{N}$, we have that $a$ belongs to the set $\mcl K$ defined in~\eqref{e.mathcal_K}, and in particular $a\in\S^\D_+$ and $|a|\leq 1$. Since the function $\Psi_\sigma$ is increasing, the inequality \eqref{e.zeta(s)<r} implies that ${p}\leq a$. Since $\kappa$, as a quantile function, is also increasing, we have ${p} \in \C$.
Notice that, for $y,y'\in \S^\D_+$ satisfying $y\geq y'$, we have $(y+y')\cdot (y-y')\geq 0$ and thus $|y|^2\geq |y'|^2$. This along with ${p}\leq a$ gives $|{p}|_{L^\infty}\leq |a|\leq 1$.
We have thus verified the conditions on ${p}$ and $a$ stated in the proposition.
Part~\eqref{i.p.cavity_lim_1} follows from \eqref{e.RtoR} and the representation in~\eqref{e.R^l,l'}.

\medskip

\noindent \emph{Part~\eqref{i.p.cavity_lim_2}}.
Due to the possibility that $r>\kappa(1)$, in general, a representation of $R_\infty$ as a Poisson--Dirichlet cascade might be off on the diagonal. 
To circumvent this, we build a sequence of Poisson--Dirichlet cascades whose overlap arrays approximate $R_\infty$ arbitrarily closely.
Allowed by~\eqref{e.zeta(s)<r}, we can
choose a sequence $(\kappa_m)_{m\in\N}$ of right-continuous increasing 
step functions from $[0,1)$ to $[0,r]$ that converges to $\kappa$ in $L^1([0,1);\R)$ as $m$ tends to infinity. For each $m \in \N$, we can also impose that $\lim_{s \nearrow 1}\kappa_m(s) = r$ while preserving the properties mentioned in the previous sentence. As usual, we set $\kappa_m(1) := \lim_{s\nearrow1}\kappa_m(s)$, and thus have $\kappa_m(1)=r$.
Setting ${p}_m:=\Psi_\sigma\circ\kappa_m$, we have that ${p}_m\in \C_\infty$ and
\begin{align}\label{e.zeta_m}
    {p}_m(1) = \Psi_\sigma(r),\qquad \lim_{m\to\infty}\Ll|{p}_m - {p}\Rr|_{L^1} =0.
\end{align}
For each $m$, we consider $Q_{{p}_m}$ defined in~\eqref{e.overlap_alpha}. Notice that, from our choice of $p$ and~\eqref{e.R^l,l'}, we have $\Ll(Q^{\ell,\ell'}_{p}\Rr)_{\ell\neq \ell'}=\Ll(R^{\ell,\ell'}_\infty\Rr)_{\ell\neq \ell'}$.

By the second property in~\eqref{e.zeta_m} and the continuity of Poisson--Dirichlet cascades in the overlap distribution (\cite[Corollary~5.32]{HJbook} or \cite[Theorem 2.17]{pan}), we deduce that $\Ll(Q^{\ell,\ell'}_{{p}_m}\Rr)_{\ell\neq \ell'}$ converges in law to $\Ll(Q^{\ell,\ell'}_{p}\Rr)_{\ell\neq \ell'}=\Ll(R^{\ell,\ell'}_\infty\Rr)_{\ell\neq \ell'}$ under $\E\la\cdot\ra_\fR$. Due to the first property in~\eqref{e.zeta_m}, we have $Q^{\ell,\ell}_{{p}_m} =R^{\ell,\ell}_\infty$, which is deterministic. We thus obtain that, under $\E\la\cdot\ra_\fR$,
\begin{align}\label{e.QtoR}
    \mbox{$Q_{{p}_m}$ converges in law to $R_\infty$ as $m$ tends to infinity. }
\end{align}
In order to proceed, we would like to improve upon the convergences in \eqref{e.RtoR} and \eqref{e.QtoR} into 
\begin{multline}
\label{e.joint_cvg_in_law_1}
\mbox{$\big(R_{N'_k}, q(R_\al)\big)$ under $\E \la\cdot\ra^\cav_{N'_k,x'_k}$ converges in law }
\\ \mbox{to $\big(R_\infty, q(R_\alpha)\big)$ under $\E \la \cdot \ra_\fR$ as $k$ tends to infinity,}
\end{multline}
and, under $\E \la \cdot \ra_\fR$,
\begin{equation}
\label{e.joint_cvg_in_law_2}
\mbox{$\big(Q_{p_m},q(R_\alpha)\big)$ converges in law to $\big(R_\infty,q(R_\alpha)\big)$ as $m$ tends to infinity.}
\end{equation}
These convergences can be obtained by appealing to the continuous mapping theorem as in the proof of Proposition~\ref{p.invar_cts}: in short, the off-diagonal elements of $R_\alpha$ are uniformly distributed over $[0,1]$, so they have probability zero to belong to the set of points of discontinuity of $q$. 

We now fix any $\eps>0$ and recall the notation $\approx_\eps$ from~\eqref{e.approx_eps}.
For the function $F_\eps$ given by Lemma~\ref{l.approx_finite_overlap} and using the convergences in law in \eqref{e.joint_cvg_in_law_1} and  \eqref{e.joint_cvg_in_law_2}, we have for sufficiently large $k$ and $m$ that
\begin{align*}
    A_{N'_k}\Ll(x'_k\Rr) 
    \stackrel{\ref{l.approx_finite_overlap}}{\approx_\eps}  \E \la F_\eps\Ll(R^{\leq n}_{N'_k},q\Ll(R^{\leq n}_{\alpha}\Rr)\Rr) \ra^\cav_{N'_k,x'_k} 
    \stackrel{\eqref{e.joint_cvg_in_law_1}}{\approx_\eps} \E \la F_\eps\Ll(R^{\leq n}_{\infty}, q\Ll(R^{\leq n}_{\sigma}\Rr)\Rr)\ra_\fR 
    \\
    \stackrel{\eqref{e.joint_cvg_in_law_2}}{\approx_\eps} \E \la F_\eps\Ll(Q^{\leq n}_{{p}_m},q\Ll(R^{\leq n}_\alpha\Rr)\Rr)\ra_\fR 
    \stackrel{\ref{l.approx_finite_overlap}}{\approx_\eps} -M\mathscr{P}_{t,{q}}({p}_m).
\end{align*}
The symbol $\approx_\ep$ does not define a transitive relation, and cumulatively we obtain an error of $4\ep$ between the first and the last term in the string of approximate equalities. 
Recall the expression of $\mathscr{P}_{t,{q}}$ in~\eqref{e.mathscrP}.
These along with~\eqref{e.zeta_m} and the Lipschitzness of $\psi$ from Corollary~\ref{c.psi_smooth} imply $\lim_{m\to\infty}\mathscr{P}_{t,{q}}({p}_m) = \mathscr{P}_{t,{q}}({p})$.
Hence, sending $k$ and $m$ to infinity, we get
\begin{align*}
    \liminf_{k\to\infty}  A_{N'_k}\Ll(x'_k\Rr) \approx_{4\eps} - M\mathscr{P}_{t,{q}}({p}) \approx_{4\eps} \limsup_{k\to\infty}  A_{N'_k}\Ll(x'_k\Rr).
\end{align*}
Sending $\eps$ to zero, we obtain part~\eqref{i.p.cavity_lim_2}.

\medskip

\noindent \emph{Part~\eqref{i.p.cavity_lim_3}}.
Recall $\la\cdot\ra^\star_{N+M,x}$ in \eqref{e.<>_N^star}.
For every $k\in\N$ and ${p}' \in \C_\infty$, we write
\begin{align*}
    r(k) := \E \la g\Ll(\tau\tau'^\intercal, \alpha\wedge\alpha'\Rr)\ra^\star_{N'_k+M,x'_k},\qquad \quad \mathfrak{r}\Ll({p}'\Rr) :=\E \la g\Ll(\tau\tau'^\intercal, \alpha\wedge\alpha'\Rr)\ra_{\fR,{q}+t\nabla\xi({p}')}.
\end{align*}
For every $\eps > 0$, we denote by $F_\ep$ the function given by Lemma~\ref{l.approx_finite_overlap_2} and we use~\eqref{e.joint_cvg_in_law_1} and \eqref{e.joint_cvg_in_law_2} to get that, for every sufficiently large $k$ and $m$,
\begin{align*}
    r(k) 
    \stackrel{\ref{l.approx_finite_overlap_2}}{\approx_\eps} \E \la F_\eps \Ll(R^{\leq n}_{N'_k},q\Ll(R^{\leq n}_{\alpha}\Rr)\Rr)\ra^\cav_{N'_k,x'_k}  
    \stackrel{\eqref{e.joint_cvg_in_law_1}}{\approx_\eps} \E \la F_\eps\Ll(R^{\leq n}_\infty,q\Ll(R^{\leq n}_{\alpha}\Rr)\Rr)\ra_\fR 
    \\
    \stackrel{\eqref{e.joint_cvg_in_law_2}}{\approx_\eps} \E \la F_\eps\Ll(Q^{\leq n}_{{p}_m},q\Ll(R^{\leq n}_{\alpha}\Rr)\Rr)\ra_\fR 
    \stackrel{\ref{l.approx_finite_overlap_2}}{\approx_\eps} \mathfrak{r}\Ll({p}_m\Rr).
\end{align*}
Using the second property in~\eqref{e.zeta_m} and a computation similar to the one in~\eqref{e.diff.q.second}, we have $\lim_{m\to\infty} \mathfrak{r}\Ll({p}_m\Rr)=\mathfrak{r}({p})$. Therefore, sending $m$ and $k$ to infinity and then $\ep$ to zero, we obtain that $\lim_{k\to\infty} r(k) = \mathfrak{r}({p})$. Lemma~\ref{l.pre_comp_2} allows us to replace $\la\cdot\ra^\star_{N'_k+M,x'_k}$ in $r(k)$ by $\la\cdot\ra^\orig_{N'_k+M,x'_k}$ in the limit. Hence, part~\eqref{i.p.cavity_lim_3} follows. 
\end{proof}

\subsection{Corollaries}

We now combine Propositions~\ref{p.perturbation} and~\ref{p.cavity_lim}. Recall the definition of~$\la\cdot\ra^\cav_{N,x}$ in~\eqref{e.<>^cav_N,x}, and that $\la\cdot\ra_{\fR}$ denotes the expectation with respect to $\fR^{\otimes \N}$. Recall also~$\sP_{t,{q}}$ from \eqref{e.mathscrP} and the definitions of the overlaps in~\eqref{e.overlap_notation} and~\eqref{e.overlap_alpha}. 
\begin{corollary}\label{c.cavity}
For every $(t,{q})\in \R_+\times \C_\infty$, there are sequences $(N^\pm_k)_{k\in\N}$ of increasing integers, $(x_k^\pm)_{k\in\N}$, ${p}_\pm \in \C\cap L^\infty_{\leq 1}$, and $a_\pm \in \mathcal{K}$ satisfying $a_\pm\geq p_\pm$ such that
\begin{enumerate}
    \item \label{i.c.cavity_I_1} $R_{N^\pm_k}$ under $\E\la\cdot\ra^\cav_{N^\pm_k,x^\pm_k}$ converges in law to
    \begin{align*}
        \Ll(Q^{\ell,\ell'}_{{p}_\pm}\1_{\ell\neq \ell'}+ (a_\pm,1) \1_{\ell =\ell'}\Rr)_{\ell,\ell'\in\N}
    \end{align*}
    under $\E \la\cdot\ra_\fR$ as $k$ tends to infinity;
    \item \label{i.c.cavity_I_2} 
    we have
    \begin{equation*}  \mathscr{P}_{t,{q}}({p}_-)\leq \liminf_{N\to\infty}\bar F_N(t,{q}) \leq \limsup_{N\to\infty}\bar F_N(t,{q}) \leq \mathscr{P}_{t,{q}}({p}_+).
    \end{equation*}
    
\end{enumerate}
    
\end{corollary}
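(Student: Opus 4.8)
\textbf{Proof plan for Corollary~\ref{c.cavity}.}
The plan is to feed Proposition~\ref{p.perturbation} into Proposition~\ref{p.cavity_lim}, specializing to $M=1$ and using the free-energy cavity calculation (Proposition~\ref{p.cavity_perturbation}) together with its cascade counterpart (Lemma~\ref{l.-MP^t,mu(zeta)}) to turn the statement about $A_N(x)$ into a statement about $\bar F_N(t,q)$. First, by Proposition~\ref{p.perturbation}, for every $\eps>0$ there is a choice of $x$ in $[1,2]^{\N^4\times \frac{D(D+1)}{2}}$ making $\Delta_N(x)$ small; more precisely, $\E_x \Delta_N(x)\to 0$, so for each $N$ we may pick $x_N$ with $\Delta_N(x_N)\to 0$. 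This produces a sequence $(N,x_N)_{N\in\N}$ to which Proposition~\ref{p.cavity_lim} applies.

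The second step is to arrange the subsequences so that the $\limsup$ and $\liminf$ of $\bar F_N(t,q)$ are realized. Since $\bar F_N(t,q)$ is a bounded sequence of reals, choose a subsequence $(N^+_k)$ along which $\bar F_{N^+_k}(t,q)\to \limsup_{N\to\infty}\bar F_N(t,q)$, and similarly $(N^-_k)$ for the $\liminf$; along each of these we still have $\Delta_{N^\pm_k}(x_{N^\pm_k})\to 0$. Apply Proposition~\ref{p.cavity_lim} (with $M=1$) to each, extracting further subsequences $(N^\pm_k,x^\pm_k)$, limit paths $p_\pm\in\C\cap L^\infty_{\le 1}$, and matrices $a_\pm\in\mathcal K$ with $a_\pm\ge p_\pm$. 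Part~\eqref{i.p.cavity_lim_1} of that proposition gives directly part~\eqref{i.c.cavity_I_1} of the corollary, and part~\eqref{i.p.cavity_lim_2} gives $A_{N^\pm_k}(x^\pm_k)\to -\mathscr P_{t,q}(p_\pm)$ (recall $M=1$).

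The third and main step is to connect $A_N(x)$ with the free energy. By Proposition~\ref{p.cavity_perturbation} with $M=1$,
\begin{equation*}
-(N+1)\bar F^{x}_{N+1}(t,q) + N\bar F^x_N(t,q) = A_N(x) + O(N^{-1/16}).
\end{equation*}
By Lemma~\ref{l.limF^x-F}, along our sequences $\bar F^{x^\pm_k}_{N^\pm_k}(t,q) - \bar F_{N^\pm_k}(t,q)\to 0$ and $\bar F^{x^\pm_k}_{N^\pm_k+1}(t,q) - \bar F_{N^\pm_k+1}(t,q)\to 0$ (note the perturbation lemma is uniform in $x$, so it applies to the $x$-dependent sequence), and by Lemma~\ref{l.NF_N-(N+M)F_(N+M)} with $M=1$ the difference $N\bar F_N(t,q) - (N+1)\bar F_{N+1}(t,q)$ is bounded uniformly in $N$; combined with $\bar F_{N+1}(t,q) - \bar F_N(t,q)\to 0$ along $N^\pm_k$ (this last point needs a short argument: write $(N+1)\bar F_{N+1} - N\bar F_N = \bar F_{N+1} + N(\bar F_{N+1}-\bar F_N)$, and deduce from the uniform bound of Lemma~\ref{l.NF_N-(N+M)F_(N+M)} and boundedness of $\bar F_N$ that $N(\bar F_{N+1}-\bar F_N)$ stays bounded, hence $\bar F_{N+1}-\bar F_N\to 0$). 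Therefore the left-hand side above converges to $-\lim_k \bar F_{N^+_k}(t,q) = -\limsup_N \bar F_N(t,q)$ along the $+$ sequence, and to $-\liminf_N \bar F_N(t,q)$ along the $-$ sequence. Matching with $A_{N^\pm_k}(x^\pm_k)\to -\mathscr P_{t,q}(p_\pm)$ yields
\begin{equation*}
\limsup_{N\to\infty}\bar F_N(t,q) = \mathscr P_{t,q}(p_+), \qquad \liminf_{N\to\infty}\bar F_N(t,q) = \mathscr P_{t,q}(p_-),
\end{equation*}
which is part~\eqref{i.c.cavity_I_2} (in fact with equalities, though only the stated inequalities are needed). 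The main obstacle is the bookkeeping in this last step: ensuring that all the error terms ($O(N^{-1/16})$, the perturbative differences, the $N\bar F_N - (N+1)\bar F_{N+1}$ regularity) are simultaneously controlled along the same subsequence, and in particular deducing $\bar F_{N+1}-\bar F_N\to 0$ from the linear-in-$N$ Lipschitz bound of Lemma~\ref{l.NF_N-(N+M)F_(N+M)} rather than assuming convergence of $\bar F_N$ outright. Everything else is a direct assembly of results already proved.
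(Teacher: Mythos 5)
There is a genuine gap in the third step, and it is fatal to the argument as written. You claim that along the subsequence $(N_k^+)$ realizing $\limsup_N \bar F_N(t,q)$, the increment $-(N+1)\bar F_{N+1}(t,q)+N\bar F_N(t,q)$ converges to $-\limsup_N\bar F_N(t,q)$. This is false. Writing $u_{N+1}:=(N+1)\bar F_{N+1}-N\bar F_N$, you have $\bar F_N=\frac1N\sum_{j<N}u_{j+1}$, so $\bar F_N$ is the Cesàro mean of the increments; convergence of Cesàro means does not imply convergence of the increments, and Lemma~\ref{l.NF_N-(N+M)F_(N+M)} only gives $|u_{N+1}|\le C$. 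Your own decomposition $u_{N+1}=\bar F_{N+1}+N(\bar F_{N+1}-\bar F_N)$ makes the problem visible: the second term is bounded but you have no control on its limit, and deducing $\bar F_{N+1}-\bar F_N\to0$ does nothing to kill it. (A toy example: $\bar F_N=c+s_N/N$ with $s_N=\sum_{j\le N}\epsilon_j$, $\epsilon_j=\pm1$; then $\bar F_N\to c$ but $u_{N+1}=c+\epsilon_{N+1}$ oscillates.) The fact that your argument outputs equalities $\limsup\bar F_N=\mathscr P_{t,q}(p_+)$, strictly stronger than the stated inequalities, should have been a warning sign.

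The correct route is the Aizenman--Sims--Starr/Cesàro one: telescope $N\bar F_N^x=\sum_{j<N}u_{j+1}^x$ with a \emph{fixed} $x$, average over $\E_x$, and use $\liminf_N\frac1N\sum_{j<N}a_j\ge\liminf_N a_N$ to get $\liminf_N\bar F_N\ge-\limsup_N\E_xA_N(x)$ (and the mirror inequality for the $\limsup$). Two further points you miss then become essential. First, the telescoping requires the same $x$ for every $j$, so you cannot simply plug in an $N$-dependent $x_N$; one must first work with $\E_xA_N(x)$ and then invoke a selection argument in the style of Panchenko's Lemma~3.3 to produce $x_N$ satisfying \emph{both} $\Delta_N(x_N)\to0$ and $A_N(x_N)\ge\E_xA_N(x)+o(1)$ (resp.\ $\le$ for the other bound); your choice of $x_N$ with only $\Delta_N(x_N)\to0$ is insufficient. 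Second, the subsequence fed into Proposition~\ref{p.cavity_lim} must be the one along which $\limsup_NA_N(x_N)$ (resp.\ $\liminf$) is attained, not the one along which $\bar F_N$ attains its $\limsup$. With those corrections the assembly of Propositions~\ref{p.cavity_perturbation}, \ref{p.perturbation} and \ref{p.cavity_lim} does go through.
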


\begin{proof}
Using Lemma~\ref{l.limF^x-F} and Proposition~\ref{p.cavity_perturbation} with $M=1$, we have
\begin{align*}
    \liminf_{N\to\infty}\bar F_N(t,{q}) = \liminf_{N\to\infty} \E_x \bar F_N^x(t,{q}) \geq -\limsup_{N\to\infty} \E_x A_N(x).
\end{align*}
Lemmas~\ref{l.limF^x-F} and~\ref{l.NF_N-(N+M)F_(N+M)} imply that $\sup_{N,x}|A_N(x)|\leq c$ for some constant $c$. 
Proposition~\ref{p.perturbation} and \eqref{e.Delta_N(x)} yield that $\lim_{N\to\infty} \E_x \Delta_N(x) =0$. Using these two facts and following the argument in \cite[Lemma~3.3]{pan}, we can find a sequence $(x_N)_{N\in\N}$ satisfying $\lim_{N\to\infty}\Delta_N(x_N) = 0$ and $A_N(x_N) \geq \E_x A_N(x)+o(1)$ as $N$ tends to infinity. 

We remark that \cite[Lemma~3.3]{pan} is originally stated to get an upper bound for $A_N(x_N)$. Examining the proof, one can see that as long as $A_N(x)$ is bounded uniformly, the argument still works for the other side needed here. Hence,
\begin{align*}
    \liminf_{N\to\infty}\bar F_N(t,{q}) \geq -\limsup_{N\to\infty} A_N(x_N).
\end{align*}
We extract $(N_k)_{k\in\N}$ along which the $\limsup_{N\to\infty} A_N(x_N)$ is achieved.
By Proposition~\ref{p.cavity_lim} at $M=1$, we can find a subsequence $(N^-_k,\, x^-_k)_{k\in\N}$ of $(N_k,\,x_{N_k})_{k\in\N}$ and ${p}_-\in\C\cap L^\infty$ such that part~\eqref{i.c.cavity_I_1} holds and the first inequality in part~\eqref{i.c.cavity_I_2} holds.
The other half follows from a similar argument.
\end{proof}

Recall $\la\cdot\ra^\orig_{N,x}$ in~\eqref{e.<>^orig_Nx=}, $\la\cdot\ra^\cav_{N,x}$ in~\eqref{e.<>^cav_N,x}, and $\la\cdot\ra_{\fR,\pi}$ in~\eqref{e.<>_R,pi}.
\begin{corollary}\label{c.cavity_II}
For any $M\in\N$, any $(t,{q})\in \R_+\times (\C\cap L^\infty)$, and any sequence of increasing integers, there are a subsequence $(N_k)_{k\in\N}$, $(x_k)_{k\in\N}$, ${p} \in \C\cap L^\infty_{\leq 1}$, and $a\in \mathcal{K}$ satisfying $a\geq p$ such that
\begin{enumerate}
    \item \label{i.c.cavity_II_1} $R_{N_k}$ under $\E\la\cdot\ra^\cav_{N_k,x_k}$ converges in law to 
    \begin{align*}
        \Ll(Q^{\ell,\ell'}_{p}\1_{\ell\neq \ell'}+ (a,1) \1_{\ell =\ell'}\Rr)_{\ell,\ell'\in\N}
    \end{align*}
    under $\E \la\cdot\ra_\fR$  as $k$ tends to infinity;
    \item \label{i.c.cavity_II_2} for every bounded continuous $g:\R^{\D\times\D}\times \R\to\R$, we have
    \begin{equation*}  \lim_{k\to\infty} \E \la g\Ll(\tau\tau'^\intercal, \alpha\wedge\alpha'\Rr)\ra^\orig_{N_k+M,x_k} =\E \la g\Ll(\tau\tau'^\intercal, \alpha\wedge\alpha'\Rr)\ra_{\fR,{q}+t\nabla\xi({p})}.
    \end{equation*}
    \end{enumerate}
\end{corollary}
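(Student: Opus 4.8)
\textbf{Proof plan for Corollary~\ref{c.cavity_II}.}
The plan is to follow essentially the same route as in the proof of Corollary~\ref{c.cavity}, but now keeping track of the Gibbs-average cavity calculation (Proposition~\ref{l.pre_comp_2}) rather than the free-energy cavity calculation. The starting point is Proposition~\ref{p.perturbation} together with the definition of $\Delta_N(x)$ in~\eqref{e.Delta_N(x)}, which give $\lim_{N\to\infty}\E_x\Delta_N(x)=0$. A standard averaging argument (the same Markov-type extraction used in Corollary~\ref{c.cavity}, see \cite[Lemma~3.3]{pan}) then produces, along any prescribed sequence of increasing integers, a choice of parameters $x_N$ such that $\lim_{N\to\infty}\Delta_N(x_N)=0$; here we do not need to optimize $A_N(x)$ as in Corollary~\ref{c.cavity}, so we may simply pick any such $x_N$, and in particular we can restrict to the given subsequence from the outset.

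Next I would apply Proposition~\ref{p.cavity_lim} with the prescribed value of $M$ to this pair of sequences $(N_k, x_{N_k})_{k\in\N}$. This immediately furnishes a further subsequence $(N_k, x_k)_{k\in\N}$, a path ${p}\in\C\cap L^\infty_{\leq 1}$, and a matrix $a\in\mathcal{K}$ with $a\geq {p}$ such that part~\eqref{i.p.cavity_lim_1} of Proposition~\ref{p.cavity_lim} gives exactly the convergence in law asserted in part~\eqref{i.c.cavity_II_1} here, and part~\eqref{i.p.cavity_lim_3} of Proposition~\ref{p.cavity_lim} gives exactly the statement in part~\eqref{i.c.cavity_II_2} (recalling that $\la\cdot\ra_{N_k+M,x_k}^\circ$ and $\la\cdot\ra_{\fR,{q}+t\nabla\xi({p})}$ are the measures appearing there). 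Part~\eqref{i.p.cavity_lim_2} of Proposition~\ref{p.cavity_lim} is simply not used for this corollary. So the corollary is really just a repackaging of Proposition~\ref{p.cavity_lim} after the preliminary extraction of a good perturbation sequence.

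The only point requiring a little care is the bookkeeping around the perturbation: Proposition~\ref{p.cavity_lim} takes as hypothesis the existence of a sequence $(N_k,x_k)$ with $\lim\Delta_{N_k}(x_k)=0$, so one must first verify that such a sequence exists along the prescribed subsequence. This is where I invoke $\lim_{N\to\infty}\E_x\Delta_N(x)=0$ from Proposition~\ref{p.perturbation} together with the boundedness of $\Delta_N$ (guaranteed by the rescaling of the $\bff_j$ built into~\eqref{e.Delta_N(x)}) and a routine extraction argument: since $\E_x\Delta_N(x)\to0$, for each $N$ on the prescribed sequence we may choose $x_N$ with $\Delta_N(x_N)\leq 2\E_x\Delta_N(x)$, say, and then $\Delta_N(x_N)\to 0$. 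With that in hand, Proposition~\ref{p.cavity_lim} does all the work, and the statement follows.

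\textbf{Main obstacle.} There is essentially no genuine obstacle here beyond careful citation: the corollary is a direct consequence of Proposition~\ref{p.cavity_lim} once the good perturbation sequence has been produced, and producing that sequence is an immediate consequence of Proposition~\ref{p.perturbation}. The one thing to be attentive to is that the subsequence in the conclusion is extracted from an \emph{arbitrary} prescribed sequence of increasing integers, so the extraction must be performed \emph{inside} that prescribed sequence at every stage — first when choosing $x_N$, and then when applying Proposition~\ref{p.cavity_lim} — rather than over all of $\N$; but this is purely a matter of phrasing.
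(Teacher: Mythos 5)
Your proposal is correct and follows essentially the same route as the paper: the paper's proof likewise invokes Proposition~\ref{p.perturbation} to get $\lim_{N\to\infty}\E_x\Delta_N(x)=0$, extracts a sequence $(x_N)$ with $\Delta_N(x_N)\to 0$, and then cites Proposition~\ref{p.cavity_lim}. Your additional remarks — that no simultaneous optimization of $A_N(x)$ is needed here (unlike in Corollary~\ref{c.cavity}) and that all extractions must be performed within the prescribed subsequence — are accurate points of bookkeeping that the paper leaves implicit.
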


\begin{proof}
Recall $\Delta_N(x)$ in~\eqref{e.Delta_N(x)}.
By Proposition~\ref{p.perturbation}, we have $\lim_{N\to\infty}\E_x \Delta_N(x)=0$. As discussed in the previous proof, we can find $(x_N)_{N\in\N}$ such that $\lim_{N\to\infty} \Delta_N(x_N)=0$. The desired result then follows from Proposition~\ref{p.cavity_lim}. 
\end{proof}

\section{Proofs of the main results}
\label{s.concl}

We now leverage the results of the previous section to complete the proofs of our main results. We first show Theorem~\ref{t.main2} as well as the fact that any subsequential limit of the free energy $\bar F_N$ must solve the equation ``almost everywhere'', as was announced in \eqref{e.satisfy.eqn.intro}. We then deduce Theorem~\ref{t.main1} by a continuity argument. We next identify, for ``almost every'' choice of $q$, the limit law of the array of the conditional expectations of the $\sigma$-overlaps given the $\alpha$-overlaps. Up to the additional perturbation provided by the parameter $\hat t$ introduced around \eqref{e.def.tdH}, we can then slightly upgrade this result and thereby obtain a proof of Theorem~\ref{t.main3}. 

\subsection{Critical point identification}

Recall the definition of $\sP_{t,q}$ in~\eqref{e.mathscrP}. In the next proposition, we assume the convergence of the entire sequence $\Ll(\bar F_N\Rr)_{N\in\N}$.

\begin{proposition}
\label{p.1st_fpe}
Suppose that $\Ll(\bar F_N\Rr)_{N\in\N}$ converges pointwise to some limit $f$, and let $t \ge 0$ and $q \in \mcl Q_\uparrow \cap L^\infty$. If $ f(t,\cdot)$ is Gateaux differentiable at ${q}$, then $f(t,{q}) = \mathscr{P}_{t,{q}}(\dr_q  f(t,{q}))$.

\end{proposition}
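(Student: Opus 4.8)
The plan is to combine the two halves of the cavity analysis encapsulated in Corollary~\ref{c.cavity} with the convergence of derivatives from Proposition~\ref{p.conv.der}. Recall that Corollary~\ref{c.cavity} produces two subsequences $(N_k^\pm)$, perturbation parameters $(x_k^\pm)$, and paths $p_\pm \in \C \cap L^\infty_{\le 1}$ such that along $(N_k^+)$ the overlap matrix converges in law (off-diagonal) to $p_+$ and $\limsup_N \bar F_N(t,q) \le \sP_{t,q}(p_+)$, and symmetrically $\sP_{t,q}(p_-) \le \liminf_N \bar F_N(t,q)$. Since we are assuming $\bar F_N \to f$ along the full sequence, both inequalities become equalities, so $f(t,q) = \sP_{t,q}(p_+) = \sP_{t,q}(p_-)$. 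Thus it suffices to identify at least one of $p_\pm$ with $\dr_q f(t,q)$; I will do this for $p_+$, say, writing $p := p_+$, $N_k := N_k^+$, $x_k := x_k^+$.

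First I would pin down $p$ in terms of the $q$-derivative of the free energy. The key is the formula for $\dr_q \bar F_N$ in~\eqref{e.def.der.FN}: for every $\kappa \in L^2$,
\begin{equation*}
\la \kappa, \dr_q \bar F_N(t,q) \ra_\cH = \E \la \kappa(\alpha\wedge\alpha') \cdot \tfrac{\sigma\sigma'^\intercal}{N} \ra_N.
\end{equation*}
The right-hand side is, up to the perturbative modification, exactly $\E \la \kappa(R_\alpha^{1,2}) \cdot R^{1,2}_{N,\sigma}\ra$, i.e.\ an expectation of a function of the entries of the overlap array $R_{N}$. Here I must be slightly careful: the Gibbs measure in~\eqref{e.def.der.FN} is $\la\cdot\ra_N$ without perturbation, while Corollary~\ref{c.cavity} gives convergence in law under the perturbed measure $\la\cdot\ra_{N_k,x_k}$. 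To bridge this I would invoke Lemma~\ref{l.limF^x-F} (the perturbation is negligible in the free energy uniformly in $x$), together with a semi-concavity argument as in Proposition~\ref{p.conv.der}: since $q \in \mcl Q_\uparrow$, the function $\bar F_{N_k}^{x_k}(t,\cdot)$ is semi-concave uniformly, its $q$-derivative satisfies the same bounds as $\dr_q\bar F_{N_k}$, and $\E_{x} \bar F_N^x \to f$, so the perturbed derivatives still converge weakly to $\dr_q f(t,q)$. Alternatively, and perhaps more cleanly, one applies Proposition~\ref{p.conv.der}(1) directly to get $\dr_q \bar F_{N_k}(t,q) \to \dr_q f(t,q)$ in $L^r$, and then argues that replacing $\la\cdot\ra_N$ by $\la\cdot\ra_{N_k,x_k}$ in the overlap expectation changes things by $o(1)$ using the interpolation bound from the proof of Lemma~\ref{l.limF^x-F}. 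Passing to the limit along $(N_k)$ and using Corollary~\ref{c.cavity}~\eqref{i.c.cavity_I_1} (the off-diagonal overlaps converge in law to $Q_{p_+}$, and the diagonal contributes nothing since it has measure zero under the uniform law of $\alpha^1\wedge\alpha^2$, provided $\kappa$ is, say, continuous — otherwise use a continuous-mapping argument as in Proposition~\ref{p.invar_cts}), I obtain
\begin{equation*}
\la \kappa, \dr_q f(t,q)\ra_\cH = \E \la \kappa(\alpha\wedge\alpha')\cdot p(\alpha\wedge\alpha')\ra_\fR = \int_0^1 \kappa(u)\cdot p(u)\,\d u
\end{equation*}
for all continuous $\kappa$, using that $\alpha^1\wedge\alpha^2$ is uniform on $[0,1]$ under $\E\la\cdot\ra_\fR$. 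Since continuous functions are dense in $L^2$, this forces $\dr_q f(t,q) = p$ almost everywhere, i.e.\ $p = \dr_q f(t,q)$.

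Finally I would assemble the pieces: $f(t,q) = \sP_{t,q}(p_+) = \sP_{t,q}(\dr_q f(t,q))$, which is the claimed identity. I expect the main obstacle to be the bookkeeping around the perturbation — making rigorous that the overlap expectation computing $\dr_q \bar F_N$ (with the \emph{unperturbed} Gibbs measure of Proposition~\ref{p.F_N_smooth}) can be replaced, in the large-$N$ limit along the chosen subsequence, by the same expression under the \emph{perturbed} measure of Corollary~\ref{c.cavity}, since only the latter is known to have a nice limiting overlap law. The cleanest route is probably: (i) use Proposition~\ref{p.conv.der}(1) to get $L^r$-convergence of $\dr_q\bar F_{N_k}(t,q)$ to $\dr_q f(t,q)$; (ii) separately use Corollary~\ref{c.cavity} and the representation $\la\kappa,\dr_q\bar F^{x_k}_{N_k}(t,q)\ra_\cH = \E\la\kappa(R_\alpha^{1,2})\cdot R^{1,2}_{N_k,\sigma}\ra_{N_k,x_k}$ (which follows by the same Gaussian integration by parts as in the proof of Proposition~\ref{p.F_N_smooth}) to identify the limit of the perturbed derivatives as $p$; (iii) invoke the uniform-in-$x$ closeness of $\bar F^x_N$ and $\bar F_N$ from Lemma~\ref{l.limF^x-F} together with semi-concavity (Proposition~\ref{p.semiconc}) to conclude that the perturbed and unperturbed $q$-derivatives have the same weak limit, hence $p = \dr_q f(t,q)$. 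A minor additional care point is the continuity-versus-measurability issue for the test function $\kappa$ composed with the overlap, handled exactly as in Proposition~\ref{p.invar_cts} since the off-diagonal overlap is uniform and $q$ (and $p$) have at most countably many discontinuities.
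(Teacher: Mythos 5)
Your overall route is the paper's: use Corollary~\ref{c.cavity} to sandwich $f(t,q)$ between $\mathscr{P}_{t,q}(p_-)$ and $\mathscr{P}_{t,q}(p_+)$, then identify $p_\pm$ with $\dr_q f(t,q)$ by passing to the limit in the overlap representation of the $q$-derivative of the perturbed free energy $\bar F^{x_k^\pm}_{N_k^\pm}$, which converges pointwise to $f$ by Lemma~\ref{l.limF^x-F} and whose derivative therefore converges to $\dr_q f(t,q)$ by the semi-concavity argument of Proposition~\ref{p.conv.der}. Your ``cleanest route'' (working directly with the perturbed free energies rather than trying to compare perturbed and unperturbed Gibbs averages) is exactly what the paper does, and your handling of the test functions $\kappa$ (continuous $\kappa \in L^\infty$, then density in $\cH$) also matches.

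There is, however, one genuine logical misstep at the start. Corollary~\ref{c.cavity} gives $\mathscr{P}_{t,q}(p_-)\le \liminf_N \bar F_N(t,q)\le\limsup_N\bar F_N(t,q)\le \mathscr{P}_{t,q}(p_+)$; assuming convergence of the full sequence only turns the middle inequality into an equality, yielding $\mathscr{P}_{t,q}(p_-)\le f(t,q)\le \mathscr{P}_{t,q}(p_+)$. It does \emph{not} follow that $f(t,q)=\mathscr{P}_{t,q}(p_+)=\mathscr{P}_{t,q}(p_-)$, so it is not sufficient to identify only one of $p_\pm$ with $\dr_q f(t,q)$: doing so for $p_+$ alone would only give the one-sided bound $f(t,q)\le\mathscr{P}_{t,q}(\dr_q f(t,q))$. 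The correct order of operations is to run your identification argument for \emph{both} $p_+$ and $p_-$ (it is entirely symmetric in the sign, so nothing new is needed), conclude $p_+=p_-=\dr_q f(t,q)$, and only then observe that the two ends of the sandwich coincide, which forces $f(t,q)=\mathscr{P}_{t,q}(\dr_q f(t,q))$. With that reordering, the argument is complete and coincides with the paper's proof.
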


\begin{proof}
Let $(N^\pm_k)_{k\in\N}$, $(x^\pm_k)_{k\in\N}$, and ${p}_\pm$ be given by Corollary~\ref{c.cavity}, whose part~\eqref{i.c.cavity_I_2} implies 
\begin{align}\label{e.P<limF<P}
    \mathscr{P}_{t,{q}}({p}_-) \leq f(t,{q})\leq  \mathscr{P}_{t,{q}}({p}_+).
\end{align}
Recall the definitions of $\tilde F^x_N$ in \eqref{e.tildeF_N=} and of $\la\cdot\ra^\cav_{N,x}$ in \eqref{e.<>^cav_N,x}. We set $\tilde F_k^\pm := \tilde F^{x^\pm_k}_{N^\pm_k}$.
For every continuous $\kappa\in L^\infty$, by a computation similar to that for \eqref{e.def.der.FN}, we have, for every $k \in \N$,
\begin{align*}
    \la \kappa, \dr_q \tilde F_k^\pm(t,{q}) \ra_\cH = \E \la \kappa\Ll(\alpha\wedge\alpha'\Rr)\cdot \frac{\sigma\sigma'^\intercal}{N^\pm_k}  \ra^\cav_{N^\pm_k,x^\pm_k}.
\end{align*}
By Lemma~\ref{l.limF^x-F}, the function $\Ll(\tilde F_k^\pm\Rr)_{k\in\N}$ also converges pointwise to $f$ as $k$ tends to infinity. Following the proof of Proposition~\ref{p.conv.der} verbatim, we obtain that $\dr_q \tilde F_k^\pm(t,{q})$ converges to~$\dr_q  f(t,{q})$ in $L^2$.
Using this and Corollary~\ref{c.cavity}~\eqref{i.c.cavity_I_1} yields that
\begin{align*}
    \la \kappa, \dr_q  f(t,{q})\ra_{L^2} = \E \la \kappa\Ll(R^{1,2}_\alpha\Rr)\cdot{p}_\pm(R^{1,2}_\alpha)\ra_{\fR}  = \int_0^1 \kappa(s)\cdot{p}_\pm(s)\d s.
\end{align*}
Since the set of continuous $\kappa\in L^\infty$ is dense in $\cH$, we have ${p}_\pm = \dr_q  f(t,q)$, which along with~\eqref{e.P<limF<P} gives the desired result.
\end{proof}

In the following result, the limit is subsequential. 

\begin{proposition}\label{p.2nd_fpe}
Suppose that $\Ll(\bar F_{N}\Rr)_{N\in\N}$ converges pointwise to some limit $f$ along a subsequence $(N_k)_{k\in\N}$, and let $t \ge 0$. 
If $f(t,\cdot)$ is Gateaux differentiable at some ${q} \in \C_\uparrow\cap L^\infty$, then
\begin{equation}
\label{e.fixed_pt_nbla_f}
    \dr_q f(t,{q}) = \dr_q \psi ({q}+t\nabla\xi(\dr_q f(t,{q}))).
\end{equation}
If, in addition, $t>0$ and $f(\cdot,q)$ is differentiable at $t$, then
\begin{align}\label{e.satify_eqn}
    \partial_t f(t,{q}) = \int_0^1\xi(\dr_q f(t,{q})). 
\end{align}
\end{proposition}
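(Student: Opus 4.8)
The plan is to combine the two cavity calculations developed in Section~\ref{s.cavity} with the differentiability assumptions, exploiting that the derivative of $\bar F_N$ along the $N$-direction can be read off both from the Aizenman--Sims--Starr scheme and from the assumed differentiability of $f$. Let me describe the steps.

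First I would establish \eqref{e.fixed_pt_nbla_f}. The key identity is that the free-energy cavity calculation of Proposition~\ref{p.cavity_perturbation} with $M=1$, together with Lemma~\ref{l.limF^x-F}, gives $\bar F_N(t,q)-\bar F_{N+1}(t,q)\to 0$ in a way that is controlled by $A_N(x)$; more precisely, combining these results one has (along the appropriate subsequence with a well-chosen sequence $x_N$) that $\bar F_{N}(t,q)$ and $-A_N(x_N)/1$ have the same limit. By Corollary~\ref{c.cavity}, this limit equals $\mathscr P_{t,q}(p_\pm)$, and by the argument of Proposition~\ref{p.1st_fpe}, the differentiability of $f(t,\cdot)$ at $q$ forces $p_+=p_-=\dr_q f(t,q)=:p$. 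Now I would invoke the second cavity calculation: Corollary~\ref{c.cavity_II}~\eqref{i.c.cavity_II_2} with $M=1$ says that for every bounded continuous $g$, $\E\la g(\tau\tau'^\intercal,\alpha\wedge\alpha')\ra^\circ_{N_k+1,x_k}$ converges to $\E\la g(\tau\tau'^\intercal,\alpha\wedge\alpha')\ra_{\fR,q+t\nabla\xi(p)}$. On the other hand, by the cavity representation of $\dr_q\bar F_{N+1}$ (the formula in \eqref{e.def.der.FN}, applied to the $(N+1)$-spin system, with the last spin playing the role of the cavity spin), the quantity $\dr_q\bar F_{N_k+1}(t,q)$ can be expressed in terms of $\E\la (\tau\tau'^\intercal)\,\mathbf 1_{\{\cdot\}}\ra^\circ_{N_k+1,x_k}$-type averages; since $\dr_q\bar F_{N_k+1}(t,q)\to\dr_q f(t,q)=p$ by Proposition~\ref{p.conv.der} (and Lemma~\ref{l.limF^x-F} to pass to the perturbed free energy), comparing the two expressions and testing against $\kappa(\alpha\wedge\alpha')$ yields that for a.e.\ $s$, $p(s)=\E[\tau\tau'^\intercal\mid \alpha\wedge\alpha'=s]$ under $\la\cdot\ra_{\fR,q+t\nabla\xi(p)}$. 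But by Corollary~\ref{c.psi_smooth}, the right-hand side of \eqref{e.def.der.psi} evaluated at $q+t\nabla\xi(p)$ is precisely $\dr_q\psi(q+t\nabla\xi(p))$, so $p=\dr_q\psi(q+t\nabla\xi(p))$, which is \eqref{e.fixed_pt_nbla_f}.

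Next I would establish \eqref{e.satify_eqn}. Here the input is the identity $f(t,q)=\mathscr P_{t,q}(p)$ with $p=\dr_q f(t,q)$, obtained above (this is exactly the conclusion of Proposition~\ref{p.1st_fpe}, which holds along the subsequence once we know $p_+=p_-$). Recall $\mathscr P_{t,q}(p)=\psi(q+t\nabla\xi(p))-t\int_0^1\theta(p)$ with $\theta(a)=a\cdot\nabla\xi(a)-\xi(a)$. I would differentiate the relation $f(t,q)=\psi(q+t\nabla\xi(\dr_q f(t,q)))-t\int_0^1\theta(\dr_q f(t,q))$ formally in $t$, using the chain rule and the fact that $\dr_q\psi$ evaluated at $q+t\nabla\xi(p)$ equals $p$ by \eqref{e.fixed_pt_nbla_f}. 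The terms involving the $t$-derivative of $p$ cancel: indeed $\la \dr_q\psi(q+t\nabla\xi(p)),\, t\nabla^2\xi(p)\partial_t p\ra_\cH - t\int_0^1 \partial_t p\cdot \nabla^2\xi(p)p = t\la p,\nabla^2\xi(p)\partial_t p\ra - t\la p,\nabla^2\xi(p)\partial_t p\ra=0$ once one uses $\dr_q\psi(q+t\nabla\xi(p))=p$ and that $\theta'(a)\cdot b = b\cdot\nabla^2\xi(a)a$. What survives is $\partial_t f(t,q) = \la p,\nabla\xi(p)\ra_\cH - \int_0^1\theta(p) = \int_0^1 (p\cdot\nabla\xi(p)) - \int_0^1(p\cdot\nabla\xi(p)-\xi(p)) = \int_0^1\xi(p)$, which is \eqref{e.satify_eqn}. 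To make this rigorous without assuming differentiability of $t\mapsto\dr_q f(t,q)$, I would instead argue via one-sided bounds: use the semi-concavity of $\bar F_N$ from Proposition~\ref{p.semiconc} and the convergence of derivatives (Proposition~\ref{p.conv.der}, part (2)) to show that $\partial_t f(t,q)=\lim_k \partial_t\bar F_{N_k}(t,q)=\lim_k\E\la\xi(\sigma\sigma'^\intercal/N_k)\ra$ by \eqref{e.der.t}; then apply Corollary~\ref{c.cavity_II}~\eqref{i.c.cavity_II_1} (or rather Corollary~\ref{c.cavity}~\eqref{i.c.cavity_I_1}) to identify the limit of the overlap law and conclude $\partial_t f(t,q)=\E\la\xi(p(\alpha\wedge\alpha'))\ra_\fR=\int_0^1\xi(p(s))\,\d s$, using that the off-diagonal overlap under $\fR$ is uniform on $[0,1]$ and that $\xi$ is continuous. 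One must be slightly careful that $\xi$ applied to the overlap matrix involves only off-diagonal terms in the limit and that the self-overlap contributions vanish; the concentration of the self-overlap from Proposition~\ref{p.perturbation}~\eqref{i.self-overlap_concent} handles this, exactly as in the proof of Part~\eqref{i.p.cavity_lim_1} of Proposition~\ref{p.cavity_lim}.

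The main obstacle I anticipate is the passage from the cavity calculation, which naturally produces information about $\dr_q\bar F_{N+1}$ expressed through the $\la\cdot\ra^\circ_{N+1,x}$-average of the spun-off spin's overlap $\tau\tau'^\intercal$, to a clean statement about $\dr_q\psi$ evaluated at the shifted path $q+t\nabla\xi(p)$. This requires (i) correctly matching the cavity-spin Gibbs measure $\la\cdot\ra_{\fR,q+t\nabla\xi(p)}$ appearing in Corollary~\ref{c.cavity_II} with the Gibbs measure $\la\cdot\ra_{q+t\nabla\xi(p)}$ of Corollary~\ref{c.psi_smooth} that defines $\dr_q\psi$ — these should coincide because the effective external field acting on $\tau$ in $\la\cdot\ra_{\fR,q+t\nabla\xi(p)}$ has covariance $q+t\nabla\xi(p)$ against the cascade overlap, exactly as in \eqref{e.<>_mu}; and (ii) ensuring the differentiation-under-the-limit in the $q$-variable is legitimate, which is where Proposition~\ref{p.conv.der} and the $L^r$-continuity estimate \eqref{e.continuity.der.FN} (carried over to the perturbed free energies via Lemma~\ref{l.limF^x-F}) do the work. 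The first identity \eqref{e.fixed_pt_nbla_f} is really the heart of the matter; once it is in place, \eqref{e.satify_eqn} follows from the already-established representation $f(t,q)=\mathscr P_{t,q}(\dr_q f(t,q))$ plus the Gaussian integration by parts \eqref{e.der.t} and the overlap identification, with only routine bookkeeping.
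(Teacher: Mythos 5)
Your proposal is correct and follows essentially the same route as the paper: identify the $p$ produced by the cavity calculations with $\dr_q f(t,q)$ via convergence of the $q$-derivatives of the perturbed free energies (Proposition~\ref{p.conv.der} combined with Lemma~\ref{l.limF^x-F}), then read off $\dr_q \bar F_{N_k+1}$ through the cavity spin and pass to the limit with Corollary~\ref{c.cavity_II}~\eqref{i.c.cavity_II_2} to match it with $\dr_q\psi(q+t\nabla\xi(p))$ via \eqref{e.def.der.psi}, and obtain \eqref{e.satify_eqn} from the Gaussian integration by parts for $\partial_t$ together with the overlap convergence. The only cosmetic differences are that the paper works throughout with the single subsequence furnished by Corollary~\ref{c.cavity_II} (rather than first detouring through Corollary~\ref{c.cavity} and $p_\pm$) and explicitly invokes Lemma~\ref{l.NF_N-(N+M)F_(N+M)} to transfer the convergence $\bar F_{N_k}\to f$ to $\bar F^{x_k}_{N_k+1}\to f$.
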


\begin{proof}
By passing to a further subsequence, we may assume that $(N_k)_{k\in\N}$ is the subsequence given in Corollary~\ref{c.cavity_II} (for $M=1$) along with $(x_k)_{k\in\N}$ and ${p} \in \C\cap L^\infty_{\leq 1}$.

Recall $\tilde F_N^x$ in~\eqref{e.tildeF_N=} and $\la\cdot\ra^\cav_{N,x}$ in~\eqref{e.<>^cav_N,x}.
As in the proof of \eqref{e.def.der.FN}, we have for any bounded continuous $\kappa:[0,1]\to\S^\D$ that
\begin{align}\label{e.dtildeF}
    \la \kappa, \dr_q \tilde F_{N_k}^{x_k}(t,{q})\ra_\cH = \E \la \kappa\Ll(\alpha\wedge\alpha'\Rr)\cdot \frac{\sigma\sigma'^\intercal}{N_k} \ra^\cav_{N_k,x_k},\quad \partial_t \tilde F_{N_k}^{x_k}(t,{q}) = \E \la \xi\Ll(\frac{\sigma\sigma'^\intercal}{N_k}\Rr) \ra^\cav_{N_k,x_k}.
\end{align}
By Lemma~\ref{l.limF^x-F}, we have that $\tilde F_{N_k}^{x_k}$ converges pointwise to $f$.
Following the proof of Proposition~\ref{p.conv.der}, we deduce that $\dr_q \tilde F_{N_k}^{x_k}(t,{q})$ converges to $\dr_q f(t,q)$ in $L^2$. Using also the convergence of the overlaps from Corollary~\ref{c.cavity_II}~\eqref{i.c.cavity_II_1}, we let $k$ tend to infinity in the first relation in~\eqref{e.dtildeF} to get that
\begin{align*}
    \la \kappa, \dr_q f(t,{q})\ra_\cH = \E \la \kappa\Ll(\alpha\wedge\alpha'\Rr) \cdot {p}\Ll(\alpha\wedge\alpha'\Rr)\ra_\fR = \la \kappa,{p}\ra_\cH. 
\end{align*}
Since the set of bounded continuous functions spans $L^2$, we deduce that
\begin{align}\label{e.zeta=nabla_f}
    {p}= \dr_q f(t,{q}).
\end{align}
Under the additional assumption on the differentiability in $t$ (and an adaptation  to $\tilde F_{N_k}^{x_k}$ of the part of Proposition~\ref{p.conv.der} concerning the derivative in $t$), we can pass to the limit in the second relation in~\eqref{e.dtildeF} and get that
\begin{align*}\partial_t f(t,{q}) = \E \la \xi\Ll({p}\Ll(\alpha\wedge\alpha'\Rr)\Rr)\ra_\fR = \int_0^1\xi({p}(u))\d u .
\end{align*}
This and \eqref{e.zeta=nabla_f} yield \eqref{e.satify_eqn}.

To obtain \eqref{e.fixed_pt_nbla_f}, we relate ${p}$ to the derivative of $\psi$. Recall $\bar F_N^x$ in~\eqref{e.F^x_N=} and $\la\cdot\ra^\orig_{N,x}$ in~\eqref{e.<>^orig_Nx=}.
By Lemmas~\ref{l.limF^x-F} and~\ref{l.NF_N-(N+M)F_(N+M)}, we have that $\bar F_{N_k+1}^{x_k}$ converges pointwise to $f$. Following the proof of \eqref{e.def.der.FN}, we have for any bounded continuous $\kappa$ that
\begin{align*}
     \la \kappa, \dr_q \bar F_{N_k+1}^{x_k}(t,{q})\ra_\cH = \E \la \kappa\Ll(\alpha\wedge\alpha'\Rr)\cdot \frac{\rho\rho'^\intercal}{N_k+1} \ra^\orig_{N_k+1,x_k}.
\end{align*}
We write $\rho$ as an $(N_k +1)$-tuple of $D$-dimensional coordinates $\rho =\Ll(\rho_1,\dots,\rho_{N_k},\rho_{N_k+1}\Rr)$, which are i.i.d.\ under $P_{N_k+1} = P_1^{\otimes (N_k+1)}$. Since the Hamiltonian in $\la\cdot\ra^\orig_{N_k+1,x_k}$ only depends on $\rho\rho^\intercal$ and $\rho\rho'^\intercal$, we deduce that $\la\cdot\ra^\orig_{N_k+1,x_k}$ is invariant under permutations of the coordinates of $\rho$. Therefore, writing $\sigma := (\rho_1,\dots\rho_{N_k})$ and $\tau := \rho_{N_k+1}$, we have 
\begin{align*}
     \la \kappa, \dr_q \bar F_{N_k+1}^{x_k}(t,{q})\ra_\cH = \E \la \kappa\Ll(\alpha\wedge\alpha'\Rr)\cdot \tau\tau'^\intercal \ra^\orig_{N_k+1,x_k}.
\end{align*}
By an adaptation of Proposition~\ref{p.conv.der}, we have that $\dr_q \bar F_{N_k+1}^{x_k}(t,{q})$ converges in $L^2$ to $\dr_q  f(t,q)$. Using also Corollary~\ref{c.cavity_II}~\eqref{i.c.cavity_II_2} (with $M=1$), we can pass to the limit in the previous display and get
\begin{align*}
    \la \kappa, \dr_q f(t,{q})\ra_\cH = \E \la \kappa\Ll(\alpha\wedge\alpha'\Rr)\cdot \tau\tau'^\intercal \ra_{\fR,q+t\nabla\xi({p})},
\end{align*}
where $\la\cdot\ra_{\fR,q+t\nabla\xi({p})}$ is given in~\eqref{e.<>_R,pi} with $M=1$. Notice that since $M = 1$, the Gibbs measure $\la\cdot\ra_{\fR,{q}+t\nabla\xi({p})}$ is the same as the Gibbs measure $\la \cdot\ra_{{q}+t\nabla\xi({p})}$ defined in \eqref{e.<>_mu}.
By~\eqref{e.def.der.psi}, the right-hand side in the previous display is equal to $\la \kappa, \dr_q \psi({q}+t\nabla\xi({p}))\ra_\cH$. Combining this with~\eqref{e.zeta=nabla_f} yields that
\begin{align}\label{e.nabla_psi=zeta}
    \dr_q \psi({q}+t\nabla\xi({p})) = {p}.
\end{align}
In conclusion, \eqref{e.fixed_pt_nbla_f} follows from~\eqref{e.zeta=nabla_f} and~\eqref{e.nabla_psi=zeta}.
\end{proof}

Recall $\mathscr{P}_{t,{q}}$ in~\eqref{e.mathscrP}, $\theta$ in \eqref{e.theta=}, and $\mcl J_{t,{q}}$ in~\eqref{e.def.Parisi.functional}. We have
\begin{align}\label{e.P=J}
    \mathscr{P}_{t,{q}}({p}) = \mcl J_{t,{q}}\Ll({q}+ t \nabla\xi({p}),{p}\Rr).
\end{align}

\begin{proof}[Proof of Theorem~\ref{t.main2}]
Let ${p}$ and ${q'}$ be as given in the statement. The relation~\eqref{e.fixed_pt_nbla_f} in Proposition~\ref{p.2nd_fpe} implies that $({q'},p)$ is a critical point.
The convergence in~\eqref{e.main1.sg} follows from Proposition~\ref{p.1st_fpe} and~\eqref{e.P=J}.
\end{proof}

Proposition~\ref{p.2nd_fpe} also proves that any subsequential limit of the free energy must satisfy the Hamilton--Jacobi equation \eqref{e.satisfy.eqn.intro} at every point of differentiability of the limit. We recall that this limit function must be Gateaux differentiable jointly in its two variables ``almost everywhere'', by Proposition~\ref{p.reg.lim}.

\subsection{Critical point representation}

\begin{proposition}\label{p.1st_stronger_id}
Suppose that $\Ll(\bar F_N\Rr)_{N\in\N}$ converges pointwise to some limit $f$.
 For every $(t,{q})\in\R_+\times \C_2$, there is ${p} \in \C\cap L^\infty_{\leq 1}$ such that
\begin{gather}\label{e.p.1st_stronger_id}
    f(t,{q}) = \mathscr{P}_{t,{q}}({p}), \qquad
    {p} =\dr_q\psi({q} + t\nabla\xi({p})). 
\end{gather}
\end{proposition}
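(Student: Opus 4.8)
The plan is to reduce the general statement for $q \in \mcl Q_2$ to the case of sufficiently regular $q$, where Theorem~\ref{t.main2} (equivalently Propositions~\ref{p.1st_fpe} and~\ref{p.2nd_fpe}) applies, and then to pass to the limit using the continuity/compactness properties already established. First I would recall from Proposition~\ref{p.reg.lim} that since $\Ll(\bar F_N\Rr)_{N \in \N}$ converges to $f$, the limit $f$ inherits all the regularity of $\bar F_N$: it is Lipschitz, locally semi-concave, $\mcl Q_2^\star$-increasing, and for each $t \ge 0$ there is a Gaussian null set $\mcl N_t \subset \cH$ such that $f(t,\cdot)$ is Gateaux differentiable on $\C_2 \setminus \mcl N_t$ and $(\C_\uparrow \cap L^\infty) \setminus \mcl N_t$ is dense in $\C_2$. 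Hence, fixing $(t,q) \in \R_+ \times \C_2$, we may choose a sequence $(q_n)_{n \in \N}$ in $(\C_\uparrow \cap L^\infty) \setminus \mcl N_t$ converging to $q$ in $L^2$ (and hence in $L^1$), at each of which $f(t,\cdot)$ is Gateaux differentiable. By Theorem~\ref{t.main2} (via Propositions~\ref{p.1st_fpe} and~\ref{p.2nd_fpe}), setting $p_n := \dr_q f(t,q_n) \in \C \cap L^\infty_{\le 1}$, we have both $f(t,q_n) = \mathscr P_{t,q_n}(p_n)$ and $p_n = \dr_q \psi(q_n + t \nabla \xi(p_n))$.

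Next I would extract a convergent subsequence. Since $p_n \in \C \cap L^\infty_{\le 1}$ for all $n$, Lemma~\ref{l.compact.embed} provides a subsequence (not relabeled) and a limit $p \in \C \cap L^\infty_{\le 1}$ such that $p_n \to p$ in $L^r$ for every $r \in [1,+\infty)$ and almost everywhere. It remains to pass to the limit in the two identities. For the first one: $f(t,q_n) \to f(t,q)$ by continuity of $f$; on the right-hand side, $\mathscr P_{t,q_n}(p_n) = \psi(q_n + t\nabla \xi(p_n)) - t\int_0^1 \theta(p_n)$, and I would argue that $q_n + t\nabla\xi(p_n) \to q + t\nabla\xi(p)$ in $L^1$ (using that $\nabla \xi$ is locally Lipschitz and the uniform bound $|p_n|_{L^\infty} \le 1$, together with dominated convergence from the a.e.\ convergence), so that $\psi(q_n + t\nabla\xi(p_n)) \to \psi(q + t\nabla\xi(p))$ by the $L^1$-Lipschitz continuity of $\psi$ from Corollary~\ref{c.psi_smooth}; and $\int_0^1 \theta(p_n) \to \int_0^1 \theta(p)$ by the same local-Lipschitz-plus-dominated-convergence argument since $\theta$ has the form given in Subsection~\ref{ss.explicit.H}. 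This yields $f(t,q) = \mathscr P_{t,q}(p)$. For the second identity, $p_n = \dr_q \psi(q_n + t\nabla\xi(p_n))$, I would use the continuity estimate \eqref{e.continuity.der.psi}: since $q_n + t\nabla\xi(p_n) \to q + t\nabla\xi(p)$ in $L^1$, Corollary~\ref{c.psi_smooth} gives $\dr_q\psi(q_n + t\nabla\xi(p_n)) \to \dr_q\psi(q + t\nabla\xi(p))$ in $L^1$; and the left-hand side $p_n$ converges to $p$ in $L^1$ as well. Hence $p = \dr_q \psi(q + t\nabla\xi(p))$, completing the proof.

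\textbf{Main obstacle.}
The delicate point is the convergence of the composition $\psi(q_n + t\nabla\xi(p_n))$ and of $\int_0^1 \theta(p_n)$, because $\nabla\xi$ and $\theta$ are not globally Lipschitz on all of $\R^{D \times D}$ — only on bounded sets. The uniform bound $|p_n|_{L^\infty} \le 1$ (from $p_n \in L^\infty_{\le 1}$) is exactly what saves the day here: all the $p_n$ take values in a fixed compact set, on which $\nabla\xi$ and $\theta$ are Lipschitz, so pointwise a.e.\ convergence upgrades to $L^1$ convergence via dominated convergence, and the $L^1$-Lipschitz continuity of $\psi$ then transfers this to convergence of $\psi(q_n + t\nabla\xi(p_n))$. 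One should also double-check that $q_n + t\nabla\xi(p_n) \in \mcl Q_1$ (indeed in $\mcl Q_\infty$, since $\nabla\xi(p)$ is increasing when $p$ is, because $\xi$ is convex along the relevant directions — or more simply because this is already implicit in the statement of the critical point condition in Theorems~\ref{t.main1}–\ref{t.main2}), so that $\psi$ and $\dr_q\psi$ are well-defined there; this is routine given the earlier development. No subsequential ambiguity in $p$ needs to be resolved since the statement only asserts existence of some $p$.
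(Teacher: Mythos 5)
Your proposal is correct and follows essentially the same route as the paper's proof: approximate $q$ by differentiability points $q_n \in (\C_\uparrow\cap L^\infty)\setminus\mcl N_t$ via Proposition~\ref{p.reg.lim}, apply Propositions~\ref{p.1st_fpe} and~\ref{p.2nd_fpe} at each $q_n$, extract a strongly convergent subsequence of $p_n=\dr_q f(t,q_n)$ using the compactness of $\C\cap L^\infty_{\le 1}$, and pass to the limit in both identities using the $L^1$-Lipschitz continuity of $\psi$, the estimate \eqref{e.continuity.der.psi}, and the local Lipschitzness of $\nabla\xi$ and $\theta$ on the set $\{|a|\le 1\}$. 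The only cosmetic difference is that the paper first extracts a weak $L^2$ limit and then upgrades to strong convergence as in Step~3 of Proposition~\ref{p.conv.der}, whereas you invoke Lemma~\ref{l.compact.embed} directly; these are equivalent.
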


\begin{proof}
Proposition~\ref{p.reg.lim} yields a sequence $({q}_n)_{n\in\N}$ in $  \C_\uparrow \cap L^\infty$ convergent to ${q}$ in $\cH$ such that $f(t,\cdot)$ is Gateaux differentiable at every ${q}_n$. By the continuity of $f$ given there, we have
\begin{align}\label{e.f(t,mu)=limf(t,mu_n)}
    f(t,{q}) = \lim_{n\to\infty} f(t,{q}_n).
\end{align}
We write ${p}_n := \dr_q f(t,{q}_n)$. Proposition~\ref{p.reg.lim} also implies that $\sup_{n\in\N}|{p}_n|_{L^\infty} \leq 1$.
By passing to a subsequence, we can assume that ${p}_n$ converges weakly in $\cH$ to some limit ${p}$. Arguing as in Step 3 of the proof of Proposition~\ref{p.conv.der}, we can upgrade this to the strong convergence in~$L^2$, and we also see that ${p} \in \C\cap L^\infty_{\leq 1}$.

Recall $\mathscr{P}_{t,{q}}$ in~\eqref{e.mathscrP}. By Corollary~\ref{c.psi_smooth} and the local Lipschitzness of $\nabla\xi$ and $\theta$, we have
\begin{align*}
    \Ll|\mathscr{P}_{t,{q}_n}({p}_n) - \mathscr{P}_{t,{q}}({p})\Rr| &\leq  \Ll|{q}_n-{q}\Rr|_{L^1}+t\Ll|\nabla\xi({p}_n)-\nabla\xi({p})\Rr|_{L^1}  + t\Ll|\theta({p}_n)-\theta({p})\Rr|_{L^1}
    \\
    &\leq Ct \Ll|{p}_n-{p}\Rr|_{L^1} + \Ll|{q}_n-{q}\Rr|_{L^1},
\end{align*}
for some constant $C$ depending only on $\xi$. By Jensen's inequality, we have that $q_n$ and $p_n$ converge in $L^1$ to $q$ and $p$ respectively as $n$ tends to infinity, and therefore
\begin{equation*}  \lim_{n\to\infty} \mathscr{P}_{t,{q}_n}({p}_n) = \mathscr{P}_{t,{q}}({p}).
\end{equation*}
By construction, the function $f(t,\cdot)$ is Gateaux differentiable at ${q}_n$. Proposition~\ref{p.1st_fpe} therefore ensures that 
\begin{equation*}  f(t,{q}_n) = \mathscr{P}_{t,{q}_n}({p}_n).
\end{equation*}
These along with~\eqref{e.f(t,mu)=limf(t,mu_n)} yields $f(t,{q}) = \mathscr{P}_{t,{q}}({p})$, which gives the first relation in~\eqref{e.p.1st_stronger_id}.

Proposition~\ref{p.2nd_fpe} ensures that ${p}_n = \dr_q\psi({q}_n+t\nabla\xi({p}_n))$.
Using \eqref{e.continuity.der.psi} and the local Lipschitzness of $\nabla\xi$, we get
\begin{align*}
    \Ll|\dr_q\psi({q}_n+t\nabla\xi({p}_n))- \dr_q\psi({q}+t\nabla\xi({p}))\Rr|_{L^2} \leq C|{q}_n-{q}|_{L^2} + C|{p}_n-{p}|_{L^2},\quad\forall n\in\N,
\end{align*}
for some $C$ only depending on $\xi$. Sending $n$ to infinity, we get ${p} = \dr_q\psi({q}+t\nabla\xi({p}))$, which gives the second relation in~\eqref{e.p.1st_stronger_id}.
\end{proof}

\begin{proof}[Proof of Theorem~\ref{t.main1}]
Let ${p}$ be given in Proposition~\ref{p.1st_stronger_id} and set ${q'}:= {q} + t\nabla\xi({p})$. The second relation in~\eqref{e.p.1st_stronger_id} ensures that $({q'},p)$ is a critical point. The first relation in~\eqref{e.p.1st_stronger_id} along with~\eqref{e.P=J} yields~\eqref{e.main1.sg}.
\end{proof}

\subsection{Convergence of the overlap}

\subsubsection{Convergence of the conditional overlap}

Recall the Gibbs measure $\la \cdot\ra_N$ associated with the original free energy given in~\eqref{e.<>_N}.
For $\ell,\ell'\in\N$, we consider the conditional expectation of the spin overlap
\begin{align*}
    R_{N,\sigma|\alpha}^{\ell,\ell'}
    :=\E \la \frac{\sigma^\ell\Ll(\sigma^{\ell'}\Rr)^\intercal}{N} \,\Big|\,\alpha^\ell\wedge \alpha^{\ell'}\ra_N,
\end{align*}
where the conditioning is with respect to $\E \la \cdot\ra_N$ (not $\la\cdot\ra_N$). We also recall that $\la \cdot \ra_{\fR}$ is the expectation with respect to the measure $\fR^{\otimes \N}$.

\begin{proposition}\label{p.cvg_overlap_og}
Suppose that $\Ll(\bar F_N\Rr)_{N\in\N}$ converges pointwise to some limit $f$  along a subsequence $(N_k)_{k\in\N}$, and let $t \ge 0$. 
If $f(t,\cdot)$ is Gateaux differentiable at some ${q} \in \C_\uparrow\cap L^\infty$, then $\Ll(R_{N_k,\sigma|\alpha}^{\ell,\ell'}\Rr)_{\ell,\ell'\in\N:\:\ell\neq \ell'}$ under $\E \la \cdot\ra_{N_k}$ converges in law to $\Ll({p}\Ll(\alpha^\ell\wedge\alpha^{\ell'}\Rr)\Rr)_{\ell,\ell'\in\N:\:\ell\neq \ell'}$ under $\E \la\cdot\ra_\fR$ as $k$ tends to infinity, where ${p} = \dr_q f(t,q)$.

\end{proposition}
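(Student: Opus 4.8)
The plan is to reduce the claimed convergence of the \emph{conditional} spin overlaps under the \emph{unperturbed} Gibbs measure $\la\cdot\ra_N$ to the convergence already established under the perturbed Gibbs measure $\la\cdot\ra_{N,x}$ in Corollary~\ref{c.cavity_II}. The key point to exploit is that for any bounded continuous test function $\phi$ depending only on finitely many off-diagonal $\alpha$-overlaps, say $\phi\big((\alpha^\ell\wedge\alpha^{\ell'})_{\ell\neq\ell'\le n}\big)$, we can write
\begin{equation*}
\E\la \phi\big((\alpha^\ell\wedge\alpha^{\ell'})_{\ell\neq\ell'}\big) \prod_{\ell\neq\ell'} g_{\ell,\ell'}\big(R^{\ell,\ell'}_{N,\sigma|\alpha}\big)\ra_N
\end{equation*}
for continuous $g_{\ell,\ell'}$, and the conditional expectation $R_{N,\sigma|\alpha}^{\ell,\ell'}$ is a function only of $\alpha^\ell\wedge\alpha^{\ell'}$ by construction, so its law under $\E\la\cdot\ra_N$ is determined by pairing against such test functions. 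The strategy is then: (1) pass from $\la\cdot\ra_N$ to $\la\cdot\ra_{N,x}$ using Lemma~\ref{l.limF^x-F} (the perturbation is negligible in the limit, and the self-overlap concentration from Proposition~\ref{p.perturbation} controls the diagonal); (2) use Corollary~\ref{c.cavity_II}~\eqref{i.c.cavity_II_1} with $M=1$, after extracting a further subsequence, to identify the limiting law of the full overlap array $R_{N_k}$ under $\E\la\cdot\ra_{N_k,x_k}$ as that of $(Q^{\ell,\ell'}_p\mathbf 1_{\ell\neq\ell'}+(a,1)\mathbf 1_{\ell=\ell'})$ under $\E\la\cdot\ra_\fR$; (3) identify $p$ with $\dr_q f(t,q)$ exactly as in the first half of the proof of Proposition~\ref{p.2nd_fpe}, via the convergence of $\dr_q\tilde F^{x_k}_{N_k}(t,q)$ to $\dr_q f(t,q)$ in $L^2$ (Proposition~\ref{p.conv.der}) paired against continuous $\kappa$.

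\textbf{Key steps in order.} First I would recall that in the limiting array, conditionally on the $\alpha$-overlaps, the off-diagonal $\sigma$-overlaps $R^{\ell,\ell'}_{\infty,\sigma}$ are \emph{deterministic} given $\alpha^\ell\wedge\alpha^{\ell'}$, namely equal to $p(\alpha^\ell\wedge\alpha^{\ell'})$ — this is visible from the representation~\eqref{e.R^l,l'} obtained in the proof of Proposition~\ref{p.cavity_lim}, where $R^{\ell,\ell'}_\infty=(\Psi_\sigma(\kappa(\alpha^\ell\wedge\alpha^{\ell'})),\alpha^\ell\wedge\alpha^{\ell'})$ and $p=\Psi_\sigma\circ\kappa$. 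Hence in the limit the conditional expectation $\E\la\,\cdot\mid\alpha^\ell\wedge\alpha^{\ell'}\ra$ of the $\sigma$-overlap is just $p(\alpha^\ell\wedge\alpha^{\ell'})$ itself, with no averaging left to do. Second, the finite-dimensional convergence in law of $R_{N_k}$ under $\E\la\cdot\ra_{N_k,x_k}$ to this limiting array, combined with the boundedness $|R^{\ell,\ell'}_{N,\sigma}|\le 1$ and a standard argument that convergence in law of the pair $(\text{off-diagonal }\sigma\text{-overlaps}, \alpha\text{-overlaps})$ plus the deterministic-conditional structure of the limit forces the conditional expectations $R^{\ell,\ell'}_{N_k,\sigma|\alpha}$ to converge in law to $p(\alpha^\ell\wedge\alpha^{\ell'})$. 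Concretely: for bounded continuous $\phi$ of the off-diagonal $\alpha$-overlaps and bounded continuous $g$, one shows $\E\la \phi(\alpha\wedge\alpha')\, g(R^{\ell,\ell'}_{N_k,\sigma|\alpha})\ra \to \E\la \phi(\alpha\wedge\alpha')\, g(p(\alpha^\ell\wedge\alpha^{\ell'}))\ra_\fR$; since $R^{\ell,\ell'}_{N_k,\sigma|\alpha}$ is $\sigma(\alpha^\ell\wedge\alpha^{\ell'})$-measurable, this gives the full convergence of the joint law in the statement. Third, transfer back from $\la\cdot\ra_{N,x}$ to $\la\cdot\ra_N$: the Ghirlanda--Guerra perturbation only changes the Gibbs measure by an $N^{-1/16}$-weighted term, and since the conditional overlaps are $[-1,1]$-valued with the overlap distribution itself being perturbation-stable (this is the content of combining Lemma~\ref{l.limF^x-F} with the $\E_x$-averaging used to produce $(x_k)$), the limiting conditional-overlap law is the same under $\E\la\cdot\ra_{N_k}$ as under $\E\la\cdot\ra_{N_k,x_k}$. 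Finally, identify $p=\dr_q f(t,q)$ by the $L^2$-convergence of derivatives argument from Proposition~\ref{p.2nd_fpe}.

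\textbf{Main obstacle.} The delicate point is the transfer from the perturbed measure $\la\cdot\ra_{N,x}$ back to the unperturbed measure $\la\cdot\ra_N$ for a \emph{specific} sequence $(x_k)$, since a priori the small perturbation could still shift the law of the overlap array. This is handled by the now-standard observation that the free energies $\bar F^x_N$ and $\bar F_N$ differ by $o(1)$ uniformly in $x$ (Lemma~\ref{l.limF^x-F}), so the Gibbs measures are asymptotically indistinguishable at the level of bounded test functions of the overlaps — more precisely, one should run the whole argument leading to the convergence of conditional overlaps already at the level of $\la\cdot\ra_{N,x}$ and then note that, because the convergence of $\dr_q\bar F^{x_k}_{N_k}(t,q)$ to $\dr_q f(t,q)$ holds and the limit $f$ does not see the perturbation, the limiting object $p$ is intrinsic, hence equals $\dr_q f(t,q)$, and in particular is the same whether one uses the perturbed or unperturbed measure. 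A secondary technical nuisance is that one must be careful that the test functions used are functions of the \emph{off-diagonal} overlaps only (to avoid the diagonal $\sigma$-overlap, which converges only after the self-overlap concentration of Proposition~\ref{p.perturbation}); restricting to $\ell\neq\ell'$ throughout, exactly as in the statement, circumvents this cleanly.
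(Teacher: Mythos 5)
Your overall strategy has a genuine gap at the step you yourself flag as the main obstacle: the transfer from the perturbed measure $\la\cdot\ra_{N,x}$ back to the unperturbed measure $\la\cdot\ra_N$. Lemma~\ref{l.limF^x-F} only says that the \emph{free energies} $\bar F_N^x$ and $\bar F_N$ are $o(1)$-close uniformly in $x$; this does not imply that the two Gibbs measures are "asymptotically indistinguishable at the level of bounded test functions of the overlaps". Closeness of free energy values carries no information about closeness of overlap distributions (this is exactly why the paper needs the extra $\hat t$-perturbation in Theorem~\ref{t.main3} to pin down the law of the unconditional overlap, and cannot do so for the unperturbed model). Likewise, the fact that the limiting path $p=\dr_q f(t,q)$ is "intrinsic" tells you what the limit \emph{would} be if the conditional overlaps under $\E\la\cdot\ra_{N_k}$ converge, but it does not establish that convergence. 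Note also that $R^{\ell,\ell'}_{N,\sigma|\alpha}$ is a conditional expectation with respect to $\E\la\cdot\ra_N$, so the object you control via Corollary~\ref{c.cavity_II} (a conditional expectation under $\E\la\cdot\ra_{N_k,x_k}$) is a different random variable, and there is no general principle comparing conditional expectations under two measures whose free energies happen to agree to $o(1)$.

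The ingredient you are missing — and which makes the whole perturbation machinery unnecessary here — is the exact finite-$N$ identity
\begin{equation*}
\E \la \tfrac{\sigma\sigma'^\intercal}{N} \,\big|\, \alpha\wedge\alpha'\ra_N = \dr_q \bar F_N(t,q)\Ll(\alpha\wedge\alpha'\Rr) \quad \text{a.s.\ under } \E\la\cdot\ra_N,
\end{equation*}
which follows by pairing the derivative formula \eqref{e.def.der.FN} against continuous $\kappa$ and using the invariance of the cascade (Proposition~\ref{p.invar_cts}) to write $\E\la\kappa(\alpha\wedge\alpha')\cdot p_N(\alpha\wedge\alpha')\ra_N=\int_0^1\kappa\cdot p_N=\E\la\kappa(\alpha\wedge\alpha')\cdot\frac{\sigma\sigma'^\intercal}{N}\ra_N$, together with the $\sigma(\alpha\wedge\alpha')$-measurability of both sides. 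With this identity the statement reduces to the convergence in law of $\Ll(p_{N_k}(\alpha^\ell\wedge\alpha^{\ell'})\Rr)_{\ell\neq\ell'}$ to $\Ll(p(\alpha^\ell\wedge\alpha^{\ell'})\Rr)_{\ell\neq\ell'}$, which follows from $p_{N_k}\to p$ in $L^1$ (Proposition~\ref{p.conv.der}, using the Gateaux differentiability of $f(t,\cdot)$ at $q\in\C_\uparrow$) and one more application of the cascade invariance to replace $\E\la\cdot\ra_{N_k}$ by $\E\la\cdot\ra_\fR$. No cavity computation, no Ghirlanda--Guerra perturbation, and no synchronization is needed for this proposition; your route both overcomplicates the argument and leaves its decisive step unjustified.
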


\begin{proof}
We write ${p}_N := \dr_q \bar F_N \Ll(t,{q}\Rr)$.
Using the invariance of the Poisson--Dirichlet cascade in Proposition~\ref{p.invar_cts}, the computation of $\dr_q \bar F_N \Ll(t,{q}\Rr)$ in \eqref{e.def.der.FN}, and the definition of conditional expectation, we have the following identities, for any bounded continuous $\kappa : [0,1] \to S^D$:
\begin{multline*}
    \E \la \kappa\Ll(\alpha\wedge\alpha'\Rr)\cdot {p}_N\Ll(\alpha\wedge\alpha'\Rr) \ra_N = \int_0^1\kappa(s)\cdot {p}_N(s)\d s = \la \kappa, \dr_q \bar F_N \Ll(t,{q}\Rr)\ra_\cH
    \\
    =\E \la \kappa \Ll(\alpha\wedge\alpha'\Rr)\cdot \frac{\sigma\sigma'^\intercal}{N}\ra_N = \E \la \kappa \Ll(\alpha\wedge\alpha'\Rr)\cdot \E \la\frac{\sigma\sigma'^\intercal}{N}\,\Big|\, \alpha\wedge\alpha' \ra_N\ra_N.
\end{multline*}
Hence, we deduce that
\begin{align}\label{e.cond_overlap=p_N()}
    \E \la\frac{\sigma\sigma'^\intercal}{N}\,\Big|\, \alpha\wedge\alpha'\ra_N = {p}_N\Ll(\alpha\wedge\alpha'\Rr)
\end{align}
a.s.\ under $\E\la\cdot\ra_N$. For every ${p}'\in \C_\infty$, we write $O_{{p}'}^{\ell,\ell'} := {p}'\Ll(\alpha^\ell\wedge\alpha^{\ell'}\Rr)$,
 so that under $\E \la\cdot\ra_N$, 
\begin{align}\label{e.R=O}
    R_{N,\sigma|\alpha}^{\ell,\ell'} = O_{{p}_N}^{\ell,\ell'},\quad\forall \ell,\ell'\in \N:\: \ell\neq \ell'.
\end{align}
Proposition~\ref{p.conv.der} ensures that ${p}_N$ converges to ${p}$ in $L^1$ along the subsequence, and thus
\begin{align}\label{e.E<|p_N-p|>}
  \E \la \Ll|{p}_N\Ll(\alpha\wedge\alpha'\Rr)-{p}\Ll(\alpha\wedge\alpha'\Rr)\Rr|\ra_\fR = \Ll|{p}_N- {p}\Rr|_{L^1}
\end{align}
tends to zero as $N$ tends to infinity along the subsequence. 
We fix any $n\in\N$ and any bounded Lipschitz $g:\Ll(\R^{\D\times\D}\times \R\Rr)^{n^2-n} \to \R$. We write $R^\square_{N,\sigma|\alpha} := \Ll(R_{N,\sigma|\alpha}^{\ell,\ell'}\Rr)_{\ell,\ell'\leq n; \ell\neq \ell'}$ and similarly for $O^\square_{{p}_N}$ and $O^\square_{p}$.
The invariance of the Poisson--Dirichlet cascade from Proposition~\ref{p.invar_cts} implies that $O_{{p}_N}$ under $\E\la\cdot\ra_N$ has the same distribution as $O_{{p}_N}$ under~$\E \la\cdot\ra_\fR$.
Combining this with~\eqref{e.R=O} yields that
\begin{align*}
    &\Ll|\E \la g\Ll(R^\square_{N,\sigma|\alpha}\Rr) \ra_N - \E \la g\Ll(O^\square_{p}\Rr) \ra_\fR \Rr| = \Ll| \E \la g\Ll(O^\square_{{p}_N}\Rr) \ra_N - \E \la g\Ll(O^\square_{p}\Rr) \ra_\fR \Rr|
    \\
    &=  \Ll| \E \la g\Ll(O^\square_{{p}_N}\Rr) \ra_\fR - \E \la g\Ll(O^\square_{p}\Rr) \ra_\fR \Rr|
     \leq \|g\|_\mathrm{Lip}\E \la \Ll|O^\square_{{p}_N} - O^\square_{p}\Rr| \ra_\fR.
\end{align*}
Using also that \eqref{e.E<|p_N-p|>} tends to zero along the subsequence therefore yields the announced result. 
\end{proof}

\subsubsection{Convergence of the overlap under perturbation}

Let $\Ll(\hat H_N(\sigma)\Rr)_{\sigma\in\R^{D\times N}}$ be the Gaussian process in~\eqref{e.hatH_N}.
For every $\Ll(t,\hat t,{q}\Rr)\in \R_+ \times \R_+\times \C_\infty$, we consider
\begin{multline}
\label{e.def.FN.t.hat}
    \hat F_N\Ll(t,\hat t,{q}\Rr) 
    \\
    := -\frac{1}{N} \E\log \int \exp\Ll(H_N^{t,{q}}(\sigma,\alpha)+\sqrt{2\hat t}\hat H_N(\sigma)- \hat t N\Ll|\frac{\sigma\sigma^\intercal}{N}\Rr|^2\Rr) \d P_N(\sigma)\d \fR(\alpha),
\end{multline}
where $H^{t,{q}}_N(\sigma,\alpha)$ is given in~\eqref{e.H^t,mu_N}. 
We denote the associated Gibbs measure by $\la \cdot\ra_{N,\hat t}\, $. 
Recall the functional $\hat {\mcl J}_{t,\hat t,q}$ defined in~\eqref{e.hatJ=}, and the overlaps $R_N$ and $Q_{p}$ defined in~\eqref{e.overlap_notation} and~\eqref{e.overlap_alpha}.
\begin{proposition}\label{p.cvg_overlap}
Suppose that $\Ll(\hat F_N\Rr)_{N\in\N}$ converges pointwise to some limit $f$  along a subsequence $(N_k)_{k\in\N}$. 
Then, for each $t \ge 0$, the function $f\Ll(t,\cdot,\cdot\Rr) : \R_+ \times \mcl Q_2 \to \R$ is Gateaux differentiable (jointly in its two variables) on a subset of $\R_+ \times (\C_\uparrow\cap L^\infty)$ that is dense in $\R_+ \times \C_2$. Moreover, for every $\hat t \ge 0$ and every point ${q} \in \C_\uparrow\cap L^\infty$ of Gateaux differentiability of $f(t,\hat t, \cdot)$, the following holds for ${p} = \dr_q f\Ll(t,\hat t, {q}\Rr)$ and ${q'} = {q} + t\nabla \xi\Ll({p}\Rr) + 2 \hat t {p}$:
\begin{enumerate}
    \item \label{i.p.cvg_overlap_1} ${p} = \dr_q \psi \Ll({q'}\Rr)$;
    \item \label{i.p.cvg_overlap_2} if $(N_k)_{k\in\N}$ is the full sequence $(N)_{N\in\N}$, then $\lim_{N\to\infty}\hat F_N\Ll(t,\hat t, {q}\Rr) = \hat{\mathcal{J}}_{t,\hat t, {q}}({q'},{p})$;
    \item \label{i.p.cvg_overlap_3} if $\hat t>0$ and $f(t,\cdot,{q})$ is differentiable at $\hat t$, then $\Ll(R_{N_k}^{\ell,\ell'}\Rr)_{\ell,\ell'\in\N:\:\ell\neq \ell'}$ under $\E \la \cdot\ra_{N_k,\hat t}$ converges in law to $\Ll({p}\Ll(\alpha^\ell\wedge\alpha^{\ell'}\Rr)\Rr)_{\ell,\ell'\in\N:\:\ell\neq \ell'}$ under $\E \la\cdot\ra_\fR$ as $k$ tends to infinity.
\end{enumerate}

\end{proposition}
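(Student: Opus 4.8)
The plan is to treat the auxiliary parameter $\hat t$ as an extra ``direction'' along which the same cavity machinery from Section~\ref{s.cavity} applies. First I would observe that $\hat F_N$ has all the regularity properties established for $\bar F_N$: it is Lipschitz in $(t,\hat t,q)$ (the $\hat t$-Lipschitzness following from a Gaussian integration by parts as in \eqref{e.der.t}, using that $|\sigma\sigma^\intercal/N|\le 1$ by \eqref{e.bound.on.sigma}), it is locally semi-concave in $q$, and it is locally semi-concave in $\hat t$ because the compensating term $\hat t N|\sigma\sigma^\intercal/N|^2$ makes $\hat t \mapsto \hat F_N(t,\hat t,q)$ concave up to the same kind of H\"older-inequality argument as in the proof of Proposition~\ref{p.semiconc.disc}. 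Hence Proposition~\ref{p.Gateaux_dff_dense} applies to the Lipschitz function $f(t,\cdot,\cdot)$ on $\R_+\times\mcl Q_2$, giving joint Gateaux differentiability on a subset of $\R_+\times(\mcl Q_\uparrow\cap L^\infty)$ dense in $\R_+\times\mcl Q_2$; and the convergence-of-derivatives statement of Proposition~\ref{p.conv.der}, together with its proof via local semi-concavity, carries over verbatim to $\partial_q \hat F_N$ and $\partial_{\hat t}\hat F_N$.

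The heart of the argument is to redo the cavity calculations of Section~\ref{s.cavity} for $\hat F_N$. The extra term $\sqrt{2\hat t}\hat H_N(\sigma) - \hat t N|\sigma\sigma^\intercal/N|^2$ has exactly the structure $\sqrt{2\hat t}\,(\text{field with covariance }N|\sigma\sigma'^\intercal/N|^2) - \hat t N (\text{half its variance})$, i.e.\ it is of the same form as the $\xi$-term but with $\xi$ replaced by the convex function $a\mapsto|a|^2$. Since $|a|^2 = \msf C\cdot a^{\otimes 2}$ for an appropriate $\msf C\in S^{D^2}_+$, this term is covered by the general formalism: the Aizenman--Sims--Starr scheme of Proposition~\ref{p.cavity_perturbation} applies with $\xi$ replaced by $\xi(a) + \hat t|a|^2$, so $\mcl J_{t,q}$ is replaced by $\hat{\mcl J}_{t,\hat t,q}$ of \eqref{e.hatJ=}, and likewise $\mathscr P_{t,q}(p)$ becomes $\psi(q + t\nabla\xi(p) + 2\hat t p) - t\int_0^1\theta(p) - \hat t\int_0^1(2p\cdot p - |p|^2) = \psi(q') - t\int_0^1\theta(p) - \hat t\int_0^1|p|^2$, which equals $\hat{\mcl J}_{t,\hat t,q}(q',p)$ when $q' = q + t\nabla\xi(p) + 2\hat t p$. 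Concretely, I would run Corollaries~\ref{c.cavity} and~\ref{c.cavity_II} with this enlarged nonlinearity; the perturbation Hamiltonian $H_N^x$ and Proposition~\ref{p.perturbation} are unchanged since they do not see the interaction term, so the Ghirlanda--Guerra identities, synchronization and ultrametricity hold as before.

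With these analogues of Propositions~\ref{p.1st_fpe} and~\ref{p.2nd_fpe} in hand, the three conclusions follow exactly as in the proofs of Theorems~\ref{t.main1}--\ref{t.main2} and Proposition~\ref{p.cvg_overlap_og}. For item~\eqref{i.p.cvg_overlap_1}: the analogue of \eqref{e.nabla_psi=zeta}, obtained from the $M=1$ Gibbs-average cavity calculation (Corollary~\ref{c.cavity_II}\eqref{i.c.cavity_II_2}) identifying $\partial_q\hat F_{N+1}^x(t,\hat t,q)$ with $\E\la\kappa(\alpha\wedge\alpha')\cdot\tau\tau'^\intercal\ra_{\fR,q'}$ and hence with $\partial_q\psi(q')$, together with the identification $p = \partial_q f(t,\hat t,q)$ from the free-energy cavity calculation, yields $p = \partial_q\psi(q')$. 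For item~\eqref{i.p.cvg_overlap_2}: the analogue of Proposition~\ref{p.1st_fpe} gives $f(t,\hat t,q) = \psi(q') - t\int_0^1\theta(p) - \hat t\int_0^1|p|^2 = \hat{\mcl J}_{t,\hat t,q}(q',p)$ when the full sequence converges. For item~\eqref{i.p.cvg_overlap_3}: here I would use the extra differentiability of $f$ in $\hat t$ — via the $\partial_{\hat t}$-analogue of \eqref{e.def.der.FN}, namely $\partial_{\hat t}\hat F_N(t,\hat t,q) = \E\la|\sigma\sigma'^\intercal/N|^2\ra_{N,\hat t}$, and convergence of derivatives — to control the diagonal/off-diagonal structure of the limiting overlap array and conclude, exactly as in Proposition~\ref{p.cvg_overlap_og}, that the off-diagonal spin overlaps $R_{N_k}^{\ell,\ell'}$ (not merely their conditional expectations) converge in law to $p(\alpha^\ell\wedge\alpha^{\ell'})$; the point is that differentiability in $\hat t$ forces $\E\la|R^{1,2}_{N,\sigma}|^2\ra \to \int_0^1|p|^2$, which matches $\E\la|p(\alpha\wedge\alpha')|^2\ra_\fR$ and, combined with synchronization, pins down the full law rather than only the conditional mean.

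The main obstacle I anticipate is purely bookkeeping: verifying carefully that every lemma of Section~\ref{s.cavity} that was stated for the specific nonlinearity $\xi$ goes through with $\xi$ replaced by $\xi(a) + \hat t|a|^2$, in particular that the process $\msf Y$ (with covariance involving $\theta$) and the process $\msf Z$ still exist and satisfy the bounds needed for the error terms $O(M^2 N^{-1/16})$ in Propositions~\ref{p.cavity_perturbation} and~\ref{l.pre_comp_2} — this is immediate because adding a convex polynomial to $\xi$ keeps it in the class \eqref{e.explicit.xi}, but it must be checked. The only genuinely new ingredient beyond ``re-run the machine'' is step~\eqref{i.p.cvg_overlap_3}, where one must combine the $q$-derivative information (which controls the conditional overlap as in Proposition~\ref{p.cvg_overlap_og}) with the $\hat t$-derivative information (which controls the second moment of the spin overlap) and invoke Panchenko's synchronization to upgrade convergence of conditional expectations to convergence of the overlaps themselves; this is where I would spend the most care.
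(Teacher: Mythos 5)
Your proposal is correct and follows essentially the same route as the paper: the differentiability and parts (1)–(2) are straightforward adaptations of Propositions~\ref{p.reg.lim}, \ref{p.1st_fpe} and~\ref{p.2nd_fpe} with the nonlinearity enlarged to $\xi(a)+\hat t|a|^2$, and part~(3) rests exactly on the mechanism you identify, namely that the $\hat t$-analogue of the equation, $\partial_{\hat t} f = |\partial_q f|_{L^2}^2$, together with $\partial_{\hat t}\hat F_N = \E\langle|\sigma\sigma'^\intercal/N|^2\rangle$ and the identity $\E\langle|R-p_N(\alpha\wedge\alpha')|^2\rangle = \partial_{\hat t}\hat F_N - |\partial_q\hat F_N|_{L^2}^2$, forces the spin overlap to concentrate in $L^2$ on its conditional expectation. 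The only cosmetic difference is that this last upgrade is a pure conditional-variance (Pythagoras) computation rather than an appeal to synchronization, but you state the correct second-moment matching explicitly, so there is no gap.
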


\begin{proof}
By straightforward modifications, the differentiability follows from Proposition~\ref{p.reg.lim}, part~\eqref{i.p.cvg_overlap_1} follows from Proposition~\ref{p.2nd_fpe}, and part~\eqref{i.p.cvg_overlap_2} follows from Proposition~\ref{p.1st_fpe}.

Hence, we focus on part~\eqref{i.p.cvg_overlap_3}.
For convenience, we drop the subsequence $(N_k)_{k \in \N}$ in the notation, but we understand that we only take limits along the subsequence throughout the proof.
By an adaptation of Proposition~\ref{p.conv.der}, we have (along the subsequence)
\begin{align}\label{e.cvg_mathscrF}
    \lim_{N\to\infty} \Ll|\dr_q \hat F_N \Ll(t,\hat t,{q}\Rr) - \dr_q f\Ll(t,\hat t,{q}\Rr)\Rr|_\cH = 0\quad \text{ and } \quad \lim_{N\to\infty} \partial_{\hat t} \hat F_N\Ll(t,\hat t,{q}\Rr) = \partial_{\hat t} f\Ll(t,\hat t,{q}\Rr).
\end{align}
For brevity, we write ${p}_N := \dr_q \hat F_N \Ll(t,\hat t,{q}\Rr)$, ${p} := \dr_q f\Ll(t,\hat t,{q}\Rr)$, and $\la \cdot\ra_N := \la\cdot\ra_{N,\hat t}\, $.
We have
\begin{align*}
    \partial_{\hat t} \hat F_N\Ll(t,\hat t,{q}\Rr) = \E \la \Ll|\frac{\sigma\sigma'^\intercal}{N}\Rr|^2\ra_N.
\end{align*}
Adapting the proof of Proposition~\ref{p.cvg_overlap_og}, we have that ~\eqref{e.cond_overlap=p_N()} holds, which implies that
\begin{align}
    \E \la \Ll|\frac{\sigma\sigma'^\intercal}{N}-{p}_N\Ll(\alpha\wedge\alpha'\Rr)\Rr|^2\ra_N 
    &= \E \la \Ll|\frac{\sigma\sigma'^\intercal}{N}\Rr|^2\ra_N - \E \la \Ll|{p}_N\Ll(\alpha\wedge\alpha'\Rr)\Rr|^2\ra_N  \notag
    \\
    &= \partial_{\hat t} \hat F_N\Ll(t,\hat t,{q}\Rr) - \Ll|\dr_q\hat F_N\Ll(t,\hat t,{q}\Rr)\Rr|^2_\cH \label{e.E<>=E<>-E<>}
\end{align}
where in the second equality we used the invariance of the Poisson--Dirichlet cascade.

We can now argue as for~\eqref{e.satify_eqn} in Proposition~\ref{p.2nd_fpe} to conclude. Indeed, since $t$ is fixed here, we view $\hat F_N(t,\cdot,\cdot)$ and $f(t,\cdot,\cdot)$ as functions of $(\hat t,{q})$. We can substitute $\hat t$ and $|\cdot|^2$ for $t$ and $\xi$ in Proposition~\ref{p.2nd_fpe} to get
\begin{gather}\label{e.f(s,mu)_PDE}
    \partial_{\hat t} f\Ll(t,\hat t,{q}\Rr) = \Ll|\dr_q f\Ll(t,\hat t,{q}\Rr)\Rr|^2_\cH.
\end{gather}
Combining~\eqref{e.cvg_mathscrF}, \eqref{e.E<>=E<>-E<>}, and~\eqref{e.f(s,mu)_PDE}, we obtain that (along the subsequence)
\begin{align}
\label{e.RNQN.close}
    \lim_{N\to\infty} \E \la \Ll|\frac{\sigma\sigma'^\intercal}{N}-{p}_N\Ll(\alpha\wedge\alpha'\Rr)\Rr|^2\ra_N =0.
\end{align}
The first convergence in~\eqref{e.cvg_mathscrF} ensures that (along the subsequence)\begin{align}
\label{e.pNp.close}
    \lim_{N\to\infty} \E \la \Ll|{p}_N\Ll(\alpha\wedge\alpha'\Rr)-{p}\Ll(\alpha\wedge\alpha'\Rr)\Rr|^2\ra_\fR =0.
\end{align}
We fix any $n\in\N$ and any bounded Lipschitz $g:\Ll(\R^{\D\times\D}\times \R\Rr)^{n^2-n} \to \R$.
We write $R^\square_N := \Ll(R_N^{\ell,\ell'}\Rr)_{\ell,\ell'\leq n; \ell\neq \ell'}$ and similarly for $Q^\square_{{p}_N}$ and $Q^\square_{p}$. 
The invariance of the Poisson--Dirichlet cascade implies that $Q_{{p}_N}$ under $\E\la\cdot\ra_N$ has the same distribution as $Q_{{p}_N}$ under~$\E \la\cdot\ra_\fR$.
We therefore have that, for every Lipschitz function $g$, 
\begin{align*}
    &\Ll|\E \la g\Ll(R^\square_N\Rr) \ra_N - \E \la g\Ll(Q^\square_{p}\Rr) \ra_\fR \Rr| 
    \\
    &=  \Ll|\E \la g\Ll(R^\square_N\Rr) \ra_N - \E \la g\Ll(Q^\square_{{p}_N}\Rr) \ra_N +\E \la g\Ll(Q^\square_{{p}_N}\Rr) \ra_\fR - \E \la g\Ll(Q^\square_{p}\Rr) \ra_\fR \Rr|
    \\
    & \leq \|g\|_\mathrm{Lip}\Ll(\E \la \Ll|R^\square_N - Q^\square_{{p}_N}\Rr| \ra_N + \E \la \Ll|Q^\square_{{p}_N} - Q^\square_{p}\Rr| \ra_\fR\Rr). 
\end{align*}
Combining this with \eqref{e.RNQN.close} and \eqref{e.pNp.close} therefore completes the proof.
\end{proof}

\begin{proof}[Proof of Theorem~\ref{t.main3}]
The theorem follows from Proposition~\ref{p.cvg_overlap}.
\end{proof}

\subsection{Stability and high-temperature uniqueness of the critical point}
We now turn to a proof of Proposition~\ref{p.unique.crit.high.temp} on the uniqueness of the critical point at high temperature. 
\begin{proof}[Proof of Proposition~\ref{p.unique.crit.high.temp}]
The fact that $\mcl J_{t,q}$ has at least one critical point is a consequence of Theorem~\ref{t.main1} (and could also be proved directly).
For each $i \in \{1,2\}$, let $(q'_i,{p}_i)\in \mcl Q_2^2$ be a critical point of~$\mcl J_{t,q}$. 
Inserting~\eqref{e.crit.p} into~\eqref{e.crit.q'}, we get ${p}_i = \dr_q \psi(q+t\nabla \xi({p}_i))$.
By \eqref{e.continuity.der.psi}, we have
\begin{align*}
    \Ll|{p}_1-{p}_2\Rr|_{L^2}\leq 16 t \Ll|\nabla\xi({p}_1)-\nabla\xi({p}_2)\Rr|_{L^2}.
\end{align*}
By the local Lipschitzness of $\nabla\xi$, we have
\begin{align*}
    C := \sup_{a,a'\in \S^\D_+:\:|a|,|a'|\leq 1;\; a\neq a'}\frac{\Ll|\nabla\xi(a)-\nabla\xi(a')\Rr|}{|a-a'|}<+\infty.
\end{align*}
The property in \eqref{e.bound.der.psi} implies that $|{p}_i(s)|\leq 1$ for each $i\in\{1,2\}$ and $s\in[0,1)$. Therefore,
\begin{align*}
    \Ll|{p}_1-{p}_2\Rr|_{L^2}\leq 16  Ct \Ll|{p}_1-{p}_2\Rr|_{L^2}.
\end{align*}
Hence, if $ t < t_c := (16 C)^{-1}$, we must have ${p}_1 ={p}_2$. Since $q'_i$ is determined by~\eqref{e.crit.p}, the uniqueness follows.
\end{proof}

We close this section with a proof of Proposition~\ref{p.stability} on the stability of relevant critical points.
\begin{proof}[Proof of Proposition~\ref{p.stability}]
We take $(t_n, q_n) \in \R_+ \times \mcl Q_2$ and $(q'_n,p_n) \in \mcl Q_2^2$ as in the statement. The fact that $(q'_n,p_n)$ is a critical point of $\mcl J_{t_n, q_n}$ (see~\eqref{e.crit.p} and~\eqref{e.crit.q'}) means that
\begin{equation}
\label{e.stability.crit}
p_n = \partial_q \psi(q'_n) \quad \text{ and } \quad  q'_n = q_n + t_n \nabla \xi(p_n).
\end{equation}
By \eqref{e.bound.der.psi} from Corollary~\ref{c.psi_smooth}, we have that $p_n \in \mcl Q \cap L^\infty_{\le 1}$. By Lemma~\ref{l.compact.embed}, the sequence $(p_n)_{n \in \N}$ is therefore precompact in $L^2$. By the second relation in \eqref{e.stability.crit} and the convergence of $(t_n,q_n)_{n \in \N}$ in $\R_+ \times L^2$, we deduce that the sequence $(q'_n, p_n)_{n \in \N}$ is precompact. Moreover, if $(p_n)_{n \in \N}$ converges in $L^2$ along a subsequence, then $(q_n', p_n)_{n \in \N}$ converges in $(L^2)^2$ along the same subsequence. Recall also from~\eqref{e.def.Parisi.functional} that
\begin{equation} 
\label{e.stability.J.tn}
\mcl J_{t_n,q_n}(q'_n,p_n) = \psi(q'_n) + \la p_n , q_n-q'_n\ra_\cH + t_n \int_0^1 \xi(p_n).
\end{equation}
Letting $(q',p) \in \mcl Q_2^2$ denote a subsequential limit of $(q'_n,p_n)_{n \in \N}$, we have that $\psi(q'_n)$ converges to $\psi(q')$ along the subsequence, by the continuity of $\psi$ from Corollary~\ref{c.psi_smooth}. The scalar product in \eqref{e.stability.J.tn} converges to $\la p, q-q'\ra_\cH$ along the subsequence, by the Cauchy--Schwarz inequality. Finally, the integral $\int_0^1 \xi(p_n)$ converges to $\int_0^1 \xi(p)$ along the subsequence, because $\xi$ is continuous and $p_n \in L^\infty_{\le 1}$. So we conclude that as $n$ tends to infinity along the subsequence, we have that $\mcl J_{t_n, q_n} (q'_n, p_n)$ converges to $\mcl J_{t,q}(q',p)$. Using also the uniform Lipschitz continuity of $\bar F_N$ from Proposition~\ref{p.barF_N_Lip}, we obtain \eqref{e.stability.conclusion}. Using the continuity of $\dr_q \psi$ from Corollary~\ref{c.psi_smooth} and \eqref{e.stability.crit}, we also verify that $(q',p)$ is a critical point of $\mcl J_{t,q}$, and thus the proof is complete.
\end{proof}

\section{Further results in the convex case}
\label{s.convex}

In this section, we give a proof of Theorem~\ref{t.parisi.convex}. We also derive a number of alternative representations of the variational problem describing the limit free energy, and explain how to remove the compensating term $-N t \xi(\sigma \sigma^\intercal)/N$ appearing in \eqref{e.def.simple.F}. We also show that the limit free energy is Gateaux differentiable everywhere in $(0,\infty) \times (\mcl Q_\uparrow \cap L^\infty)$, and use this to show that it satisfies the Hamilton--Jacobi equation \eqref{e.satisfy.eqn.intro} at every point in this set. We collect all results into a final statement that also describes the convergence of the conditional $\sigma$-overlaps for every $(t,q) \in (0,\infty) \times (\mcl Q_\uparrow \cap L^\infty)$.

\subsection{Parisi formula when \texorpdfstring{$\xi$}{xi} is convex on \texorpdfstring{$\S^\D_+$}{S\^D\_+}}

When $\xi$ is convex over the whole space $\R^{D\times D}$, an interpolation argument allows one to establish a lower bound on any subsequential limit of $\bar F_N(t,q)$ \cite{gue03, barcon}. When $\D=1$, one can generalize this bound to the case when $\xi$ is only assumed to be convex over $\S^1_+=\R_+$ using the Talagrand positivity principle (see \cite[Theorem~3.4]{pan}). If $\D>1$ and $\xi$ only depends on the $D$ diagonal entries of its argument, then one may as well think of $\xi$ as being defined on $\R^D$, and one can simplify the setup and only consider paths that take values in $\R^D_+$ instead of $S^D_+$. In this context, one can still apply the scalar positivity principle to get the Parisi formula under the assumption that $\xi$ is convex over $\R^\D_+$, see \cite[Section~3.2]{bates2022free}.

For general vector spin glass models, to the best of our knowledge, existing results all require $\xi$ to be convex on $\R^{\D\times\D}$. Here we obtain the lower bound on the limit free energy by combining the results of \cite{mourrat2023free} and \cite{chen2022hamilton}. We derive the converse bound from the results of this paper, but as already mentioned, these results are much more precise than necessary in this case. 

Recall the definitions of $\sP_{t,{q}}$ in \eqref{e.mathscrP} and of $\mathcal{K}$ in \eqref{e.mathcal_K}.

\begin{proposition}[Parisi formula for the enriched model]\label{p.parisi}
If $\xi$ is convex on $\S^\D_+$, then for every $t \ge 0$ and $q \in \mcl Q_2$, we have
\begin{align}\label{e.parisi_hj}
    \lim_{N\to\infty} \bar F_N(t,{q}) = \sup_{{p}\in\C:\: \exists a \in \mathcal{K},\, p\leq a}\sP_{t,{q}}({p})= \sup_{{p}\in\C_\infty}\sP_{t,{q}}({p}).
\end{align}
\end{proposition}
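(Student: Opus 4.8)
\textbf{Proof plan for Proposition~\ref{p.parisi}.}
The strategy is to prove a chain of inequalities closing the loop between the three quantities in \eqref{e.parisi_hj}. First I would establish the lower bound $\liminf_{N\to\infty}\bar F_N(t,q)\ge \sup_{p\in\mcl Q_\infty}\sP_{t,q}(p)$. This is where convexity enters: by \cite{chen2022hamilton}, when $\xi$ is convex on $\S^\D_+$ the solution $f$ to the Hamilton--Jacobi equation \eqref{e.hj} admits the Hopf--Lax-type representation, and by \cite{mourrat2023free} one has $\liminf_{N\to\infty}\bar F_N(t,q)\ge f(t,q)$ for arbitrary $\xi$. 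So it suffices to check that $f(t,q)\ge \sup_{p\in\mcl Q_\infty}\sP_{t,q}(p)$, which follows because $(t',q')\mapsto\mcl J_{t',q'}(q'',p'')$ is an affine subsolution structure: concretely, for any $p\in\mcl Q_\infty$ the affine map $(t',q')\mapsto \mcl J_{t',q'}(q+t\nabla\xi(p),p)$ solves \eqref{e.hj} with a modified initial condition that lies below $\psi$ (by convexity of $\xi$, one has $\psi(q'')+\la p,\,\cdot-q''\ra_\cH+ t'\int\xi(p)$ touching from below), and evaluating at $(t,q)$ gives $\mcl J_{t,q}(q+t\nabla\xi(p),p)=\sP_{t,q}(p)$ by \eqref{e.P=J}; the comparison principle for \eqref{e.hj} then yields $f(t,q)\ge\sP_{t,q}(p)$. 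Alternatively, and perhaps more cleanly, I would invoke the variational representation in~\eqref{e.parisi.convex.var} together with the change of variables $q'=t\nabla\xi(p)$ described below that display, which directly identifies $f(t,q)$ with $\sup_{p}\sP_{t,q}(p)$; I expect this is the most economical route and avoids re-deriving comparison estimates.

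Second, I would prove the upper bound $\limsup_{N\to\infty}\bar F_N(t,q)\le \sup_{p\in\mcl Q:\ \exists a\in\mcl K,\ p\le a}\sP_{t,q}(p)$. Here I would reduce to $q\in\mcl Q_\infty$ by the Lipschitz continuity of $\bar F_N$ in $q$ from Proposition~\ref{p.barF_N_Lip} (both sides are $L^1$-continuous in $q$; continuity of the right-hand side follows from the Lipschitzness of $\psi$ in Corollary~\ref{c.psi_smooth} and of $\nabla\xi,\theta$ on bounded sets, noting $|a|\le1$ for $a\in\mcl K$). For $q\in\mcl Q_\infty$, I apply Corollary~\ref{c.cavity}: it produces $p_+\in\mcl Q\cap L^\infty_{\le1}$ and $a_+\in\mcl K$ with $a_+\ge p_+$ such that $\limsup_N\bar F_N(t,q)\le\sP_{t,q}(p_+)$. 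Since $p_+$ is exactly of the form admitted in the first supremum on the right of \eqref{e.parisi_hj} (it satisfies $p_+\le a_+$ with $a_+\in\mcl K$), this gives the desired upper bound immediately, without needing convexity at all.

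Third, I would close the loop by checking the trivial inequality between the two suprema:
\begin{align*}
\sup_{p\in\mcl Q:\ \exists a\in\mcl K,\ p\le a}\sP_{t,q}(p)\ \le\ \sup_{p\in\mcl Q_\infty}\sP_{t,q}(p),
\end{align*}
which holds because any $p\le a$ with $a\in\mcl K$ is bounded (as in the argument near \eqref{e.zeta(s)<r}, $p\le a$ and $p,a\in\S^\D_+$ force $|p(s)|\le|a|\le1$, so $p\in\mcl Q_\infty$), so the first index set is contained in the second. Combining the three displays: $\sup_{\mcl Q_\infty}\sP_{t,q}\le\liminf_N\bar F_N(t,q)\le\limsup_N\bar F_N(t,q)\le\sup_{p\le a\in\mcl K}\sP_{t,q}(p)\le\sup_{\mcl Q_\infty}\sP_{t,q}$, forcing equality throughout and in particular the existence of the limit. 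The main obstacle I anticipate is the lower bound: making the reduction from the abstract Hamilton--Jacobi comparison of \cite{chen2022hamilton, mourrat2023free} to the concrete statement $f(t,q)=\sup_p\sP_{t,q}(p)$ requires care about which convexity hypothesis ($\xi$ convex on $\S^\D_+$ versus on $\R^{D\times D}$) is actually used in those references and whether their Hopf--Lax formula is stated in the enriched ($q\ne0$) generality; if \eqref{e.parisi.convex.var} is taken as known (as the excerpt suggests via "As will be shown here"), this obstacle largely dissolves and the proof is short. Everything else is routine continuity and the already-established cavity bound.
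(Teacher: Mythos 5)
Your proposal follows essentially the same route as the paper: the lower bound is obtained by combining $\liminf_N \bar F_N \ge f$ from \cite{mourrat2023free} with the Hopf--Lax representation of $f$ from \cite{chen2022hamilton} (which is indeed proved there under convexity on $\S^\D_+$ and for general $q$), and the upper bound is exactly Corollary~\ref{c.cavity}, whose output $p_+\le a_+\in\mathcal{K}$ lands in the first supremum, so the sandwich closes as you describe. Two caveats. First, your ``most economical route'' via \eqref{e.parisi.convex.var} would be circular: in the paper that formula is itself a corollary of this proposition (its proof uses \eqref{e.limsup<} and the duality identity below), so you must work from the sup-inf Hopf--Lax formula \eqref{e.hopf-lax} directly; your comparison-principle alternative is legitimate and is in fact mentioned in the paper as a remark after the proof. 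Second, the one computation your plan leaves implicit is precisely where convexity on $\S^\D_+$ enters: to evaluate the inner infimum of the Hopf--Lax formula at the choice $q'=t\nabla\xi(p)$ one needs $\xi^*(\nabla\xi(b))=b\cdot\nabla\xi(b)-\xi(b)=\theta(b)$ for $b\in\S^\D_+$, which follows from the first-order convexity inequality $\xi(b')-\xi(b)\ge(b'-b)\cdot\nabla\xi(b)$; with that identity filled in, the argument is complete.
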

\begin{proof}
We fix any $(t,q)$. 
Under the stated convexity assumption, the lower bound in \cite[Theorem~1.1]{mourrat2023free} expressed in terms of the solution to \eqref{e.hj} is proved in~\cite[Corollary~4.14]{chen2022hamilton} to admit a representation by the Hopf--Lax formula: 
\begin{align}\label{e.hopf-lax}
    \liminf_{N\to\infty} \bar F_N(t,q)  \geq \sup_{{q'}\in \C_\infty} \inf_{p'\in\C_\infty}\Ll\{\psi({q}+{q'})- \la q',p'\ra_\cH +t\int_0^1 \xi({p'}(s))\d s\Rr\}.
\end{align}
On the other hand, by Corollary~\ref{c.cavity}, we can find ${p} \in \C\cap L^\infty_{\leq 1}$ and $a\in\mathcal{K}$ satisfying $a\geq p$ such that
\begin{align}\label{e.limsup<}
    \limsup_{N\to\infty} \bar F_N(t,{q}) \leq \sP_{t,{q}}({p}) = \psi({q}+t\nabla\xi({p}))-t \int_0^1\theta({p}(s))\d s,
\end{align}
where we recall that $\theta$ is defined in~\eqref{e.theta=}.
To compare these two formulas, we need to show that
\begin{align}\label{e.theta=xi*()}
    \theta(b) = \xi^*(\nabla\xi(b)),\quad\forall b\in \S^\D_+,
\end{align}
with $\xi^*$ as in \eqref{e.def.xistar}. 
Fix any $b\in\S^\D_+$. It is clear from their definitions that $\theta(b) \leq  \xi^*(\nabla\xi(b))$. For any $b'\in\S^\D_+$, the convexity of $\xi$ on $S^D_+$ implies that for every $\lambda \in [0,1]$, 
\begin{equation*}  \lambda \xi(b') + (1-\lambda)\xi(b) \ge \xi(\lambda b'+ (1-\lambda)b).
\end{equation*}
Rearranging terms and sending $\lambda$ to zero, we obtain that 
\begin{equation*}  \xi(b')-\xi(b)\geq (b'-b)\cdot \nabla\xi(b),
\end{equation*}
and thus
\begin{equation*}  b\cdot\nabla\xi(b)-\xi(b)\geq b'\cdot\nabla\xi(b)-\xi(b').
\end{equation*}
Taking the supremum over $b'\in\S^\D_+$, we get $\theta(b)\geq \xi^*(\nabla\xi(b))$, and thus complete the argument for~\eqref{e.theta=xi*()}.

Using~\eqref{e.theta=xi*()}, we have
\begin{multline*}
    \int_0^1\theta({p}(s))\d s  = \int_0^1 \xi^*(\nabla\xi({p}(s))) \d s \geq \sup_{p'\in\C_\infty}\Ll\{\la \nabla\xi(p), p'\ra_\cH-\int_0^1\xi(p'(s))\d s \Rr\}  
    \\
    \geq \la \nabla\xi(p), p\ra_\cH-\int_0^1\xi(p(s))\d s = \int_0^1\theta({p}(s))\d s ,
\end{multline*}
which implies
\begin{align}\label{e.int_theta=sup}
    \int_0^1\theta({p}(s))\d s = \sup_{p'\in\C_\infty}\Ll\{\la \nabla\xi(p), p'\ra_\cH-\int_0^1\xi(p'(s))\d s \Rr\}.
\end{align}
Applying this to \eqref{e.hopf-lax}, we get
\begin{align*}
    \liminf_{N\to\infty} \bar F_N(t,q)  & \geq  \inf_{p'\in\C_\infty}\Ll\{\psi({q}+t\nabla\xi(p))- t\la \nabla\xi(p),p'\ra_\cH +t\int_0^1 \xi({p'}(s))\d s\Rr\}
    \\
    &\geq \psi({q}+t\nabla\xi(p))- t\int_0^1 \theta({p}(s))\d s,
\end{align*}
which along with~\eqref{e.limsup<} yields the first equality in~\eqref{e.parisi_hj}. The second equality can be deduced similarly.
\end{proof}

\begin{proof}[Proof of Theorem~\ref{t.parisi.convex}]
Fix any $(t,q)\in\R_+\times \C_2$.
Applying~\eqref{e.int_theta=sup} to \eqref{e.limsup<} and comparing it with~\eqref{e.hopf-lax}, we get
\begin{align*}
    \lim_{N\to\infty} \bar F_N(t,q)  &= \sup_{{q'}\in \C_\infty} \inf_{p'\in\C_\infty}\Ll\{\psi({q}+{q'})- \la q',p'\ra_\cH +t\int_0^1 \xi({p'}(s))\d s\Rr\}
    \\
    & = \sup_{q'\in q+\C_\infty} \inf_{p\in\C_\infty} \mcl J_{t,q}(q',p),
\end{align*}
where the last equality follows from the definition of $\mcl J_{t,{q}}(q',{p})$ in~\eqref{e.def.Parisi.functional}.
Hence, we have verified the first identity in~\eqref{e.parisi.convex}.
It is proved in \cite[Theorem~4.6]{chen2022hamilton} that the unique viscosity solution~$f$ to~\eqref{e.hj}, defined in \cite[Definition~4.2]{chen2022hamilton}, admits the Hopf--Lax representation under the assumption that $\xi$ is convex on $\S^\D_+$. The Hopf--Lax formula for $f$ evaluated at $(t,q)$ has the same expression as the one on the first line of the above display. Since this identity holds at every $(t,q)$, we conclude that $\lim_{N\to\infty}\bar F_N =f$ pointwise everywhere on $\R_+\times \C_2$.
\end{proof}

In the above proof, to show that $\Ll(\bar F_N\Rr)_{N\in\N}$ converges to the solution $f$, we directly match the variational formula for $\lim_{N\to\infty}\bar F_N$ with the Hopf--Lax representation of $f$. We mention another approach via the comparison principle for viscosity solutions. Let $p$ be as given in~\eqref{e.limsup<} and consider the function $g: (t',q')\mapsto \sP_{t',q'}(p)$. Using the convexity of $\xi$, one can verify that $g$ is a subsolution of~\eqref{e.hj}. The comparison principle (e.g.\ \cite[Proposition~3.8 and Remark~4.7]{chen2022hamilton}) implies that $g\leq f$ everywhere. This along with~\eqref{e.limsup<} yields $\limsup \bar F_N(t,q)\leq g(t,q)\leq f(t,q)$. Since $(t,q)$ was arbitrary, we have $\limsup \bar F_N\leq f$ everywhere on $\R_+\times \C_2$. The matching lower bound is provided by \cite[Theorem~1.1]{mourrat2023free}. This argument can be seen in~\cite{chen2022pde} for $D=1$.

In the corollary below, we verify the version of the Hopf--Lax formula appearing in~\eqref{e.parisi.convex.var}.

\begin{corollary}[Alternative form of Hopf--Lax formula]If $\xi$ is convex on $\S^\D_+$, then for every $t \ge 0$ and $q \in \mcl Q_2$, we have
\begin{align}
\label{e.parisi.convex2}
    \lim_{N\to\infty} \bar F_N(t,q) = \sup_{q' \in \mcl Q_\infty} \Ll\{ \psi(q + q') - t \int_0^1 \xi^* \Ll( \frac{q'}{t} \Rr)  \Rr\} .
\end{align}
\end{corollary}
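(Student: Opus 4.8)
The plan is to start from the variational formula already established in Theorem~\ref{t.parisi.convex}, namely
\begin{equation*}
\lim_{N\to\infty} \bar F_N(t,q) = \sup_{q' \in q + \mcl Q_\infty} \inf_{p \in \mcl Q_\infty} \mcl J_{t,q}(q',p),
\end{equation*}
and to manipulate the inner infimum over $p$ into the convex-dual form $-t\int_0^1 \xi^*(q'/t)$. Recalling $\mcl J_{t,q}(q',p) = \psi(q') + \la p, q-q'\ra_\cH + t\int_0^1\xi(p)$, after the change of variables replacing $q'$ by $q+q'$ (so that $q'$ now ranges over $\mcl Q_\infty$), the inner problem becomes
\begin{equation*}
\inf_{p\in\mcl Q_\infty} \Ll\{ \la p, -q'\ra_\cH + t\int_0^1 \xi(p) \Rr\} = -\sup_{p\in\mcl Q_\infty} \int_0^1 \Ll( p(u)\cdot \frac{q'(u)}{t} - \xi(p(u)) \Rr) \d u \, \cdot t.
\end{equation*}
So the whole point is to show that this last supremum over $p\in\mcl Q_\infty$ equals $\int_0^1 \xi^*(q'(u)/t)\,\d u$, where $\xi^*$ is defined in~\eqref{e.def.xistar} as a supremum over $b\in\S^\D_+$.

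The first key step is a pointwise/measurable-selection argument: since $q'(u)/t \in S^D_+$ for a.e.\ $u$ (because $q'\in\mcl Q_\infty$ and $t>0$; the case $t=0$ is trivial and handled separately since then the formula reads $\lim \bar F_N = \psi(q)$ which matches $\sup_{q'}\psi(q+q')$ with the convention $0\cdot\xi^*(\cdot/0)=0$ forcing $q'=0$), we have $\int_0^1 \xi^*(q'(u)/t)\,\d u = \int_0^1 \sup_{b\in S^D_+}(q'(u)/t\cdot b - \xi(b))\,\d u$. The inequality $\le$ (i.e.\ the supremum over $p\in\mcl Q_\infty$ is at most this integral) is immediate since for any fixed $p\in\mcl Q_\infty$, $p(u)\cdot q'(u)/t - \xi(p(u)) \le \xi^*(q'(u)/t)$ pointwise. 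For the reverse inequality, one must produce near-optimal $p\in\mcl Q_\infty$; this is where monotonicity of $p$ is the constraint to respect. The natural candidate is $p = \nabla\xi^*(q'/t)$ (or a regularized version), and one needs that $u\mapsto \nabla\xi^*(q'(u)/t)$ is increasing in the $S^D_+$ order. This follows from monotonicity of the subdifferential of the convex function $\xi^*$ combined with the fact that $q'$ is increasing: if $a \le b$ in $S^D_+$ and $p_a \in \partial\xi^*(a)$, $p_b\in\partial\xi^*(b)$, then $(p_b - p_a)\cdot(b-a) \ge 0$, but this alone does not give $p_b - p_a \in S^D_+$. The cleanest route is instead to invoke the relation $\theta(b) = \xi^*(\nabla\xi(b))$ proved in~\eqref{e.theta=xi*()} together with~\eqref{e.int_theta=sup}: for $p\in\mcl Q_\infty$ one has $\int_0^1\theta(p) = \sup_{p'\in\mcl Q_\infty}\{\la\nabla\xi(p),p'\ra_\cH - \int_0^1\xi(p')\}$, which is exactly the structure we need once we identify $q'/t$ with $\nabla\xi(p)$.

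Concretely, I would argue as follows. Given $q'\in\mcl Q_\infty$, if $q'/t$ lies in the range of $\nabla\xi$ restricted to $\mcl Q_\infty$ — say $q'/t = \nabla\xi(p)$ for some $p\in\mcl Q_\infty$ — then~\eqref{e.int_theta=sup} directly yields $\sup_{p''\in\mcl Q_\infty}\{\la q'/t, p''\ra_\cH - \int_0^1\xi(p'')\} = \int_0^1\theta(p) = \int_0^1\xi^*(\nabla\xi(p)) = \int_0^1\xi^*(q'/t)$, which gives the desired identity. For general $q'$, one argues by density/approximation: the set $\{t\nabla\xi(p): p\in\mcl Q_\infty, p\leq a \text{ for some } a\in\mcl K\}$ is rich enough that, combined with the supremum over $q'$ in~\eqref{e.parisi_hj} being equivalently taken over $\{p\in\mcl Q: \exists a\in\mcl K, p\le a\}$ via $\mathscr P_{t,q}$, one can transfer. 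Actually the slickest presentation reverses the logic: I would \emph{start} from $\sup_{p} \mathscr P_{t,q}(p) = \sup_p\{\psi(q+t\nabla\xi(p)) - t\int_0^1\theta(p)\}$ (which equals the limit free energy by Proposition~\ref{p.parisi}), substitute $q' := t\nabla\xi(p)$ so that $\theta(p) = \xi^*(\nabla\xi(p)) = \xi^*(q'/t)$ by~\eqref{e.theta=xi*()}, giving $\sup_p \mathscr P_{t,q}(p) = \sup_{q'\in t\nabla\xi(\mcl Q_\infty)}\{\psi(q+q') - t\int_0^1\xi^*(q'/t)\}$, and then check that enlarging the domain from $t\nabla\xi(\mcl Q_\infty)$ to all of $\mcl Q_\infty$ does not change the supremum. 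The latter uses that for any $q'\in\mcl Q_\infty$, either $\xi^*(q'(u)/t) = +\infty$ on a set of positive measure (making that $q'$ irrelevant) or $q'(u)/t \in \mathrm{dom}(\xi^*)$ a.e., in which case $\nabla\xi^*(q'/t)$ provides a competitor $p$ — and its monotonicity, the one genuinely delicate point, I would obtain from the fact that $\xi$ convex on $S^D_+$ implies $\nabla\xi$ is monotone as a map $S^D_+\to\R^{D\times D}$, hence restricting attention to the diagonal-dominant structure, $\nabla\xi^*$ inverts it and $q'$ increasing in the $S^D_+$-order pulls back to $p$ increasing. I expect this monotonicity verification — ensuring the dual optimizer stays inside $\mcl Q_\infty$ rather than just being an $L^2$ function — to be the main obstacle, and if a direct argument is awkward I would fall back on approximating $q'$ by piecewise-constant paths and handling each constant piece by the finite-dimensional Fenchel duality $\sup_{b\in S^D_+}(a\cdot b - \xi(b)) = \xi^*(a)$ directly, then passing to the limit using the continuity of $\psi$ (Corollary~\ref{c.psi_smooth}) and dominated convergence for the $\int_0^1\xi^*(q'/t)$ term.
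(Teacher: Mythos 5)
Your proposal contains all the ingredients of a correct proof, but it is organized around a step that has a genuine gap — and that step turns out to be unnecessary. You frame the problem as proving the \emph{exact} identity
\begin{equation*}
\sup_{p\in\mcl Q_\infty}\Ll\{\tfrac1t\la q',p\ra_\cH-\int_0^1\xi(p)\Rr\}=\int_0^1\xi^*\Ll(\tfrac{q'}{t}\Rr),
\end{equation*}
whose hard direction ($\ge$) requires exhibiting a near-maximizer that is \emph{increasing in the $S^D_+$ order}. As you yourself observe, monotonicity of the subdifferential of the convex function $\xi^*$ only gives $(p_b-p_a)\cdot(b-a)\ge 0$, not $p_b-p_a\in S^D_+$, so the candidate $\nabla\xi^*(q'/t)$ is not known to lie in $\mcl Q$; and the piecewise-constant fallback has exactly the same defect, since the pointwise maximizers of $b\mapsto a\cdot b-\xi(b)$ need not be ordered when $a$ increases. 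As written, this step does not close.

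The fix is that you never need the hard direction. The two one-sided inequalities you already wrote down suffice, and they are precisely what the paper uses. For $\liminf_{N}\bar F_N(t,q)\ge\rhs$: starting from the saddle formula of Theorem~\ref{t.parisi.convex} (equivalently \eqref{e.hopf-lax}), the \emph{easy} pointwise Fenchel inequality $\tfrac1t q'(u)\cdot p'(u)-\xi(p'(u))\le\xi^*(q'(u)/t)$ gives $\inf_{p'}\mcl J_{t,q}(q+q',p')\ge\psi(q+q')-t\int_0^1\xi^*(q'/t)$ for \emph{every} $q'\in\mcl Q_\infty$, and taking the supremum over $q'$ finishes this direction — no attainment, no monotone selection. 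For $\limsup_N\bar F_N(t,q)\le\rhs$: your "slickest presentation" is correct and complete on its own — take the specific $p$ from \eqref{e.limsup<}, set $q'=t\nabla\xi(p)\in\mcl Q_\infty$, and use $\theta(p)=\xi^*(\nabla\xi(p))$ from \eqref{e.theta=xi*()} to see that $\sP_{t,q}(p)=\psi(q+q')-t\int_0^1\xi^*(q'/t)\le\rhs$. The "check that enlarging the domain does not change the supremum" you append to this is exactly the lower-bound direction, already handled above by the easy Fenchel inequality; there is no need to show that a general $q'$ is approximated by elements of $t\nabla\xi(\mcl Q_\infty)$. If you reassemble your proposal along these lines, discarding the monotone-maximizer construction entirely, you recover the paper's proof.
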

\begin{proof}
We fix any $(t,q)$ and denote the variational formula on the right-hand side of~\eqref{e.parisi.convex2} by $\rhs$.
Using~\eqref{e.theta=xi*()} in~\eqref{e.limsup<}, we get
\begin{equation*}  \limsup_{N\to\infty}\bar F_N(t,q)\leq\rhs.
\end{equation*}
To show the converse bound, we recall the definition of $\xi^*$ in~\eqref{e.def.xistar} to see that, for every $q' \in \mcl Q_\infty$,
\begin{align*}
     \sup_{p'\in\C_\infty}\Ll\{\frac{1}{t}\la q',p'\ra_\cH -\int_0^1\xi(p'(s))\d s\Rr\} \leq \int_0^1 \xi^* \Ll(\frac{q'(s)}{t}\Rr)\d s.
\end{align*}
Using this in~\eqref{e.hopf-lax}, we get $\liminf_{N\to\infty}\bar F_N(t,q)\geq \rhs$. This gives the announced identity at $(t,q)$.
\end{proof}

We can extract from Proposition~\ref{p.parisi} a result that takes a form that is more common in the literature. For every $\pi\in\C_\infty$ and $x\in \S^\D$, we set
\begin{align} \label{e.sP(pi,x)}
\begin{split}
    \sP(\pi,x) := \E\log\iint\exp\Ll(w^{\nabla\xi\circ\pi}(\alpha)\cdot\tau-\frac{1}{2}\nabla\xi\circ\pi(1)\cdot\tau\tau^\intercal+x\cdot \tau\tau^\intercal \Rr)\d P_1(\sigma) \d \fR(\alpha)
     \\+\frac{1}{2}\int_0^1\theta\Ll(\pi(s)\Rr)\d s.
\end{split}
\end{align}
This resembles the classic Parisi functional for the Sherrington--Kirkpatrick model.

\begin{corollary}[Parisi formula for free energy with correction]\label{c.parisi}
If $\xi$ is convex on $\S^\D_+$, then
\begin{align*}
    \lim_{N\to\infty} \frac{1}{N}\E \log \int \exp\Ll(H_N(\sigma) - \frac{N}{2}\xi\Ll(\frac{\sigma\sigma^\intercal}{N}\Rr)\Rr)\d P_N(\sigma) 
     = \inf_{\pi\in\C_\infty} \sP(\pi,0).
\end{align*}
\end{corollary}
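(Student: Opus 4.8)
The plan is to deduce Corollary~\ref{c.parisi} from Proposition~\ref{p.parisi} by relating the free energy without external field and with the compensating term $-\tfrac{N}{2}\xi(\sigma\sigma^\intercal/N)$ to $\bar F_N(t,q)$ evaluated at a suitable point, and then simplifying the resulting variational formula. First I would observe that the quantity on the left is $-\bar F_N(\tfrac12, 0)$: indeed, comparing with \eqref{e.def.simple.F}, taking $t = \tfrac12$ makes $\sqrt{2t}H_N(\sigma) = H_N(\sigma)$ and $tN\xi(\sigma\sigma^\intercal/N) = \tfrac{N}{2}\xi(\sigma\sigma^\intercal/N)$, while $q = 0$ kills the external-field terms $\sqrt{2h}z\cdot\sigma$ and $h\cdot\sigma\sigma^\intercal$. (A minor point: one should check that the sign conventions match, i.e.\ that $\bar F_N$ carries the leading minus sign, so that $\lim_N \tfrac1N\E\log\int\exp(\cdots) = -\lim_N \bar F_N(\tfrac12,0)$.)

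Next I would apply Proposition~\ref{p.parisi} at $(t,q) = (\tfrac12, 0)$ to get
\begin{equation*}
\lim_{N\to\infty}\bar F_N\Ll(\tfrac12,0\Rr) = \sup_{p\in\mcl Q_\infty} \sP_{\frac12, 0}(p) = \sup_{p\in\mcl Q_\infty}\Ll\{\psi\Ll(\tfrac12\nabla\xi(p)\Rr) - \tfrac12\int_0^1\theta(p(s))\,\d s\Rr\},
\end{equation*}
using the definition of $\mathscr P_{t,q}$ in \eqref{e.mathscrP}. Then the task reduces to matching the right-hand side with $-\inf_{\pi}\sP(\pi,0) = \sup_\pi(-\sP(\pi,0))$. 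I would change variables: write $\pi = p$ and compare $\psi(\tfrac12\nabla\xi(p))$, given explicitly via \eqref{e.psi=} as $-\E\log\iint\exp(\sqrt2\, w^{\frac12\nabla\xi(p)}(\alpha)\cdot\sigma - \tfrac12\nabla\xi(p)(1)\cdot\sigma\sigma^\intercal)\,\d P_1\,\d\fR$, with the first line of \eqref{e.sP(pi,x)} at $x=0$. The scaling relation $w^{c\kappa} \stackrel{d}{=} \sqrt c\, w^\kappa$ (from the covariance formula \eqref{e.gaussian_cascade}) lets one absorb the factor $\tfrac12$: namely $\sqrt2\, w^{\frac12\nabla\xi(p)} \stackrel{d}{=} w^{\nabla\xi(p)} = w^{\nabla\xi\circ p}$, and $\tfrac12\nabla\xi(p)(1) = \tfrac12\nabla\xi\circ p(1)$, so that $\psi(\tfrac12\nabla\xi(p))$ equals minus the first-line integral term in $\sP(p,0)$. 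Likewise the $\theta$-terms match up to sign: $-\tfrac12\int_0^1\theta(p(s))\,\d s$ versus $+\tfrac12\int_0^1\theta(\nabla\xi\circ p(s))\,\d s$ — here I need to be careful, since $\sP(\pi,x)$ is written with $\theta$ composed with $\nabla\xi\circ\pi$, so the correspondence is through the substitution $p \leftrightarrow \nabla\xi\circ\pi$ rather than $p\leftrightarrow\pi$. I would therefore reparametrize the supremum over $p\in\mcl Q_\infty$ in Proposition~\ref{p.parisi} by writing $p$ in place of $\nabla\xi\circ\pi$; since $\nabla\xi$ is monotone on $S^D_+$ (as $\xi$ is convex there), $\nabla\xi\circ\pi \in\mcl Q$ whenever $\pi\in\mcl Q$, and conversely every relevant $p$ arises this way up to the closure taken in the $\sup$, so the two suprema agree.

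The main obstacle I anticipate is precisely this reparametrization and the verification that the supremum over $\pi\in\mcl Q_\infty$ of $-\sP(\pi,0)$ coincides with the supremum over $p\in\mcl Q_\infty$ of $\sP_{\frac12,0}(p)$: one must confirm that $\xi^* $ (equivalently $\theta\circ\nabla\xi$) interacts correctly, invoking the identity $\theta(b) = \xi^*(\nabla\xi(b))$ established in the proof of Proposition~\ref{p.parisi} (see \eqref{e.theta=xi*()}), and that the change of variables $p = \nabla\xi\circ\pi$ is onto the effective domain of the supremum — which may require approximating an arbitrary $p\in\mcl Q_\infty$ by images $\nabla\xi\circ\pi$, or else directly checking that $\sP(\pi,0)$ depends on $\pi$ only through $\nabla\xi\circ\pi$ so that no generality is lost. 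Once these bookkeeping identities are in place, combining them with the reduction $\lim_N\tfrac1N\E\log\int\exp(\cdots) = -\lim_N\bar F_N(\tfrac12,0)$ and Proposition~\ref{p.parisi} yields the claimed formula. I would also double-check the roles of the constants $\tfrac12$ appearing in \eqref{e.sP(pi,x)} against those produced by the $t=\tfrac12$ specialization, as these are the easiest place for a spurious factor of $2$ to creep in.
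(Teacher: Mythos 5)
Your reduction of the left-hand side to $-\bar F_N(\tfrac12,0)$, the application of Proposition~\ref{p.parisi} at $(t,q)=(\tfrac12,0)$, and the matching of $\psi(\tfrac12\nabla\xi(p))$ with the first line of $\sP(p,0)$ via $\sqrt2\,w^{\frac12\kappa}\stackrel{\d}{=}w^{\kappa}$ are all correct and are exactly the paper's route. The gap is in your treatment of the $\theta$-terms. The reparametrization $p=\nabla\xi\circ\pi$ you propose is internally inconsistent: matching the cascade terms forces $\pi=p$, since $-\psi(\tfrac12\nabla\xi(p))$ produces the field $w^{\nabla\xi\circ p}$, which must coincide with the field $w^{\nabla\xi\circ\pi}$ in \eqref{e.sP(pi,x)}; whereas matching the $\theta$-terms as literally printed would force $p=\nabla\xi\circ\pi$. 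If you actually substitute $p=\nabla\xi\circ\pi$ into $\sP_{\frac12,0}$, the first term becomes $\psi(\tfrac12\nabla\xi(\nabla\xi\circ\pi))$, whose cascade field is $w^{\nabla\xi\circ\nabla\xi\circ\pi}$, and the first terms no longer agree. No change of variables reconciles the two functionals as printed: $\sP(\pi,0)$ as written depends on $\pi$ only through $\nabla\xi\circ\pi$, while $-\sP_{\frac12,0}(p)$ depends on $p$ through $\nabla\xi\circ p$ in one term and through $p$ itself in the other, and the resulting infima are genuinely different optimization problems (one ranges over the image $\nabla\xi(\mcl Q_\infty)$, the other over $\mcl Q_\infty$ with a mismatched penalty).

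The resolution is that the intended identity is the direct one, $-\sP_{\frac12,0}(p)=\sP(p,0)$ with $\pi=p$ and no reparametrization, which is what the paper's proof asserts; this is consistent only if the final term of \eqref{e.sP(pi,x)} is read as $\tfrac12\int_0^1\theta(\pi(s))\,\d s$, the composition with $\nabla\xi$ there being a misprint. That reading is confirmed by \eqref{e.theta=xi*()} together with the Hopf--Lax correction $t\int_0^1\xi^*(q'/t)$ evaluated at $q'=t\nabla\xi(p)$, which equals $t\int_0^1\theta(p)$, and by the classical Parisi functional, in which the cascade field has covariance increments $\nabla\xi\circ\pi$ while the scalar correction involves $\theta$ of the overlap quantile $\pi$ itself. (The distinction matters: for $D=1$ and $\xi(a)=\beta^2a^2$ one has $\theta(\nabla\xi(a))=4\beta^4\,\theta(a)\neq\theta(a)$ in general.) With that reading your argument closes in one line and coincides with the paper's proof; without it, the corollary does not follow from Proposition~\ref{p.parisi} by the change of variables you describe.
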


\begin{proof}
Recall the expression of $\bar F_N(t,q)$ from~\eqref{e.def.barFN.continuous} and the random Hamiltonian $H_N(\sigma)$ with covariance~\eqref{e.def.xi}.
Setting $t=\frac{1}{2}$ and $q=0$, we get
\begin{align*}
    -\bar F_N\Ll(\frac{1}{2},0\Rr)=\frac{1}{N}\E \log \int \exp\Ll(H_N(\sigma) - \frac{N}{2}\xi\Ll(\frac{\sigma\sigma^\intercal}{N}\Rr)\Rr)\d P_N(\sigma).
\end{align*}
Recalling the definition of $\sP_{t,q}$ in~\eqref{e.mathscrP}, we have $-\sP_{\frac{1}{2},0}(p) = \sP(p,0)$ for every $p\in\C_\infty$, where we also used $\sqrt{2}w^{\frac{1}{2}q'}\stackrel{\d}{=}w^{q'}$ for $q'\in\C_\infty$, which is clear from~\eqref{e.gaussian_cascade}.
Now, the announced identity follows from~\eqref{e.parisi_hj} in Proposition~\ref{p.parisi} evaluated at $t=\frac{1}{2}$ and $q=0$.
\end{proof}

We can further remove the correction term $- \frac{N}{2}\xi\Ll(\frac{\sigma\sigma^\intercal}{N}\Rr)$ and get the following.

\begin{proposition}[Parisi formula]\label{p.parisi_std}
If $\xi$ is convex on $\S^\D_+$, then
\begin{align*}
    \lim_{N\to\infty} \frac{1}{N}\E \log \int \exp\Ll(H_N(\sigma)\Rr)\d P_N(\sigma) 
    & = \sup_{y\in\S^\D_+}\inf_{\pi\in\C_\infty}\Ll\{\sP(\pi,y)-\frac{1}{2}\xi^*(2y)\Rr\}
    \\
    & = \sup_{z\in\S^\D_+}\inf_{y\in\S^\D_+,\ \pi\in\C_\infty}\Ll\{\sP(\pi,y)-y\cdot z + \frac{1}{2}\xi(z)\Rr\}.
\end{align*}
\end{proposition}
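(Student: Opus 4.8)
The plan is to start from Corollary~\ref{c.parisi}, which already gives a Parisi-type formula for the free energy with the correction term $-\frac N2 \xi(\sigma\sigma^\intercal/N)$, and remove this term by introducing an auxiliary variable that linearizes the correction. The idea is that for every fixed $\sigma$ in the support of $P_N$, writing $a = \sigma\sigma^\intercal/N \in \mcl K$, we can use the convexity of $\xi$ on $S^D_+$ together with the definition of $\xi^*$ in~\eqref{e.def.xistar} to write
\begin{equation*}
-\frac12 \xi(a) = \inf_{z \in S^D_+} \Ll\{ -\tfrac12 a \cdot z + \tfrac12 \xi^*(z) \Rr\} = \inf_{z \in S^D_+} \Ll\{ -y \cdot \sigma\sigma^\intercal + \tfrac1N \cdot \tfrac12 N \xi^*(2y) \Rr\}
\end{equation*}
after the substitution $z = 2Ny$ (so that $\tfrac12 a \cdot z = y \cdot \sigma\sigma^\intercal$ and $\tfrac12 \xi^*(z) = \tfrac12 \xi^*(2Ny)$; one should be careful with the scaling, but since $\xi^*$ is positively homogeneous only of degree $1$ in the sense dictated by its definition, the cleaner route is to keep $z$ as the dual variable of $a$ and note $-\frac N2\xi(\sigma\sigma^\intercal/N) = \inf_{z} \{-\frac12 \sigma\sigma^\intercal\cdot z + \frac N2\xi^*(z)\}$). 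I would first prove the matching upper and lower bounds separately.

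For the upper bound, I would fix $z \in S^D_+$ (or rather, a family of such, one allowed per replica — but the key point is that the correction term is replaced by its linearization uniformly in $\sigma$) and bound
\begin{equation*}
\frac1N \E \log \int \exp\Ll(H_N(\sigma)\Rr) \d P_N(\sigma) \le \frac1N\E\log\int \exp\Ll(H_N(\sigma) - \tfrac N2\xi\Ll(\tfrac{\sigma\sigma^\intercal}{N}\Rr) + \tfrac12 \sigma\sigma^\intercal\cdot z - \tfrac N2\xi^*(z) + \tfrac N2\xi^*(z)\Rr)\d P_N(\sigma),
\end{equation*}
wait — this needs $-\frac12\sigma\sigma^\intercal\cdot z + \frac N2\xi^*(z) \ge -\frac N2\xi(\sigma\sigma^\intercal/N)$, which is exactly the Fenchel--Young inequality. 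So adding the quantity $\frac12\sigma\sigma^\intercal\cdot z$ (a bilinear term that can be absorbed into the external field, matching the $x\cdot\tau\tau^\intercal$ slot of $\sP(\pi,x)$ after the cavity reduction to $N=1$) and subtracting the constant $\frac N2\xi^*(z)$ only increases the integrand, giving an upper bound of the form $-\bar F_N^{(z)}(\tfrac12, 0) + \tfrac12\xi^*(z)$ for an enriched free energy with the extra external-field term $\tfrac12\sigma\sigma^\intercal\cdot z$. Applying Corollary~\ref{c.parisi} (suitably adapted to include this harmless extra one-body term, which just shifts $\sP(\pi,\cdot)$ from $\sP(\pi,0)$ to $\sP(\pi, y)$ with $z=2y$) and then taking the infimum over $z\in S^D_+$ yields $\limsup_{N}\frac1N\E\log\int e^{H_N} \le \inf_z\{\inf_\pi \sP(\pi,\tfrac z2) + \tfrac12\xi^*(z)\}$. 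For the lower bound, by the Talagrand positivity principle (or rather, since we work on $S^D_+$, by the fact that the relevant overlaps concentrate on $\mcl K\subset S^D_+$ after perturbation), one knows that in the Gibbs measure associated with $H_N$ the self-overlap $\sigma\sigma^\intercal/N$ concentrates, and for the optimal $z$ the inequality above is essentially tight; more carefully, one picks $z$ to be (close to) $\nabla\xi$ evaluated at the limiting self-overlap value, and runs the interpolation lower bound of \cite{mourrat2023free} / the argument behind Corollary~\ref{c.cavity} in reverse. Equating the two bounds and then recognizing that $\inf_z\{\cdots + \tfrac12\xi(z)\}$-type expressions reproduce $-\tfrac12\xi^*(2y)$ via $\xi^{**}=\xi$ on $S^D_+$ gives the two displayed forms, the second being just the un-dualized version of the first (expanding $\xi^*(2y) = \sup_{z\in S^D_+}\{2y\cdot z - \xi(z)\}$ so that $-\tfrac12\xi^*(2y) = \inf_z\{-y\cdot z + \tfrac12\xi(z)\}$, and then moving the $\sup$ over $z$ to the outside by a minimax argument — which is legitimate here because the expression is concave in $z$ and the constraint sets are convex, or alternatively because we only claim the equality of the two nested optimization values, not a genuine saddle-point exchange).

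The main obstacle I anticipate is the lower bound: unlike the upper bound, which is a soft Fenchel--Young plus Corollary~\ref{c.parisi} argument, the lower bound requires knowing that the self-overlap $\sigma\sigma^\intercal/N$ under the Gibbs measure for $H_N$ (with no correction term) does not escape to matrices outside $S^D_+$ where $\xi$ need not be convex — and then that the linearization is asymptotically sharp at the right value of $z$. For $D=1$ this is the classical Talagrand positivity principle; for the diagonal-$\xi$ case it is in \cite{bates2022free}; for the general vector case one must either invoke the synchronization/positivity machinery already developed in Section~\ref{s.cavity} (in particular the fact, used in the proof of Proposition~\ref{p.cavity_lim}, that subsequential overlap limits lie in $\mcl K \subset S^D_+$) or cite the positivity principle in the form available in \cite{pan, pan.vec}. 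Concretely, I would run the perturbed model as in Section~\ref{s.cavity}, extract a subsequential limit of the self-overlap $a\in\mcl K$, set $z^\star = \nabla\xi(a)$ and $y^\star = z^\star/2$, use Corollary~\ref{c.cavity}~\eqref{i.c.cavity_I_2} applied to the enriched free energy with external field $\tfrac12\sigma\sigma^\intercal\cdot z^\star$ to identify $\liminf_N\frac1N\E\log\int e^{H_N}$ from below by $\inf_\pi\sP(\pi,y^\star) - \tfrac12\xi^*(z^\star)$, and finally check (again using convexity of $\xi$ and $a\in S^D_+$) that this lower bound coincides with the upper bound's value at the optimal $z$. Collecting everything and cleaning up the minimax exchange in the last line completes the proof.
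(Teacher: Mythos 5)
Your overall strategy (linearize the correction term $-\tfrac N2\xi(\sigma\sigma^\intercal/N)$ by convex duality, apply Corollary~\ref{c.parisi} with a tilted single-spin measure, and control the error via concentration of the self-overlap) is a genuinely different route from the paper's. The paper instead considers $\cF_N(t,x)$, the corrected free energy with an extra $tN\xi(\sigma\sigma^\intercal/N)+x\cdot\sigma\sigma^\intercal$ in the exponent, shows via \cite{chen2023self} that its limit is the viscosity solution of the finite-dimensional equation $\partial_t f=\xi(\nabla_x f)$ with initial condition $\inf_\pi\sP(\pi,\cdot)$, and reads off both displayed identities as the Hopf--Lax and Hopf representations of that solution at $(t,x)=(\tfrac12,0)$. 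Your route is classically viable, but as written it has genuine gaps.

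First, your two bounds are swapped. Fenchel--Young gives $\tfrac N2\xi(\sigma\sigma^\intercal/N)\ge\tfrac12\sigma\sigma^\intercal\cdot z-\tfrac N2\xi^*(z)$, hence $H_N\ge\big[H_N-\tfrac N2\xi(\sigma\sigma^\intercal/N)+\tfrac12\sigma\sigma^\intercal\cdot z\big]-\tfrac N2\xi^*(z)$; this yields the \emph{lower} bound $\liminf_N\tfrac1N\E\log\int e^{H_N}\ge\sup_y\{\inf_\pi\sP(\pi,y)-\tfrac12\xi^*(2y)\}$ for free, not the upper bound as you claim. The hard direction is the \emph{upper} bound, which requires the linearization to be asymptotically tight at $z^\star=\nabla\xi(a)$ for a limiting self-overlap $a\in\mathcal{K}$, using $\theta(a)=\xi^*(\nabla\xi(a))$ from \eqref{e.theta=xi*()} together with concentration of $\sigma\sigma^\intercal/N$ under the \emph{uncorrected} Gibbs measure. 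Your sketch of this step does not assemble these ingredients: Proposition~\ref{p.perturbation} only gives concentration after perturbation and along subsequences, so a transfer/localization argument is still needed, and this is precisely the part of the proof that the paper's viscosity-solution detour is designed to avoid. Second, your derivation of the second displayed identity from the first fails: the map $z\mapsto\sP(\pi,y)-y\cdot z+\tfrac12\xi(z)$ is \emph{convex} in the sup-variable $z$, not concave, so no Sion-type minimax exchange applies, and the two formulas are not nested reorderings of one another ($\sup_y\inf_{\pi,z}$ versus $\sup_z\inf_{y,\pi}$). Their equality is the coincidence of the Hopf--Lax and Hopf formulas for the finite-dimensional equation, which requires knowing that $x\mapsto\inf_\pi\sP(\pi,x)$ is convex — the paper obtains this from the convexity of $\cF_N(0,\cdot)$ via H\"older — and this ingredient is absent from your proposal.
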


\begin{proof}
The method is the same as in~\cite[Section~5]{mourrat2020extending} and~\cite[Section~5]{chen2023self}. For $N\in\N$ and $(t,x) \in \R_+\times \S^\D$, we consider
\begin{align*}
    \cF_N(t,x) := \frac{1}{N}\E\log\int\exp\Ll(H_N(\sigma)- \frac{N}{2}\xi\Ll(\frac{\sigma\sigma^\intercal}{N}\Rr)+ tN\xi\Ll(\frac{\sigma\sigma^\intercal}{N}\Rr)+x\cdot \sigma\sigma^\intercal\Rr)\d P_N(\sigma).
\end{align*}
It is shown in \cite[Proposition~5.2]{chen2023self} that the convexity of $\xi$ over $\S^\D_+$ implies that the limit of $(\cF_N)_{N\in\N}$ in the local uniform topology is a viscosity solution $f$ of $\partial_t f - \xi(\nabla_x f) =0$ on $(0,\infty)\times \S^\D$. The notion of viscosity solutions is defined in \cite[Section~5]{chen2023self}.
Recall that $\S^\D_{++}$ is the set of $\D$-by-$\D$ positive definite matrices, which is the interior of $\S^\D_+$.
Since $\S^\D_{++}$ is an open subset of $\S^\D$, we immediately have that $f$ is a viscosity solution of
\begin{align}\label{e.cF_hj}
    \partial_t f - \xi(\nabla_x f) =0\qquad \text{on}\quad (0,\infty)\times \S^\D_{++}.
\end{align}
We want to use variational formulas for the solution of~\eqref{e.cF_hj} to conclude. For this, we need to first identify the initial condition of $f$ and verify more properties.

Applying Corollary~\ref{c.parisi} with $e^{x\cdot\tau\tau^\intercal} \d P_1(\tau)$ substituted for $P_1$ therein, we have, for every $x\in\S^\D$,
\begin{align}\label{e.f(0,x)=}
    f(0,x) = \lim_{N\to\infty} \cF_N(0,x)  = \inf_{\pi\in\Pi} \sP(\pi,x).
\end{align}
It is verified in \cite[Lemma~5.1]{chen2023self} that $\cF_N$ is Lipschitz uniformly in $N$ and convex, and satisfies that $\cF_N(t,x) \leq \cF_N(t,x')$ whenever $x\geq x'$. Passing to the limit, we have that $f$ is Lipschitz and convex and that $f(t,\cdot)$ is increasing for every $t\geq 0$. These properties along with the well-posedness \cite[Proposition~5.3]{chen2023self} of \eqref{e.cF_hj} implies that $f$ is the viscosity solution of~\eqref{e.cF_hj} with the above initial condition, which is unique in the class of Lipschitz functions that are increasing for each fixed $t$. 

Since $\xi$ is convex on $\S^\D_+$ and $f(0,\cdot)$ is also convex, we can represent $f$ by the Hopf--Lax formula and by the Hopf formula described in \cite[Proposition~5.3]{chen2023self}: for every $t>0$ and $x\in\S^\D_+$,
\begin{align}\label{e.f(t,x)=}
\begin{split}
    f(t,x) & =\sup_{y\in\S^\D_+}\Ll\{f(0,x+y)-t\xi^*\Ll(\frac{y}{t}\Rr)\Rr\}
    \\
    & = \sup_{z\in\S^\D_+}\inf_{y\in\S^\D_+}\Ll\{f(0,y)+(x-y)\cdot z+ t\xi(z)\Rr\}.
\end{split}
\end{align}
Notice that, for every $N\in\N$,
\begin{align*}
    \cF_N\Ll(\frac{1}{2},0\Rr) = \frac{1}{N}\E\log\int \exp(H_N(\sigma))\d P_N(\sigma).
\end{align*}
Hence, setting $t=\frac{1}{2}$ and $x=0$ in~\eqref{e.f(t,x)=}, inserting~\eqref{e.f(0,x)=} to~\eqref{e.f(t,x)=}, and using the convergence of $\cF_N$ to $f$, we can obtain the announced identities.
\end{proof}

\begin{remark}
In the spin glass literature, sometimes it is preferred to consider left-continuous paths in $\Pi$ defined in~\eqref{e.Pi=} instead of the right-continuous ones in $\C_\infty$. 
We explained in Remark~\ref{r.left-cts} that, for $\pi\in\Pi$, the Gaussian process $w^\pi$ exists and satisfies all the properties in Section~\ref{s.cascades}. Hence, $\sP(\pi,x)$ is defined in the same way as in~\eqref{e.sP(pi,x)} for $\pi\in\Pi$ and $x\in\S^\D$. By the $L^1$-Lipschitzness of $\psi$ in Corollary~\ref{c.psi_smooth}, the value of $\sP(\pi,x)$ does not change if we replace $\pi$ by its left-continuous version.
Therefore, $\inf_{\pi\in \C_\infty}$ in Corollary~\ref{c.parisi} and Proposition~\ref{p.parisi_std} can be replaced by $\inf_{\pi \in\Pi}$.
Due to the first equality in Proposition~\ref{p.parisi}, we can further refine $\inf_{\pi\in\Pi}$ therein to $\inf_{\pi\in\Pi:\:\exists a \in \mathcal{K},\, \pi\leq a}$ for $\mathcal{K}$ defined in~\eqref{e.mathcal_K}.
\end{remark}

\subsection{Differentiability of the Parisi formula}

We show that the limit free energy is differentiable. We use a similar argument as that for the so-called generic models, which is presented, for instance, in \cite[Section~3.7]{pan}.

\begin{proposition}\label{p.diff_parisi}
Suppose that $\xi$ is convex on $\S^\D_+$, and let $f$ be the limit of $\Ll(\bar F_N\Rr)_{N\in\N}$ as identified in Proposition~\ref{p.parisi}. 
\begin{itemize}
    \item For each $t\in \R_+$, the function $f(t,\cdot)$ is Gateaux differentiable everywhere on $\C_\uparrow\cap L^\infty$.
    \item The function $f$ is Gateaux differentiable everywhere on $(0,\infty)\times (\C_\uparrow\cap L^\infty)$.
\end{itemize}
\end{proposition}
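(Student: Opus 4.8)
The plan is to combine the variational formula \eqref{e.parisi_hj} with the fixed-point relations established in Section~\ref{s.concl}, exploiting convexity to upgrade ``almost everywhere'' differentiability to differentiability everywhere on the ``nice'' set $\C_\uparrow\cap L^\infty$. The key structural fact is that, when $\xi$ is convex on $\S^\D_+$, Theorem~\ref{t.parisi.convex} tells us that the full sequence $\Ll(\bar F_N\Rr)_{N\in\N}$ converges to $f$, so Propositions~\ref{p.1st_fpe}, \ref{p.2nd_fpe} and~\ref{p.1st_stronger_id} are all available. In particular, by Proposition~\ref{p.1st_stronger_id}, for \emph{every} $(t,q)\in\R_+\times\C_2$ there is some ${p}={p}_{t,q}\in\C\cap L^\infty_{\leq 1}$ with $f(t,q)=\mathscr{P}_{t,q}({p})$ and ${p}=\dr_q\psi(q+t\nabla\xi({p}))$; the issue is that a priori this ${p}$ need not be unique, which is exactly what would obstruct differentiability.

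First I would show that for $(t,q)\in(0,\infty)\times(\C_\uparrow\cap L^\infty)$ — or, for the first bullet, at any fixed $t\geq 0$ and $q\in\C_\uparrow\cap L^\infty$ — the maximizer ${p}$ in \eqref{e.parisi_hj} is \emph{unique}. The strategy mirrors the classical argument for generic models (cf.\ \cite[Section~3.7]{pan}): the functional ${p}\mapsto\mathscr{P}_{t,q}({p})$ should be shown to be \emph{strictly} concave, or at least to have a unique maximizer, by perturbing $q$ in the finitely many ``one-body'' directions hidden in the definition of $\psi$ (recall $\psi$ is built from cascades with a rich perturbation family). Concretely: any two maximizers ${p}_1,{p}_2$ must both satisfy the Euler--Lagrange relation ${p}_i=\dr_q\psi(q+t\nabla\xi({p}_i))$ together with the value identity $\mathscr{P}_{t,q}({p}_1)=\mathscr{P}_{t,q}({p}_2)=f(t,q)$; convexity of $\xi$ forces the overlap distributions encoded by ${p}_1$ and ${p}_2$ to coincide via a Talagrand-positivity / strict-convexity comparison, using that on $\C_\uparrow\cap L^\infty$ the path $q$ has no flat pieces so no degeneracy is created. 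I expect to lean on the Lipschitz bound \eqref{e.continuity.der.psi} for $\dr_q\psi$ and the local Lipschitzness of $\nabla\xi$ here, essentially re-running the contraction argument from the proof of Proposition~\ref{p.unique.crit.high.temp} but now leveraging convexity of $\xi$ rather than smallness of $t$.

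Once uniqueness of the maximizer ${p}_{t,q}$ is in hand, Gateaux differentiability follows from a standard envelope-theorem argument combined with the known structure. Since $f(t,\cdot)$ is Lipschitz and $\mcl Q_2^*$-increasing (Proposition~\ref{p.reg.lim}), and since $f(t,q)=\sup_{p}\mathscr{P}_{t,q}({p})$ with the supremum attained at the unique point ${p}_{t,q}$, the directional derivative of $f(t,\cdot)$ at $q$ in any admissible direction $\kappa$ equals $\la\kappa,{p}_{t,q}\ra_\cH$ (because $q\mapsto\mathscr{P}_{t,q}({p})$ is affine in the relevant sense and the maximizer varies continuously — continuity of ${p}_{t,q}$ in $q$ is itself a consequence of uniqueness plus compactness via Lemma~\ref{l.compact.embed}, exactly as in Step~3 of the proof of Proposition~\ref{p.conv.der}). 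So the linear continuous functional $\kappa\mapsto\la\kappa,{p}_{t,q}\ra_\cH$ is the Gateaux derivative, giving $\dr_q f(t,q)={p}_{t,q}$. For the second bullet, the joint differentiability in $(t,q)$ on $(0,\infty)\times(\C_\uparrow\cap L^\infty)$ follows the same way: $f$ is a supremum of functions $(t,q)\mapsto\mathscr{P}_{t,q}({p})$, and one checks that $\mathscr{P}_{t,q}({p})$ is jointly $C^1$ in $(t,q)$ for fixed ${p}$ (the $t$-derivative being $\int_0^1\xi(\dr_q\psi(q+t\nabla\xi({p})))\cdots$-type expressions, smooth because $\nabla\xi$ is smooth), so the unique-maximizer envelope argument yields joint Gateaux differentiability with $\partial_t f(t,q)=\int_0^1\xi({p}_{t,q})$, consistent with \eqref{e.satify_eqn}.

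The main obstacle I anticipate is the uniqueness of the maximizer ${p}_{t,q}$ in \eqref{e.parisi_hj}. Without it, the envelope argument only delivers that the set of subdifferentials is the closed convex hull of $\{{p}:\mathscr{P}_{t,q}({p})=f(t,q)\}$, which could be multi-valued. Establishing uniqueness genuinely requires exploiting convexity of $\xi$ on $\S^\D_+$: one must show that two distinct maximizers would produce two distinct asymptotic overlap laws, yet both would have to represent the same limit free energy, contradicting either strict concavity of the Parisi-type functional in the synchronized variable or the Talagrand positivity principle applied componentwise. Handling this in the vector-valued setting (rather than scalar) is where the synchronization machinery of \cite{pan.vec} and the restriction to $\C_\uparrow$ (no flat segments, bounded ellipticity) will be essential, since these are precisely what prevent the pathologies that would otherwise break uniqueness.
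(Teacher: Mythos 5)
There is a genuine gap: your entire argument is routed through uniqueness of the maximizer $p_{t,q}$ in \eqref{e.parisi_hj}, and that uniqueness is never established. The two justifications you sketch do not work. Re-running the contraction argument of Proposition~\ref{p.unique.crit.high.temp} requires $16Ct<1$, i.e.\ smallness of $t$; convexity of $\xi$ on $\S^\D_+$ supplies no contraction at large $t$. And strict concavity of $p\mapsto\sP_{t,q}(p)$ is nowhere available in the paper --- establishing it would amount to a vector-spin analogue of the Auffinger--Chen strict convexity of the Parisi functional, a substantial result that the paper deliberately does not invoke. Indeed the paper's whole discussion of multiple critical points of $\mcl J_{t,q}$ at low temperature signals that uniqueness of solutions of $p=\dr_q\psi(q+t\nabla\xi(p))$ cannot be taken for granted. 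So the step you yourself flag as ``the main obstacle'' is exactly where the proposal fails, and the fallback tools you name (Talagrand positivity, synchronization, absence of flat pieces of $q$) do not add up to an argument.

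The paper's proof avoids the issue entirely: uniqueness of the maximizer is not needed. Fix \emph{any} maximizer $\pi$ with $\sP_q(\pi)=f(q)$. For a smooth direction $\kappa$, the map $r\mapsto f(q+r\kappa)$ is semi-concave (Propositions~\ref{p.semiconc} and~\ref{p.reg.lim}), so its superdifferential at $r=0$ is non-empty; and $r\mapsto\sP_{q+r\kappa}(\pi)$ is a smooth minorant of $r\mapsto f(q+r\kappa)$ touching it at $r=0$ (minorant because $\pi$ remains admissible in the supremum, touching because $\pi$ is a maximizer at $q$). Sandwiching any element $a$ of the superdifferential between the difference quotients of $f$ from above (semi-concavity) and of $\sP_{\cdot}(\pi)$ from below, and letting $r\to0$, forces $a=\frac{\d}{\d r}\sP_{q+r\kappa}(\pi)\big|_{r=0}$, so the superdifferential is a singleton and $f$ is differentiable along $\kappa$ --- for \emph{every} choice of maximizer, all of which therefore share the same directional derivative. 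The derivative is then identified via the cavity results (Corollary~\ref{c.cavity}) and extended from smooth $\kappa$ to full Gateaux differentiability by the Lipschitz extension and density, as in Step~3 of Proposition~\ref{p.conv.der}. If you want to salvage your route, you would have to supply the strict concavity/uniqueness input from scratch; the touching-function argument is both shorter and the one the available estimates actually support.
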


\begin{proof}
We show the first property; the second one can be verified in an analogous way.
Fix any $(t,{q}) \in \R_+\times (\C_\uparrow\cap L^\infty)$. 
We write $f({q'}) :=f(t,{q'})$ and $\sP_{{q'}}(\pi) := \sP_{t,{q'}}(\pi)$. 
By Proposition~\ref{p.parisi}, we can write, for every $q' \in \mcl Q_2$, 
\begin{align}\label{e.limF_N=sP}
    \lim_{N\to\infty} \bar F_N(t,q' )= f(q') = \sup_{\pi\in\C\cap L^\infty_{\leq 1}} \sP_{q'}(\pi).
\end{align}
This identity at $q$ together with~\eqref{e.limsup<} implies that there is $\pi\in \C\cap L^\infty_{\leq 1}$ such that
\begin{align}\label{e.sP(pi)=f(q)}
    \sP_q(\pi) =  f(q).
\end{align}

\medskip

\noindent \emph{Step 1.}
We show that $f$ is differentiable at $q$ along nice directions.
Let $\kappa\in C^{\infty}([0,1]; S^D)$ be such that $\kappa(0) = 0$. Due to $q\in \C_\uparrow$, by the same argument in Step~1 of the proof of Proposition~\ref{p.conv.der}, there are $c,\delta>0$ such that ${q} + r \kappa\in \C_{\uparrow,c}\cap L^\infty$ for all $r\in [-\delta,\delta]$ (we recall that~$\C_{\uparrow,c}$ is defined in \eqref{e.def.C_c}).
By Proposition~\ref{p.reg.lim}, we have that the function $r\mapsto  f(q+r\kappa)$ is semi-concave on $[-\delta,\delta]$. We want to show that this function is differentiable at $r=0$. For this, we need to introduce the notion of superdifferential (e.g.\ \cite[Definition~3.1.1]{cannarsa2004semiconcave}). For a function $g:I\to\R$ defined on an open interval $I\subset\R$, the \textit{superdifferential} of $g$ at $r\in I$ is the set of real numbers $a$ satisfying
\begin{align*}
    \limsup_{s\to r}\frac{g(s)-g(r)-a(s-r)}{|s-r|}\leq 0.
\end{align*}
It is well-known (e.g.\ \cite[Proposition~3.3.4~(c-d)]{cannarsa2004semiconcave}, or \cite[Theorem~2.13]{HJbook} for a variant with convex functions) that for a semi-concave function $g : I \to \R$ and $r\in I$, the superdifferential at $r$ is not empty, and if it contains exactly one point $a \in \R$, then $g$ is differentiable at $r$ with $\frac{\d}{\d r}g(r)=a$.
So we let $a \in \R$ be any element of the superdifferential of the mapping $r\mapsto  f(q+r\kappa)$ at $r=0$, and proceed to identify it uniquely. 
In the following, we denote by $C$ a constant that depends only on $(t,{q})$, $c$, and $\kappa$, and may vary from instance to instance.
By Proposition~\ref{p.reg.lim}, we have
\begin{align}\label{e.sP_semi_concave}
    (1-\lambda)  f(q) + \lambda  f(q+r\kappa) -  f({q}+ \lambda r \kappa) \leq C\lambda(1-\lambda)r^2
\end{align}
uniformly in $\lambda\in[0,1]$ and $r$. Dividing both sides by $\lambda$, sending $\lambda$ to zero, and using the definition of the superdifferential, we get
\begin{align*}
     f(q+r\kappa) -  f(q) -a r\leq Cr^2.
\end{align*}
Dividing both sides by $r>0$ and using~\eqref{e.limF_N=sP} and~\eqref{e.sP(pi)=f(q)}, we get
\begin{align*}
    a\geq \frac{ f(q+r\kappa)- f(q)}{r}  -Cr\geq \frac{\sP_{q+r\kappa}(\pi)-\sP_q(\pi)}{r}  -Cr.
\end{align*}
Repeating it for $-r<0$, we have
\begin{align*}
    a\leq \frac{ f(q)- f(q-r\kappa)}{r} +Cr\leq \frac{\sP_q(\pi)-\sP_{q-r\kappa}(\pi)}{r}  +Cr.
\end{align*}
Recall the definition of $\sP_{t,q}$ in~\eqref{e.mathscrP}.
By the smoothness of $\psi$ in 
Corollary~\ref{c.psi_smooth}, and the $L^\infty$ bound on $\pi$, there is $C$ such that
\begin{align*}\Ll|\frac{\sP_{q+r\kappa}(\pi)-\sP_q(\pi)}{r}-\frac{\d}{\d r}\sP_{q+r\kappa}(\pi)\Big|_{r=0}  \Rr|  \leq Cr,
\end{align*}
uniformly in $r\in[-\delta,\delta]$.
Combining the above three displays, we obtain
\begin{align*}
    \frac{\d}{\d r}\sP_{q+r\kappa}(\pi)\Big|_{r=0} -Cr \leq a \leq \frac{\d}{\d r}\sP_{q+r\kappa}(\pi)\Big|_{r=0} +Cr
\end{align*}
Now, sending $r$ to zero, this identifies $a$ uniquely, and thus $r\mapsto  f(q+r\kappa)$ is differentiable at $r=0$.

\medskip

\noindent \emph{Step 2.}
We want to identify the derivative of $f$ at $q$.
We use an argument similar to that in the proof of Proposition~\ref{p.1st_fpe}.
Let $(N^\pm_k)_{k\in\N}$, $(x^\pm_k)_{k\in\N}$, and ${p}_\pm$ be given by Corollary~\ref{c.cavity}.
Recall the definitions of $\tilde F^x_N$ in \eqref{e.tildeF_N=} and of $\la\cdot\ra^\cav_{N,x}$ in \eqref{e.<>^cav_N,x}. We set $\tilde F_k^\pm := \tilde F^{x^\pm_k}_{N^\pm_k}$.
As in the proof of Proposition~\ref{p.1st_fpe}, we can compute, for every $\kappa \in C([0,1]; S^D)$,
\begin{align*}
    \frac{\d}{\d r}\tilde F^\pm_k(t,{q}+r\kappa)\Big|_{r =0} = \E \la  \kappa\Ll(\alpha\wedge\alpha'\Rr) \cdot \frac{\sigma\sigma'^\intercal}{N^\pm_k}\ra^\cav_{N^\pm_k,x^\pm_k}.
\end{align*}
By Lemma~\ref{l.limF^x-F}, we also have that $\tilde  F_k^\pm$ converges pointwise to $f$. Similarly to Step~2 of the proof of Proposition~\ref{p.conv.der}, the semi-concavity~\eqref{e.sP_semi_concave} implies the convergence of $\frac{\d}{\d r}\tilde  F^\pm_k(t,{q}+r\kappa)\big|_{r =0}$ to $\frac{\d}{\d r}  f(q+r\kappa)\big|_{r=0}$.
This convergence along with the above display and Corollary~\ref{c.cavity}~\eqref{i.c.cavity_I_1} implies, for every $\kappa \in C([0,1]; S^D)$,
\begin{align}\label{e.dsP=<>}
    \frac{\d}{\d r}  f(q+r\kappa)\Big|_{r=0} = \E \la \kappa\Ll(R^{1,2}_\alpha\Rr) \cdot {p}_\pm\Ll(R^{1,2}_\alpha\Rr)\ra_{\fR}  = \la \kappa, {p}_\pm\ra_\cH.
\end{align}
Since $C([0,1]; S^D)$ is dense in $\cH$, we must have ${p}_\pm={p}$ for some ${p}\in \cH$.

\medskip

\noindent \emph{Step 3.} We are now ready to show that $ f$ is Gateaux differentiable at $q$.
By Proposition~\ref{p.reg.lim} and \eqref{e.limF_N=sP}, we have that $ f:\C_2\to\R$ is Lipschitz continuous. Therefore, by the first step of the proof of Proposition~\ref{p.Gateaux_dff_dense}, the function $ f$ admits a Lipschitz extension $\bar  f: \cH\to \R$. We show that $\bar  f$ is Gateaux differentiable at ${q}$. 
Fix any ${q'}\in \C_2$ and let $(\kappa_n)_{n\in\N}$ be a sequence in $C^{\infty}([0,1]; S^D)$ with $\kappa_n(0) = 0$ that converges to ${q'}$ in $\cH$.
We have
\begin{align*}
   \Ll|\frac{\bar  f({q}+r{q'})-\bar f(q)}{r}-\la {p}, {q'}\ra_\cH\Rr|\leq  \Ll|\frac{\bar  f({q}+r\kappa_n)-\bar f(q)}{r}-\la {p}, \kappa_n\ra_\cH\Rr| + C_0\Ll|\kappa_n -{q'}\Rr|_\cH,
\end{align*}
where $C_0 := \Ll\|\bar f\Rr\|_\mathrm{Lip}+|{p}|_\cH$.
By~\eqref{e.dsP=<>}, sending $r$ to zero and then $n$ to infinity in the above display, we get $\frac{\d}{\d r} \bar f({q}+r{q'})\Big|_{r=0}= \la {p},{q'}\ra_\cH$. Hence, we can conclude that $\bar f$ is Gateaux differentiable at ${q}$ and its derivative is equal to ${p}$.

Since admissible directions at ${q}\in\C_2$ for $ f$ includes $\C_2$ and $\C_2$ spans $\cH$, we can deduce from Definition~\ref{d.gateaux} that $ f$ is Gateaux differentiable at ${q}$ with derivative ${p}$.
This verifies the first announced property. 
\end{proof}

\subsection{Summary}
In the convex case, we summarize our results below.

\begin{corollary}\label{c.convex}
Suppose that $\xi$ is convex on $\S^\D_+$.
Then, the sequence $\Ll(\bar F_N\Rr)_{N\in\N}$ converges pointwise to some limit $f$ on $\R_+\times \C_2$.
At every $(t,q) \in (0,\infty)\times (\C_\uparrow\cap L^\infty)$, the function $f$ is Gateaux differentiable (jointly in its two variables) and satisfies
\begin{align}\label{e.pde_convex}
    \partial_t f(t,q) - \int_0^1 \xi\Ll(\dr_q f(t,q)\Rr) =0.
\end{align}
For every $t\in[0,\infty)$, $f(t,\cdot)$ is Gateaux differentiable at every $q\in \C_\uparrow\cap L^\infty$ and the following holds for $p=\dr_q f(t,q)$ and $p_N= \dr_q \bar F_N(t,q)$:
\begin{enumerate}
    \item \label{i.c.convex_1} $p,\, p_N\in \C\cap L^\infty_{\leq 1}$ for every $N\in\N$ and $(p_N)_{N\in\N}$ converges to $p$ in $L^r$ for every $r\in[1,\infty)$ as $N$ tends to infinity;
    \item \label{i.c.convex_2} $f(t,q) = \sP_{t,q}(p)$ and $p=\dr_q \psi (q+t\nabla \xi(p))$;
    \item \label{i.c.convex_3} $p_N(\alpha\wedge\alpha') = \E \la \frac{\sigma\sigma'^\intercal}{N}\, |\, \alpha\wedge\alpha' \ra_N$ almost surely under $\E \la\cdot\ra_N$ for every $N$, and the overlap array $\Ll(p_N\Ll(\alpha^\ell\wedge\alpha^{\ell'}\Rr)\Rr)_{\ell,\ell'\in\N:\:\ell\neq \ell'}$ under $\E \la\cdot\ra_N$ converges in law to $\Ll(p\Ll(\alpha^\ell\wedge\alpha^{\ell'}\Rr)\Rr)_{\ell,\ell'\in\N:\:\ell\neq \ell'}$ under $\E \la\cdot\ra_\fR$ as $N$ tends to infinity.
\end{enumerate}
\end{corollary}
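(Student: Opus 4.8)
The plan is to assemble Corollary~\ref{c.convex} by stitching together the earlier results of the paper, the only genuinely new input being Proposition~\ref{p.parisi} (pointwise convergence in the convex case) and Proposition~\ref{p.diff_parisi} (Gateaux differentiability of the limit on $(0,\infty)\times(\C_\uparrow\cap L^\infty)$ and on $\{t\}\times(\C_\uparrow\cap L^\infty)$ for each fixed $t$). First I would invoke Proposition~\ref{p.parisi} to record that $(\bar F_N)_{N\in\N}$ converges pointwise to a limit $f$ on $\R_+\times\C_2$; by Proposition~\ref{p.precompact} (or Proposition~\ref{p.reg.lim}) the convergence is in fact locally uniform in $\R_+\times\C_r$ for every $r>1$, and $f$ inherits the Lipschitz, semi-concavity, and $\C_2^*$-monotonicity properties. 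The differentiability assertions, both the joint one on $(0,\infty)\times(\C_\uparrow\cap L^\infty)$ and the one in $q$ for each fixed $t$, are exactly Proposition~\ref{p.diff_parisi}, so no further work is needed there.

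Next, for the PDE~\eqref{e.pde_convex}: at a point $(t,q)\in(0,\infty)\times(\C_\uparrow\cap L^\infty)$ the limit $f$ is differentiable jointly in both variables, so the hypotheses of Proposition~\ref{p.2nd_fpe} (applied with the full sequence, which converges by the previous paragraph) are met, and \eqref{e.satify_eqn} gives precisely $\partial_t f(t,q)=\int_0^1\xi(\dr_q f(t,q))$. For the remaining items, fix $t\ge 0$ and $q\in\C_\uparrow\cap L^\infty$ and set $p=\dr_q f(t,q)$, $p_N=\dr_q\bar F_N(t,q)$. Item~\eqref{i.c.convex_1} is a combination of two facts already in hand: $p_N\in\C\cap L^\infty_{\le 1}$ by \eqref{e.bounds.der.FN} in Proposition~\ref{p.F_N_smooth}, $p\in\C\cap L^\infty_{\le1}$ since it is a weak-$L^2$ (hence $L^r$, by Lemma~\ref{l.compact.embed}) limit of the $p_N$ via Proposition~\ref{p.conv.der}(1), and the $L^r$-convergence for $r\in[1,\infty)$ is the conclusion of Proposition~\ref{p.conv.der}(1), whose applicability rests on the Gateaux differentiability of $f(t,\cdot)$ at $q\in\C_\uparrow$ supplied by Proposition~\ref{p.diff_parisi}.

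For item~\eqref{i.c.convex_2}, the identity $f(t,q)=\sP_{t,q}(p)$ is Proposition~\ref{p.1st_fpe} (which needs convergence of the full sequence, established above, and differentiability of $f(t,\cdot)$ at $q$), and the fixed-point relation $p=\dr_q\psi(q+t\nabla\xi(p))$ is~\eqref{e.fixed_pt_nbla_f} in Proposition~\ref{p.2nd_fpe}; note that Proposition~\ref{p.2nd_fpe}'s first conclusion \eqref{e.fixed_pt_nbla_f} only requires differentiability of $f(t,\cdot)$ at $q$, not differentiability in $t$, so it is available for all $t\ge 0$. Finally item~\eqref{i.c.convex_3}: the almost-sure identity $p_N(\alpha\wedge\alpha')=\E\la N^{-1}\sigma\sigma'^\intercal\mid\alpha\wedge\alpha'\ra_N$ is exactly~\eqref{e.cond_overlap=p_N()} from the proof of Proposition~\ref{p.cvg_overlap_og} (which is a direct consequence of the formula~\eqref{e.def.der.FN} for $\dr_q\bar F_N$ together with the cascade invariance of Proposition~\ref{p.invar_cts}), and the convergence in law of the off-diagonal conditional-overlap array to $(p(\alpha^\ell\wedge\alpha^{\ell'}))_{\ell\neq\ell'}$ under $\E\la\cdot\ra_\fR$ is the statement of Proposition~\ref{p.cvg_overlap_og} itself (applied along the full sequence, with $p=\dr_q f(t,q)$). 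The main obstacle, such as it is, is purely organizational: one must be careful that each cited proposition is invoked under hypotheses it actually guarantees — in particular that Proposition~\ref{p.diff_parisi} provides differentiability in $q$ for \emph{every} $q\in\C_\uparrow\cap L^\infty$ (not merely outside a null set), so that the per-$t$ statements of items~(1)--(3) hold everywhere on $\C_\uparrow\cap L^\infty$ and not just on a dense subset — after which the proof is a short assembly with no new estimates.

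\begin{proof}[Proof of Corollary~\ref{c.convex}]
By Proposition~\ref{p.parisi}, the sequence $\Ll(\bar F_N\Rr)_{N\in\N}$ converges pointwise to a limit $f$ on $\R_+\times\C_2$, and hence (by Proposition~\ref{p.precompact} and Proposition~\ref{p.reg.lim}) locally uniformly in $\R_+\times\C_r$ for every $r\in(1,\infty]$. By Proposition~\ref{p.diff_parisi}, $f$ is Gateaux differentiable jointly in its two variables at every $(t,q)\in(0,\infty)\times(\C_\uparrow\cap L^\infty)$, and for each $t\in[0,\infty)$ the function $f(t,\cdot)$ is Gateaux differentiable at every $q\in\C_\uparrow\cap L^\infty$.

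We now fix $(t,q)\in(0,\infty)\times(\C_\uparrow\cap L^\infty)$. Since $\Ll(\bar F_N\Rr)_{N\in\N}$ converges along the full sequence and $f$ is differentiable jointly in $(t,q)$ at this point, Proposition~\ref{p.2nd_fpe} applies and \eqref{e.satify_eqn} gives
\begin{equation*}
\partial_t f(t,q) - \int_0^1 \xi\Ll(\dr_q f(t,q)\Rr) = 0,
\end{equation*}
which is~\eqref{e.pde_convex}.

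Next fix $t\in[0,\infty)$ and $q\in\C_\uparrow\cap L^\infty$, and set $p := \dr_q f(t,q)$ and $p_N := \dr_q \bar F_N(t,q)$. By \eqref{e.bounds.der.FN} in Proposition~\ref{p.F_N_smooth}, we have $p_N\in\C\cap L^\infty_{\le1}$ for every $N\in\N$. Since $f(t,\cdot)$ is Gateaux differentiable at $q\in\C_\uparrow$, Proposition~\ref{p.conv.der}(1) ensures that $(p_N)_{N\in\N}$ converges to $p$ in $L^r$ for every $r\in[1,\infty)$; in particular $p$ is an $L^2$-limit of elements of $\C\cap L^\infty_{\le1}$, so $p\in\C\cap L^\infty_{\le1}$ as well. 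This proves item~\eqref{i.c.convex_1}.

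For item~\eqref{i.c.convex_2}, the identity $f(t,q) = \sP_{t,q}(p)$ follows from Proposition~\ref{p.1st_fpe} (applicable since $\Ll(\bar F_N\Rr)_{N\in\N}$ converges along the full sequence and $f(t,\cdot)$ is Gateaux differentiable at $q$), and the relation $p = \dr_q\psi(q+t\nabla\xi(p))$ is~\eqref{e.fixed_pt_nbla_f} in Proposition~\ref{p.2nd_fpe}, whose first conclusion requires only the Gateaux differentiability of $f(t,\cdot)$ at $q$.

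Finally, for item~\eqref{i.c.convex_3}, the almost-sure identity
\begin{equation*}
p_N\Ll(\alpha\wedge\alpha'\Rr) = \E \la \frac{\sigma\sigma'^\intercal}{N}\,\Big|\,\alpha\wedge\alpha' \ra_{N}
\end{equation*}
under $\E\la\cdot\ra_N$ is exactly \eqref{e.cond_overlap=p_N()}, established in the proof of Proposition~\ref{p.cvg_overlap_og} as a consequence of the formula \eqref{e.def.der.FN} for $\dr_q\bar F_N$ together with the invariance of the Poisson--Dirichlet cascade from Proposition~\ref{p.invar_cts}. The convergence in law of $\Ll(R_{N,\sigma|\alpha}^{\ell,\ell'}\Rr)_{\ell\neq\ell'} = \Ll(p_N\Ll(\alpha^\ell\wedge\alpha^{\ell'}\Rr)\Rr)_{\ell\neq\ell'}$ under $\E\la\cdot\ra_N$ to $\Ll(p\Ll(\alpha^\ell\wedge\alpha^{\ell'}\Rr)\Rr)_{\ell\neq\ell'}$ under $\E\la\cdot\ra_\fR$ is the content of Proposition~\ref{p.cvg_overlap_og}, applied along the full sequence $(N)_{N\in\N}$ with $p = \dr_q f(t,q)$. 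This completes the proof.
\end{proof}
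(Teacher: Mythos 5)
Your proof is correct and follows exactly the same route as the paper's own proof: Proposition~\ref{p.parisi} for convergence, Proposition~\ref{p.diff_parisi} for differentiability, Proposition~\ref{p.2nd_fpe} for the equation, \eqref{e.bounds.der.FN} with Proposition~\ref{p.conv.der} for item~(1), Propositions~\ref{p.1st_fpe} and~\ref{p.2nd_fpe} for item~(2), and Proposition~\ref{p.cvg_overlap_og} with \eqref{e.cond_overlap=p_N()} for item~(3). The only difference is that you spell out the hypothesis-checking more explicitly than the paper does, which is fine.
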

In part~\eqref{i.c.convex_2}, $\sP_{t,q}$ is given in~\eqref{e.mathscrP}.
In part~\eqref{i.c.convex_3}, the conditional expectation is taken with respect to the measure $\E \la\cdot\ra_N$ for $\la\cdot\ra_N$ given in~\eqref{e.<>_N}.

\begin{proof}[Proof of Corollary~\ref{c.convex}]
The existence of $f$ is given by Proposition~\ref{p.parisi}. The differentiability of $f$ follows from Proposition~\ref{p.diff_parisi}. Proposition~\ref{p.2nd_fpe} yields~\eqref{e.pde_convex}. Part~\eqref{i.c.convex_1} is a consequence of \eqref{e.bounds.der.FN} in Proposition~\ref{p.F_N_smooth} and Proposition~\ref{p.conv.der}.
Part~\eqref{i.c.convex_2} follows from Propositions~\ref{p.1st_fpe} and~\ref{p.2nd_fpe}. Lastly, part~\eqref{i.c.convex_3} is a consequence of Proposition~\ref{p.cvg_overlap_og} and~\eqref{e.cond_overlap=p_N()} in its proof.
\end{proof}

Corollary~\ref{c.convex}~\eqref{i.c.convex_3} only gives the convergence of the conditioned overlap. To achieve the unconditioned convergence, one can add a small perturbation as in Theorem~\ref{t.main3}. The next result shows that the strict convexity of $\xi$ is sufficient without perturbation.

\begin{proposition}
Suppose that the function $\xi$ is strictly convex over $\R^{\D\times\D}$, let $t \in (0,\infty)$ and $q\in \C_\uparrow\cap L^\infty$, and let $p$ be as in Corollary~\ref{c.convex}. 
Then, the off-diagonal overlap array $\Ll(\sigma^l(\sigma^{l'})^\intercal/N\Rr)_{l,l'\in\N:\:l\neq l'}$ under $\E \la\cdot\ra_N$ converges in law to $\Ll(p\Ll(\alpha^l\wedge\alpha^{l'}\Rr)\Rr)_{l,l'\in\N:\:l\neq l'}$ under $\E\la\cdot\ra_\mathfrak{R}$ as $N$ tends to infinity.
\end{proposition}
\begin{proof}
We define, for every $r \ge 0$,
\begin{equation*}  \eta(r) := \inf \Ll\{ \xi(y) -  \xi(x) - (y-x) \cdot \nabla \xi(x) \ \mid \ |x| \le 1, |x-y| = r \Rr\} .
\end{equation*}
The definition of $\eta(r)$ consists in the minimization of a continuous function over a compact set, hence the infimum is achieved. By the assumption of strict convexity, we deduce that $\eta(r) > 0$ for every $r > 0$. By definition, we have that
\begin{align*}
    \xi(y) - \xi(x) -(y-x)\cdot\nabla\xi(x)\geq \eta(|y-x|),
\end{align*}
for every $x,y \in \R^{\D\times\D}$ with $|x| \le 1$. Then, for any random variable $X$ taking values in the unit ball of $\R^{\D\times\D}$, substituting $X,\,\E X$ for $y,x$ and taking the expectation, we get
\begin{align*}
    \E \xi(X)-\xi\Ll(\E X\Rr)\geq \E\, \eta(|X-\E X|).
\end{align*}
As in Corollary~\ref{c.convex}, we set $p_N = \partial_q\bar F_N(t,q)$ so that by part~\eqref{i.c.convex_3} there, we have that $\E \la\frac{\sigma\sigma'^\intercal}{N}\big| \alpha\wedge\alpha'  \ra_N = p_N(\alpha\wedge\alpha')$. Substituting $\frac{\sigma\sigma'^\intercal}{N}$ for $X$ and $\E\la\,\cdot\, \big|\alpha\wedge\alpha'\ra_N$ for $\E$ in the above display and then taking $\E\la\cdot\ra_N$, we get
\begin{align*}
    \E \la \xi\Ll(\frac{\sigma\sigma'^\intercal}{N}\Rr)\ra_N - \E \la\xi \Ll(p_N\Ll(\alpha\wedge\alpha'\Rr)\Rr)\ra_N \geq \E \la\eta\Ll(\Ll|\frac{\sigma\sigma'^\intercal}{N} -  p_N\Ll(\alpha\wedge\alpha'\Rr)\Rr|\Rr) \ra_N.
\end{align*}
Using~\eqref{e.def.der.FN} and the invariance of cascades (Proposition~\ref{p.invar_cts}), we can rewrite the left-hand side in the above display as
\begin{align*}
    \partial_t \bar F_N(t,q) - \int_0^1\xi\Ll(\partial_q\bar F_N(t,q) \Rr).
\end{align*}
By Proposition~\ref{p.conv.der}, we have
\begin{align*}
    \lim_{N\to\infty} \partial_{t} \bar F_N\Ll(t,{q}\Rr) = \partial_{t} f\Ll(t,{q}\Rr)\quad \text{ and } \quad \lim_{N\to\infty} \Ll|\dr_q \bar F_N \Ll(t,{q}\Rr) - \dr_q f\Ll(t,{q}\Rr)\Rr|_\cH = 0.
\end{align*}
Combining the above three displays with~\eqref{e.pde_convex}, we obtain 
\begin{align*}
    \lim_{N\to\infty} \E \la \eta \Ll(\Ll|\frac{\sigma\sigma'^\intercal}{N} - p_N\Ll(\alpha\wedge\alpha'\Rr)\Rr|\Rr)\ra_N =0.
\end{align*}
Since the random variables appearing in the argument of $\eta$ are bounded, and recalling that $\eta$ is strictly positive on $(0,\infty)$, we see that the display above is equivalent to the statement that $\Ll|\frac{\sigma\sigma'^\intercal}{N} - p_N\Ll(\alpha\wedge\alpha'\Rr)\Rr|$ converges to zero in $\E\la \cdot \ra_N$-probability; and this in turn is equivalent to~\eqref{e.RNQN.close}. We can now follow the same steps after~\eqref{e.RNQN.close} to conclude the proof.
\end{proof}

\medskip

\noindent \textbf{Acknowledgements.} HBC has received funding from the European Research Council (ERC) under the European Union’s Horizon 2020 research and innovation programme (grant agreement No.\ 757296).

\small
\bibliographystyle{plain}
\newcommand{\noop}[1]{} \def\cprime{$'$}

\end{document}